% ----------------------------------------------------------------
% AMS-LaTeX Paper ************************************************
% **** -----------------------------------------------------------
\documentclass[12pt]{amsart}
\usepackage{latexsym,amsmath, amscd, amssymb, amsthm,  euscript, mathrsfs, hyperref, stmaryrd}
\usepackage[dvipsnames]{xcolor}
\usepackage[all]{xy}
\usepackage{tikz}
\usetikzlibrary{arrows}
\usetikzlibrary{cd}
\usepackage{makecell}
\usepackage{enumitem}
%\usepackage{showlabels}
%\usepackage[active]{srcltx} % SRC Specials: DVI [Inverse] Search
% ----------------------------------------------------------------
\vfuzz2pt % Don't report over-full v-boxes if over-edge is small
\hfuzz2pt % Don't report over-full h-boxes if over-edge is small
\addtolength{\textwidth}{+4cm} \addtolength{\textheight}{+2cm}
\hoffset-2cm \voffset-1cm \setlength{\parskip}{5pt}
\setlength{\parskip}{5pt} %\textwidth = 330 pt \textheight = 530 pt
% THEOREMS -------------------------------------------------------
\newtheorem{thm}[subsection]{Theorem}
\newtheorem{thm/def}[subsection]{Theorem/Definition}
\newtheorem{cor}[subsection]{Corollary}

\newtheorem{lem}[subsection]{Lemma}
\newtheorem*{lem*}{Lemma}

\newtheorem{prop}[subsection]{Proposition}
\newtheorem*{prop*}{Proposition}

\theoremstyle{definition}

\newtheorem{defn}[subsection]{Definition}

\theoremstyle{definition}

\theoremstyle{definition}

\newtheorem{rem}[subsection]{Remark}
\newtheorem*{rem*}{Remark}

\newtheorem{example}[subsection]{Example}
\numberwithin{equation}{subsection}

\newtheorem{pg}[subsection]{}

\newtheorem{warning}[subsection]{Warning}

% MATH -----------------------------------------------------------

\newcommand{\Q}{\mathbf{Q}}

\newcommand{\mc}{\mathcal }

\newcommand{\Z}{\mathbb{Z}}

\newcommand{\Sp}{\text{\rm Spec}}
\newcommand{\Spec}{\text{\rm Spec}}

\newcommand{\mls}{\mathscr}

\newcommand{\Pic}{\mathrm{Pic}}

\newcommand{\et }{\text{\rm \'et}}

\newcommand{\gp}{\text{\rm gp}}

%\newcommand{\Rachel}[1]{\begin{Comment}{blue}{Rachel}#1\end{Comment}}
%\newcommand{\Martin}[1]{\begin{Comment}{blue}{Martin}#1\end{Comment}}

% Bf letters
\newcommand{\bm}{\mathbf{m}}
\newcommand{\bs}{\mathbf{s}}
\newcommand{\bD}{\mathbf{D}}
\newcommand{\bE}{\mathbf{E}}
\newcommand{\bd}{\mathbf{d}}
\newcommand{\bc}{\mathbf{c}}

% Scr letters

\newcommand{\sN}{\mathscr{N}}

% Frak letters
\newcommand{\fM}{\mathfrak{M}}
\newcommand{\fC}{\mathfrak{C}}

\newcommand{\cX}{\mc X}

\newcommand{\ba}{\mathbf{a}}

\newcommand{\bC}{\mathbf{C}}
\newcommand{\bb}{\mathbf{b}}
\newcommand{\zar}{\text{\rm zar}}
% \usepackage[a4paper, total={5in, 9.5in}, left=1.75in]{geometry}

% \usepackage{setspace}
% \doublespacing

%%%%%%%%%%%%%%%%%%%%
% The next few lines make TODO comments that can be toggled on or off with a variable at the start of the document.
\usepackage{xcolor}

\newcommand\RACHELoff{\newcommand{\Commentr}[1]{}}
\newcommand{\Rachel}[1]{\Commentr{#1}}

\newcommand\MARTINoff{\newcommand{\Commentp}[1]{}}

%%%%%%%%%%%%%%%%%%%%%

\newcommand{\node}{\text{\rm node}}

\newcommand{\fm}{\mathfrak{m}}

\newcommand{\MS}[2]{M_{{#2}\rightarrow {#1}}}
\newcommand{\bMS}[2]{\overline M_{{#2}\rightarrow {#1}}}

% ----------------------------------------------------------------
\begin{document}

%%%%%%%%%%%%%%%%%%%%
% This code turns the TODO and NOTE comments on and off.

% % Comment/uncomment one of these lines at a time
% \NOTEoff
% %\NOTEoff

% Comment/uncomment one of these lines at a time
%\MARTINon
\MARTINoff

% Comment/uncomment one of these lines at a time
%\RACHELon
\RACHELoff
%%%%%%%%%%%%%%%%%%

\title{Curves with colliding points: logarithmic and stacky}
\author{Martin Olsson and Rachel Webb}

\begin{abstract}
    We introduce a new notion of generalized log twisted curves, which are marked nodal curves  with additional data at the marked points.  In the case when the markings are distinct this notion agrees with the notion of twisted curve introduced by Abramovich and Vistoli.  In addition to developing the basic notions and results, we study in this article the moduli of such curves as well as contraction maps between them.  This is motivated, in part, by applications to twisted stable maps which will be studied in  a subsequent article.
\end{abstract}

\maketitle
\setcounter{tocdepth}{1}
\tableofcontents

%\Rachel{pick up in section 3.4. then write the last section and its part of the intro.}
\section{Introduction}

% \Rachel{Higher order todos:
% \begin{itemize}
% \item \ref{R:flat-reduced} unimportant but interesting remark needs fleshing out
% \item \ref{S:contract-curve} I completely rewrote it. Martin please check.
% \item \ref{L:properties} I think is an additional needed lemma. --- Perhaps done, but Martin please check
% \item \ref{A:log-contractions} I rewrote it. I think all I really did was add details/notation, but historically my accuracy when I try to talk about log stuff is not high . . . so please check.
% \end{itemize}}

% \Rachel{organize by 2, most of 3. Then A, B, the rest of 3 (that deals with contractions), 5. 4 can still be an appendix.}

\subsection{Overview}
Moduli of weighted stable pointed curves were introduced by Hassett in \cite{Hassett} as compactifications of smooth pointed curves alternative to $\overline{\mc M}_{g, n}$. 
In these moduli spaces, objects are tuples $(C/S, \{s_i\}_{i=1}^n)$ where $C \to S$ is a nodal curve of genus $g$ and the $s_i$ are sections of the smooth locus, not necessarily disjoint, satisfying a certain stability condition.
In this article, we consider the problem of lifting such a tuple $(C/S, \{s_i\}_{i=1}^n)$ to a \textit{stacky} curve $\mls C \to S$ with coarse space $C$ and abelian stabilizer groups that are nontrivial only at nodes and markings. We build a theory that specializes to the twisted curves considered by Abramovich-Vistoli \cite{AV} and Abramovich-Olsson-Vistoli \cite{AOV} when the sections $s_i$ are disjoint. Our motivation is to construct proper moduli of weighted stable maps from these curves to tame Artin stacks, and we do this in the companion article \cite{OWarticle2}, generalizing work in \cite{AlexeevGuy, bayermanin} where stable map spaces from weighted stable pointed curves to projective varieties were studied.

In the present article, there are three main points.
\begin{itemize}
\item We define a \textit{generalized log twisted curve} over $S$ as a tuple $\bC$ that includes a nodal curve with not-necessarily-disjoint sections $(C/S, \{s_i\}_{i=1}^n)$ together with some log data. The tuple $\bC$ defines an associated stack $\mls C$ with coarse space $C$  whose nontrivial stabilizers are all abelian and supported at nodes and markings.
\item We analyze in detail contractions of genus-zero subcurves of generalized log twisted curves. This theory will be used in \cite{OWarticle2} to prove properness of the moduli spaces of weighted stable maps.
\item We characterize the tame Artin curves with abelian stabilizers that arise as stacks associated to generalized log twisted curves. 
\end{itemize}

% \subsection{A motivating example}
% We present an example that exposes the essential behavior of our theory. Let $k$ be an algebraically closed field of characteristic zero, let $S = \AA^1_k$, let $C  \to S$ be the trivial family with fiber $\PP^1_k$, and let $s_1, s_2$ be two sections that are disjoint except at the fiber over the origin in $S$. There are several generalized log twisted curves $\bC$ that extend this data. If we require that the fiber of the associated stack 
% $\mls C$ over a non-origin point of $S$ be isomorphic to $\PP^1_k$ rooted at two points, both to order 2, then there are exactly two possibilities. The stack associated to the first is just ...

\subsection{Generalized log twisted curves}
Given a pair $(C/S, \{s_i\}_{i=1}^n)$ consisting of a nodal curve and $n$ sections, consider the problem of finding a tame Artin stack $\mls C$ such that
\begin{equation}\label{eq:text}
\parbox{\dimexpr\linewidth-10em}{%
    %\begin{center}
    $\mls C$ has coarse space $C$ and nontrivial stabilizers that are all abelian and supported at markings and nodes of geometric fibers.%
    %\end{center}
  }
  \end{equation}
There is a naive way to construct such a $\mls C$: given a tuple of positive integers $r_1, \ldots, r_n$, let $\mls C_i$ denote the curve obtained by rooting $C$ along $s_i$ to order $r_i$, and then set $\mls C$ to be the fiber product of the $\mls C_i$ over $ C$. From here we see another way to construct curves $\mls C$ satisfying \eqref{eq:text}: given $\mls C$ constructed as a fiber product of root stacks as above, and given a morphism $f: \mls C \to \cX$ to an algebraic stack $\cX$, the relative coarse space of $f$ (defined as in \cite[\S 3]{AOV}) will be another stacky curve $\mls C'$ satisfying \eqref{eq:text}.

When $C$ is smooth, the stacky curves considered in this article are all relative coarse spaces of fiber products of root stacks, as sketched above. The idea of a generalized log twisted curve is to encode the choice of ``which'' relative coarse space to take in a certain sheaf of monoids on $C$, without reference to a target. When $C$ is nodal, stacky structure at the nodes can also be encoded by sheaves of monoids on $C$, as was done in \cite{AOV, LogTwisted}. Thus a generalized log twisted curve is defined in \ref{D:2.23} as a tuple
\[
\bC = (C, \{s_i\}_{i=1}^n, M_{S} \hookrightarrow M_{S}', \mls N) 
\]
where $M_{S} \hookrightarrow M_{S}'$ is a simple inclusion of log structures and $\mls N$ is an \textit{admissible sheaf of monoids} (admissible sheaves of monoids are defined in \ref{def:admissible-sheaf} below).  Here, $M_S$ is the canonical  log structure on $S$ arising from a log smooth morphism of log schemes $(C, M_C^{\node}) \to (S, M_S)$ (these log structures are independent of the sections $s_i$).

A generalized log twisted curve $\bC$ defines a stacky curve $\mls C$ satisfying \eqref{eq:text} as follows. By \cite{AOV, LogTwisted}, the simple inclusion $M_S \hookrightarrow M_S'$ defines a stack $\mls C^{\node}$ with coarse space $C$ and stacky structure supported at points that are nodes in their fibers. On the other hand, the admissible monoid $\mls N$ contains $\oplus_{i=1}^n s_{i, *} \mathbf{N}$ as a subsheaf, and $\oplus_{i=1}^n s_{i, *} \mathbf{N}$ is the characteristic sheaf of the log structure on $C$ corresponding to the marked points. By \cite{BV} the inclusion $\oplus_{i=1}^n s_{i, *} \mathbf{N} \hookrightarrow \mls N$ induces a ``stack of roots'' $\mls C^{\mls N}$ with coarse space $C$ and stack structure supported at the markings. We set
\[
\mls C := \mls C^{\node} \times_C \mls C^{\mls N}.
\]

 Let $\fM_{g, n}^{glt}$ be the fibered category whose fiber over a scheme $S$ is the groupoid of generalized log twisted curves over $S$. 
%\Rachel{notation: $\fM _{g, n}^{glt}$ is called $\fM _{g, n}^{glt}$ in section 2}
\begin{thm}
The fibered category $\fM _{g, n}^{glt}$ is a smooth algebraic stack locally of finite type over $\mathbf{Z}$.
%Moreover, it admits a Zariski covering by open substacks isomorphic to $\fM_{g,n}$, indexed by admissible monoids $N \subset \Q^n_{\geq 0}$.
\end{thm}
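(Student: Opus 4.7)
The plan is to exhibit $\fM _{g,n}^{glt}$ as an étale tower over the stack $\fM_{g,n}$ of $n$-pointed prestable curves of genus $g$ whose sections lie in the smooth locus but are not required to be disjoint. This underlying stack is well known to be smooth and locally of finite type over $\mathbf{Z}$: it is covered by open substacks of Hassett's weighted moduli spaces, and its smoothness is a consequence of the standard deformation theory of nodal curves with a labeled Cartier divisor supported in the smooth locus. There is an obvious forgetful morphism $\fM_{g,n}^{glt} \to \fM_{g,n}$, and if I can show that this morphism is representable and étale, then smoothness and local finite-type of $\fM_{g,n}^{glt}$ follow at once from the corresponding properties of $\fM_{g,n}$.

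The data in a generalized log twisted curve beyond $(C,\{s_i\})$ decomposes into two essentially independent pieces, which I would analyze separately. First, the log structure $M_S$ is canonical once $C/S$ is fixed, and by the results of \cite{LogTwisted} the stack parametrizing simple inclusions $M_S \hookrightarrow M_S'$ is a disjoint union, indexed by $\mathbf{N}_{>0}$-valued functions on the set of nodes of geometric fibers, of étale algebraic stacks over $\fM_{g,n}$. Second, the admissibility hypothesis on $\mls N$ combined with the inclusion $\bigoplus_i s_{i,*}\mathbf{N} \hookrightarrow \mls N$ forces $\mls N$ to be determined étale-locally at each marking by finite combinatorial data (root orders at the markings, together with compatibilities at points where sections collide). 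This decomposition should likewise present the moduli of admissible $\mls N$ over $\fM_{g,n}$ as a disjoint union, indexed by combinatorial type, of étale substacks. Since the two pieces of data are built on disjoint supports (nodes and markings respectively, recalling that sections land in the smooth locus), $\fM_{g,n}^{glt}$ fits into a $2$-fibered diagram realizing it as the fiber product of the two étale towers over $\fM_{g,n}$, and étaleness is preserved under fiber product.

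The main obstacle is the second bullet: the global analysis of admissible sheaves of monoids in families, and in particular showing that the combinatorial type of $\mls N$ is locally constant on $\fM_{g,n}^{glt}$ and that each stratum is open-and-closed and étale over $\fM_{g,n}$. This requires descending the local classification of admissible monoids around each marking to a statement about families, matching the local models, built by gluing root-stack data of \cite{BV}, against the allowed ways that sections $s_i$ may deform and specialize (in particular, coalesce). I expect the argument to reduce, by passing to an étale neighborhood of a marking, to an explicit description of admissible monoids on the local model $\operatorname{Spec}\mathbf{Z}[t]$ with prescribed sections, after which the desired local constancy of combinatorial type and the étale representability of each stratum become formal. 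Once this local-to-global step is carried out, combining with the simple-inclusion classification at the nodes and with the smoothness of $\fM_{g,n}$ yields the theorem.
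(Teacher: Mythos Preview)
Your overall strategy—analyzing the forgetful morphism to the stack of marked curves with simple inclusion—is correct and matches the paper's approach, and your treatment of the simple inclusion at the nodes is fine. The genuine gap is in your analysis of the admissible-monoid piece: the claim that the combinatorial type of $\mls N$ is locally constant on $\fM_{g,n}^{glt}$, so that the moduli of admissible sheaves decomposes as a \emph{disjoint} union of étale pieces, is false.

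Consider a family over $S$ in which two sections $s_1,s_2$ coincide over a closed point but separate generically. Over the generic locus, $\mls N$ is determined by a pair of root orders $(m_1,m_2)$, while over the collision point its stalk is a full admissible submonoid $N\subset\mathbf{Q}^2_{\geq 0}$ projecting to $\frac{1}{m_1}\mathbf{N}$ and $\frac{1}{m_2}\mathbf{N}$. There are in general many such $N$ with the same projections---for instance $\frac{1}{2}\mathbf{N}\times\frac{1}{2}\mathbf{N}$ versus the submonoid generated by $(1,0),(0,1),(\tfrac12,\tfrac12)$---so the type at the collision is not determined by the generic type. The strata you envision are therefore not open-and-closed; distinct strata overlap on the locus where the sections remain distinct.

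What the paper does instead is prove that any admissible sheaf $\mls N$ is, Zariski-locally on the \emph{base} $S$, the image of a single global admissible monoid $N\subset\mathbf{Q}^n_{\geq 0}$. From this it follows that for each fixed $N$ the locus $\fM^{glt}_{g,n}(N)\subset\fM^{glt}_{g,n}$ where $\mls N=\mls N_N$ is \emph{open}, and the forgetful functor restricts to an equivalence on each such piece. These open substacks overlap but jointly cover $\fM^{glt}_{g,n}$, which suffices to exhibit a smooth atlas and conclude. The missing ingredient in your sketch is precisely this Zariski-local-on-$S$ constancy lemma; it replaces your incorrect local-constancy-of-type with the correct statement that the $N$-strata are open (but not closed).
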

In fact, we show in \ref{thm:curves-are-algebraic} that $\fM_{g,n}^{glt}$ admits a Zariski open cover by substacks isomorphic to certain stacks of twisted curves in the sense of \cite{AOV}.  A key point, which is discussed further in \ref{SS:1.6} below, is that unlike the case when the sections $s_i$ are distinct the stack $\mls C$ does not determine $\bC $ in general.

\subsection{Contractions} 
We say a morphism $q: C \to D$ of nodal curves over $S$ is a \textit{contraction} if $q$ is surjective and the canonical map $\mls O_{D} \to Rq_*\mls O_{C}$ is an isomorphism. Equivalently, $q$ contracts a genus-zero subcurve of $C$ in every geometric fiber (see Corollary \ref{cor:fibers}). 

One place such morphisms arise is in the theory of weighted stable pointed curves in \cite{Hassett}, as we now explain. Recall that the definition of weighted stable curve depends on a choice of weight $\ba = (a_1, \ldots, a_n) \in [0, 1)^n$. If $\bb = (b_1, \ldots, b_n)$ is another choice of weight such that $b_i \leq a_i$ for each $i$, and if $(C/S, \{s_i\}_{i=1}^n)$ is $\ba$-weighted stable, then by \cite{Hassett} there is a canonical pair $(D/S, q: C \to D)$ where $D$ is a $\bb$-weighted stable curve over $S$ and $q$ is a contraction as defined above. 

A major aim of this paper is to extend the Hassett contractions described in the previous paragraph, or indeed any contraction of coarse curves, to generalized log twisted curves.
To this end we define a contraction $\bC \to \bD$ of generalized log twisted curves in \ref{D:6.2} to be a contraction $C \to D$ of the underlying coarse curves, together with additional data realizing compatibilities of the simple inclusions and admissible monoids. The definition is chosen so that a contraction $\bC \to \bD$ naturally induces a morphism $\mls C \to \mls D$ of associated stacks (see \ref{D:6.8}).  As an example, if $\mls C$ is the stack associated to a generalized log twisted curve $\bC$ and $f: \mls C \to \cX$ is a morphism to a tame stack $\cX$, the relative coarse space of $f$ will be the stack arising from a generalized log twisted curve $\bC'$ and $f$ will be the morphism associated to a contraction $\bC \to \bC'$ (this is \ref{lem:rel coarse}).

Our discussion of contractions culminates with the following result, which says that a contraction of coarse curves induces a canonical ``initial'' contraction of generalized log twisted curves. This theorem is stated more precisely later as \ref{T:8.1}. 

\begin{thm}\label{thm:contract-intro}
Let $\bC$ be a generalized log twisted curve with coarse space $C$ and let $q: C \to D$ be a contraction. Then there is a generalized log twisted curve $\bD$ with coarse curve $D$ and a contraction $\mathbf{q}: \bC \to \bD$ extending $q$ with the following universal property: If $\bC \to \bD'$ is a contraction such that the underlying contraction of coarse curves $C \to D'$ factors through $q$, then $\bC \to \bD'$ factors uniquely through $\mathbf{q}$.
\end{thm}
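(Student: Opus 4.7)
The plan is to construct $\bD$ explicitly by pushing the extra data on $\bC$ forward along $q$, and then verify the universal property via an adjunction-type argument. I would work étale-locally on $S$ and on $D$, since by \ref{cor:fibers} the contraction $q: C \to D$ collapses a genus-zero subcurve in every geometric fiber, and both the admissible monoid and the simple inclusion data can be read off from the stalks.

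The candidate $\bD$ has underlying sections $t_i := q \circ s_i$, which may now coincide for different $i$. For the log data on $S$, the canonical nodal log structure $M_S^D$ for $(D \to S)$ relates to $M_S = M_S^C$ through the inclusion of the nodes of $D$ into the nodes of $C$; I extend this to a simple inclusion $M_S^D \hookrightarrow (M_S^D)'$ via a pushout construction using the given $M_S \hookrightarrow M_S'$. For the admissible sheaf of monoids on $D$, the natural candidate is $\mls N^D := q_* \mls N$, or a suitable subsheaf thereof, equipped with the map from $\bigoplus_i t_{i,*}\mathbf{N}$ induced by the original inclusion $\bigoplus_i s_{i,*}\mathbf{N} \hookrightarrow \mls N$. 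The key point is that the stalk of $\mls N^D$ at a colliding point $t_i$ sees the contributions of every preimage $s_j$ with $q(s_j) = t_i$; this precisely encodes the fact that the universal $\bD$ should remember the stacky data at all preimages simultaneously. Verifying that $\mls N^D$ satisfies the admissibility axioms of \ref{def:admissible-sheaf} should reduce to stalk-by-stalk conditions at markings and surviving nodes, using admissibility of $\mls N$ together with $q_*\mls O_C = \mls O_D$.

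Next I would check that the evident morphism $\mathbf{q}: \bC \to \bD$ qualifies as a contraction in the sense of \ref{D:6.2}: compatibility of the simple inclusions is automatic from the pushout construction, and compatibility of the admissible monoids is the unit of adjunction $\mls N^D \to q_*\mls N$. In particular, $\mathbf{q}$ then induces the morphism $\mls C \to \mls D$ of associated stacks described in \ref{D:6.8}.

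Finally I would verify the universal property. Given a second contraction $\bC \to \bD'$ whose underlying morphism of coarse curves factors as $C \xrightarrow{q} D \xrightarrow{f} D'$, the log data on $\bD'$ pulls back along $f$ to produce candidate data on $D$, and the data on the $\bC$-side must then factor through this by the adjunction defining $\bD$; uniqueness of the factorization $\bD \to \bD'$ follows formally. The main obstacle I expect is the admissibility verification for $\mls N^D$: the local axioms for admissible sheaves of monoids are delicate, and subtle torsion or rank issues can appear at the collision points where several $s_i$ map to a single $t_i$, and at surviving nodes of $D$ adjacent to contracted components of $C$. Handling the interaction between the simple inclusion $M_S \hookrightarrow M_S'$ and the admissible monoid $\mls N$ at these exceptional points, and confirming that the pushforward in each case yields exactly the initial object, is where most of the technical work should lie.
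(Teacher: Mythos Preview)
Your outline misses the key input from Section~\ref{A:log-contractions}: the canonical log enhancement of a contraction.  The definition of a contraction $\bC\to\bD$ in \ref{D:6.2} uses the map $\overline M^{\underline s}_D\to q_*\overline M_C$ coming from \ref{cor:log-contractions}, and as computed in \ref{S:contract-monoid} this map sends the generator $e_{t_i}$ not just to $e_{s_i}$ but to $\sum_{\nu\in E^i(\Gamma_x)}e^P_\nu+e_{s_i}$, i.e.\ a sum of \emph{node} generators along the path from the attaching point to $s_i$ plus the marking generator.  Consequently the diagram \eqref{eq:contract2} that $\mls N^D$ must fit into involves the full characteristic sheaf $\overline M_{\mls C}$ (markings \emph{and} twisted nodes), and the initial $\mls N^D$ is determined by a fiber product
\[
\mls N^D \;=\; (\overline M^{\underline s}_D)_{\mathbf Q}\times_{\tilde q_*(\overline M_C|_{\mls C})_{\mathbf Q}}\tilde q_*\overline M_{\mls C},
\]
not by $q_*\mls N$.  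Your candidate $q_*\mls N$ sees only the marking data of $\bC$ and completely ignores the simple inclusion $\ell^C$; in particular, if a contracted rational tail carries a twisted node but the marking on it has trivial $\mls N$-stalk, your $\mls N^D$ would be trivial at the image point, whereas the correct initial $\mls N^D$ records the nodal twist.  So the construction, the admissibility check, and the universal property all go through the fiber product above, and ``adjunction for $q_*$'' is not the right mechanism.

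There is a parallel problem with the simple inclusion.  A pushout does not produce $\ell^D$: the paper defines $(M_S^D)'$ as the \emph{saturation} of $M_S^D$ inside $M_S^{C\prime}$ along the composite $M_S^D\xrightarrow{\phi}M_S^C\xrightarrow{\ell^C}M_S^{C\prime}$.  Concretely, at a node $j'\in J(D)$ with preimage nodes having twists $(c_j)$, the new twist is $\gcd_j(c_j)$; this is exactly what saturation gives and what is needed for the factorization in the universal property (any other simple $\ell^{D'}$ mapping into $M_S^{C\prime}$ automatically lands in the saturation).  A pushout construction does not yield this, and in fact there is no natural pushout diagram available here.  Once these two constructions are in place, the universal property is checked directly from the defining properties of saturation and of the fiber product, as in the proof of \ref{T:8.1}.
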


% As motivation for our study of contractions, recall the definition of weighted stable pointed curves given in [cite Hassett]. Given weights $\ba = (a_1, \ldots, a_n) \in [0, 1)^n$, an $\ba$-weighted stable pointed curve over a scheme $S$ is a tuple $(C, \{s_i\}_{i=1}^n)$ where $C \to S$ is a nodal curve of genus $g$ and $s_i$ are sections such that any $x \in C$, the sum of the weights of the $s_i$ equal to $x$ is at most 1, and such that the weighted log canonical bundle is ample. If $\bb = (b_1, \ldots, b_n)$ 

\subsection{Characterization of stacks associated to generalized log twisted curves}\label{SS:1.6}

Let $\fM_{g, n}$ denote the category of tame stacks whose coarse spaces are (relative) nodal curves of genus $g$, with nontrivial stabilizers supported at nodes and $n$ sections of the coarse curve. There is a natural functor $F: \fM^{glt}_{g, n} \to \fM_{g, n}$ given by taking the associated stack. When restricted to the subcategory where the markings are all distinct, this functor $F$ induces an equivalence between the category of log twisted curves and twisted curves \cite[A.5]{AOV}. However, as noted above $F$ is not an equivalence in general.

The functor $F$ will be faithful, but on objects where the markings coincide we give examples to show that it may not be full (see the discussion in \ref{ss:faith-not-full}). The stack associated to a generalized log twisted curve over an algebraically closed field is reduced and has abelian stabilizers, and one might at first expect that this characterizes the essential image of $F$. This is especially reasonable given that, due to work of Alqvist \cite{alqvist}, away from the nodes such a curve does arise from \textit{some} stack of roots construction, of which our $\mls C^{\mls N}$ is an example. However, we prove the following in \ref{ex:mu2}, \ref{ex:mu3} and \ref{ex:not}.

\begin{thm}
All tame abelian nodal orbicurves (see \ref{def:abelian curve}) with $\mu_2$ and $\mu_3$ stabilizers arise from generalized log twisted curves, but there is an example of an orbicurve with a $\mu_4$ stabilizer that does not arise from such.
\end{thm}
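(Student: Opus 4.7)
The plan is to reduce to a local analysis at a single collision point of the coarse curve. Since the stack $\mls C$ associated to a generalized log twisted curve $\bC$ is built as a fiber product, and the simple inclusion $M_S \hookrightarrow M_S'$ accounts only for the node structure, I can fix a geometric point $x \in C$ at which $k$ sections $s_{i_1}, \ldots, s_{i_k}$ collide with cyclic stabilizer $\mu_r$, and ask: which $\mu_r$-actions at $x$ are induced by some choice of admissible $\mls N$ containing $\bigoplus_{j} s_{i_j,*}\mathbf{N}$ near $x$? The starting point is to encode both the prescribed orbicurve structure at $x$ and the stack $\mls C^{\mls N}_x$ produced by a candidate admissible $\mls N_x$ into a common finite combinatorial invariant, namely the tuple of characters $(\chi_1,\ldots,\chi_k) \in \widehat{\mu_r}^{\,k}$ describing the $\mu_r$-action on the cotangent directions of the sections $s_{i_j}$; the task is then to characterize combinatorially which character tuples are realized by some admissible $\mls N_x$.

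For the $\mu_2$ case, given any tuple $(\chi_1,\ldots,\chi_k) \in \widehat{\mu_2}^{\,k}$, I would construct $\mls N_x$ explicitly by adjoining to $\bigoplus_j \mathbf{N}\cdot e_j$ the ``half-sum'' element $\tfrac{1}{2} \sum_{j:\chi_j \ne 0} e_j$ (inside the groupification) and directly verify the defining conditions of an admissible sheaf of monoids. This should be transparent since the single extra generator makes the combinatorics of saturation and of the inclusion $\bigoplus s_{i_j,*}\mathbf{N} \hookrightarrow \mls N$ immediate. For $\mu_3$ I would proceed analogously: the characters of $\mu_3$ are $\{0, 1, 2\}$, and since negation is an automorphism of $\mu_3$, any character tuple can be normalized to $\{0,1\}^k$ after a change of isomorphism $\mu_r \cong \mu_3$, reducing the construction again to adjoining a single ``third-sum'' generator and verifying admissibility.

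For the $\mu_4$ counterexample, I would consider the orbicurve with $\mu_4$-stabilizer at $x$ where two sections collide with characters $(1,2) \in \widehat{\mu_4}^{\,2}$. The character $2$ factors through the proper subgroup $\mathbf{Z}/2 \subset \mathbf{Z}/4$, forcing any would-be admissible $\mls N_x$ to contain simultaneously a ``quarter-step'' element pertaining to $e_1$ (to produce character $1$) and a ``half-step'' element pertaining to $e_2$ (to produce character $2$). I would verify that such a combination of generators is never realized by an admissible $\mls N_x$: either the monoid fails the required saturation/finiteness axiom, or else its associated stack acquires a stabilizer strictly larger than $\mu_4$ (typically $\mu_4 \times \mu_2$), so the resulting orbicurve disagrees with the prescribed one.

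The principal obstacle will be the counterexample. The positive cases reduce to explicit constructions plus verification of axioms, while the $\mu_4$ case demands a negative result --- no admissible monoid realizes the chosen orbicurve --- which requires enumerating the finitely many candidate admissible $\mls N_x$ compatible with the prescribed stabilizer and character data, and checking that each violates a defining property. The combinatorial heart of the obstruction is the simple arithmetic observation that $\mathbf{Z}/4$ possesses a nontrivial proper subgroup, a feature absent from the cyclic groups $\mathbf{Z}/2$ and $\mathbf{Z}/3$ of prime order, and it is precisely this subgroup that allows the image character tuple to lie outside the class produced by admissible monoids.
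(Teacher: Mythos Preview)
Your reduction to a local analysis at a stacky point is right, but the invariant you propose is not well-defined. The sections $s_{i_1},\dots,s_{i_k}$ are sections of the \emph{coarse} curve, and all of them look identical locally (they are just the point $x$); there is no canonical lift of $s_{i_j}$ to $\mls C$ and hence no canonical character $\chi_j$ attached to the $j$-th section. The paper's invariant is the \emph{local monoid} $D_{\bar x}$ of generalized effective Cartier divisors on $\mls C_R$ supported at $\bar x$; this is a fine sharp monoid with an integral inclusion $\mathbf{N}\hookrightarrow D_{\bar x}$ and $D_{\bar x}^\gp/\mathbf{Z}\simeq X_{\bar x}$ the character group of the stabilizer. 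The characterization (\ref{C:4.13}) is that $\mls C$ arises from a generalized log twisted curve if and only if each $D_{\bar x}$ is a pushout $\mathbf{N}\oplus_{\mathbf{N}^m}N$ with $N\subset\mathbf{Q}^m_{\geq 0}$ admissible, and crucially $m$ is \emph{not fixed in advance}: you are free to choose how many markings to place at $x$.

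This affects all three cases. For $\mu_2$ the local monoid is determined by the single integer $c\geq 1$ with $2z=c$ (where $z$ is the minimal lift of $1\in\mathbf{Z}/2$), and the paper takes $m=c$ sections with $N=\mathbf{N}^c+\mathbf{N}(1/2,\dots,1/2)$; your construction recovers this only if you happen to choose $k=c$. For $\mu_3$ your normalization step is simply wrong: an automorphism of $\mu_3$ acts on a character tuple $(a_1,\dots,a_k)\in(\mathbf{Z}/3)^k$ by global negation, so $(1,2)$ becomes $(2,1)$, not something in $\{0,1\}^k$. The paper instead computes the constants $a,b$ with $2z_1=a+z_2$, $2z_2=b+z_1$ and builds $N\subset\mathbf{Q}^{a+b}_{\geq 0}$ with generators mixing $1/3$'s and $2/3$'s. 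For $\mu_4$, your proposed orbicurve ``two sections with characters $(1,2)$'' does not specify a stack. The paper's counterexample is $\mc X=[\Sp(k[y,w]/(y^2-w^2))/\mu_4]$ with weights $1,3$; the obstruction is proved not by enumeration but by comparing the associated graded $\bigoplus\mathfrak m^i/\mathfrak m^{i+1}$ and using that an admissible $N$ is \emph{saturated}: one deduces $2z_1=z_2=2z_3$ in $N$, hence $z_1-z_3\in N$, forcing $y/w$ into the ring, a contradiction. Your intuition that the proper subgroup $\mathbf{Z}/2\subset\mathbf{Z}/4$ is responsible is morally correct, but the actual mechanism is the saturation axiom, not a stabilizer-size count.
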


The mentioned example is, in fact, quite simple.
These results follow from our characterization \ref{C:4.13} of stacks that  arise from generalized log twisted curves. In deriving our characterization, we make Alqvist's result explicit for reduced abelian orbicurves with smooth coarse space, showing in \ref{p:tanc} that such a stack $\mls C$ can be constructed from a collection of inclusions of fine sharp monoids $\mathbf{N} \to M_i$ whose cokernels are the nontrivial stabilizer group schemes of $\mls C$.

%However this is not the case: consider the local curve $[\Spec(k[x,y]/(x^2-y^2) / \mu_4]$ where $\mu_4$ acts on $x$ with weight 1 and on $y$ with weight 3. We show in \ref{} that this local curve cannot arise from a generalized log twisted curve, although all tame abelian nodal orbicurves with $\mu_2$ and $\mu_3$ stabilizers do arise from such (\ref{} and \ref{}). In fact in \ref{} we 

\subsection{A word on the log perspective}

Associated to a marked curve $(C/S, \{s_i\}_{i=1}^n)$ there is a canonical log curve $(C, M_C) \to (S, \MS{S}{C})$ defined as follows. We have already mentioned the canonical log smooth curve $(C, M_C^{\node}) \to (S, \MS{S}{C})$ that is independent of the $s_i$. On the other hand, associated to each $s_i$ we have an inclusion of its (invertible) ideal sheaf $\mls I_i \to \mls O_C$, and this induces a log curve $(C, M_C^{s_i})$ over $S$ equipped with the trivial log structure. We define $M_C$ to be the coproduct (in the category of fine log structures) of $M_C^{\node}$ with all the $M^{s_i}_C$, and we note that the induced log map $(C, M_C) \to (S, \MS{S}{C})$ is no longer log smooth.

A generalized log twisted curve is by definition a marked nodal curve $(C/S, \{s_i\}_{i=1}^n)$ together with certain enhancements of the associated log structures $M_C$ and $\MS{S}{C}$, and the associated stack $\mls C$ can be viewed as a moduli space of certain log structures. In particular the stack $\mls C$ associated to a generalized log twisted curve has a canonical log structure $M_{\mls C}$ and there is a log morphism $(\mls C, M_{\mls C}) \to (C, M_C)$ extending the coarse moduli map (see \ref{SS:log-stack}). 

In the course of proving Theorem \ref{thm:contract-intro}, we show in Section \ref{A:log-contractions} that if $q: C \to D$ is a contraction of coarse curves over $S$, there is a canonical commuting diagram of log morphisms
\begin{equation}\label{eq:log-contr-intro}
\xymatrix{
(C, M_C)\ar[r]\ar[d]& (D, M_{D})\ar[d]\\
(S, \MS{S}{C})\ar[r]& (S, \MS{S}{D}).}
\end{equation}
extending $q$. If $\bC \to \bD$ is a contraction of generalized log twisted curves, the induced morphism $\mls C \to \mls D$ is uniquely determined by the fact that it extends to a morphism of log stacks filling in a certain commuting diagram \ref{R:3.24}. %(The diagram in question is a cube with \eqref{eq:log-contr-intro} and its analog for the stacky curves.)

\subsection{Organization of the paper}
In Section \ref{sec:glt-curves} we define generalized log twisted curves, including admissible sheaves, and prove that these form a smooth algebraic stack. Although an admissible sheaf is a priori a certain sheaf of monoids in the \'etale topology, we explain how it is equivalent to use certain Zariski sheaves of finitely generated abelian groups. Section \ref{S:section3} defines the stack $\mls C$ associated to a generalized log twisted curve $\bC$.

Sections \ref{A:appendixB}-\ref{sec:glt-contractions} discuss contractions. 
Section \ref{A:appendixB} discusses contractions of coarse curves and as such does not go too far beyond existing literature. The goal of Section \ref{A:log-contractions} is to construct the canonical log contraction diagram \ref{eq:log-contr-intro}, and Section \ref{sec:glt-contractions} discusses contractions of generalized log twisted curves.
Finally Section \ref{S:image} discusses the difference between the categories of generalized log twisted curves and tame abelian nodal orbicurves..

%Given weights $\ba = (a_1, \ldots, a_n) \in [0, 1)^n$, an  $\ba$-weighted stable pointed curve over a scheme $S$ is a tuple $(C, \{s_i\}_{i=1}^n)$ where $C \to S$ is a nodal curve of genus $g$ and $s_i$ are sections such that any $x \in C$, the sum of the weights of the $s_i$ equal to $x$ is at most 1, and such that the weighted log canonical bundle is ample. In this article, we consider the problem of enhancing such a tuple  

\subsection{Notation and conventions}\label{SS:conventions}
A \textit{prestable curve} over a scheme $S$ is an algebraic space $C$ over $S$ such that the structure morphism is flat, proper, locally finitely presented, of relative dimension 1, and whose geometric fibers are reduced connected nodal curves.

If $C \to S$ is a prestable curve, its \textit{smooth locus} is the open subspace $C^{sm} \subset C$ equal to the complement of the points that are nodes in their fibers. By \cite[\href{https://stacks.math.columbia.edu/tag/0C56}{Tag 0C56}]{stacks-project} this locus commutes with arbitrary base change.

An \textit{$n$-marked prestable curve} over a scheme $S$ is a pair $(C/S, \{s_i\}_{i=1}^n)$ where $C$ is a prestable curve over $S$ and $s_i: S \to C^{sm}$ are sections into the smooth locus of $C$. The sections $s_i$ do not need to be distinct. 

We often use the terminology \emph{coarse curve} to mean the underlying ordinary curve in a context when we also consider additional structure on that curve (either logarithmic data or stack structure).
%In this situation there are natural log structures on $C$ and $S$ - we refer to paragraph \ref{SS:2.19} for our notation in this regard.

Following the terminology of \cite{BV} we will distinguish between two kinds of charts for a log structure $M$ on a scheme $T$.  A \emph{Kato chart for $M$} is a map $\beta :P\rightarrow M$ from a fine monoid $P$ which induces an isomorphism $P^a\rightarrow M$ from the associated log structure. A \emph{Deligne-Faltings chart}, or just \emph{DF chart}, is a map $\beta :P\rightarrow \overline M$ which fppf locally on $T$ lifts to a Kato chart for $M$.

For an algebraic stack $\mc X$ a geometric point $\bar x\rightarrow \mc X$ is a morphism from the spectrum of an algebraically closed field.  The stabilizer group scheme $G_{\bar x}$ of $\bar x$ is the group scheme of automorphisms of $\bar x$.  It can be defined as the fiber product $\mc X\times _{\Delta, \mc X\times \mc X, (\bar x, \bar x)}\bar x.$  If $\mc X$ has finite diagonal (over some base) and therefore has a coarse moduli space $\pi :\mc X\rightarrow X$ then the stabilizer group scheme of a geometric point $\bar x\rightarrow X$ is defined to be the stabilizer group scheme of any lifting of $\bar x$ to $\mc X$.  Since any two such liftings are noncanonically isomorphic, this stabilizer group scheme is unique up to a conjugacy class of isomorphisms.  In particular, if the stabilizer group scheme is abelian then the stabilizer group scheme of a geometric point $\bar x\rightarrow X$ is well-defined.

Following \cite[10.3.2]{Olssonbook} for an algebraic stack $\mc X$ we write $\mls Div^+(\mc X)$ for the groupoid of generalized effective Cartier divisors on $\mc X$.  This is the groupoid of pairs $(\mc L, \alpha )$, where $\mc L$ is a line bundle on $\mc X$ and $\alpha :\mc L\rightarrow \mls O_{\mc X}$ is an $\mls O_{\mc X}$-linear map. When $\mc X$ is an algebraic space, we will also write $\mathfrak{D}iv^+_{\mc X}$ for the category fibered in groupoids over $\mc X_{\et}$ whose fiber over $V \to \mc X$ is $\mls{D}iv^+(V)$. 

\subsection{Acknowledgements}
Olsson was partially supported by NSF FRG grant DMS-2151946 and a grant from the Simons Foundation. Webb was partially supported by an NSF Postdoctoral Research Fellowship, award number 200213, and a grant from the Simons Foundation. Webb is grateful to Mark Gross for helpful conversations and to the Isaac Newton Institute for their hospitality.

\section{Generalized log twisted curves}\label{sec:glt-curves}
The main definition in the article is the definition \ref{D:2.23} of a generalized log twisted curve. As preparation, in Section \ref{ss:adm-monoid} we define admissible monoids of $\mathbf{Q}^n_{\geq 0}$(and an equivalent notion of an admissible subgroup of $\mathbf{Q}^n$) and prove basic properties of such objects. Section \ref{ss:adm-sheaf} extends this to a notion of an admissible \textit{sheaf} on a marked prestable curve $(C, \{s_i\}_{i=1}^n)$ and gives many equivalent characterisations of such objects. Finally in Section \ref{ss:glt} we define generalized log twisted curves and prove that they form an algebraic stack.

\subsection{Admissible monoids}\label{ss:adm-monoid}

We define admissible submonoids of $\mathbf{Q}^n_{\geq 0}$ and admissible subgroups of $\mathbf{Q}^n$ and show that these are equivalent notions. The reader more comfortable with groups may safely use our theory by noting only the group definition. 

\begin{defn}\label{def:admissible}   A submonoid $N\subset \mathbf{Q}^n_{\geq 0}$ is \emph{admissible} if $N$ is finitely generated and saturated and contains $\mathbf{N}^n$. 
\end{defn}

\begin{defn}   A subgroup $G\subset \mathbf{Q}^n$ is \emph{admissible} if it is finitely generated and contains $\mathbf{Z}^n$. 
\end{defn}

More generally if $I$ is a finite set, we will speak of admissible submonoids of $\mathbf{Q}^I_{\geq 0}$ (or admissible subgroups of $\mathbf{Q}^I$).

\begin{lem}\label{lem:monoid-vs-group}
There is a bijection between admissible submonoids of $\mathbf{Q}^n_{\geq 0}$ and admissible subgroups of $\mathbf{Q}^n$ given by sending $N$ to $N^{gp}$, with inverse sending $G$ to $G \cap \mathbf{Q}^n_{\geq 0}$. Moreover, for any collection of positive integers $\underline m = (m_1, \ldots, m_n)$, a subgroup of $\prod_{i=1}^n \mathbf{Z}/(m_i)$ determines an admissible subgroup of $\mathbf{Q}^n$, and every admissible subgroup arises in this way. 
\end{lem}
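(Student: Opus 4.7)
The plan is to verify separately that both maps are well-defined, that they are mutually inverse, and then to handle the final sentence about subgroups of $\prod_{i=1}^n \mathbf{Z}/(m_i)$.

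First I would check that $N \mapsto N^{\gp}$ sends admissible monoids to admissible subgroups. The inclusion $\mathbf{Z}^n = (\mathbf{N}^n)^{\gp} \subset N^{\gp}$ and the finite generation of $N^{\gp}$ are immediate from the definitions. For the reverse map $G \mapsto G \cap \mathbf{Q}^n_{\geq 0}$, the containment of $\mathbf{N}^n$ and saturation are also immediate. The key nontrivial point is that $G \cap \mathbf{Q}^n_{\geq 0}$ is finitely generated as a monoid; I would deduce this from Gordan's lemma applied to the rational polyhedral cone $\mathbf{Q}^n_{\geq 0}$ inside $G$, which is a finitely generated subgroup of $\mathbf{Q}^n$ (in fact free abelian of rank $n$, since it contains $\mathbf{Z}^n$).

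Next I would show the two compositions are the identity. For $N^{\gp} \cap \mathbf{Q}^n_{\geq 0} = N$: any $x$ in the intersection satisfies $k x \in \mathbf{N}^n \subset N$ for $k$ large enough to clear denominators, and then saturation of $N$ forces $x \in N$. For $(G \cap \mathbf{Q}^n_{\geq 0})^{\gp} = G$: given any $g = (g_1, \ldots, g_n) \in G$, choose an integer $M$ large enough that $g_i + M \geq 0$ for all $i$; then both $g + (M, \ldots, M)$ and $(M, \ldots, M)$ lie in $G \cap \mathbf{Q}^n_{\geq 0}$ (using that $\mathbf{Z}^n \subset G$), so $g$ is a difference of elements of $G \cap \mathbf{Q}^n_{\geq 0}$.

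For the final sentence I would use the short exact sequence $0 \to \mathbf{Z}^n \to \prod_i \tfrac{1}{m_i}\mathbf{Z} \to \prod_i \mathbf{Z}/(m_i) \to 0$. A subgroup $H \subset \prod_i \mathbf{Z}/(m_i)$ pulls back to a subgroup of $\prod_i \tfrac{1}{m_i}\mathbf{Z} \subset \mathbf{Q}^n$ containing $\mathbf{Z}^n$ and finitely generated, hence admissible. Conversely, an admissible $G$ has $G/\mathbf{Z}^n$ finite (being finitely generated and a subgroup of $(\mathbf{Q}/\mathbf{Z})^n$), so taking $m_i$ to annihilate the $i$-th coordinate projection realizes $G/\mathbf{Z}^n$ as a subgroup of $\prod_i \mathbf{Z}/(m_i)$ whose pullback under the displayed surjection recovers $G$. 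I expect the only point requiring more than routine verification to be the appeal to Gordan's lemma for finite generation of $G \cap \mathbf{Q}^n_{\geq 0}$.
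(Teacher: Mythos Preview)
Your proposal is correct and follows essentially the same route as the paper. The only notable difference is in establishing that $G \cap \mathbf{Q}^n_{\geq 0}$ is finitely generated: you invoke Gordan's lemma directly, whereas the paper observes that $G \subset \tfrac{1}{m}\mathbf{Z}^n$ for some $m$, so $G \cap \mathbf{Q}^n_{\geq 0}$ is an exact submonoid of the fine monoid $\tfrac{1}{m}\mathbf{N}^n$ and then cites \cite[2.1.9(2)]{Ogus}; the underlying content is the same.
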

\begin{proof}
First note that if $G\subset \Q^n$ is an admissible subgroup then $(G \cap \Q^n_{\geq 0})^{gp}=G$. The forward containment is clear. For the reverse containment, let $\gamma \in G$ be an element and  choose  $\alpha \in \Z^n_{\geq 0}$ such that each coordinate of $\gamma + \alpha$ is nonnegative, i.e. $\gamma + \alpha \in G \cap \Q^n_{\geq 0}$. 
Then $\gamma = (\gamma +\alpha )-\alpha $ expresses $\gamma $ as a difference of two elements in $G\cap \Q^n_{\geq 0}$ (note that $\mathbf{Z}_{\geq 0}^n\subset G\cap \Q^n_{\geq 0}$),
so $\gamma\in (G \cap \Q^n_{\geq 0})^{gp}$.

Furthermore, since $G$ is finitely generated it is contained in $\frac{1}{m}\mathbf{Z}^n$ for some integer $m>0$ and therefore $G\cap \Q^n_{\geq 0} = G\cap \frac{1}{m}\mathbf{N}^n$ is an exact submonoid of $\frac{1}{m}\mathbf{N}^n$.  From this and \cite[2.1.9 (2)]{Ogus}
it follows that $G\cap \Q^n_{\geq 0}$ is fine and saturated.

Next we show that if $N\subset \Q^n_{\geq 0}$ is an admissible submonoid then $N^{gp} \cap \Q^n_{\geq 0}=N$. The reverse containment is clear. For the forward containment suppose $\alpha - \beta \in N^{gp} \cap \Q^n_{\geq 0}$, where $\alpha$ and $\beta$ are in $N$ and $\alpha-\beta$ has nonnegative coordinates. Then there is an integer $m>0$ such that $m(\alpha-\beta) \in \Z^n_{\geq 0} \subset N$. Since $N$ is saturated this implies $\alpha-\beta \in N$ implying that $N^\gp \cap \Q^n_{\geq 0}\subset N$.

To prove the second statement of the lemma, note that admissible subgroups of $\mathbf{Q}^n$ are in bijection with subgroups of 
$$
(\Q/\mathbf{Z})^n=\text{colim}_{\underline m}\prod _i\mathbf{Z}/(m_i).
$$
A subgroup of $\prod _i\mathbf{Z}/(m_i) \simeq \prod_i (1/m_i)\mathbf{Z}/\mathbf{Z}$ determines a subgroup of $(\Q/\mathbf{Z})^n$ and the preimage in $\Q^n$ is an admissible subgroup containing $\prod_{i=1}^n (1/m_i)\mathbf{Z}$. Since every finitely generated subgroup of $\Q^n$ contains $\prod_{i=1}^n (1/m_i)\mathbf{Z}$ for some sequence of integers $m_i$, every admissible subgroup arises this way.
\end{proof}

\begin{rem}\label{L:integral}
If $N \subset \mathbf{Q}^n_{\geq 0}$ is admissible, then the inclusion $\mathbf{N}^n \to N$ is integral. This is immediate from the definition \cite[Def~4.6.2(1)]{Ogus}: if $\lfloor q \rfloor$ denotes the componentwise floor and $\langle q \rangle $ denotes the componentwise fractional part of a vector of nonnegative rational numbers $q$, then an equality $a_1+q_1=a_2+q_2$ with $a_i \in \mathbf{N}^n$ and $q_i \in \mathbf{Q}_{\geq 0}^n$ implies equalities $a_1+\lfloor q_1 \rfloor = a_2 + \lfloor q_2 \rfloor$ and $\langle q_1 \rangle = \langle q_2 \rangle.$ Since $N$ is saturated we have $\langle q_i \rangle \in N$. 
%In fact, it follows from \cite[Thm~4.5.7]{Ogus} that the inclusion is flat.
\end{rem}

\begin{rem}\label{R:rank1} An admissible submonoid of $\mathbf{Q}_{\geq 0}$ is of the form $\frac{1}{m}\mathbf{N}$ for some $m\geq 1$.  Indeed this follows from \ref{lem:monoid-vs-group} and the fact that any subgroup of $\mathbf{Z}/(N)$ is of the form $\mathbf{Z}/(m)$ for some divisor $m|N$.
\end{rem}

\subsubsection{Quotients of admissible monoids}
 Limits and colimits, and in particular quotients, exist quite generally in the category  of integral monoids  \cite[Chapter I, \S 1]{Ogus}, but it will be useful to have concrete descriptions of certain quotients of monoids which we discuss in this subsection.

\begin{defn}
Let $I$ be a subset of $\{1, \ldots, n\}$.
\begin{itemize}
\item If $N \subset \mathbf{Q}^n_{\geq 0}$ is an admissible submonoid, the \textit{quotient} $N^I \subset \mathbf{Q}^I_{\geq 0}$ is the image of $N$ under the projection $\mathbf{Q}^n_{\geq 0} \to \mathbf{Q}^I_{\geq 0}$.
\item If $G \subset \mathbf{Q}^n$ is an admissible subgroup, the \textit{quotient} $G^I \subset \mathbf{Q}^I$ is the image of $G$ under the projection $\mathbf{Q}^n \to \mathbf{Q}^I$.
\end{itemize}
\end{defn}

\begin{lem}\label{L:2.7}
If $N \subset \mathbf{Q}^n_{\geq 0}$ is an admissible submonoid, then $N^I$ is also admissible. Likewise, if $G \subset \Q^n$ is admissible, so is $G^I$. Moreover, if $G = N^{gp}$, then 
\begin{equation}\label{eq:quotient-monoid-vs-group}
(N^I)^{gp} = G^I.
\end{equation}
\end{lem}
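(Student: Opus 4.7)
The plan is to dispatch the group statement first (where everything is immediate), then prove admissibility of $N^I$ directly, and finally derive the equation $(N^I)^{gp} = G^I$ by a short direct computation. The one nontrivial ingredient is saturation of $N^I$, which I will handle by a lifting trick.

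First, for the group case: since $G \subset \mathbf{Q}^n$ is finitely generated and $G^I$ is the image of $G$ under the coordinate projection, $G^I$ is finitely generated, and it contains $\mathbf{Z}^I$ because $\mathbf{Z}^n \subset G$ surjects onto $\mathbf{Z}^I$. So $G^I$ is admissible.

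For the monoid case, I would note at once that $N^I$ is finitely generated as the image of the finitely generated monoid $N$, and it contains $\mathbf{N}^I$ because $\mathbf{N}^n \subset N$ surjects onto $\mathbf{N}^I$. The only real work is to show $N^I$ is saturated in $\mathbf{Q}^I_{\geq 0}$. For this, suppose $x \in \mathbf{Q}^I_{\geq 0}$ and $kx \in N^I$ for some integer $k \geq 1$. Choose $y \in N$ with $y^I = kx$. The idea is to lift $x$ to an element $\tilde x \in \mathbf{Q}^n_{\geq 0}$ in a clever way: define $\tilde x$ to have $I$-components equal to $x$ and to have non-$I$ components equal to the corresponding components of $y/k$. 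Then $k\tilde x = y$ by construction, so $\tilde x \in \mathbf{Q}^n_{\geq 0}$ satisfies $k\tilde x \in N$. Saturation of $N$ then yields $\tilde x \in N$, and projecting to $I$-coordinates gives $x \in N^I$. This is the main (and only slightly subtle) step.

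For the final equation $(N^I)^{gp} = G^I$, I would argue by direct verification of both containments. For $\subseteq$, any element of $(N^I)^{gp}$ has the form $a^I - b^I$ with $a,b \in N$, and $a - b \in N^{gp} = G$, so the element lies in $G^I$. For $\supseteq$, any element of $G^I$ has the form $g^I$ with $g \in G = N^{gp}$; writing $g = a - b$ with $a,b \in N$ gives $g^I = a^I - b^I \in (N^I)^{gp}$. These two paragraphs, together with the group case, conclude the proof. The main obstacle is the saturation argument, but the lifting construction above resolves it cleanly.
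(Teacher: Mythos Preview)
Your arguments for the group case and for the equation $(N^I)^{\gp} = G^I$ are correct, but the saturation step for $N^I$ has a genuine gap. Saturation of $N$ only tells you that $\tilde x \in N$ provided $\tilde x \in N^{\gp}$ and $k\tilde x \in N$; it says nothing for arbitrary $\tilde x \in \mathbf{Q}^n_{\geq 0}$. Your lift $\tilde x$, with off-$I$ coordinates equal to $y_j/k$, need not lie in $N^{\gp}$. Concretely, take $N = \mathbf{N}^2$, $I = \{1\}$, $x = 1 \in (N^I)^{\gp} = \mathbf{Z}$, $k = 2$, and $y = (2,1) \in N$: then $\tilde x = (1, 1/2) \notin N^{\gp} = \mathbf{Z}^2$, so saturation of $N$ does not apply (and indeed $\tilde x \notin N$). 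Your phrasing also takes $x \in \mathbf{Q}^I_{\geq 0}$ rather than $x \in (N^I)^{\gp}$, and the statement you would then be proving is false already for $N = \mathbf{N}^2$, $x = 1/2$, $k = 2$.

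The paper avoids this by working on the group side: it shows directly that $N^I = G^I \cap \mathbf{Q}^I_{\geq 0}$ (given $\gamma \in G^I \cap \mathbf{Q}^I_{\geq 0}$, lift to $g \in G$ with $g^I = \gamma$ and add an element of $\mathbf{Z}^{I^c}_{\geq 0} \subset G$ to make all coordinates nonnegative, landing in $G \cap \mathbf{Q}^n_{\geq 0} = N$ by \ref{lem:monoid-vs-group}), and then both the admissibility of $N^I$ and the equation $(N^I)^{\gp} = G^I$ drop out of the bijection in \ref{lem:monoid-vs-group}. Your argument can be repaired along the same lines: first prove $(N^I)^{\gp} = G^I$ (your proof of this is independent of saturation and fine), then for $x \in (N^I)^{\gp} = G^I$ with $kx \in N^I$ choose a preimage $g \in G$ and adjust its $I^c$-coordinates by nonnegative integers to obtain a lift in $G \cap \mathbf{Q}^n_{\geq 0} = N$ projecting to $x$.
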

\begin{proof}
The group $G^I$ is admissible, since the image of a finitely generated group is finitely generated and the image of $\mathbf{Z}^n$ in $\Q^I$ is $\mathbf{Z}^I$. Once we show \eqref{eq:quotient-monoid-vs-group}, it follows from \ref{lem:monoid-vs-group} that $N^I$ is also admissible. But to show \eqref{eq:quotient-monoid-vs-group} it is enough to show
\[
N^I = G^I \cap \Q^I_{\geq 0}.
\]
The forward containment is clear from the definitions. Conversely, if $\gamma \in G^I \cap \Q^I_{\geq 0}$, then there is a lift $\widetilde{\gamma} \in G$ whose coordinates indexed by $I$ are nonnegative: Start with any lift $\tilde \gamma \in G$ of $\gamma $ and add a suitable element of $\mathbf{Z}^{I^c}_{\geq 0}\subset G$. Such a lift $\tilde \gamma $ lies in  $N = G \cap \Q^n_{\geq 0}$ implying that  $\gamma$ is in $N^I$.
\end{proof}

\begin{rem}
Note that $G^I$ is a quotient of $G$ in the category of abelian groups. Similarly, as a corollary of \ref{L:2.7}, we see that $N^I$ is a quotient of $N$ in the category of fine saturated monoids: in fact, 
\begin{equation}\label{eq:face}
N^I = N/F
\end{equation}
where $F$ is the face $\Q^{I^c}_{\geq 0} \cap N$ (see \cite[1.4.1]{Ogus} for the definition of a face). To deduce \eqref{eq:face} from \ref{L:2.7}, recall that
\[
N/F := \langle N, F^{gp} \rangle / F^{gp}
\]
where $\langle N, F^{gp} \rangle$ denotes the submonid of $N^{gp}$ generated by $N$ and $F^{gp}$, and note that if $G = N^{gp}$ then
\[
G^I = N^{gp} / (N^{gp} \cap \Q^{I^c}).
\]
It follows  that $(N/F)^{gp}$ is precisely $G^I$, which is the groupification of $N^I$ by \ref{L:2.7}. Since $N/F$ and $N^I$ have the same groupification, they are equal by \ref{lem:monoid-vs-group}.
\end{rem}

\begin{warning}\label{warn:pushout vs image}
If $G \subset \Q^n$ is an admissible subgroup and $I$ is a subset of $\{1, \ldots, n\}$ we can also consider  the pushout $\mathbf{Z}^I \oplus_{\mathbf{Z}^n} G$ in the category of abelian groups, which by the universal property of pushout comes equipped with a map $\mathbf{Z}^I\oplus _{\mathbf{Z}^n}G\rightarrow G^I$. However, the pushout $\mathbf{Z}^I\oplus _{\mathbf{Z}^n}G$ may have torsion and therefore is not admissible and the map to $G^I$ may not be an isomorphism.  In fact, this map is the quotient of $\mathbf{Z}^I\oplus _{\mathbf{Z}^n}G$ by its torsion subgroup, which implies that $G^I$ is the pushout of the diagram
$$
\xymatrix{
\mathbf{Z}^n\ar[r]\ar[d]& G\\
\mathbf{Z}^I&}
$$
in the category of torsion free abelian groups.

For example, if $G \subset \Q^2$ is the subgroup $\langle (\frac{1}{2}, 0), (0,\frac{1}{2})\rangle$ and $I$ has one element, then $\mathbf{Z}^I \oplus_{\mathbf{Z}^2} G \simeq \mathbf{Z} \times \mathbf{Z}/(2)$ and $G^I  = \langle \frac{1}{2} \rangle \subset \Q$.

Translated to the language of monoids this means the following. If $N \subset \mathbf{Q}^n_{\geq 0}$ is admissible, the pushout  in the category of integral monoids $\mathbf{N}^I \oplus_{\mathbf{N}^n} N$ is equal to the image of $\mathbf{N}^I \oplus N$ in $\mathbf{Z}^I \oplus_{\mathbf{Z}^n} G$. Hence the natural map
\[
\mathbf{N}^I \oplus_{\mathbf{N}^n} N \to N^I
\]
realizes $N^I$ as the pushout in the category of saturated sharp monoids.
% If $N \subset \mathbf{Q}^n_{\geq 0}$ is an admissible submonoid and $I$ is a subset of $\{1, \ldots, n\}$, the quotient $N^I$ is \textbf{not} equal to the pushout $\mathbf{N}^I \oplus_{\mathbf{N}^n} N$ in the category of integral monoids. Rather, the canonical map
% \[
% \mathbf{N}^I \oplus_{\mathbf{N}^n} N \to N^I
% \]
% is the quotient by the torsion subgroup of $\mathbf{N}^I \oplus_{\mathbf{N}^n} N$, which can be nontrivial in general. For example, \Rachel{add an example and check this. I'm talking about pushout in what category?}
\end{warning}

\begin{rem}\label{rem:define m}
If $N \subset \mathbf{Q}^n_{\geq 0}$ is an admissible submonoid then for each index $i$ there is a natural map $N\rightarrow N^{\{i\}}$.  Since a saturated finitely generated sharp  submonoid of $\mathbf{Q}$ containing $\mathbf{N}$ is equal to $\frac{1}{m}\mathbf{N}$ for some $m\geq 1$ each $N^{\{i\}}$ is a free monoid of rank $1$ and the induced map 
\[
N \subset \oplus_{i=1}^n N^{\{i\}}.
\]
gives a canonical free admissible submonoid of $\mathbf{Q}^n_{\geq 0}$ containing $N$. Similarly, if $G \subset \mathbf{Q}^n$ is admissible then $\oplus_{i=1}^n G^{\{i\}}$ is an admissible free abelian subgroup of $\mathbf{Q}^n$ containing $G$. 
\end{rem}

\subsection{Admissible sheaves}\label{ss:adm-sheaf}
%Let $S$ be a scheme and $C\rightarrow S$ a prestable curve with sections $\{s_i:S\rightarrow C\}_{i=1, \dots, n}$ into the smooth locus of $C$ (not necessarily distinct). 
Let $(C/S, \{s_i\}_{i=1}^n)$ be an $n$-marked prestable curve over a scheme $S$ (defined in \ref{SS:conventions}). We are about to define a notion of an admissible sheaf on $(C/S, \{s_i\}_{i=1}^n)$, and for this we must choose a topology on $C$.
Recall that $C$ may be an algebraic space, but nevertheless it makes sense to speak of sheaves on $C$ in both the Zariski and \'etale topologies (see \cite[\href{https://stacks.math.columbia.edu/tag/03YD}{Tag 03YD}]{stacks-project}). 
We will ordinarily work with the \'etale topology, but on occasion it will be useful to work with the Zariski topology (in particular for the proof of \ref{thm:curves-are-algebraic} below), and as we will see the notion of an admissible sheaf is independent of which of these topologies we use.

Ordinarily we will suppress the topology from the notation, with the \'etale topology being understood, but if there is the possibility for confusion we will incorporate the topology.  For example, we can consider the constant sheaves $\mathbf{Q}^n_{C_\et }$ and $\mathbf{Q}^n_{C_\zar }$ on $C$ in either topology.  Furthermore, the canonical map
\begin{equation}\label{eq:beta}
\beta: \Q^n_{\geq 0} \to \oplus s_{i, *} \mathbf{Q}_{\geq 0}.
\end{equation}
is defined in either topology, and if we wish to emphasize the topology we write $\beta _{C_\et }$ or $\beta _{C_\zar }$.  In fact, if $q:C_\et \rightarrow C_\zar $ is the canonical morphism of topoi then $q^{-1}\Q^n_{\geq 0, C_\zar }\simeq \Q^n_{\geq 0, C_\et }$ and $q^{-1}\beta _{C_\zar }=\beta _{C_\et }$.

% If $\mls G$ is a Zariski sheaf on $C$, then its stalk at a point $x \in |C|$ is the colimit
% \[
% \mls G_x := \lim_{\longrightarrow}\; \mls G(U)
% \]
% over open subsets $U$ of $|C|$ containing $x$.

It turns out that our desired notion of an admissible sheaf on $(C/S, \{s_i\}_{i=1}^n)$ can be realized by a whole spectrum of (equivalent) definitions, including in the Zariski vs \'etale topology. We now give names to just two of these definitions, at opposite ends of the spectrum:

\begin{defn}\label{def:admissible-sheaf}
An \'etale subsheaf $\mls N \subset \oplus s_{i, *} \mathbf{Q}_{\geq 0, S_\et }$ is \textit{admissible} if \'etale locally it is the image under the map $\beta _{C_\et }$ of the constant sheaf $N_{C_\et }$ associated to an admissible submonoid $N \subset \mathbf{Q}^n_{\geq 0}$. 
\end{defn}

\begin{defn}
A Zariski subsheaf $\mls G \subset \oplus s_{i, *} \mathbf{Q}_{S_\zar }$ is \textit{admissible} if Zariski locally it is the image under $\beta ^\gp _{C_\zar }$ of the constant sheaf $G_{C_\zar }$ associated to an admissible subgroup $G \subset \Q^n$.
\end{defn}

\begin{example}
If $N \subset \mathbf{Q}^n_{\geq 0}$ (resp. $G \subset \mathbf{Q}^n$) is admissible, the image of the associated constant sheaf under $\beta$ is an admissible sheaf of monoids (resp. groups) on $C$. We denote this image by $\mls N_N$ (resp. $\mls G_G$).
\end{example}
\begin{example}\label{E:rank1} If all the sections $s_i$ are disjoint, then it follows from \ref{R:rank1} that an admissible subsheaf of monoids contained in $\oplus s_{i*}\mathbf{Q}_{\geq 0}$ is of the form $\oplus s_{i*}\frac{1}{m_i}\mathbf{N}$ for various integers $m_i$.
\end{example}

\begin{lem}\label{lem:subsheaves}
Let $(C/S, \{s_i\}_{i=1}^n)$ be a $n$-marked prestable curve over a scheme $S$.
There are natural bijections between the following sets of sheaves.
\begin{enumerate}
\item [(i)] Admissible subsheaves of monoids $\mls N \subset \oplus s_{i, *} \mathbf{Q}_{\geq 0, S_\et }$
\item [(ii)] \'Etale subsheaves $\mls N \subset \oplus s_{i, *} \mathbf{Q}_{\geq 0, S_\et }$ that are Zariski locally the image of an admissible submonoid $N \subset \Q^n_{\geq 0}$ under the map $\beta _{C_\et }$
\item [(iii)] Zariski subsheaves $\mls N \subset \oplus s_{i, *} \mathbf{Q}_{\geq 0, S_\zar }$ that are Zariski locally the image of an admissible submonoid $N \subset \Q^n_{\geq 0}$ under the map $\beta _{C_\zar }$
\item [(iv)] Zariski subsheaves $\mls N \subset \oplus s_{i, *} \mathbf{Q}_{\geq 0, S_\zar }$ 
such that for every $x \in |C|$,  the stalk $\mls N_{x}$ (in the Zariski topology) is an 
admissible submonoid of $(\oplus s_{i, *} \mathbf{Q}_{\geq 0, S_\zar })_{x} 
\simeq \Q^I_{\geq 0}$ (where $I$ is the set of sections passing through 
$x$), and there is a Zariski neighborhood of $x$ where $\mls N$ 
is the image of the monoid $\mls N_{x}$ under $\beta: \Q^I_{\geq 0} 
\to \oplus_{x \in s_i} s_{i, *} \Q_{\geq 0, S_\zar }.$
\item [(v)] Admissible subsheaves of groups $\mls G \subset \oplus s_{i, *} \mathbf{Q}_{S_\zar }.$
\end{enumerate}
\end{lem}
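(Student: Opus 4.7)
The plan is to reduce all five conditions to the stalkwise characterization (iv), then handle the topology comparison and the passage between monoids and groups separately. The key structural observation is that for any point $x \in |C|$ with $I(x) := \{i : x \in s_i(S)\}$, since each $s_i$ is a closed immersion, there is a Zariski (resp.\ \'etale) neighborhood $U$ of $x$ on which $(\oplus_{i=1}^n s_{i,*}\mathbf{Q}_{\geq 0})|_U$ is isomorphic to the constant sheaf $\mathbf{Q}^{I(x)}_{\geq 0}$. In particular, the Zariski and \'etale stalks at $x$ agree with $\mathbf{Q}^{I(x)}_{\geq 0}$, and subsheaves satisfying a local-image condition are determined by their stalks.

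First I would establish (iii) $\Leftrightarrow$ (iv). For (iii) $\Rightarrow$ (iv), if $\mls N$ is the image under $\beta_{C_\zar}$ of an admissible $N \subset \mathbf{Q}^n_{\geq 0}$ in a Zariski neighborhood of $x$, then the stalk $\mls N_x$ is the image of $N$ under the projection $\mathbf{Q}^n_{\geq 0} \to \mathbf{Q}^{I(x)}_{\geq 0}$, which equals the quotient $N^{I(x)}$ and is admissible by \ref{L:2.7}; after further shrinking, the image of $N$ coincides with the image of $N^{I(x)} = \mls N_x$ under $\beta$. For (iv) $\Rightarrow$ (iii), take $N := \mls N_x \oplus \mathbf{N}^{I(x)^c} \subset \mathbf{Q}^n_{\geq 0}$, which is admissible and whose image equals $\mls N$ in a Zariski neighborhood of $x$ by the second clause of (iv).

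Next, (ii) $\Leftrightarrow$ (iii) and (i) $\Leftrightarrow$ (ii) follow from the structural observation. The pullback $q^{-1}$ along $q: C_\et \to C_\zar$ intertwines $\beta_{C_\zar}$ with $\beta_{C_\et}$ and carries $\oplus s_{i,*}\mathbf{Q}_{\geq 0, S_\zar}$ to $\oplus s_{i,*}\mathbf{Q}_{\geq 0, S_\et}$; by the structural observation, $q^{-1}$ and $q_*$ are mutually inverse on subsheaves that are Zariski-locally (resp.\ \'etale-locally) the image of an admissible submonoid, yielding (ii) $\Leftrightarrow$ (iii). The implication (ii) $\Rightarrow$ (i) is immediate, and (i) $\Rightarrow$ (ii) follows by running the stalkwise argument of (iii) $\Rightarrow$ (iv) in the \'etale topology and then using that the \'etale stalk coincides with the Zariski stalk, so the lift $N := \mls N_x \oplus \mathbf{N}^{I(x)^c}$ produces a Zariski-local description.

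Finally, (iii) $\Leftrightarrow$ (v) follows from \ref{lem:monoid-vs-group}: the mutually inverse operations $N \mapsto N^{gp}$ and $G \mapsto G \cap \mathbf{Q}^n_{\geq 0}$ between admissible monoids in $\mathbf{Q}^n_{\geq 0}$ and admissible subgroups of $\mathbf{Q}^n$ are compatible with taking images under $\beta_{C_\zar}$ and $\beta_{C_\zar}^{gp}$, giving the corresponding bijection at the level of sheaves (pass through (iv) and note that the equivalence $N_x^{gp} = \mls G_x$ holds stalkwise). The main obstacle I expect is the stalk bookkeeping in (iii) $\Rightarrow$ (iv): one must verify that after suitable shrinking the image of the ambient admissible monoid $N$ actually agrees with the image of the stalk monoid $N^{I(x)} = \mls N_x$, rather than just containing it. This uses the direct-sum decomposition of $\oplus s_{i,*}\mathbf{Q}_{\geq 0}$ near $x$ together with the admissibility of $N^{I(x)}$ from \ref{L:2.7}.
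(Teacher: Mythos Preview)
Your treatment of (iii) $\Leftrightarrow$ (iv) and (iii) $\Leftrightarrow$ (v) is correct and matches the paper's argument essentially verbatim. The gap is in your ``key structural observation'': the sheaf $\oplus_{i=1}^n s_{i,*}\mathbf{Q}_{\geq 0}$ is \emph{not} locally constant near $x$ in general. The sections $s_i$ with $i\in I(x)$ all pass through $x$, but over a positive-dimensional base $S$ they need not coincide in any neighborhood of $x$; as $y$ varies near $x$ the set $I(y)$ shrinks, so the stalks jump. What is true, and what you use correctly elsewhere, is only that the Zariski and \'etale stalks at $x$ both equal $\mathbf{Q}^{I(x)}_{\geq 0}$.

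This false observation is load-bearing in your (ii) $\Leftrightarrow$ (iii). The paper handles this step via a separate general lemma (\ref{lem:zar-et}): for the morphism of topoi $\epsilon:C_\et\to C_\zar$ and any inclusion $F\hookrightarrow\epsilon^{-1}G$ of \'etale sheaves, the adjunction $\epsilon^{-1}\epsilon_*F\to F$ is an isomorphism. This is what guarantees that $\epsilon^{-1}$ and $\epsilon_*$ are mutually inverse on the relevant subsheaves; local constancy of the ambient sheaf would make this trivial, but since that fails you need the lemma (whose proof uses that \'etale maps have Zariski-open image and a descent argument). For (i) $\Rightarrow$ (ii) the paper's route is also cleaner than yours: if $\mls N|_U=\mls N_N|_U$ on some \'etale $U\to C$, then on the Zariski open image $p(U)$ the \'etale stalks of $\mls N$ and $\mls N_N$ agree at every geometric point, and since subsheaves of a fixed sheaf are determined by their stalks, $\mls N|_{p(U)}=\mls N_N|_{p(U)}$. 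Your stalkwise lift $N:=\mls N_x\oplus\mathbf{N}^{I(x)^c}$ is the right object, but you still owe the argument that it matches $\mls N$ on a \emph{Zariski} (not just \'etale) neighborhood, and that argument is precisely the one just described.
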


\begin{rem} The bijections between the various sets in question are discussed in the proof.
\end{rem}

\begin{rem} We note that by recombining the different conditions in \ref{lem:subsheaves}, one can write down many equivalent definitions of an admissible sheaf on $(C/S, \{s_i\}_{i=1}^n)$. See \ref{lem:admissible-base} and \ref{lem:admissible-log}  for more equivalent characterizations.
\end{rem}

\begin{proof}[Proof of Lemma \ref{lem:subsheaves}]

The bijection between the sets in (i) and (ii) is the identity (as a map of sets); in other words, it is clear that a sheaf $\mls N$ as in (ii) is admissible, and we claim that an admissible sheaf $\mls N$ is always Zariski locally the image of an admissible submonoid under $\beta.$ Indeed, if $\mls N$ is admissible and $U \to C$ is an \'etale morphism such that $\mls N|_U$ is equal to $\mls N_N|_U$, then $p(U)$ is a nonempty Zariski open set of $C$ such that for every geometric point $\bar x \in p(U)$, the \'etale stalks of $\mls N$ and $\mls N_N$ at $\bar x$ are equal.   It follows that the subsheaves $\mls N$ and $\mls N_N$ of $\oplus s_{i, *} \mathbf{Q}_{\geq 0, S_\et}$ are equal.

 Let $\epsilon :C_\et \rightarrow C_\zar $ denote the natural morphism of topoi, and recall that $\epsilon_*$ is left exact. The bijection between the sets in (ii) and (iii) is induced by $\epsilon^{-1}$ and $\epsilon_*$. Indeed, these functors identify the sheaves labelled $\oplus s_{i, *} \mathbf{Q}_{\geq 0}$ in each topology, and they preserve subsheaves since both are left exact. These functors also preserve the property of being Zariski-locally isomorphic to some $\mls N_N$. If $\mls N$ is any Zariski sheaf then the adjunction map $\mls N \to \epsilon_*\epsilon^{-1}\mls N$ is an isomorphism by inspection, and if $\mls N$ is an \'etale subsheaf of $\oplus s_{i, *} \mathbf{Q}_{\geq 0, S_\et }$ then $\epsilon^{-1}\epsilon_*\mls N \to \mls N$ is an isomorphism by the folllowing general result \ref{lem:zar-et}.

The bijection between the sets in (iii) and (iv) is again the identity. If $\mls N$ is as in (iv), then in a Zariski neighborhood of $ x \in |C|$ we see that $\mls N$ is the image of the admissible monoid
\[
\mls N_{ x} \oplus \mathbf{N}^{n-|I|} \subset \Q^I_{\geq 0} \oplus \Q^{n-|I|}_{\geq 0} = \Q^n_{\geq 0}.
\]
Conversely, if $\mls N$ is as in (iii), let $ x \in |C|$ be a point where in a Zariski neighborhood we have $\mls N = \mls N_N$ for some admissible monoid $N$. Since taking stalks commutes with the image, we have
\begin{equation}\label{eq:stalk quotient}
\mls N_{x} = \mathrm{image}((\mls N_N)_{ x} \to (\oplus s_{i, *} \Q_{\geq 0, S_\zar })_{x}) = \mathrm{image}(N \to \Q^I_{\geq 0})
\end{equation}
where $I$ is the set of sections passing through $ x$. This is precisely the quotient monoid $N^I$, which is an admissible submonoid of $\Q^I_{\geq 0}$ by \ref{L:2.7}. This computation shows $N^I = \mls N_{ x}$. However, by shrinking the Zariski neighborhood we may assume that $\beta: \Q^n_{\geq 0, C_\zar } \to \oplus s_{i, *} \Q_{\geq 0, S_\zar }$ factors through $\Q^I_{\geq 0, C_\zar }$, and hence $N \to \mls N$ factors through $N^I$. In this neighborhood, $\mls N$ is the image of $N^I = \mls N_{x}$ as claimed.

The bijection between the sets in (iii) and (v) sends an admissible sheaf of subgroups $\mls G$ to $\mls G \cap \oplus s_{i, *} \mathbf{Q}_{\geq 0, S_\zar }$ (and in the opposite direction it sends $\mls N$ to $\mls N^{gp}$). This function is injective by \ref{lem:monoid-vs-group}, and if $G \subset \mathbf{Q}^n$ is an admissible subgroup surjecting onto $\mls G$ then $G \cap \Q^n_{\geq 0}$ is an admissible submonoid by \ref{lem:monoid-vs-group}, and it surjects onto $\mls G \cap \oplus s_{i, *} \Q_{\geq 0, S_\zar }$ by universal properties of fiber products. Hence it is enough to show that $\mls N = \mls N^{gp} \cap \oplus s_{i, *} \Q_{\geq 0, S_\zar }$. This can be checked on stalks, where we see that
\[\mls N_{ x} = \mls N^{gp}_{x} \cap (\oplus s_{i, *} \Q_{\geq 0, S_\zar })_{ x}\]
by \ref{lem:monoid-vs-group}, since $\mls N_{x}$ is an admissible submonoid of $(\oplus s_{i, *} \Q_{\geq 0, S_\zar })_{ x}$ by the equivalence of the sets in (iii) and (iv).
\end{proof}

\begin{rem}\label{rem:stalk quotient}
It follows from \ref{lem:subsheaves} that if $\mls N$ is an admissible sheaf on $(C/S, \{s_i\}_{i=1}^n)$, and if $\bar x: \Sp(k) \to C$ is a geometric point with image $x \in |C|$, then the \'etale stalk $\mls N_{\bar x}$ is equal to the colimit $\displaystyle \lim_{\longrightarrow} \mls N(U)$ over Zariski neighborhoods $U$ of $x$. In particular, if $\mls N$ is an admissible sheaf, then for any $x \in |C|$ we have a well-defined monoid
\[
\mls N_x := \mls N_{\bar x}
\]
for any geometric point $\bar x$ with image $x$. Moreover if $U \subset |C|$ is a Zariski neighborhood of $x$ and $N \subset \Q^n_{\geq 0}$ is an admissible monoid such that $\mls N|_U = \mls N_N$, then by \eqref{eq:stalk quotient} we have
\[
\mls N_{\bar x} = \mls N_x = N^I
\]
where $I \subset \{1, \ldots, n\}$ is the set of indices $i$ such that $s_i$ contains $x$.
\end{rem}
% We define admissible sheaves of both monoids and groups on the marked curve $(C, \{s_i\})$ in both the \'etale and Zariski topology, and show that all four notions are equivalent. 

% Recall that $C$ may be an algebraic space, so to begin we use the \'etale topology on $C$. Let $\beta$ denote the canonical morphism of \'etale sheaves of monoids
% \[
% \beta: \Q^n_{\geq 0} \to \oplus s_{i, *} \mathbf{Q}_{\geq 0}
% \]
% where $\Q^n$ is the constant sheaf.

% \begin{defn}
% An \'etale subsheaf $\mls N \subset \oplus s_{i, *} \mathbf{Q}_{\geq 0}$ (resp. $\mls G \subset \oplus s_{i, *} \mathbf{Q}$) is \textit{admissible} if \'etale locally it is the image of an admissible submonoid $N \subset \Q^n_{\geq 0}$ (resp. $G \subset \Q^n$) under the map $\beta $ (resp. $\beta^{gp}$).
% \end{defn}

\begin{lem}\label{lem:zar-et} Let $X$ be an algebraic space  and let $G$ be a sheaf of sets on the Zariski site of $X$. 

(i) The adjunction map $G\rightarrow \epsilon _*\epsilon ^{-1}G$ is an isomorphism.

(ii) Let $i:F\hookrightarrow \epsilon ^{-1}G$ be an inclusion of sheaves on $X_\et $.  Then the adjunction map $\epsilon ^{-1}\epsilon _*F\rightarrow F$ is an isomorphism.
\end{lem}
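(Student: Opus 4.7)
The plan is to exploit two observations: first, the image of any \'etale morphism of algebraic spaces $V \to X$ is Zariski open in $X$, and second, the presheaf inverse image satisfies the explicit formula $\epsilon^p G(V) = G(\mathrm{im}(V \to X))$ (since the image is the initial Zariski open through which $V \to X$ factors). In particular the \'etale stalk of $\epsilon^{-1}G$ at a geometric point $\bar x$ over $x \in |X|$ is canonically the Zariski stalk $G_x$.

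For (i), I reduce to showing that $G(U) \to \epsilon^{-1}G(U)$ is a bijection for each Zariski open $U \subset X$. For injectivity: a section $s$ mapping to zero becomes zero on some \'etale cover $\{V_i \to U\}$, hence vanishes on each $\mathrm{im}(V_i \to X) \subset U$; these open images form a Zariski cover of $U$, so $s = 0$ by the Zariski sheaf property. For surjectivity: a section of $\epsilon^{-1}G$ on $U$ is given, \'etale-locally, by data $s_i \in G(W_i)$ for Zariski opens $W_i \supset \mathrm{im}(V_i \to X)$ with compatibility on Zariski opens containing $\mathrm{im}(V_i \times_U V_j \to X)$; restricting the $s_i$ to the Zariski opens $\mathrm{im}(V_i \to X)$ produces a compatible family that glues via the Zariski sheaf property of $G$.

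For (ii), first observe that $\epsilon^{-1}\epsilon_*F \to F$ is a monomorphism: applying left-exact $\epsilon_*$ to the inclusion $F \hookrightarrow \epsilon^{-1}G$ yields an inclusion $\epsilon_*F \hookrightarrow G$ by (i), and then applying exact $\epsilon^{-1}$ gives a monomorphism $\epsilon^{-1}\epsilon_*F \hookrightarrow \epsilon^{-1}G$ that factors through $F$. Surjectivity is checked on \'etale stalks at a geometric point $\bar x$ over $x \in |X|$, where $(\epsilon^{-1}\epsilon_*F)_{\bar x}$ identifies with the Zariski stalk $(\epsilon_*F)_x = \colim_{U \ni x} F(U)$ over Zariski neighborhoods $U$ of $x$.

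Given $\alpha \in F_{\bar x}$ represented by $s \in F(V) \subset \epsilon^{-1}G(V)$ on an \'etale neighborhood $V$ of $\bar x$, its image in $G_x$ lifts via (i) to a section $t \in G(U) = \epsilon^{-1}G(U)$ for a Zariski neighborhood $U$ of $x$. After passing to a common \'etale refinement $W \to V \times_X U$ through $\bar x$, we may arrange $s|_W = t|_W$ in $\epsilon^{-1}G(W)$; letting $U' = \mathrm{im}(W \to X) \subset U$, the map $W \to U'$ is an \'etale cover and $t|_W = s|_W \in F(W)$, so the sheaf property of the subsheaf $F \subset \epsilon^{-1}G$ forces $t|_{U'} \in F(U')$, realizing $\alpha$ in $(\epsilon_*F)_x$. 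The main delicate point is the juggling between \'etale and Zariski data; specifically, the repeated use of openness of images of \'etale morphisms of algebraic spaces to descend \'etale-local information to Zariski-local information.
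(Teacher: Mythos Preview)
Your argument is correct and follows essentially the same approach as the paper's: both use that the presheaf inverse image is $V \mapsto G(\mathrm{im}(V \to X))$ via openness of \'etale morphisms, and both prove (ii) on stalks by lifting a germ of $F$ to a Zariski section of $G$ and then invoking \'etale descent for the subsheaf $F \subset \epsilon^{-1}G$ over the image of an \'etale neighborhood (the paper shrinks $U$ so that $V \to U$ is surjective, you instead pass to $U' = \mathrm{im}(W \to X)$, which amounts to the same thing). One cosmetic slip: $G$ is a sheaf of \emph{sets}, so the injectivity step in (i) should be phrased as two sections becoming equal rather than one section ``mapping to zero.''
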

\begin{proof} 
For (i), note that if $v:V\rightarrow X$ is an \'etale morphism then $v(V)\subset X$ is open and therefore $\epsilon ^{-1}G$ is the sheaf associated to the presheaf $\epsilon ^{ps, -1}G$ sending $v:V\rightarrow X$ to $G(v(V))$. This implies, in particular, that the presheaf $\epsilon ^{ps, -1}G$ is separated. 
 
  If $U\subset X$ is an open subset and $s\in H^0(U_\et , \epsilon ^{-1}G)$ is a section then there exists an \'etale cover $\{v_i:V_i\rightarrow U\}_{i\in I}$ such that $s_i = s|_{V_i}$ is the image of a section of $t_i\in G(v_i(V_i))$.  Since the image of $V_i\times _UV_j\rightarrow U\subset X$ equals $v(V_i)\cap v(V_j)$ we find that the restrictions of $t_i$ and $t_j$ to this intersection map to the same element of $\epsilon ^{-1}G(V_i\times _UV_j)$, and since $\epsilon ^{ps, -1}G$ is separated also the same element in $\epsilon ^{ps, -1}G(V_i\times _UV_j) = G(v_i(V_i)\cap v_j(V_j))$.  It follows that the sections $t_i$ and $t_j$ agree on overlaps defining a section $t\in G(U)$ mapping to $s$.

For (ii) note that the functor $\epsilon _*$ is left exact so the map $\epsilon _*F\rightarrow \epsilon _*\epsilon ^{-1}G = G$ (using (i)) is an inclusion.  To verify that the adjunction map is an isomorphism it suffices to show that for every geometric point $\bar x\rightarrow X$ with image $x\in |X|$ the map (Zariski stalk on the left, \'etale stalk on the right)
$$
(\epsilon _*F)_x\rightarrow F_{\bar x}
$$
is an isomorphism.  The map is clearly injective since both sides compatibly include into $G_x = (\epsilon ^{-1}G)_{\bar x}$.  For the surjectivity, suppose $s\in F_{\bar x}$ is a section.  Its image in $(\epsilon ^{-1}G)_{\bar x}$ then defines an element $t_x\in G_{x}$.  Let $U\subset X$ be a Zariski neighborhood over which $t_x$ extends to a section $t\in G(U)$. Since the image of $t$ in $(\epsilon ^{-1}G)_{\bar x}$ lies in $F_{\bar x}$ there exists an \'etale neighborhood $V\rightarrow U$ of $\bar x$ such that the pullback $t_V$ of $t$ to $\epsilon ^{-1}G(V)$ lies in $F(V)$.  Since \'etale morphisms are open we can arrange, after shrinking on $U$, that the map $V\rightarrow U$ is surjective. Now observe that the two pullbacks 
$$
p_1^*t_V, p_2^*t_V\in F(V\times _UV)
$$
are equal since they become equal in $\epsilon ^{-1}G(V\times _UV)$ and $F\rightarrow \epsilon ^{-1}G$ is injective.  It follows that $t_V$ descends to a section of $F(U)$ which implies that the original $t\in G(U)$ in fact lies in $F(U)=\epsilon _*F(U)$.  The corresponding element of $(\epsilon _*F)_x$ then maps to $s$ in $F_{\bar x}$.
\end{proof}

We close this section by showing (in \ref{lem:admissible-base}) that for a sheaf on $(C/S, \{s_i\}_{i=1}^n)$, the property of being admissible is in fact Zariski local on the base. For $x \in |C|$, define a Zariski open subset $U_{ x} \subset C$ via
\[
U_{ x} := C \setminus \left( \bigcup_{J \subset \{1, \ldots, n\}} V_{ {x}, J}\right)
\]
where $V_{x, J}$ is defined to be the union of the irreducible components of $\cap_{j \in J} s_j$ that do not contain $x$.  Observe that if $s_i$ does not contain $ x$, then $s_i$ does not meet $U_{ x}$, so in particular the restriction of $\oplus s_{i, *} \Q$ to $U_{ x}$ is equal to $\oplus_{ x \in s_i} s_{i, *} \Q$.

\begin{lem}\label{lem:admissible-base}
If $\mls G \subset \oplus_{s_{i, *}} \Q$ is admissible, then for every $ x \in |C|$ the restriction of $\mls G$ to $U_{ x}$
is the image of $\mls G_{ x}$ under $\beta^{gp}: \Q^I \to \oplus_{ x \in s_i} s_{i, *}\Q$, where $I$ is the set of sections containing $ x$.
Consequently, a Zariski subsheaf $\mls G \subset \oplus_{s_{i, *}} \Q$ is admissible if and only if Zariski locally on the base $S$ it is the image of an admissible subgroup $G \subset \Q^n$.
\end{lem}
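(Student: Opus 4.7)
The plan is to prove the first statement (the explicit description of $\mls G|_{U_x}$) by stalk-wise comparison, then deduce the second statement (admissibility is Zariski-local on $S$) using the first together with properness of $C/S$.

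For the first statement, let $\mls H$ denote the image of the constant sheaf $(\mls G_x)_{U_x}$ under $\beta^{gp}$ in $\oplus_{i \in I} s_{i,*}\Q|_{U_x}$, noting that sections $s_i$ with $i \notin I$ do not meet $U_x$. Both $\mls H$ and $\mls G|_{U_x}$ are subsheaves of $\oplus_{i \in I} s_{i,*}\Q|_{U_x}$, so it suffices to check equality of Zariski stalks at every $y \in U_x$. A direct computation gives $\mls H_y = (\mls G_x)^{I_y}$, where the projection is $\Q^I \to \Q^{I_y}$ and $I_y \subset I$ by definition of $U_x$. The key geometric input is that, again by the very definition of $U_x$, there exists an irreducible component $Z$ of $\bigcap_{j \in I_y} s_j$ containing both $x$ and $y$; one then checks that $I_{\eta_Z} = I_y$ for the generic point $\eta_Z$ of $Z$ (the containment $I_y \subset I_{\eta_Z}$ is immediate, and the reverse containment follows because each $s_j$ containing $\eta_Z$ must contain the closure $Z$, hence also $y$). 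Using admissibility of $\mls G$, I choose Zariski open neighborhoods $V_x \ni x$ and $V_y \ni y$ on which $\mls G$ is the image of admissible groups $G^{(x)}, G^{(y)} \subset \Q^n$. Since $\eta_Z$ lies in every nonempty Zariski open subset of the irreducible $Z$, we have $\eta_Z \in V_x \cap V_y$, and the two local descriptions force $(G^{(x)})^{I_y} = \mls G_{\eta_Z} = (G^{(y)})^{I_y}$. Combining this with $\mls G_x = (G^{(x)})^I$, $\mls G_y = (G^{(y)})^{I_y}$, and iterated quotients yields $\mls G_y = (\mls G_x)^{I_y}$ as required.

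For the second statement, the backward direction is immediate from the definition of admissibility. For the forward direction, fix $s \in S$; since each section $s_i$ meets $C_s$ in a single point, there are at most $n$ distinct section points $x_1, \ldots, x_N \in C_s$, and the index sets $I_{x_k} = \{i : s_i(s) = x_k\}$ are pairwise disjoint. Let $G \subset \Q^n$ be the admissible subgroup generated by $\Z^n$ together with the images of $\mls G_{x_k}$ under the inclusions $\Q^{I_{x_k}} \hookrightarrow \Q^n$. Using part (1) and the pairwise disjointness of the $I_{x_{k'}}$, the stalk of $\beta^{gp}(G)$ at any $y \in U_{x_k}$ equals $G^{I_y} = (\mls G_{x_k})^{I_y} = \mls G_y$, since for $k' \neq k$ the inclusion $I_y \subset I_{x_k}$ forces $I_y \cap I_{x_{k'}} = \emptyset$ so that the projection of $\mls G_{x_{k'}}$ to $\Q^{I_y}$ vanishes; at any stalk $y \in C \setminus \bigcup_i s_i$, both sides are trivially zero. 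The open subset $\Omega := \bigcup_k U_{x_k} \cup (C \setminus \bigcup_i s_i)$ therefore satisfies $\mls G|_\Omega = \beta^{gp}(G)|_\Omega$, and it contains $C_s$ because every point of $C_s$ is either one of the $x_k$ or lies off every section. Properness of $C \to S$ then supplies an open $S' \subset S$ with $s \in S'$ and $C_{S'} \subset \Omega$, completing the proof.

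The main obstacle I anticipate is the stalk-wise comparison in the first part, specifically setting up the generic-point argument at $\eta_Z$ to link the a priori unrelated local descriptions of $\mls G$ around $x$ and around $y$. The identification $I_{\eta_Z} = I_y$ is what makes this link possible and ultimately drives the entire proof; once it is in hand the quotient computation and the consequence for part (2) are routine.
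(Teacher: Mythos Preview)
Your proof is correct and follows essentially the same approach as the paper. In part~1 you use the generic point $\eta_Z$ of an irreducible component of $\bigcap_{j\in I_y}s_j$ containing both $x$ and $y$, where the paper picks a ``generic $z$'' in that component lying in both local patches; the content is identical. In part~2 the paper intersects the images $\pi(U_{x_j})$ in $S$, while you take the open $\Omega=\bigcup_k U_{x_k}\cup(C\setminus\bigcup_i s_i)\supset C_s$ and invoke properness to find $S'$; your version makes the use of properness more explicit and cleanly separates the $I_y=\emptyset$ case, but the underlying idea is the same.
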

\begin{proof}
Let $\mls G'$ be the admissible sheaf of groups on $U_{ x}$ associated to $\mls G_{ x}$. For $ y \in |C|$, we let $I_{ y}$ denote the set of sections passing through $ y$.

It suffices to show that $\mls G$ and $\mls G'$ have the same stalks. Let $ y \in |U_{ x}|$. Let $V$ be the irreducible component of $\cap_{s_j \in I_{ y}} s_j$  that contains $ y$. The definition of $U_{ x}$ tells us that $ x \in |V|$ and also that $I_{ y} \subset I_{ x}$. By \ref{lem:subsheaves} there is a Zariski neighborhood of $ y$ (resp. $x$) where the stalk is a quotient of $\mls G_{ y}$ (resp. $\mls G_{x}$). Since $V$ is irreducible, we can find a $ z$ in $V$ that is in both of these neighborhoods; that is,
\[
\mls G_{ z} = (\mls G_{ y})^{I_{ z}} \quad \quad \quad \quad \mls G_{ z} = (\mls G_{ x})^{I_{ z}}.
\]
For generic such $ z$ we have $I_{ z} = I_{ y}$, and hence by the first equation we see $\mls G_{ z} = \mls G_{ y}$, and by the second we see $\mls G_{ y} = (\mls G_{ x})^{I_{ y}}$, which is precisely the stalk $\mls G'_{ y}$.

For the ``consequently'' statement of the lemma, let $\mls G$ be admissible on $(C/S, \{s_i\}_{i=1}^n)$ and let $\pi: C \to S$ be the structure morphism of the curve. For $ s \in S$ let $ x_1, \ldots,  x_r$  be the distinct marked points of $C_{ s}$. Define
\[
G := \bigoplus_{j=1}^r \mls G_{ x_j} \subset \Q^n \quad \quad \quad \quad \text{and} \quad \quad \quad \quad U = \bigcap_{j=1}^r \pi(U_{ x_j}).
\]
We claim that $\mls G$ is the image of $G$ on $\pi^{-1}(U)$. To prove it, it is enough to show that $\mls G$ and $\mls G_G$ have the same stalks at every point $ y \in |U|$. But we must have $ y \in |U_{ x_j}|$ for some $x_j$, and in particular $I_{ y} \subset I_{ x_j}$. Our previous discussion shows that $\mls G_{y} = (\mls G_{ x_j})^{I_{ y}}$, but this monoid is equal to $G^{I_{ y}} = (\mls G_G)_{ y}$ since $I_{ y} \subset I_{ x_j}$.

\end{proof}

\subsection{Log interpretation}
\label{SS:2.19}
In this section we define a canonical log curve $(C, M_C) \to (S, \MS{S}{C})$ associated to a marked nodal curve $(C, \{s_i\}_{i=1}^n)$. The discussion will lead to another way to think about admissible sheaves $\mls N$ on $(C, \{s_i\}_{i=1}^n)$.

%We now explain how the data of an admissible sheaf is related to the log curve defined by $(C/S, \{s_i\}_{i=1}^n).$

The log curve $(C, M_C) \to (S, \MS{S}{C})$ is defined as follows. By \cite[1.2]{OlssonTohoku} there is a canonical structure of a log smooth morphism
\[
(C, M^{\node}_C) \to (S, \MS{S}{C})
\]
where $f^*\MS{S}{C} \to M^{\node}_C$ is an isomorphism away from the nodes. The log structure $\MS{S}{C}$ depends on the morphism $C\rightarrow S$, but if the curve $C$ is clear from context we will write $M_S$ for $\MS{S}{C}$. (Warning: Sometimes in the literature one finds the notation $M_{C/S}$ in reference to the relative characteristic sheaf on $C$, which should not be confused with $\MS{S}{C}$).

% \Rachel{changed $\mls Div$ to $\mathfrak{D}iv$; is this correct? see \ref{SS:conventions}}
Moreover, since each section $s_i$ lands in the smooth locus of $C$, its ideal sheaf $\mls I_i$ is locally free and hence the dual of the inclusion $\mls I_i \to \mls O$ defines a Deligne-Faltings structure \begin{equation}\label{eq:df}
\mathbf{N} \to \mathfrak{D}iv^+_{C}
\end{equation}
in the sense of \cite[Def 3.1]{BV} (see \ref{SS:conventions} for the definition of $\mathfrak{D}iv^+_C$). In fact \eqref{eq:df} factors through a Deligne-Faltings structure 
\begin{equation}\label{eq:df2}
s_{i, *} \mathbf{N} \to \mathfrak{D}iv^+_{C},
\end{equation} and the quotient $\mathbf{N} \to s_{i, *} \mathbf{N}$ is a Deligne-Faltings chart for \eqref{eq:df2}. We let $M^{s_i}_C$ denote the log structure associated to \eqref{eq:df2} (see \cite[Thm~3.6]{BV}); in particular, the characteristic sheaf of $M^{s_i}_C$ is $s_{i, *}\mathbf{N}$.  Concretely if $U\rightarrow C$ is an \'etale morphism from a connected scheme $U$ with the preimage of $s_i(S)\subset C$ in $U$ nonempty and connected, then 
$$
M^{s_i}_C(U) = \coprod _{n\geq 0}\{\text{trivializations of $\mls I_i^{\otimes n}$}\}.
$$

We define
\[
M_C := M_C^{\node} \oplus_{\mls O^*_C} M^{\underline{s}}_C \quad \quad \quad \quad \text{where} \quad M^{\underline{s}}_C := \oplus_{\mls O^*_C} M^{s_i}_C.
\]

% The relationship to admissible sheaves is explained in the following lemma.

\begin{lem}\label{lem:admissible-log}
The characteristic sheaf $\overline{M}^{\underline s}_C$  is isomorphic to $\oplus s_{i, *}\mathbf{N}$, and the map $\beta$ defined in \eqref{eq:beta} is induced by a canonical DF chart
\[
\mathbf{N}^n \to \overline{M}^{\underline s}_C.
\]
In particular, an admissible sheaf $\mls N$ may be viewed as a subsheaf of $\overline{M}_{C, \Q}.$
\end{lem}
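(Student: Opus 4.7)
My plan is to reduce the statement to the general fact that the characteristic sheaf functor $M \mapsto \overline M$ on fine log structures carries pushouts of log structures over $\mls O^*$ to coproducts in sharp integral monoid sheaves. To make this concrete, I would first note that for any finite family of fine log structures $\{N_i\}$ on a scheme $X$, applying the quotient by $\mls O_X^*$ to the pushout defining $\oplus_{\mls O_X^*} N_i$ identifies $\overline{\oplus_{\mls O_X^*} N_i}$ with the coproduct $\oplus \overline N_i$ taken in sharp integral monoid sheaves. Because each $\overline N_i$ is already sharp (i.e.\ has trivial unit group), this coproduct is simply the pointwise direct sum of monoid sheaves. Applied to the log structures $M^{s_i}_C$, together with the identification $\overline M^{s_i}_C = s_{i,*}\mathbf{N}$ coming from the definition of $M^{s_i}_C$ via the DF chart $\mathbf{N} \to s_{i,*}\mathbf{N}$ of \eqref{eq:df2}, this yields
\[
\overline M^{\underline s}_C \;\cong\; \bigoplus_{i=1}^n \overline M^{s_i}_C \;\cong\; \bigoplus_{i=1}^n s_{i,*}\mathbf{N}.
\]

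Next I would construct the DF chart $\mathbf{N}^n \to \overline M^{\underline s}_C$ by assembling the individual DF charts $\mathbf{N} \to s_{i,*}\mathbf{N}$ into a single map into the direct sum. That this combined map is a DF chart follows from the universal property of the coproduct: fppf-locally each $\mathbf{N} \to s_{i,*}\mathbf{N}$ lifts to a Kato chart $\mathbf{N} \to M^{s_i}_C$, and the induced map $\mathbf{N}^n \to M^{\underline s}_C$ from the coproduct is then a Kato chart for $M^{\underline s}_C$, since applying $\overline{(-)}$ recovers our identified isomorphism. The compatibility with $\beta$ of \eqref{eq:beta} is then a direct check on stalks: at a geometric point $\bar x$ of $C$, the $i$th component $\mathbf{N} \to s_{i,*}\mathbf{N}$ is the identity on stalks when $s_i$ passes through $\bar x$ and has target $0$ otherwise, which after tensoring with $\mathbf{Q}_{\geq 0}$ is precisely $\beta$.

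For the final clause, the same coproduct computation applied to $M_C = M_C^{\node}\oplus_{\mls O_C^*} M_C^{\underline s}$ gives $\overline M_C \cong \overline M_C^{\node}\oplus \overline M_C^{\underline s}$, so there is a canonical summand inclusion $\overline M^{\underline s}_C \hookrightarrow \overline M_C$. Tensoring with $\mathbf{Q}$ yields an inclusion $\oplus s_{i,*}\mathbf{Q} \hookrightarrow \overline M_{C,\mathbf{Q}}$, into which any admissible sheaf $\mls N \subset \oplus s_{i,*}\mathbf{Q}_{\geq 0}$ then embeds.

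The step I expect to require the most care is justifying the commutation of $\overline{(-)}$ with the pushout of log structures; this is standard but it is the technical heart of the argument, and everything else is bookkeeping with direct sums and DF/Kato charts.
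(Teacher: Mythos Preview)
Your proposal is correct and follows the same approach as the paper, which simply states that the lemma ``follows from the preceding discussion'' in Section~\ref{SS:2.19}. You have spelled out precisely that discussion: the identification $\overline{M}^{s_i}_C = s_{i,*}\mathbf{N}$ from \eqref{eq:df2}, the compatibility of $\overline{(-)}$ with the coproduct $\oplus_{\mls O_C^*}$, and the assembly of the individual DF charts into $\mathbf{N}^n \to \overline{M}^{\underline s}_C$.
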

\begin{proof}
    This follows from the preceding discussion.
\end{proof}

\subsection{Generalized log twisted curves}\label{ss:glt}
In this section we introduce the category of generalized log twisted curves and show that they form an algebraic stack.

%It will be a fibered category over the category of schemes. \Rachel{including morphisms, not just isomorphisms, is new}

\begin{defn}\label{D:2.23} A \emph{generalized log twisted curve of genus $g$ with $n$ marked points} $\mathbf{C}$ over a scheme $S$ is a collection of data 
\begin{equation}\label{E:2.23.1}
\mathbf{C} = (C/S, \{s_i\}_{i=1}^n, \ell :M_S\hookrightarrow M_S', \sN )
\end{equation}
as follows:
\begin{enumerate}
    \item [(i)] $(C/S, \{s_i\}_{i=1}^n)$ is an $n$-marked prestable curve such that $C$ has genus $g$.
    \item [(ii)] $\ell $ is a simple morphism of log structures in the sense of \cite[1.5]{LogTwisted}.
    \item [(iii)] $\mls N \subset \oplus s_{i, *} \Q_{\geq 0}$ is an admissible subsheaf.
\end{enumerate}
Given two generalized log twisted curves over $S$ 
\begin{equation}\label{eq:some curves}
\mathbf{C}^{(j)} = (C^{(j)}/S, \{s_i^{(j)}\}_{i=1}^n, \ell :M^{(j)}_S\hookrightarrow M_S^{(j)\prime }, \sN ^{(j)} ), \ \ j=1,2
\end{equation}
a \textit{morphism} $\mathbf{C}^{(1)}\rightarrow \mathbf{C}^{(2)}$ lying over $S$ is a pair $(f,  \rho )$, where $f:C^{(1)}\rightarrow C^{(2)}$ is an isomorphism of curves over $S$ and $\rho :M_S^{(1)\prime }\rightarrow M_S^{(2)\prime }$ is an isomorphism of log structures on $S$, such that the following hold:
\begin{enumerate}
    \item [(a)] The morphism $f$ preserves the sections: $f \circ s^{(1)}_i = s^{(2)}_i$.
    \item [(b)] The isomorphism $\rho $ is compatible with the isomorphism $M_S^{(1)}\simeq M_S^{(2)}$ induced by $f$.
    \item [(c)] Under the isomorphism $f_*(\oplus s^{(1)}_{i, *} \Q_{\geq 0}) = \oplus s^{(2)}_{i, *} \Q_{\geq 0}$ we have $\mls N^{(2)} \subset f_*\mls N ^{(1)}$. 
\end{enumerate}
\end{defn}
\begin{rem} In the usual manner we can also consider morphisms of generalized log twisted curves over a morphism of schemes.
If $\mathbf{C}^{(1)}$ (resp. $\mathbf{C}^{(2)}$) is a generalized log twisted curve over $S^{(1)}$ (resp. $S^{(2)}$) and $g: S^{(1)} \to S^{(2)}$ is a morphism of schemes, a \emph{pullback} $g^* \mathbf{C}^{(2)}$ is a generalized log twisted curve given by pulling back each datum of $\mathbf{C}^{(2)}$. A \emph{morphism} $\mathbf{C}^{(1)} \to \mathbf{C}^{(2)}$ lying over $g$ is a morphism $\mathbf{C}^{(1)} \to g^*\mathbf{C}^{(2)}$ of objects over $S^{(1)}$.
\end{rem}

\begin{rem} By the bijection in \ref{lem:subsheaves} the data of $\mls N$ can be replaced by an admissible subsheaf of groups $\mls G\subset \oplus s_{i, *} \Q$. 
%Alternatively, following \ref{lem:admissible-log} one can view $\mls N$ as a subsheaf of $\overline{M}_{C, \Q}$ satisfying certain conditions.
\end{rem}

\begin{pg}
Consider the fibered category $\fM_{g,n}$ of log twisted curves of genus $g$ with $n$ untwisted markings. That is, the fiber of $\fM_{g,n}$ over a scheme $S$ is the groupoid of tuples
\[
\fM_{g,n}(S) := \{(C/S, \{s_i\}_{i=1}^n, \ell: M_S \hookrightarrow M'_S)\}
\]
where
\begin{enumerate}
\item[(i)] $(C/S, \{s_i\}_{i=1}^n)$ is an $n$-marked prestable curve of genus $g$
\item[(ii)] $\ell$ is a simple morphism of log structures.
\end{enumerate}
It follows from \cite[1.8]{LogTwisted} that the groupoid $\fM_{g, n}(S)$ is isomorphic to the groupoid of genus-$g$ $n$-marked twisted curves with stacky structure supported at the nodes.
\end{pg}
\begin{lem}\label{lem:twisted-nodes}
The category $\fM_{g,n}$ is a smooth algebraic stack, locally of finite type and with quasi-compact and separated diagonal over $\mathbf{Z}$.
\end{lem}
\begin{proof}
The category $\fM_{g, 0}$ is a smooth algebraic stack locally of finite type over $\mathbf{Z}$ by \cite[Thm~A.6]{AOV} and \cite[\href{https://stacks.math.columbia.edu/tag/0DSS}{Tag 0DSS}]{stacks-project}. Its diagonal is quasi-compact and separated by \cite[Rmk~A.7]{AOV} and \cite[\href{https://stacks.math.columbia.edu/tag/0DSQ}{Tag 0DSQ}]{stacks-project}.  Let $\fC_{g,0} \to \fM_{g,0}$ be its universal curve, and let $\fC_{g,0}^{sm} \subset \fC_{g,0}$ be the smooth locus. Then $\fM_{g, n}$ is the $n$-fold fiber product of $\fC_{g,0}^{sm}$ over over $\fM_{g,0}$. Since $\fC^{sm}_{g, 0} \to \fM_{g, 0}$ is smooth and separated the lemma follows. 
%Observe that $\fM_{g, n}$ is an algebraic stack locally of finite type over $\mathbf{Z}$, and since $\fC_g^{sm} \to \fM$ is smooth the stack $\fM_{g, n}$ is also smooth.
\end{proof}

Let $\fM _{g,n}^{glt}$ denote the fibered category over $Sch/\mathbf{Z}$ of generalized log twisted curves of genus $g$ and with $n$ marked points.

\begin{thm}\label{thm:curves-are-algebraic}
The fibered category $\fM _{g, n}^{glt}$ is a smooth algebraic stack, locally of finite type and with quasi-compact and separated diagonal over $\mathbf{Z}$. Moreover, it admits a Zariski covering by open substacks isomorphic to $\fM_{g,n}$, indexed by admissible monoids $N \subset \Q^n_{\geq 0}$.
\end{thm}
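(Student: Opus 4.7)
The plan is to exhibit $\fM_{g,n}^{glt}$ as the union of open substacks indexed by admissible monoids $N \subset \mathbf{Q}^n_{\geq 0}$, each isomorphic to $\fM_{g,n}$, and deduce the result from algebraicity of $\fM_{g,n}$. For each admissible $N$, I will define $\iota_N: \fM_{g,n} \to \fM_{g,n}^{glt}$ by sending a log twisted curve $(C/S, \{s_i\}, \ell)$ to the generalized log twisted curve $(C/S, \{s_i\}, \ell, \mls N_N)$, where $\mls N_N$ is the image of the constant sheaf $N_C$ under the map $\beta$ of \eqref{eq:beta}. The functor $\iota_N$ is fully faithful because any morphism in $\fM_{g,n}^{glt}$ between objects in its image preserves the markings, so $f_*\mls N_N^{(1)} = \mls N_N^{(2)}$ automatically and condition (c) of Definition~\ref{D:2.23} is satisfied with equality.

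The heart of the argument is showing $\iota_N$ is representable by an open immersion. Given a scheme $T$ and $\mathbf{C}_T = (C_T, \{s_{i,T}\}, \ell_T, \mls N_T) \in \fM_{g,n}^{glt}(T)$, the fiber product $T \times_{\fM_{g,n}^{glt}} \fM_{g,n}$ identifies with the subfunctor of $T$ on which $\mls N_T = \mls N_N$, which by Remark~\ref{rem:stalk quotient} is the locus $U \subset T$ of points $t$ satisfying $\mls N_{T, x_j} = N^{I_j}$ for every marked point $x_j$ of $C_t$ with index set $I_j$. To prove $U$ is open at a given $t \in U$, I will use Lemma~\ref{lem:admissible-base} to choose a Zariski neighborhood $V$ of $t$ and an admissible $N'$ with $\mls N_T|_V = \mls N_{N'}|_V$; after shrinking $V$ so that pairs of sections distinct at $t$ remain distinct throughout $V$, the partition of $\{1,\ldots,n\}$ at any $t' \in V$ refines the one at $t$. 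Since $t \in U$ gives $N^{I_j} = (N')^{I_j}$ for each $j$, and since the image of $N^{I_j}$ in $\mathbf{Q}^{I'}_{\geq 0}$ under the projection to any subset $I' \subset I_j$ equals $N^{I'}$ (and similarly for $N'$), we conclude $N^{I'} = (N')^{I'}$ at every index set $I'$ appearing at $t'$, hence $t' \in U$ and $V \subset U$.

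The covering property is immediate from Lemma~\ref{lem:admissible-base}: any admissible sheaf is Zariski-locally of the form $\mls N_{N_\alpha}$, so any object of $\fM_{g,n}^{glt}(S)$ lies in the image of some $\iota_{N_\alpha}$ after Zariski-local base change. Combining everything, the $\iota_N$ form a Zariski cover of $\fM_{g,n}^{glt}$ by open substacks isomorphic to the smooth algebraic stack $\fM_{g,n}$, and $\fM_{g,n}^{glt}$ inherits the same properties. The main obstacle is the openness argument in the second paragraph, which crucially uses that for admissible monoids, an equality of quotients $N^I = (N')^I$ at a given index set propagates to all subsets $I' \subset I$.
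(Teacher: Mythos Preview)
Your approach is the same as the paper's: both prove the theorem by exhibiting the open substacks $\fM_{g,n}^{glt}(N)\simeq \fM_{g,n}$ indexed by admissible monoids, and your openness argument is a correct and more explicit version of what the paper abbreviates by simply citing Lemma~\ref{lem:admissible-base}.

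There is, however, a genuine gap: you never verify that $\fM_{g,n}^{glt}$ is a \emph{stack}, i.e.\ satisfies \'etale descent. The step ``the $\iota_N$ form a Zariski cover by open substacks isomorphic to the algebraic stack $\fM_{g,n}$, and $\fM_{g,n}^{glt}$ inherits the same properties'' relies on a principle (a fibered category with a Zariski open cover by algebraic stacks is algebraic) that is only valid once descent is known; without it one cannot even make sense of gluing the open pieces. The paper checks this separately using \cite[\href{https://stacks.math.columbia.edu/tag/0CKJ}{Tag 0CKJ}]{stacks-project}: the forgetful map to $\fM_{g,n}$ is faithful, so it suffices to see that admissible subsheaves of $\oplus s_{i,*}\mathbf{Q}_{\geq 0}$ form an \'etale sheaf on $S$, which holds because subsheaves descend and admissibility is by definition an \'etale-local condition. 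The paper also verifies representability of the diagonal directly (the locus where two admissible sheaves agree is open, again by the equivalence of (i) and (iv) in Lemma~\ref{lem:subsheaves}); once you have the stack axiom this would also follow from your open cover, but as written it is not addressed. These are easy to supply, but they are not optional.
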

\begin{proof}
%  We have a functor of categories fibered in groupoids
% \begin{equation}\label{eq:forget}
% \fM_{g, n}^{new} \to \fM_{g, n}
% \end{equation}
% that forgets the admissible subsheaf $\sN$.  
We use the forgetful functor
\begin{equation}\label{eq:forget}
\fM^{glt}_{g,n} \to \fM_{g,n}
\end{equation}
To check that $\fM_{g,n}^{glt}$ is a stack for the \'etale topology we use \cite[\href{https://stacks.math.columbia.edu/tag/0CKJ}{Tag 0CKJ}]{stacks-project}, which means we must check two hypotheses. For the first, note that \eqref{eq:forget} is faithful. Indeed, the data of an arrow in $\fM_{g, n}^{glt}$ is the same as the data of an arrow in $\fM_{g, n}$: arrows in $\fM_{g, n}^{glt}$ satisfy the additional \textit{requirement} that they preserve the sheaf $\sN$. For the second, we must show that if $(C/S, \{s_i\}_{i=1}^n, M_S \hookrightarrow M_S')$ is an object of $\fM_{g,n}$ over $S$, the presheaf
\[
(T \to S) \mapsto \{ \text{admissible subsheaves}\; \sN \text{on $T \times_S C$}\}
\]
defines a sheaf on the \'etale site of $S$. One of the sheaf axioms holds because agreement of \'etale subsheaves of $\oplus s_{i,*}\Q^n_{\geq 0}$ can be checked at \'etale stalks. The other sheaf axiom holds because \'etale sheaves satisfy descent for the \'etale topology and admissibility is an \'etale local condition.

Next we check that the diagonal of $\fM _{g, n}^{glt}$ is representable.  We have seen that \eqref{eq:forget} is faithful, so it is enough to show that the map on $Isom$ sheaves induced by \eqref{eq:forget} is open. This amounts to checking that, if $(C/S, \{s_i\}_{i=1}^n)$ is a prestable curve, the locus where two admissible sheaves $\sN_1, \sN_2$ on $C$ agree is open. This follows from \ref{lem:subsheaves}, specifically the equivalence of (i) and (iv).

To show that $\fM^{glt}_{g,n}$ is representable it remains to find a smooth cover by a scheme. For any admissible monoid $N \subset \mathbf{N}^n$ let $\fM^{glt}_{g,n}(N) \subset \fM^{glt}_{g,n}$ be the subcategory where $\mls N$ is equal to the image of $N$ under the map $\beta$ defined in \eqref{eq:beta}. This is an open substack of $\fM^{glt}_{g,n}$ by \ref{lem:admissible-base}, and the union of these open substacks covers $\fM^{glt}_{g,n}$ by the same. On the other hand, the restriction of \eqref{eq:forget} to $\fM^{glt}_{g,n}(N)$ is an equivalence of categories. This shows that $\fM^{glt}_{g,n}$ has a Zariski covering by open substacks isomorphic to $\fM_{g, n}$.
The remaining desired properties of $\fM^{glt}_{g,n}$ now follow from \ref{lem:twisted-nodes}.

\end{proof}
\begin{rem} 
For a sequence of integers $\bm = (m_1, \ldots, m_n)$, let $\fM^{glt}_{g, n, \bm}$ denote the subcategory of $\fM^{glt}_{g,n}$ consisting of objects $\bC$ whose admissible monoid $\mls N$ has the property that for every $x \in C$, the stalk $\mls N_x$ is contained in $\oplus s_{i, *}\frac{1}{m_i} \mathbf N$. The proof of \ref{thm:curves-are-algebraic} shows that $\fM^{glt}_{g, n, \bm}$ is an open substack of $\fM^{glt}_{g, n}$, and that restriction of \eqref{eq:forget} to this open substack is \'etale and quasi-compact.

% In fact the proof gives more information about the map
% \eqref{eq:forget}.  Namely it shows that $\fM _{g, n}^{new}$ admits a Zariski covering by open subsets which map to $\fM _{g, n}$ by open immersions.  Furthermore, if we impose a bound on the allowed sequences of integers $\bm $ for which $\mls N$ lies in $\frac{1}{\bm}\overline M_{C}$ then we see that the restriction of \eqref{eq:forget} to the open substack classifying $\mathbf{C}$ subject to this bound is \'etale and quasi-compact.
\end{rem}

\section{The stack associated to a generalized log twisted curve}\label{S:section3}

%While there is no ``equivalence of categories'' between generalized log twisted curves and their associated stacks (see for instance Example \ref{ex:two-to-one}), in the following appendix we characterize what stacks arise from generalized log twisted curves (this is not used in the rest of the article).

\subsection{The stack associated to an admissible monoid}
Let $N \subset \Q^n_{\geq 0}$ be an admissible monoid. Then $N^{gp}$ is a subgroup of $\Q^n$, hence torsion free. The affine scheme $\Spec(\mathbf{Z}[N])$ is an affine toric variety and the diagonalizable group scheme
\[
D(N^{gp}) := \Spec(\mathbf{Z}[N^{gp}])
\]
is a torus with a natural action on $\Spec(\mathbf{Z}[N])$. We write
$$
\mls S_N:= [\Sp (\mathbf{Z}[N])/D(N^\gp )]
$$
for the associated stack quotient (sometimes referred to as a ``toric stack'').  By \cite[Prop~3.25]{BV} there is an equivalence of fibered categories between $\mls S_N$ and the fibered category that to each scheme $T$ associates the groupoid of symmetric monoidal functors $N \to \mls Div^+(T)$, where $\mls Div^+(T)$ is as in \ref{SS:conventions}.
%the groupoid of generalized effective Cartier divisors on $T$. 

\begin{example}\label{ex:toric stack}
Let $(C/S, \{s_i\}_{i=1}^n)$ be a marked prestable curve. The markings define a canonical morphism $C \to \mls S_{\mathbf{N}^n}$. This morphism corresponds to the functor $\mathbf{N}^n \to \mls Div^+(C)$ sending the $i^{th}$ generator to $\mls I_i\rightarrow \mls O_C$, where $\mls I_i \subset \mls O_C$ is the ideal sheaf of the $i^{th}$ section.
\end{example}

\begin{rem} Given a morphism $N \to N'$ of fine monoids, the morphism 
$$
\Sp (\mathbf{Z}[N'])\rightarrow \Sp (\mathbf{Z}[N])
$$
is equivariant with respect to the homomorphism $D({N'}^\gp )\rightarrow D(N^\gp )$, and therefore induces a morphism $\mls S_{N'} \to \mls S_N.$
\end{rem}

\subsection{The stack associated to a generalized log twisted curve}\label{SS:3.4}
There is a stack $\mls C$ associated to a generalized log twisted curve  $\bC=(C/S, \{s_i\},  M_S \hookrightarrow M_S', \mls N)$ as follows.

The simple inclusion $M_S \hookrightarrow M_S'$ defines a tame stack $\mls C^{\node}$ with coarse space $C$ which introduces stacky structure at the nodes of $C$ (see \cite[1.8]{LogTwisted}).

The admissible sheaf $\mls N$ defines a tame stack $\mls C^{\mls N}$ with coarse space $C$ which is an isomorphism away from the images of the $s_i$ in $C$, as we now explain. The inclusion $\oplus s_{i, *} \mathbf{N} \hookrightarrow \mls N$ is a system of denominators in the sense of \cite[4.3]{BV}, and $\mls C^{\mls N}$ is the associated ``stack of roots'' arising from \cite[4.19]{BV}.  
This stack is the fibered category over $Sch/C$ whose fiber over $f: U \to C$ is the groupoid of pairs $(F, \alpha)$ where $F:f^{-1}\mls N\rightarrow \mathfrak{D}iv^+_U$ is a symmetric monoidal functor and $\alpha :G\to F\circ \iota $ is an isomorphism of monoidal functors defining a $2$-commutative diagram 
\begin{equation}\label{eq:def CN}
\begin{tikzcd}
f^{-1}(\oplus s_{i, *}\mathbf{N}) \arrow[r, hookrightarrow, "\iota"] \arrow[d, "G"'] & f^{-1}\mls N \arrow[dl, dashrightarrow, "F"]  \\
\mathfrak{D}iv^+_{U}
\end{tikzcd}
\end{equation}
where $G$ is the Deligne-Faltings structure \eqref{eq:df2} induced by the marked points. 

\subsection{The log stack associated to a generalized log twisted curve}\label{SS:log-stack}
% \Rachel{I extracted this material from the previous subsection because I always had a hard time finding the defintion of the notation for log structures.}
Let $\bC = (C, \{s_i\}_{i=1}^n, \ell:M_S \hookrightarrow M_S', \mls N)$ be a generalized log twisted curve. We now explain a canonical structure of a log stack $(\mls C, M_{\mls C})$ such that the coarse moduli map extends to a log morphism $(\mls C, M_{\mls C}) \to (C, M_C)$, where the log structure $M_C$ was defined in \ref{SS:2.19}.

As explained in \cite[4.1]{LogTwisted}, the stack $\mls C^{\node}$ classifies certain simple inclusions $M_C^{\node} \to M$. In particular there is a universal log structure $M_{\mls C^{\node}}$ on $\mls C^{\node}$ and inclusion $M^{\node}_C|_{\mls C^{\node}} \to M_{\mls C^{\node}}$. On the other hand, the stack $\mls C^{\mls N}$ is the stack classifying inclusions of fine log structures $M_C^{\underline s}\hookrightarrow M'$ for which the induced map $\overline M_{C, \mathbf{Q}}^{\underline s}\rightarrow \overline M'_{\mathbf{Q}}$ is an isomorphism and identifies $\overline M'$ with $\mls N$. Hence we have a universal log structure $M_{\mls C}^{\mls N}$ and inclusion $M^{\underline{s}}_C|_{\mls C^{\mls N}} \hookrightarrow M_{\mls C}^{\mls N}.$ We define
\[
M_{\mls C} := M_{\mls C}^{\node}\oplus _{\mls O_{\mls C}^*}M_{\mls C}^{\mls N}
\]
where the log structures on the right hand side are pulled back from $\mls C^{\node}$ and $\mls C^{\mls N}$, respectively.
The log morphism $(\mls C, M_{\mls C}) \to (C, M_C)$ is induced by the inclusions $M^{\node}_C|_{\mls C^{\node}} \to M_{\mls C^{\node}}$ and $M^{\underline{s}}_C|_{\mls C^{\mls N}} \hookrightarrow M_{\mls C}^{\mls N}.$

% Furthermore, the description of $\mls C^{\node}$, $\mls C^{\mls N}$, and $\mls C$ as moduli stacks of certain log structures defines log structures $M_{\mls C}^{\node}$, $M_{\mls C}^{\mls N}$,  and $M_{\mls C} = M_{\mls C}^{\node}\oplus _{\mls O_{\mls C}^*}M_{\mls C}^{\mls N}$ on $\mls C$ (note that we are defining these as log structures on $\mls C$ but the first two are pulled back from $\mls C^{\node}$ and $\mls C^{\mls N}$ respectively) and a morphism of log stacks $(\mls C, M_{\mls C})\rightarrow (C, M_C)$ sending $M_C^{\node}$ (resp. $M^{\underline s}$) to $M_{\mls C}^{\node}$ (resp. $M_{\mls C}^{\mls N}$).

\subsection{Local descriptions of $\mls C^{\mls N}$}\label{SS:local}
Let $\bC = (C/S, \{s_i\}_{i=1}^n, \ell:M_S \hookrightarrow M_S', \mls N)$ be a generalized log twisted curve over $S$.
We give several local descriptions of $\mls C^{\mls N}$. Let $U \to  C$ be a morphism such that $\mls N|_U$ is the image of an admissible monoid $N \subset \Q^n_{\geq 0}$ under $\beta$ \eqref{eq:beta}. (By \ref{lem:admissible-base}, one can take $U$ to be the preimage of a Zariski open subscheme of the base $S$.) %\Rachel{put in here what \ref{lem:admissible-base} says about local descriptions }

First, it follows from \cite[Prop~4.18]{BV} that the fiber of $\mls C^{\mls N}$ over $V \to U$ is the groupoid of pairs $(F, \alpha)$ where $F$ is a symmetric monoidal functor and $\alpha:G\rightarrow F\circ \iota $ is an isomorphism of functors as in the diagram
\begin{equation}\label{eq:local description3}
\begin{tikzcd}
\mathbf{N}^n \arrow[r, hookrightarrow, "\iota"] \arrow[d, "G"'] &N \arrow[dl, dashrightarrow, "F"]  \\
\mls Div^+(V).
\end{tikzcd}
\end{equation}
 Indeed, the diagram
\begin{equation}\label{eq:chart}
\begin{tikzcd}
\mathbf{N}^n \arrow[r] \arrow[d] & N \arrow[d, "\beta"] \\
\oplus s_{i,*}\mathbf{N}|_V \arrow[r] & \mls N|_V
\end{tikzcd}
\end{equation}
is a chart for the system of denominators $\oplus s_{i,*}\mathbf{N} |_V\hookrightarrow \mls N|_V$.

Second, it follows from the above and \cite[Prop~4.13]{BV} that there is a fiber square
\begin{equation}\label{eq:local description1}
\begin{tikzcd}
\mls C_U^{\mls N} \arrow[d] \arrow[r] & \mls S_N \arrow[d] \\
U \arrow[r] & \mls S_{\mathbf{N}^n}
\end{tikzcd}
\end{equation}
where $U \to \mls S_{\mathbf{N}^n}$ is the restriction of the morphism in \ref{ex:toric stack} and $\mls C^{\mls N}_U := U \times_C \mls C^{\mls N}$.

Third, if we moreover we assume $U = \Spec(R)$ and that each $\mls I_i|_U$ is trivial, then the maps  $\mls I_i|_U\rightarrow \mls O _U$ are identified with elements $a_1, \ldots, a_n$ of $R$, and we may explicitly compute the fibered product $\mls C_U^{\mls N}$:
\begin{equation}\label{eq:local description2}
\mls C_U^{\mls N} = [\Spec(R \otimes_{\mathbf{Z}[\mathbf{N}^n]} \mathbf{Z}[N]) / D(N^{gp}/\mathbf{Z}^n)].
\end{equation}
The homomorphism $\mathbf{Z}[\mathbf{N}^n] \to R$ is induced by the elements $a_i$ and the group $D(N^{gp}/\mathbf{Z}^n)$ acts on $\mathbf{Z}[N]$ via the natural action and trivially on $R$.

%This shows, in particular, that at a point where all the sections $s_i$ coincide the stabilizer group scheme is equal to $D(N^\gp /\mathbf{Z}^n)$. \Rachel{I think this isn't completely obvious. We must be using the fact that $N$ is sharp; for example, for the inclusion $\mathbf{N} \to \mathbf{N} \times \mathbf{Z}/r\mathbf{Z}$ I think the stabilizer group scheme is trivial, not $\mu_r$.}

Finally, if some of the sections $s_i$ do not meet $U$ we can simplify the above descriptions. Let $I \subset \{1, \ldots, n\}$ be the set of indices $i$ such that $s_i$ meets $U$. Then in \eqref{eq:chart} we may replace $\mathbf{N}^n$ with $\mathbf N^I$ and $N$ with $N^I$, and the resulting diagram is still a chart. It follows that we may replace $\mathbf{N}^n$ with $\mathbf N^I$ and $N$ with $N^I$ in \eqref{eq:local description3}, \eqref{eq:local description1}, and \eqref{eq:local description2}.

\begin{example}\label{E:example3.7} If the sections $s_i$ are disjoint then by \ref{E:rank1} an admissible subsheaf amounts to the data of an integer $m_i$ attached to each marking and the associated stack $\mls C^{\mls N}$ is obtain by applying the $m_i$-th root stack construction at each of the sections $s_i$.
\end{example}

\begin{cor}\label{cor:properties}
Let $\bC = (C/S, \{s_i\}_{i=1}^n, M_S \hookrightarrow M_S', \mls N)$ be a generalized log twisted curve over $C$ and let $\mls C$ be the associated stack. 
\begin{enumerate}
\item $\mls C$ is a tame Artin stack over $S$ with coarse moduli space $C$.
\item The map $\mls C \to C$ is flat over $C^{sm}$ and an isomorphism on $C^{sm}$ minus the images of the $s_i$.
\item The maximal open substack $\mls C^\circ \subset \mls C$ over which the stabilizer groups are trivial is schematically dense in $\mls C$.
\item The stabilizer group scheme of a geometric point $\bar x \to \mls C$ mapping to the smooth locus is $D((N^I)^{gp})/\mathbf{Z}^I$ (as a group scheme over the residue field of $\bar x$).
\item Geometric fibers of $\mls C \to S$ are reduced.
\end{enumerate}
\end{cor}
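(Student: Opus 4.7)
The approach is to decompose $\mls C=\mls C^{\node }\times _C\mls C^{\mls N}$ and use that, since the sections $s_i$ lie in the smooth locus of $C$ by definition, the nodes of $C$ and the markings are disjoint. Consequently, Zariski locally on $C$ either $\mls C^{\node }=C$ (away from nodes) or $\mls C^{\mls N}=C$ (away from markings), and each of (1)--(5) reduces to the analogous assertion for one of the two factors. The required facts for $\mls C^{\node }$ are already supplied by \cite{LogTwisted} and \cite{AOV}, so the real work is for $\mls C^{\mls N}$, for which I would use the local description \eqref{eq:local description2}
\[
\mls C^{\mls N}|_U \;=\; [\Spec (R\otimes _{\mathbf{Z}[\mathbf{N}^I]}\mathbf{Z}[N^I])/D((N^I)^\gp /\mathbf{Z}^I)]
\]
on a Zariski open $U=\Spec R\subset C^{sm}$ where the ideal sheaves $\mls I_i$ are trivial and $\mls N|_U$ is the image of an admissible monoid $N\subset \mathbf{Q}^n_{\geq 0}$; here $I\subset \{1,\ldots ,n\}$ indexes the sections meeting $U$.

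Given this presentation, (1) and (4) are essentially formal. The group $D((N^I)^\gp /\mathbf{Z}^I)$ is diagonalizable, hence linearly reductive, so $\mls C^{\mls N}|_U$ is tame Artin. The coarse moduli space is the spectrum of invariants: $e^n\in \mathbf{Z}[N^I]$ is $D((N^I)^\gp /\mathbf{Z}^I)$-invariant iff $n\in N^I\cap \mathbf{Z}^I$, which by saturation equals $\mathbf{N}^I$; taking invariants thus returns $R$ and the coarse space is $U$. For (4), a geometric point lying over the origin of $\Spec \mathbf{Z}[N^I]$---i.e.\ over the simultaneous intersection of all markings in $I$---is a fixed point of the entire action, with stabilizer the full $D((N^I)^\gp /\mathbf{Z}^I)$. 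Statement (2) follows because \ref{L:integral} shows $\mathbf{N}^I\to N^I$ is integral, whence $\mathbf{Z}[\mathbf{N}^I]\to \mathbf{Z}[N^I]$ is flat by Kato's theorem on flatness for integral morphisms of monoids; when $I=\emptyset $ the presentation collapses to $U$, yielding the stated isomorphism.

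I expect (5) to be the main obstacle. It reduces to showing that for every geometric point $\bar s\to S$, the tensor product $R_{\bar s}\otimes _{\mathbf{Z}[\mathbf{N}^I]}\mathbf{Z}[N^I]$ is reduced. The plan is to invoke that $\mathbf{N}^I\to N^I$ is a saturated morphism of fine saturated monoids, so that the monoid algebra map $\mathbf{Z}[\mathbf{N}^I]\to \mathbf{Z}[N^I]$ is flat with geometrically reduced fibers; since $R_{\bar s}$ is reduced (being the coordinate ring of an affine piece of a nodal curve over an algebraically closed field), the tensor product is also reduced. Finally (3) follows formally: the open substack on which stabilizers are trivial contains the isomorphic preimage of $C^{sm}\setminus \bigcup _is_i(S)$, which is topologically dense in $\mls C$, and combined with the reducedness from (5) yields schematic density.
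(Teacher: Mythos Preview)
Your arguments for (1), (2), and (4) track the paper's closely and are fine. The problem is with (5), and hence with your route to (3).

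The claim that $\mathbf{N}^I\to N^I$ is a saturated morphism of monoids is false in general. Take $I=\{1,2\}$ and $N=\mathbf{N}^2+\mathbf{N}\cdot(1/2,1/2)$; then $\mathbf{Z}[N]=\mathbf{Z}[x,y,z]/(z^2-xy)$ and the fiber of $\Spec\mathbf{Z}[N]\to\Spec\mathbf{Z}[\mathbf{N}^2]$ over the origin is $\Spec\mathbf{Z}[z]/(z^2)$, which is not reduced. (Equivalently, the integral pushout $\mathbf{N}\oplus_{\mathbf{N}^2}N$ along the sum map is not saturated; cf.\ \ref{warn:pushout vs image}.) Concretely, over a field $k$ of characteristic $2$ with $s_1=s_2$ cut out by $t$, the presentation ring is $k[t,z]/(z^2-t^2)=k[t,z]/(z-t)^2$, which is non-reduced. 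So your plan for (5) cannot succeed in characteristic dividing $|N^{\gp}/\mathbf{Z}^I|$: the fppf cover is genuinely non-reduced there, even though the stack $[\Spec(k[t,z]/(z-t)^2)/\mu_2]$ turns out to be reduced (one can see this from the smooth atlas $\Spec k[t,v,s^{\pm1}]/(v^2s-t^2)$ obtained by inducing up to $\mathbb{G}_m$).

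The paper avoids this by reversing the order of (3) and (5). Statement (3) follows directly from (2): the preimage of the dense open $C^{sm}\setminus\bigcup_i s_i(S)$ is contained in $\mls C^\circ$, and since $\mls C\to C$ is flat over $C^{sm}$, this preimage is schematically dense in $\mls C|_{C^{sm}}$; the nodal locus is handled by the known structure of $\mls C^{\node}$. Then (5) follows from (3): over a field, $\mls C^\circ$ is an open subscheme of a nodal curve, hence reduced, and a stack containing a schematically dense reduced open is itself reduced. You should reorganize your argument along these lines.
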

\begin{proof}
Part (1) follows from the preceeding discussion. Flatness follows from the local description \ref{eq:local description2} combined with \ref{L:integral}. 
Statement (3) follows from (2) and the structure of $\mls C$ over nodes.
To prove (4), it suffices to consider the case when $I = \{1, \ldots, n\}$ and so we omit $I$ from the notation. If $\bar x : \mathrm{Spec}(k) \to U$ is a geometric point where all the sections $s_i$ coincide, then from \eqref{eq:local description2} the fiber product $\mls C^{\mls N} \times_U \mathrm{Spec}(k)$ is given by 
\[
[\Spec(k[N]/\langle x_1, \ldots, x_n \rangle) / D(N^{gp}/\mathbf{Z}^n)]
\]
where $x_i\in \mls O_{U, \bar x}$ is a generator for the ideal defining $s_i$. For every element $\alpha$ of $N$, some multiple is in $\mathbf{N}^n$, and if $\alpha$ is nontrivial then this multiple is also nontrivial (i.e. $N$ is \textit{sharp}). It follows that every nonconstant element of $k[N]/\langle x_1, \ldots, x_n \rangle$ is nilpotent and in particular this ring is local, so the stabilizer of the unique closed point of its spectrum must be the entire group $D(N^{gp}/\mathbf{Z}^n)$.
Finally (5) follows from (3) and the observation that over a field the substack $\mls C^\circ \subset \mls C$ is reduced.
\end{proof}

% \begin{rem}\label{R:flatremark}
% The local description \ref{eq:local description2} combined with \ref{L:integral} implies that the morphism $\mls C^{\mls N}\rightarrow C$ is flat.
% \end{rem}

% \Rachel{Maybe should summarize properties of $\mls C$ here: tame artin, flat over coarse space away from nodes, generically a scheme, reduced geometric fibers---I guess the last can be checked at the local rings of $\mls C$. At the nodes it holds by the explicit description of the ring at a node. At non-special points it holds because $\mls C = C$ here and $C$ is reduced. At a marked point it follows from flatness: If $R = \mls O_{C, c}$ is the local ring at the marked point then $\mls C_R = [\Spec(A)/G]$ where $R \to A$ is flat. Since $R$ is a domain (in fact equal to $k[t]_{(t)}$), we have an inclusion $R \hookrightarrow R_t$ where $t$ is a uniformizer of the maximal ideal of $R$. Since $A$ is flat when we tensor with $A$ we get an inclusion $A \hookrightarrow A_t$. But $\Spec(A_t)$ is a $G$-torsor over the domain $R_t$, hence reduced---REALLY NOT SURE ABOUT THIS WHEN $G$ IS NOT REDUCED---CAN I EVEN CHECK REDUCEDNESS ON A FLAT COVER?? so $A$ is a subring of a domain hence reduced.

% Alternatively, for flat morphisms associated points map to associated points, and there is Ravi's remark 24.4.11---but this picture seems to contradict my picture of flat limits with nonreduced fibers.}

\section{Contractions of coarse curves}\label{A:appendixB}

In this section we study contraction morphisms of prestable nodal curves over a base scheme. For scheme curves, such morphisms were introduced by Knudsen in \cite[1.3]{knudsen} to study projectivity of the moduli spaces $\overline{\mls M}_{g, n}$. While most of the results of this section presumably are well-known to experts, we include them here for lack of a suitable reference.  The essential ideas in this section can be found in \cite{Hassett} and  \cite{knudsen}. 

For purposes of the current paper, the main result of this section is \ref{C:sequences} which says that all contractions are locally determined by a sequence of contractions of rational bridges and rational tails. To prove it, we first show the result \ref{T:contractthm} that weighted prestable maps have unique stabilizations. This latter result will also be used in \cite{OWarticle2}.

%\Rachel{do we need to disallow some degenerate values of $g, \ba$ for which there are no stable maps?}

\subsection{Contraction morphisms}

Fix a base scheme $S$. In this section, we work with the category of prestable curves over $S$ whose morphisms between objects $C \to S$ and $C' \to S$ are $S$-morphisms $C \to C'$ (not necessarily isomorphisms). 

\begin{defn}\label{def:contraction}
A \textit{contraction} of a prestable curve $C/S$ is a pair $(C'/S, q)$ where $C'/S$ is a prestable curve over $S$ and $q: C \to C'$ is a morphism of prestable curves such that $q$ is surjective and the canonical map $\mls O_{C'} \to Rq_*\mls O_{C}$ is an isomorphism. We also call $q$ a \textit{contraction morphism} or simply a contraction.
\end{defn}

We observe that our definition differs from the one in \cite[1.3]{knudsen}. We will eventually see in \ref{cor:fibers} that they are equivalent. We now note several basic properties of this definition.

\begin{lem}\label{L:properties}
Let $q: (C_1/S, f_1) \to (C_2/S, f_2)$ and $p: (C_2/S, f_2) \to (C_3/S, f_3)$ be two morphisms of prestable maps over a scheme $S$. Let $S' \to S$ be any morphism of schemes.
\begin{enumerate}
\item[(i)] If $q$ and $p$ are contractions, then so is $p \circ q$.
\item[(ii)] If $q$ and $p \circ q$ are contractions, so is $p$.
\item[(iii)] If $q$ is a contraction, its pullback $q': C_1 \times_S S' \to C_2 \times_S S'$ is a contraction. 
\item[(iv)] If $q$ is a contraction then $C_1$ and $C_2$ have the same genus (in every geometric fiber).
\Rachel{This part is new, please check}
\item[(v)] If $q$ is a contraction (resp. contraction of curves represented by schemes) then it is an epimorphism in the category of separated algebraic spaces over $S$ (resp. arbitrary $S$-schemes).
\end{enumerate}
\end{lem}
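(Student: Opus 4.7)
My plan is to separate each part into a surjectivity check and a cohomological check. Surjectivity is stable under composition, under base change, and descends from $p \circ q$ to $p$ (as a map of underlying topological spaces, since $(p \circ q)(C_1) \subseteq p(C_2) \subseteq C_3$), so the surjectivity half of each of (i)--(iii) is immediate and requires no further discussion. The substantive content is the identity $\mls O_{C'} \xrightarrow{\sim} Rq_*\mls O_C$. I would treat (i) and (ii) together via composition of derived pushforwards, handle (iii) by a base change argument, and deduce (iv) from (iii).

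For (i) and (ii), the key input is the canonical identity $R(p \circ q)_* \simeq Rp_* \circ Rq_*$, giving the commutative triangle
\[
\mls O_{C_3} \longrightarrow Rp_*\mls O_{C_2} \longrightarrow R(p \circ q)_*\mls O_{C_1},
\]
where the second arrow is $Rp_*$ applied to $\mls O_{C_2} \to Rq_*\mls O_{C_1}$. The first arrow is an iso precisely when $p$ is a contraction, the second is an iso whenever $q$ is, and the composite is an iso precisely when $p \circ q$ is. Two-out-of-three for isomorphisms then yields both (i) and (ii) in one stroke.

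For (iii), the morphism $q$ is proper (since both $C_i$ are proper over $S$), the source $C_1$ is $S$-flat, and, using that $q$ is a contraction, $Rq_*\mls O_{C_1} \simeq \mls O_{C_2}$ is also $S$-flat. Under these flatness hypotheses the derived base change map
\[
L(g_{C_2})^* Rq_*\mls O_{C_1} \longrightarrow Rq'_*\,L(g_{C_1})^*\mls O_{C_1}
\]
is a quasi-isomorphism for arbitrary $g: S' \to S$, and flatness lets me replace each derived pullback by the ordinary pullback. The result is $\mls O_{C_2 \times_S S'} \simeq Rq'_*\mls O_{C_1 \times_S S'}$, completing (iii).

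Finally, (iv) follows from (iii) by base change to a geometric point $\bar s \to S$: the Leray spectral sequence (or directly the vanishing of $R^{\geq 1}(q_{\bar s})_*\mls O_{C_{1,\bar s}}$) yields $\chi(\mls O_{C_{2,\bar s}}) = \chi(\mls O_{C_{1,\bar s}})$, hence equality of arithmetic genera. The only step I expect to pause at is citing the correct version of derived base change in (iii); with the flatness of $C_1$ and of $\mls O_{C_2}$ over $S$, this is standard, and the rest of the argument is a direct assembly of formalism.
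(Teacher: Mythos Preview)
Your proof is correct and matches the paper's approach essentially line for line: (i), (ii), and (iv) are argued identically (the paper displays the same two-out-of-three triangle for (i)--(ii) and the same Leray/$H^1$ computation for (iv)). For (iii) the paper makes the derived base change step slightly more explicit by working affine-locally on $S$ and $C_2$, then using flatness of $C_2/S$ to reduce to cohomology and base change for the flat proper map $q^{-1}(\Spec B)\to\Spec A$ (Stacks Tag 0A1K); this is exactly the Tor-independence underlying your appeal to derived base change, so the arguments coincide.
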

%\Rachel{I think we use (ii) in B.19 and (iii) is used to prove (iv) (as far as I can tell) and also to prove B.10.}
\begin{proof}
Part (i) is immediate. For part (ii), observe that we have a commuting diagram
\begin{equation}\label{eq:basic-properties}
\begin{tikzcd}
\mls O_{C_3} \arrow[r] \arrow[d] & R(p\circ q)_* \mls O_{C_1} \arrow[d, equal]\\
Rp_* \mls O_{C_2} \arrow[r] & R p_*Rq_*\mls O_{C_1}
\end{tikzcd}
\end{equation}
where the horizontal arrows are isomorphisms since $q$ and $p \circ q$ are contractions.

To prove (iii) it suffices to show that if  $h: S' \to S$ denotes the given map then  the base change map
\begin{equation}\label{eq:properties2}
Lh^* Rq_*\mls O_{C_1} \longrightarrow Rq'_{*} \mls O_{C_1 \times_S S'}
\end{equation}
is an isomorphism.  To verify that \eqref{eq:properties2} is an isomorphism we may work locally on $S$ and $S'$ and $C_2$. Therefore we write $S = \Sp (A)$ and $S' = \Sp (A')$ and fix $\Sp (B)\subset C_2$ an affine open subset.  Let $C_{1, B}$ denote $q^{-1}(\Sp (B))$ and write $B'$ for $B\otimes _AA'$ so we have a diagram of cartesian squares
$$
\xymatrix{
C_{1, B'}\ar[d]\ar[r]& C_{1, B}\ar[d]\\
\Sp (B')\ar[r]\ar[d]& \Sp (B)\ar[d]\\
\Sp (A')\ar[r]& \Sp (A),}
$$
and we need to show that the natural map
\begin{equation}\label{E:basechange3}
R\Gamma (C_{1, B}, \mls O_{C_{1, B}})\otimes ^{\mathbf{L}}_BB'\rightarrow R\Gamma (C_{1, B'}, \mls O_{C_{1, B'}})
\end{equation}
is an isomorphism.
Since $B$ is flat over $A$ we can also identify the left side of this map with $R\Gamma (C_{1, B}, \mls O_{C_{1, B}})\otimes ^{\mathbf{L}}_AA'$ and the map in question with the base change map for the cartesian square
$$
\xymatrix{
C_{1, B'}\ar[d]\ar[r]& C_{1, B}\ar[d]\\
\Sp (A')\ar[r]& \Sp (A).}
$$
That \eqref{E:basechange3} is an isomorphism now follows from the fact that $C_{1, B}$ is flat over $A$ and cohomology and base change \cite[\href{https://stacks.math.columbia.edu/tag/0A1K}{Tag 0A1K}]{stacks-project} proving (iii). 

For part (iv), observe that by part (3) the morphism $q_{\bar s}: C_{1,\bar s} \to C_{2, \bar s}$ is a contraction for every geometric point $\bar s \to S$, and therefore
$$
H^1(C_{2, \bar s}, \mls O_{C_{2, \bar s}}) = H^1(C_{2, \bar s}, Rq_{\bar s*}\mls O_{C_{1, \bar s}}) = H^1(C_{1, \bar s}, \mls O_{C_{1, \bar s}}).
$$

For part (v), first note a contraction $q$ of scheme curves is an epimorphism in the category of schemes since it is surjecive and the induced map $\mls O_C \to q_*\mls O_D$ is an isomorphism. Now suppose $X$ is an algebraic space with separated morphism $X \to S$ and let $f_1, f_2: D \to X$ be morphisms such that $f_1 \circ q = f_2 \circ q$. To show $f_1=f_2$ we may replace $X$ with the scheme theoretic image of the morphisms $f_i \circ q$ and hence assume these maps are surjective. In this case $X$ is proper over $S$ (by \cite[\href{https://stacks.math.columbia.edu/tag/08AJ}{Tag 08AJ}]{stacks-project}), locally of finite presentation, and has fibers that are of dimension $\leq 1$, hence by \ref{lem:is-a-scheme} it is \'etale-locally on $S$ represented by a scheme. On the other hand to shhow $f_1=f_2$ we may make an \'etale base change on $S$. Via such a base change we can arrange for $C, D,$ and $X$ to all be represented by schemes, and the result follows from the scheme case.

\Rachel{TODO FROM HERE}
\end{proof}

There are two basic examples of contraction morphisms (see \ref{P:B.21}). Our main result about contractions is \ref{C:sequences} which says that all contractions are in some sense built from these two. 

\begin{example}\label{P:B.21}
Let $C$ be a prestable curve over a separably closed field $k$, so by \cite[\href{https://stacks.math.columbia.edu/tag/0C4D}{Tag 0C4D}]{stacks-project} all the nodes of $C$ have residue field equal to $k$. Let $E \subset C$ be a rational irreducible component. There are two explicit contraction morphisms in this setting:
\begin{itemize}
\item $E$ is a \emph{rational bridge} if it contains two nodes and no marked points. There is a contraction $C \to C'$ sending $E$ to a node of $C'$ \cite[\href{https://stacks.math.columbia.edu/tag/0E3M}{Tag 0E3M}]{stacks-project}. 
\item $E$ is a \emph{rational tail} if it contains one node and $\sum_{I, s_i \in E} a_i \leq 1$. There is a contraction $C \to C'$ sending $E$ to a smooth point of $C'$ \cite[\href{https://stacks.math.columbia.edu/tag/0E3H}{Tag 0E3H}]{stacks-project}.
\end{itemize}
\end{example}

\begin{thm}\label{C:sequences}
Let $q:C\rightarrow C'$ be a contraction of prestable curves over a strictly henselian local ring.  Then $q$ can be written as a sequence of contractions
$$
C = C_0\rightarrow C_1\rightarrow \cdots \rightarrow C_r = C',
$$
where each contraction $C_i\rightarrow C_{i+1}$ either contracts a rational bridge or a rational tail in the closed fiber (see \ref{P:B.21}). 
\end{thm}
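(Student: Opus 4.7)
The plan is to induct on the integer $\nu(q) := \#\mathrm{Irr}(C_0) - \#\mathrm{Irr}(C_0')$, where $C_0$ and $C_0'$ are the closed fibers. For the base case $\nu(q)=0$, the closed fiber map $q_0\colon C_0\to C_0'$ contracts no components; it also cannot identify two distinct points, since such an identification would strictly enlarge $(q_0)_*\mathcal{O}_{C_0}$ relative to $\mathcal{O}_{C_0'}$, contradicting the identification from \ref{def:contraction}. Hence $q_0$ is an isomorphism; then $q$ is proper and quasi-finite over the strictly henselian base $S$ (and therefore finite), and $q_*\mathcal{O}_C\cong \mathcal{O}_{C'}$ forces $q$ itself to be an isomorphism. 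For the inductive step I would locate a rational tail or rational bridge $E_0\subset C_0$ that is collapsed by $q_0$, contract $E_0$ in the family over $S$ to produce a prestable curve $C_1/S$ together with a contraction $C\to C_1$, factor $q$ through $C_1$, and invoke \ref{L:properties}(ii) to conclude that the residual map $C_1\to C'$ is again a contraction, now with $\nu$ strictly smaller.

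To locate $E_0$, note that $q_0$ is again a contraction by \ref{L:properties}(iii), so its positive-dimensional fibers are connected reduced trees of rational curves. Let $T\subset C_0$ be the underlying reduced curve of one connected component of the exceptional locus, viewed (via its irreducible components) as a connected subtree of the dual graph $\Gamma$ of $C_0$, and set $p := q_0(T)\in C_0'$. Since $C_0'$ is a nodal curve, $p$ is either smooth or a node, so the formal neighborhood of $p$ in $C_0'$ has $1$ or $2$ branches; the preimage of each branch meets $T$ in exactly one external node. Consequently the number $e$ of edges of $\Gamma$ with exactly one endpoint in $T$ satisfies $e\in\{1,2\}$.

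Writing $k = \#V(T)$, the tree $T$ contributes $k-1$ internal edges, so summing $\Gamma$-degrees over the vertices of $T$ gives $2(k-1)+e\leq 2k$. Therefore some vertex $v_0\in T$ has $\Gamma$-degree at most $2$; it has degree at least $1$, since otherwise the corresponding component would be an isolated connected component of $C_0$, which is impossible as $q$ is not an isomorphism; and it carries no self-loop, as a self-loop at $v_0$ would be an internal edge creating a cycle in the tree $T$. Hence $v_0$ corresponds to a rational component $E_0\subset C_0$ with either exactly one node (a rational tail) or exactly two distinct nodes (a rational bridge) in $C_0$. I would then invoke the family-wide contraction of a single rational tail or bridge, the content of \cite[\href{https://stacks.math.columbia.edu/tag/0E3H}{Tag 0E3H}]{stacks-project} and \cite[\href{https://stacks.math.columbia.edu/tag/0E3M}{Tag 0E3M}]{stacks-project} applied to the family $C/S$, to obtain $C\to C_1$; the universal property of that contraction together with the fact that $q$ collapses $E_0$ on the closed fiber supplies a unique factorization $q = q'\circ(C\to C_1)$ with $q'\colon C_1\to C'$. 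Then \ref{L:properties}(ii) identifies $q'$ as a contraction with $\nu(q') = \nu(q)-1$, and induction closes the argument.

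The main obstacle I expect is not the combinatorial degree-count, which is elementary once the bound $e\leq 2$ on external edges of $T$ is in hand, but rather establishing the family-wide existence and the precise universal property of the contraction of a single rational tail or bridge: it is this universal property that allows one to factor the original $q$ through the intermediate curve $C_1$ and thereby reduce the problem to a contraction with strictly smaller $\nu$. One must in particular verify that $C\to C_1$ is a contraction in the sense of \ref{def:contraction} and that the induced morphism $C_1\to C'$ lands inside the category of prestable curves over $S$, so that the induction applies verbatim.
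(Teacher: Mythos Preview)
Your overall strategy---induct on the number of components contracted in the closed fiber, peeling off one rational tail or bridge at a time---matches the paper's approach (see \ref{L:B.16}(iv) for the field case and the short argument in \ref{SS:applications} for the lift to the family). Your degree-count locating a suitable $E_0$ is a clean alternative to the tree argument in the proof of \ref{L:B.16}(ii).

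The gap you yourself flag is, however, the entire content of the theorem beyond the field case. The Stacks Project tags \texttt{0E3H} and \texttt{0E3M} construct the contraction of a rational tail or bridge only over a field; they neither provide a family-wide construction over $S$ nor supply a universal property that would let you factor an arbitrary $q$ through the resulting $C_1$. The paper resolves both issues in a way you may not have anticipated: rather than building $C\to C_1$ directly, it passes through weighted stable maps. After adding auxiliary sections so that the desired $C_1$ becomes the stabilization of $C$ for a suitable weight vector, \ref{C:B.9} (a consequence of the stabilization theorem \ref{T:contractthm}) produces the contraction $C\to C_1$ over an \'etale neighborhood of $\bar s$---harmless when $S$ is strictly henselian---and the factorization of $q$ through $C_1$ then comes from \ref{L:B.13}, whose proof goes via the line-bundle descent arguments in \ref{L:B.11} and \ref{L:B.12}. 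So your outline is sound, but the obstacle you named genuinely requires this extra input; without it the inductive step does not go through.
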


We prove \ref{C:sequences} in \ref{SS:applications} below.

\subsection{Line bundles and contractions}
We now make a careful study of line bundles and contraction morphisms. One application will be showing that contractions are unique when they exist \ref{L:B.12}. 
%Using this uniqueness, we prove \ref{C:sequences} in the case when the base is a separably closed field (this is \ref{L:B.16}).
For a prestable curve $C/S$ let $LB(C)$ denote the category of line bundles on $C$. (The notation $\mls Pic(C)$ might be more natural but we avoid that notation since it is often used to denote the groupoid of line bundles and here we consider also non-invertible maps).

\begin{lem}\label{L:B.11} Let $q:C\rightarrow C'$ be a contraction over a scheme $S$.

(i) The pullback functor $q^*:LB(C')\rightarrow LB(C)$ is fully faithful.

(ii) If $\mls L$ is a line bundle on $C$ and $s\in S$ is a point such that the restriction $\mls L_{\bar s}$ to the geometric fiber $C_{ \bar s}$ over $s$ is in the image of $LB(C'_{\bar s})$ then there exists a neighborhood $U\subset S$ of $s$ such that $\mls L|_{C_{U}}$ is in the image of $LB(C'_{U})$.
\end{lem}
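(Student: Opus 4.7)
For part (i), the result follows from the adjunction $(q^*, Rq_*)$ combined with the projection formula: for line bundles $\mls A, \mls B$ on $C'$,
\[
\Hom_C(q^*\mls A, q^*\mls B) = \Hom_{C'}(\mls A, Rq_*q^*\mls B) = \Hom_{C'}(\mls A, \mls B \otimes Rq_*\mls O_C) = \Hom_{C'}(\mls A, \mls B),
\]
where the last equality uses the contraction hypothesis $Rq_*\mls O_C \simeq \mls O_{C'}$.

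For part (ii), set $\mls L' := q_*\mls L$. The plan is to prove, on a neighborhood of the fiber $C'_{\bar s}$ in $C'$, that (a) $R^1q_*\mls L = 0$, (b) $\mls L'$ is a line bundle, and (c) the counit $q^*\mls L' \to \mls L$ is an isomorphism. By properness of $\pi': C' \to S$ these neighborhoods correspond to $\pi'^{-1}(U)$ for some Zariski neighborhood $U$ of $s$, and then $\mls L'|_{C'_U}$ will be the desired descent of $\mls L|_{C_U}$.

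The key step is (a), which I would establish via the theorem on formal functions at any $y \in C'_{\bar s}$: $(R^1q_*\mls L)^\wedge_y = \varprojlim_n H^1(C_n, \mls L|_{C_n})$, where $C_n$ is the $n$-th infinitesimal thickening of the scheme-theoretic fiber $C_0 := q^{-1}(y)$. Since each $C_n$ is supported on a scheme of dimension at most $1$, the long exact sequences attached to $0 \to \fm^n/\fm^{n+1} \otimes_{k(y)} \mls F|_{C_0} \to \mls F|_{C_n} \to \mls F|_{C_{n-1}} \to 0$ (for $\mls F = \mls O_C$ or $\mls L$) show that the transitions $H^1(\mls F|_{C_n}) \to H^1(\mls F|_{C_{n-1}})$ are surjective because $H^2$ vanishes. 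Applied to $\mls F = \mls O_C$: the hypothesis $R^1q_*\mls O_C = 0$ together with formal functions gives $\varprojlim H^1(C_n, \mls O_{C_n}) = 0$, and finite-dimensionality combined with surjective transitions forces each $H^1(C_n, \mls O_{C_n}) = 0$, in particular $H^1(C_0, \mls O_{C_0}) = 0$. The descent hypothesis $\mls L_{\bar s} = q_{\bar s}^*\mls M$ implies $\mls L|_{C_0}$ is trivial, hence $H^1(C_0, \mls L|_{C_0}) = 0$ as well; induction on $n$ via the long exact sequence with $\mls F = \mls L$ then yields $H^1(C_n, \mls L|_{C_n}) = 0$ for all $n$, whence $(R^1q_*\mls L)^\wedge_y = 0$ and $(R^1q_*\mls L)_y = 0$ by faithful flatness of completion.

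Parts (b) and (c) follow formally. For (b), derived base change along $\bar s \to S$ (which applies because $\mls L$ is $\mls O_S$-flat) combined with (a) yields $q_*\mls L|_{C'_{\bar s}} \cong q_{\bar s *}\mls L_{\bar s} \cong \mls M$ (the last equality by the projection formula on the fiber contraction) together with $\mls O_S$-flatness of $q_*\mls L$ at points of $C'_{\bar s}$; the fiberwise criterion for local freeness then gives (b). For (c), the counit is a map of line bundles whose restriction to $C_{\bar s}$ is the canonical isomorphism $q_{\bar s}^*\mls M = \mls L_{\bar s}$, hence an isomorphism in a neighborhood of $C_{\bar s}$ by Nakayama. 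The main obstacle is (a): since $\mls L$ is not $\mls O_{C'}$-flat at the contracted locus, classical cohomology-and-base-change for $q$ with coefficients $\mls L$ is unavailable, and the formal-functions argument circumvents this by transporting the vanishing from $\mls O_C$ to $\mls L$ via the triviality of $\mls L|_{C_0}$.
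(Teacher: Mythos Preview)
Part (i) is correct and coincides with the paper's argument.

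For part (ii) your route differs from the paper's. The paper passes to the completion $\widehat{\mls O}_{S,\bar s}$ and uses deformation theory of line bundles: since the contraction identifies $H^i(C_{\bar s}, \mls O_{C_{\bar s}})$ with $H^i(C'_{\bar s}, \mls O_{C'_{\bar s}})$, the line bundle $\mls M_{\bar s}$ on $C'_{\bar s}$ deforms uniquely through every infinitesimal thickening over $S$ compatibly with $\mls L$, and Grothendieck existence then produces $\mls M$ on $C'$ with $q^*\mls M \simeq \mls L$; the projection formula gives $q_*\mls L \simeq \mls M$ and identifies the counit with this isomorphism. Your approach instead works along fibers of $q$ via the theorem on formal functions to obtain $R^1q_*\mls L = 0$, and then uses Tor-independent base change along $\bar s\to S$ for (b) and (c). Your method avoids invoking Grothendieck existence; the paper's is shorter and more conceptual.

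One point in your step (a) needs correction. The short exact sequence you write, with kernel $\fm^n/\fm^{n+1}\otimes_{k(y)}\mls F|_{C_0}$, is not exact on the left in general: this identification would require $\mls O_C$ to be flat over $\mls O_{C',y}$, which fails precisely at the contracted locus. The actual kernel is $K_n^{\mls F}=\mls F\otimes_{\mls O_C}(\fm^n\mls O_C/\fm^{n+1}\mls O_C)$, which is only a \emph{quotient} of what you write. Fortunately this does not break the argument: the multiplication map gives a surjection $\fm^n/\fm^{n+1}\otimes_{k(y)}\mls O_{C_0}\twoheadrightarrow \fm^n\mls O_C/\fm^{n+1}\mls O_C$, and since $H^1(C_0,\mls O_{C_0})=0$ (which you have established) and $H^2$ vanishes on the one-dimensional $C_0$, right-exactness of $H^1$ forces $H^1(K_n^{\mls F})=0$ as well (using $\mls L|_{C_0}\simeq\mls O_{C_0}$). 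With this extra sentence inserted, your induction for $\mls L$ goes through.
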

\begin{proof}
    For (i), note that for two line bundles $\mls L_1, \mls L_2\in LB(C')$ the pullback map is given by
    $$
    \text{Hom}(\mls L_1, \mls L_2)\simeq H^0(C', \mls L_1^\vee \otimes \mls L_2)\xrightarrow{\sim} H^0(C, q^*\mls L_1^\vee \otimes q^*\mls L_2)\simeq \text{Hom}(q^*\mls L_1, q^*\mls L_2),
    $$
    where the middle isomorphism follows from the fact that $\mls O_{C'}\rightarrow Rq_*\mls O_{C}$ is an isomorphism.

    For (ii) it suffices to show that $q_*\mls L$ is a line bundle on $C$ and the adjunction map $q^*q_*\mls L\rightarrow \mls L$ is an isomorphism, at least after shrinking on $S$. Let $A$ be the completion of $\mls O_{S, \bar s}$. Since $\Sp(A) \to S$ is faithfully flat, it suffices to show these hold after replacing $S$ by $\Sp(A)$.  After this replacement we construct a line bundle $\mls M$ on $C'$ pulling back to $\mls L$ using deformation theory as follows.  Let $\mls M_{\bar s}$ be a line bundle on $C'_{ \bar s}$ whose pullback to $C_{\bar s}$ is $\mls L_{\bar s}$.  Since the deformation theory of $\mls M_{\bar s}$ (resp. $\mls L_{\bar s}$) is governed by $H^*(C_{\bar s}, \mls O_{C_{\bar s}})$ (resp. $H^*(C'_{ \bar s}, \mls O_{C', \bar s})$) and the map $H^*(C_{ \bar s}, \mls O_{C_{ \bar s}})\rightarrow H^*(C'_{ \bar s}, \mls O_{C'_{ \bar s}})$ is an isomorphism, we conclude that for each power $\mathfrak{m}^n$ of the maximal ideal in $A$ the line bundle $\mls L_{A_n}$ on $C_{ A_n}:= C\otimes _A(A/\mathfrak{m}^n)$ is obtained by pullback from a unique line bundle $\mls M_{A_n}$ on $C'_{ A_n}$.  By the Grothendieck existence theorem it then follows that $\mls L$ is the pullback of a line bundle $\mls M$ on $C'$. 
    
    To finish, note that by the projection formula we have 
    \begin{equation}\label{eq:B.15.1}
    q_*\mls L \simeq q_*q^*\mls M\simeq (q_*\mls O_C)\otimes _{\mls O_{C'}}\mls M\simeq \mls M,\end{equation} where the last isomorphism uses the fact that $q_*\mls O_C\simeq \mls O_{C'}$. So $q_*\mls L$ is a line bundle as required. Under this isomorphism, the adjunction map $q^*q_*\mls L\rightarrow \mls L$ is identified with the given isomorphism $q^*\mls M\simeq \mls L$.
    % Again using that $\mls O_{C'}\simeq Rq_*\mls O_{C}$ we conclude that $\mls M\simeq q_*\mls L$ as desired.  
\end{proof}

\begin{lem}\label{L:B.12} Let $q_1:C\rightarrow C_1$ and $q_2:C\rightarrow C_2$ be two contractions of prestable curves over a scheme $S$.  Let $\mls L_2$ be a relatively ample invertible sheaf on $C_2$ and let $\mls L$ denote $q_2^*\mls L_2$.  Then there exists a morphism $p:C_1\rightarrow C_2$ such that $p\circ q_1 = q_2$ if and only if the line bundle $\mls L$ descends to a line bundle on $C_1$.  Furthermore such a  morphism $p$ is unique and a contraction.
\end{lem}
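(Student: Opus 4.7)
The ``only if'' direction is immediate: if $p \circ q_1 = q_2$, then $\mls L = q_2^*\mls L_2 = q_1^*(p^*\mls L_2)$ gives the desired descent. The plan for the converse is to use relative ampleness of $\mls L_2$ to realize $C_2$ as a closed subscheme of a projective bundle, produce a morphism $\phi: C_1 \to \mathbf{P}(V)$ using sections of the descent $\mls M$ of $\mls L$, and then show that $\phi$ factors through $C_2$. Uniqueness and the contraction property of $p$ will be deduced from the hypotheses via \ref{L:properties}(ii) and a short ideal-sheaf argument.

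Concretely, suppose $\mls L \simeq q_1^*\mls M$ for a line bundle $\mls M$ on $C_1$; by \ref{L:B.11}(i), $\mls M$ is determined up to unique isomorphism. Working locally on $S$, choose $n$ so that $\mls L_2^{\otimes n}$ is very ample and $V := \pi_{2*}\mls L_2^{\otimes n}$ is locally free, giving a closed immersion $C_2 \hookrightarrow \mathbf{P}(V)$. By the projection formula applied to the contractions $q_1$ and $q_2$, together with the isomorphisms $q_{i*}\mls O_C \simeq \mls O_{C_i}$, there are canonical isomorphisms
\[
V \;\simeq\; \pi_*\mls L^{\otimes n} \;\simeq\; \pi_{1*}\mls M^{\otimes n}.
\]
The evaluation map $\pi_1^*V \to \mls M^{\otimes n}$ pulls back along $q_1$ to the evaluation $\pi^*V \to \mls L^{\otimes n}$, which is the $q_2$-pullback of the surjection $\pi_2^*V \to \mls L_2^{\otimes n}$ and hence surjective; since $q_1$ is surjective, a stalkwise Nakayama argument then implies that the original map $\pi_1^*V \to \mls M^{\otimes n}$ is surjective. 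This yields a morphism $\phi: C_1 \to \mathbf{P}(V)$ such that $\phi \circ q_1$ equals the composite $C \xrightarrow{q_2} C_2 \hookrightarrow \mathbf{P}(V)$.

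To show $\phi$ factors through $C_2$, set $Z := C_1 \times_{\mathbf{P}(V)} C_2$. Since $\phi \circ q_1$ factors through $C_2$, the morphism $q_1$ factors through the closed immersion $Z \hookrightarrow C_1$, so $q_{1*}\mls O_C$ is a pushforward from $Z$ and is therefore annihilated by the ideal sheaf $\mls I_Z$ of $Z$ in $C_1$. Because $\mls O_{C_1} \simeq q_{1*}\mls O_C$, this forces $\mls I_Z = \mls I_Z \cdot \mls O_{C_1} = 0$, so $Z = C_1$ and $\phi$ factors through $C_2$, giving the desired $p: C_1 \to C_2$ with $p \circ q_1 = q_2$. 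The same reasoning yields uniqueness of $p$: if $p, p'$ are two such morphisms, their equalizer is a closed subscheme of $C_1$ (using that $C_2/S$ is separated) through which $q_1$ factors, and the ideal-sheaf argument forces it to be all of $C_1$. Uniqueness then lets us glue the local construction over $S$. Finally, $p$ is a contraction by \ref{L:properties}(ii) applied to the contractions $q_1$ and $q_2 = p \circ q_1$. The main obstacle is the factorization step, which hinges essentially on the identity $q_{1*}\mls O_C = \mls O_{C_1}$ supplied by $q_1$ being a contraction.
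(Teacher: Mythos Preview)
Your proof is correct and rests on the same underlying idea as the paper's: use the isomorphisms $q_{i*}\mls O_C \simeq \mls O_{C_i}$ together with the projection formula to identify sections of $\mls L_2$, $\mls L$, and $\mls M$, and then exploit relative ampleness to construct $p$. The execution differs, however. After passing to affine $S$, the paper identifies $C_2$ directly with $\mathrm{Proj}\bigl(\bigoplus_{n\geq 0} H^0(C_1, \mls M^{\otimes n})\bigr)$, so the morphism $p:C_1\to C_2$ comes for free as the natural map to this Proj, with no separate factorization step needed. Your single-degree embedding into $\mathbf{P}(V)$ requires the additional (but clean) ideal-sheaf argument using $q_{1*}\mls O_C=\mls O_{C_1}$ to factor $\phi$ through the closed subscheme $C_2$. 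Conversely, your uniqueness argument via the equalizer of $p$ and $p'$ is more elementary than the paper's, which invokes \ref{L:B.11}(i) to show that $p'^*\mls L_2$ is uniquely isomorphic to $\mls M$ and then matches the resulting maps to Proj. Both approaches glue over $S$ via the uniqueness just established, and both obtain that $p$ is a contraction from \ref{L:properties}(ii).
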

\begin{proof}
%Let $\pi _i:C_i\rightarrow S$ and $\pi :C\rightarrow S$ be the structure morphisms. 
That $p$ is a contraction (when it exists) follows from \ref{L:properties}.

For the ``only if'' direction note that given a morphism $p:C_1\rightarrow C_2$ such that $p\circ q_1 = q_2$ the sheaf $p^*\mls L_1$ is a line bundle on $C_1$ whose pullback to $C$ is $\mls L$.

To prove the ``if'' direction and the uniqueness statment of the lemma we may replace $S$ by an \'etale cover (by descent), so it suffices to consider the case when $S$ is affine, in which case $\mls L_2$ is ample on $C_2$. In this case we have
$$
C_2\simeq \mathrm{Proj}(\oplus _{n\geq 0}H^0(C_2, \mls L_2^{\otimes n}))\simeq \mathrm{Proj}(\oplus _{n\geq 0}H^0(C, \mls L^{\otimes n}))
$$
and the map $q_2$ is identified with the natural map $C\rightarrow \mathrm{Proj}(\oplus _{n\geq 0}H^0(C, \mls L^{\otimes n}))$.  Moreover, since $\mls L_2$ is ample, for $n$ sufficiently large the sheaf $\mls L^{\otimes n}$ is generated by global sections and hence the pullback $\mls L_2^{\otimes n}$ is as well.

To see the ``if'' part of the lemma, let $\mls L_1$ be a line bundle on $C_1$ with $q_1^*\mls L_1\simeq \mls L$.  Then since $q_1$ is a contraction the pullback map $H^0(C_1, \mls L_1^{\otimes n})\rightarrow H^0(C, \mls L^{\otimes n})$ is an isomorphism.  This implies that for $n$ sufficiently large the sheaf $\mls L_1^{\otimes n}$ is generated by global sections and that the map 
\begin{equation}\label{eq:B.16.1}
C_1\rightarrow \mathrm{Proj}(\oplus _{n\geq 0}H^0(C_1, \mls L_1^{\otimes n}))\simeq \mathrm{Proj}(\oplus _{n\geq 0}H^0(C, \mls L^{\otimes n}))\simeq C_2
\end{equation}
defines a morphism $p:C_1\rightarrow C_2$ such that $p\circ q_1 = q_2$ proving the ``if'' direction.  

For uniqueness, let $p': C_1 \to C_2$ be a morphism such that $p' \circ q_1 = q_2$. Then 
\[
p'^*\mls L_2 \simeq q_{1, *}q_1^*p'^*\mls L_2 \simeq q_{1, *}q_2^*\mls L_2 \simeq q_{1, *} \mls L \simeq q_{1, *}q_1^*\mls L_1 \simeq \mls L_1,
\]
where the first and last isomorphisms follow from the projection formula and the fact that $q_1$ is a contraction (see e.g. \eqref{eq:B.15.1}). In fact by \ref{L:B.11}(i) this isomorphism $\alpha: p'^*\mls L_2 \to \mls L_1$ is the unique one such that the composition
\[
q_2^*\mls L_2 \simeq p_1^*p'^*\mls L_2 \xrightarrow{q_1^*\alpha} q_1^*\mls L_1 \simeq \mls L
\]
agrees with the identification already given. Since $\mls L_2$ is ample on $C_2$ we have an identification of $p'$ with a morphism of the form $\eqref{eq:B.16.1}$ where the isomorphism 
$\mathrm{Proj}(\oplus _{n\geq 0}H^0(C_1, \mls L_1^{\otimes n}))\simeq \mathrm{Proj}(\oplus _{n\geq 0}H^0(C, \mls L^{\otimes n}))$ is the one induced by $\alpha$. Now uniqueness of $\alpha$ implies that $p'$ agrees with $p$.
\end{proof}

\begin{lem}\label{L:B.13} Let $q_1:C\rightarrow C_1$ and $q_2:C\rightarrow C_2$ be two contractions of prestable curves over a scheme $S$.  If $\bar s\rightarrow S$ is a geometric point for which there exists a morphism $p_{\bar s}:C_{1, \bar s}\rightarrow C_{2, \bar s}$ such that $p_{\bar s}\circ q_{1, \bar s} = q_{2, \bar s}$, then after replacing $S$ by an \'etale neighborhood of $\bar s$ there exists a morphism $p:C_1\rightarrow C_2$ such that $p\circ q_1 = q_2$, and such a morphism $p$ is unique.
\end{lem}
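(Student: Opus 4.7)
The plan is to reduce to the setting of \ref{L:B.12} by working étale locally on $S$ and produce a relatively ample line bundle on $C_2$ that descends along $q_1$. First, I would replace $S$ by an étale neighborhood of $\bar s$ so that $S$ is affine and $C_2/S$ admits a relatively ample line bundle $\mls L_2$; such a line bundle exists étale locally on $S$ by standard arguments for proper flat curves. Set $\mls L := q_2^*\mls L_2$, a line bundle on $C$.

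Next, I would translate the hypothesis at $\bar s$ into the hypothesis of \ref{L:B.11}(ii). Because $p_{\bar s}\circ q_{1,\bar s} = q_{2,\bar s}$, we have
\[
\mls L|_{C_{\bar s}} \;=\; q_{2,\bar s}^*(\mls L_2|_{C_{2,\bar s}}) \;=\; q_{1,\bar s}^*\bigl(p_{\bar s}^*(\mls L_2|_{C_{2,\bar s}})\bigr),
\]
so $\mls L|_{C_{\bar s}}$ lies in the essential image of $q_{1,\bar s}^*:LB(C_{1,\bar s})\rightarrow LB(C_{\bar s})$. Applying \ref{L:B.11}(ii) to the contraction $q_1:C\rightarrow C_1$ and the line bundle $\mls L$, after shrinking $S$ around $s$ we obtain a line bundle $\mls L_1$ on $C_1$ with $q_1^*\mls L_1 \simeq \mls L$.

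At this point \ref{L:B.12} applies directly: the line bundle $\mls L = q_2^*\mls L_2$ descends along $q_1$, so there exists a unique morphism $p:C_1\rightarrow C_2$ with $p\circ q_1 = q_2$, and this $p$ is itself a contraction. Uniqueness of the extension then also follows from the uniqueness clause of \ref{L:B.12}. The main subtlety here is only the initial reduction ensuring that a relatively ample line bundle on $C_2$ exists after an étale base change; once that is in place, the result follows formally from \ref{L:B.11}(ii) and \ref{L:B.12}, and no further work is needed.
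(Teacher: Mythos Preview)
Your proposal is correct and follows essentially the same approach as the paper: both arguments pick a relatively ample line bundle $\mls L_2$ on $C_2$ (after an \'etale base change), use the factorization $p_{\bar s}\circ q_{1,\bar s}=q_{2,\bar s}$ to see that $q_2^*\mls L_2$ restricted to the fiber lies in the image of $q_{1,\bar s}^*$, invoke \ref{L:B.11}(ii) to descend it along $q_1$ after shrinking $S$, and then conclude via \ref{L:B.12}. The paper's proof is a two-sentence summary of exactly this; you have simply unpacked the steps.
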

\begin{proof}
    Combining \ref{L:B.11} (ii) and \ref{L:B.12} we can, after replacing $S$ by an \'etale neighborhood of $\bar s$, find a relatively ample invertible sheaf $\mls L_2$ on $C_2$ such that $q_2^*\mls L_2$ descends to an invertible sheaf on $C_1$.  The result then follows from \ref{L:B.12}.
\end{proof}

\subsection{Weighted stable maps and contractions}\label{SS:4.10}
We now recall the notion of a \textit{weighted stable map} from \cite{AlexeevGuy, bayermanin}, an extension of the notion of weighted stable curves in \cite{Hassett}. This definition and the contraction result \ref{T:contractthm} proved in this section will figure prominently in the companion paper \cite{OWarticle2}. In this paper, we use \ref{T:contractthm} to prove \ref{C:sequences}.

Let $S$ be a scheme, let $X \to S$ be a separated morphism locally of finite presentation from an algebraic space. Fix a nonnegative integer $g$ and rational \textit{weights} $\ba = (a_1, \ldots, a_n)$ with $a_i \in \mathbf{Q} \cap (0, 1]$, satisfying
\[
2g-2 + a_1 + \ldots + a_n > 0.
\]
A
  \emph{prestable map to $X$ of type $(g, \ba)$} is a tuple 
$(C/S, \{s_i\}_{i=1}^n, f:C\rightarrow X)$
where $(C/S, \{s_i\}_{i=1}^n)$ is an $n$-marked prestable curve over $S$ of genus $g$, $f$ is an $S$-morphism, and
for every geometric point $\bar s\rightarrow S$ and $x\in C_{\bar s}$ we have
$$
\sum _{i, \, s_i(\bar s) = x}a_i\leq 1.
$$
% An isomorphism of prestable maps of type $(g, \ba)$ over $S$ is an isomorphism of the associated $n$-marked prestable curves over $S$ that commutes with the morphisms to $X$.
The prestable map is \emph{stable} if moreover the following conditions holds for every geometric point $\bar s\rightarrow S$:
\begin{enumerate}
\item [] If $E\subset C_{\bar s}$ is an irreducible component such that $f_{\bar s}(E)$ has dimension zero, then either the normalization $\widetilde E$ of $E$ has positive genus, or
\begin{equation}\label{eq:numerics}
\#\{e\in \widetilde E \mid \text{$e$ maps to a node of $C_{\bar s}$}\}+\sum _{i, s_i(\bar s)\in E}a_i>2.
\end{equation}
\end{enumerate}

The main result regarding contractions and weighted stable maps is the following.

\begin{thm}\label{T:contractthm} Let $(C, \{s_i\}_{i=1}^n, f:C \to X)$ be a prestable map of type $(g, \ba)$. There exists a unique factorization
$$
\xymatrix{
C\ar@/^2pc/[rr]^-f \ar[r]^-{q }& C^c\ar[r]^-{f^c}& X,}
$$
where $q$ is a contraction and $(C^c/S, \{q\circ s_i\}_{i=1}^n; f^c)$ is a stable map to $X$ of type $(g, \ba)$.
\end{thm}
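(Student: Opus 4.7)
The plan is to handle uniqueness and existence separately, reducing family-level statements to geometric fibers via \ref{L:B.13} and constructing the factorization by iteratively contracting the ``unstable'' rational components of the closed fiber, using the family contractions of \ref{P:B.21} as building blocks.

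For uniqueness, suppose we are given two factorizations $C \xrightarrow{q_i} C^c_i \xrightarrow{f^c_i} X$ with $(C^c_i, f^c_i)$ stable of type $(g,\ba)$, for $i = 1, 2$. By \ref{L:B.13} it suffices to produce compatible isomorphisms $C^c_{1, \bar s} \simeq C^c_{2, \bar s}$ on each geometric fiber. On such a fiber, the equality $q_{i, *} \mls O_{C_{\bar s}} = \mls O_{C^c_{i, \bar s}}$ together with stability forces $q_{i, \bar s}$ to contract precisely those irreducible components $E \subset C_{\bar s}$ for which $f_{\bar s}(E)$ is a point and $E$ violates \eqref{eq:numerics}: contracting fewer leaves an unstable component in $C^c_i$, while contracting more would produce components of $C^c_i$ outside the image of $f_{\bar s}$. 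This pins down each $q_{i,\bar s}$ uniquely, and the desired canonical identification of $C^c_{1,\bar s}$ with $C^c_{2,\bar s}$ follows from the description of each $C^c_{i, \bar s}$ as the ringed-space quotient of $C_{\bar s}$ by $q_{i,*}\mls O_{C_{\bar s}}$.

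For existence, I would first reduce to the case where $S = \Spec(A)$ is strictly henselian local, using \ref{L:properties}(iii) together with the uniqueness just proved to descend along \'etale covers. In the closed fiber $C_{\bar s}$ identify the finite set of unstable components. The constraints $0 < a_i \leq 1$ and the overall positivity $2g - 2 + \sum a_i > 0$ imply that each such component $E$ is either a \emph{rational tail} (one node with $\sum_{s_i \in E} a_i \leq 1$) or a \emph{rational bridge} (two nodes, no markings)---the cases of zero nodes or high node/weight count are ruled out respectively by the global positivity and by the very definition of unstability. Pick one such $E$ and apply the corresponding family contraction from \ref{P:B.21} to produce a contraction $q^{(1)} : C \to C^{(1)}$ over $S$; the morphism $f$ descends to $f^{(1)} : C^{(1)} \to X$ because $f|_E$ is constant to a point of $X$ (as $X$ is separated) and $q^{(1)}_*\mls O_C = \mls O_{C^{(1)}}$. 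Iterating, since each step strictly decreases the number of irreducible components in the closed fiber, the process terminates after finitely many steps at the desired $(C^c, f^c)$.

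The main technical obstacle is to verify that the terminal object is in fact stable. A single contraction of a rational tail may cause an adjacent component to become newly unstable, since it removes one of its nodes and adds markings, so stability is not preserved step by step---only the property of being a prestable map is. What is preserved is finiteness of unstable components together with the strict decrease in total component count, which is why termination suffices: by construction no unstable component survives at the last step, and this is exactly stability. A subsidiary check, that prestability is preserved when a rational tail is contracted (so that the markings which collide at the new image point $q^{(1)}(E) \in C^{(1)}$ still have weight sum at most one), follows immediately from the defining inequality $\sum_{s_i \in E} a_i \leq 1$ on the contracted tail.
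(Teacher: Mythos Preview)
Your uniqueness argument has a genuine gap. The claim that $q_{i,\bar s}$ contracts precisely the components $E$ with $f_{\bar s}(E)$ a point and $E$ violating \eqref{eq:numerics} is false: a component can satisfy \eqref{eq:numerics} in $C_{\bar s}$ and still be contracted in the stabilization. For instance, take $X$ a point and $C_{\bar s} = E_0 \cup E_1 \cup E_2$ a chain with $E_0$ of genus $1$, $E_1$ rational with two nodes and one marking of weight $1/2$, and $E_2$ an unmarked rational tail. Then $E_1$ satisfies $2 + 1/2 > 2$ in $C_{\bar s}$, but after contracting $E_2$ it has one node and weight $1/2$, hence must also be contracted. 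So the contracted locus is not characterized by \eqref{eq:numerics} on the original curve, and the sentence about ``components of $C^c_i$ outside the image of $f_{\bar s}$'' does not parse. The paper instead proves the stronger statement \ref{T:stronger} by induction on the number of contracted components: given any intermediate prestable contraction $C\to C'$, \ref{L:B.16}(ii) and (iii) locate a component of $C$ contracted in both $C'$ and $C^c$, and contracting it reduces the count.

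Your existence argument is closer to correct but also incomplete: \ref{P:B.21} is stated over a separably closed field, not over a strictly henselian local base, so it does not supply the family contractions you invoke (the cited Stacks Project tags do extend to families, so this is fixable, but it is a missing step as written). The paper takes a different route entirely: after reducing to projective $X$ it adds, \'etale-locally on $S$, auxiliary sections $y_1,\dots,y_m$ of weight $1$ so that the enlarged marked map is already stable for the weight $(a_1,\dots,a_n,1,\dots,1)$, and then cites \cite[3.1]{AlexeevGuy} for the $\ba$-stabilization. Your iterative approach, once the family contractions are properly supplied, would yield a more self-contained argument; the paper's is shorter but leans on the external reference.
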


In proving \ref{T:contractthm} we will in fact prove the following slightly stronger version of the uniqueness part of \ref{T:contractthm}. (The following theorem also explains what is meant by uniqueness in \ref{T:contractthm}). 
%We will prove both results together.

\begin{thm}\label{T:stronger}
Let $(C, \{s_i\}_{i=1}^n, f)$ be a prestable map to $X$ of type $(g, \ba)$. Suppose we have two factorizations of $f: C \to X$ as indicated by the solid arrows in the diagram 
  $$
    \xymatrix{
    & C'\ar[rd]^-{f'}\ar@{-->}[dd]^-{p}& \\
    C\ar[ru]^-{q'}\ar[rd]_-{q}&& X.\\
    & C^c\ar[ru]_-{f^c}& }
    $$
    Assume that $q'$ and $q$ are contractions, that $(C', \{q\circ s_i)\}_{i=1}^n, f')$ is prestable of type $(g, \ba)$, and that $(C^c, \{q\circ s_i)\}_{i=1}^n; f^c)$ is stable of type $(g, \ba)$. Then there exists a unique morphism $p: C' \to C^c$ such that $p \circ q' = q$ and $f^c \circ p = f'$.

\end{thm}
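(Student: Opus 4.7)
The plan is to first handle uniqueness of $p$ and then reduce existence to a combinatorial claim about fibers of $q'$. For uniqueness: since $q': C \to C'$ is a contraction, it is surjective with $q'_*\mls O_C = \mls O_{C'}$, so its scheme-theoretic image is all of $C'$. Thus two morphisms $C' \to C^c$ whose compositions with $q'$ agree must coincide, by separatedness of $C^c$; the same reasoning applied to $X$ shows that $f^c\circ p = f'$ follows automatically from $p\circ q' = q$.

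For existence, I would invoke \ref{L:B.13} to reduce to the case $S = \mathrm{Spec}(k)$ for $k$ algebraically closed. Over $k$, apply \ref{L:B.12}: fix an ample line bundle $\mls L^c$ on $C^c$ and check that $\mls L := q^*\mls L^c$ descends along $q'$. Every fiber of $q'$ is a connected nodal curve of arithmetic genus zero (using $Rq'_*\mls O_C = \mls O_{C'}$), hence a tree of smooth $\mathbf{P}^1$'s, on which a line bundle is trivial precisely when it has degree zero on every component. Since $\mls L^c$ is ample, $\deg(\mls L|_E) = 0$ iff $q$ contracts $E$. Hence everything reduces to the combinatorial claim: for every positive-dimensional fiber $F'$ of $q'$, $q$ contracts every irreducible component of $F'$.

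To prove the claim, let $p' = q'(F')$ and let $k \in \{1,2\}$ be the number of points where $F'$ meets $\overline{C \setminus F'}$ (with $k = 1$ iff $p'$ is smooth in $C'$, $k = 2$ iff $p'$ is a node). Since the sections $q'\circ s_i$ land in the smooth locus of $C'$, $k = 2$ forces $F'$ to contain no sections, while $k = 1$ gives $\sum_{s_i \in F'} a_i \leq 1$ from prestability of $(C', \{q' s_i\})$ at $p'$. Summing special-point counts over the $n$ components of the tree $F'$ yields
\[
\sum_{E \subset F'} (\#\nu_E + a_E) \;=\; 2(n-1) + k + \sum_{s_i \in F'} a_i \;\leq\; 2n,
\]
so there is at least one \emph{unstable} component $E_0 \subset F'$, meaning $E_0$ is rational, $f$-contracted, and satisfies $\#\nu_{E_0} + a_{E_0} \leq 2$. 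The key subclaim is that $q$ must contract any such $E_0$: if not, $q(E_0) \subset C^c$ would be a rational $f^c$-contracted component, and a bookkeeping of nodes lost to, and sections gained from, neighbouring contracted fibers of $q$ (each of total weight $\leq 1$ by prestability of $C^c$) yields $\#\nu_{q(E_0)} + a_{q(E_0)} \leq \#\nu_{E_0} + a_{E_0} \leq 2$, contradicting stability of $C^c$. Once $E_0$ is contracted, both $q$ and $q'$ factor through $C \to C/E_0$ as contractions, and I would finish by induction on $n$.

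I expect the main obstacle to be the subclaim that each unstable component $E_0$ is contracted by $q$: it requires careful tracking of how the special-point count on a component changes when neighbouring fibers collapse, exploiting the constraint that the weighted sum of sections at each collapsed point of $C^c$ is at most $1$. The remaining steps, namely the reduction via \ref{L:B.13} and the line-bundle descent via \ref{L:B.12}, are largely mechanical once this combinatorial core is settled.
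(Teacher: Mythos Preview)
Your proposal is correct and follows essentially the same route as the paper: reduce to an algebraically closed field via \ref{L:B.13}, then induct by locating an unstable component $E_0$ in a fiber of $q'$ and showing it is also contracted by $q$, replacing $C$ by $C/E_0$. The combinatorial step you flag as the main obstacle---that any such unstable $E_0$ must be contracted by the stable contraction $q$---is exactly \ref{L:B.16}(iii), and your counting argument producing $E_0$ is \ref{L:B.16}(ii); the line-bundle descent framing via \ref{L:B.12} is a harmless repackaging, since once the induction shows every $q'$-fiber is $q$-contracted, the factorization $p$ comes directly from $q'_*\mls O_C = \mls O_{C'}$.
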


\begin{rem}
If $X$ is a scheme projective over $S$, if $C$ is also a scheme and if $(C/S, \{s_i\}_{i=1}^n; f)$ is stable of weight $\bb=(b_1, \ldots, b_n)$ for some choice of $b_i \in (0, 1]$, then the existence part of \ref{T:contractthm} is proved in \cite[3.1]{AlexeevGuy} \cite[1.2.1]{bayermanin}  (see also \cite[Thm~4.1]{Hassett}). 
\end{rem}

We prove \ref{T:contractthm} and \ref{T:stronger} together in several steps \ref{SS:first}-\ref{SS:last} below. 

\subsubsection{Contractions over separably closed fields}
\label{SS:first}
To begin we completely describe contractions over separably closed fields. Note that in particular that when $X=S$, part (iv) of the next lemma is \ref{C:sequences} in the case when the base is a separably closed field.

\begin{lem}\label{L:B.16} Let $q:(C/k, \{s_i\}_{i=1}^n)\rightarrow (C'/k, \{s_i'\}_{i=1}^n)$ be a contraction of prestable $n$-marked curves over a separably closed field $k$ and let $g:C'\rightarrow X$ be a morphism.  Assume that $(C/k, \{s_i\}_{i=1}^n, g\circ q)$ and $(C'/k, \{s_i'\}_{i=1}^n, g)$ are prestable maps of type $(g, \ba) $.
\begin{enumerate}
    \item [(i)] Any irreducible component $E\subset C$ contracted in $C'$ is a smooth rational curve.
    \item [(ii)] 
If $q$ is not an isomorphism, there exists an irreducible $E\subset C$ contracted in $C'$ with $E$  a smooth rational curve for which one of the following holds:
\begin{enumerate}
    \item [(a)] $E$ contains two nodes and no marked points.
    \item [(b)] $E$ contains one node and $\sum _{i, s_i\in E}a_i\leq 1.$
\end{enumerate}
\item [(iii)] 
If $(C'/k, \{s_i'\}_{i=1}^n; g)$ is stable then any component $E\subset C$ as in (ii)  is contracted in $C'$.
\item[(iv)] The contraction $q$ can be factored as a sequence of contractions$$
C = C_0\rightarrow C_1\rightarrow \cdots \rightarrow C_r = C',
$$
where each $C_i\rightarrow C_{i+1}$ either contracts a rational bridge or a rational tail as in \ref{P:B.21}.
\item[(v)] The contraction $q$ has a unique ``greedy factorization''
\[
C = C_0 \to C_1 \to \ldots \to C_r = C'
\]
where for $i=0, \ldots, r-2$ the contraction $C_i \to C_{i+1}$ contracts all rational tails of $C_i$ with zero-dimensional images in $C'$, and every irreducible component contracted by $C_{r-1} \to C_r$ is a rational bridge.

\end{enumerate}
\end{lem}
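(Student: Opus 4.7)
The plan is to prove the four assertions in sequence, working throughout over the separably closed field $k$. The starting point is the contraction hypothesis $\mls O_{C'}\simeq Rq_*\mls O_C$: for any closed point $p\in C'$, proper base change applied to $\{p\}\hookrightarrow C'$ shows that the fiber $F:=q^{-1}(p)$ is connected with $H^1(F,\mls O_F)=0$, hence is a connected nodal curve of arithmetic genus zero. Such a curve is necessarily a tree of smooth rational components, which gives (i) immediately, since any contracted $E$ lies in such a fiber.

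For (ii), assume $q$ is not an isomorphism and pick a $p\in C'$ whose fiber $F$ is positive-dimensional. Let $e$ denote the number of \emph{external} nodes of $F$, that is, nodes of $C$ lying on $F$ whose other branch leaves $F$. Since $p$ is either a smooth point or a node of $C'$ the constraint is $e\in\{1,2\}$. The crucial input from prestability is that when $e=2$ the point $p$ is a node of $C'$, so no section $s_i'$ can land at $p$, and hence $F$ carries no markings at all; and when $e=1$, prestability of $(C',g)$ at $p$ gives $\sum_{s_i\in F}a_i\leq 1$. A case analysis on the dual tree of $F$ then exhibits a leaf $E$ with exactly one internal node and an allowed number of external nodes and markings, making it satisfy (a) or (b): if $F$ is irreducible it is itself such an $E$, and otherwise the constraints above rule out every alternative configuration of some leaf of the tree.

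For (iii) I would argue by contradiction: suppose $E\subset C$ is a smooth rational component satisfying (a) or (b) and $q(E)$ is not a point. Then $E':=q(E)$ is an irreducible component of $C'$ and the restriction $q|_E:\mathbf{P}^1\to E'$ is birational, hence, because $E$ is smooth, an isomorphism onto $E'$; in particular $\widetilde{E'}\simeq\mathbf{P}^1$. The number of points of $\widetilde{E'}$ mapping to nodes of $C'$ and the weight $\sum_{s_i'\in E'}a_i$ are bounded by the analogous numerics for $E$ in $C$, yielding $\#\{\text{nodes on }\widetilde{E'}\}+\sum_{s_i'\in E'}a_i\leq 2$. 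Combined with $g(E')=f(E)$ being zero-dimensional (in the situation of interest) and the vanishing of the geometric genus of $E'$, this contradicts the stability condition for $(C',g)$ applied to $E'$.

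For (iv) I would induct on the number of irreducible components of $C$. When $q$ is an isomorphism there is nothing to prove. Otherwise, use (ii) to extract a contracted $E$ satisfying (a) or (b) and invoke \ref{P:B.21} to obtain an elementary contraction $c:C\to C_1$ collapsing $E$. To factor $q$ through $c$ it suffices, by \ref{L:B.12}, to show that for some relatively ample $\mls L'$ on $C'$ the pullback $q^*\mls L'$ descends to $C_1$; this holds because $q^*\mls L'|_E\simeq\mls O_E$ (as $q$ collapses $E$), and a standard descent along the contraction $c$ then produces the required line bundle on $C_1$. The resulting morphism $p:C_1\to C'$ is itself a contraction by \ref{L:properties}(ii), and since $C_1$ has strictly fewer irreducible components than $C$, the induction hypothesis applied to $p$ finishes the proof. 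The principal technical difficulty will be (ii), where the delicate interaction between the tree structure of $F$, the distribution of external nodes, and the prestability constraints on markings must be carefully unwound to locate a leaf of the right combinatorial type.
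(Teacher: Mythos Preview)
Your proposal is correct and tracks the paper's argument closely, with two places where the packaging differs and one where you are in fact more careful than the paper.

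For (i), the paper uses the exact sequence $0\to\mls O_C\to\mls O_D\oplus\mls O_E\to\mls O_N\to 0$ (with $D$ the union of the other components) and pushes forward to get $R^1q_*\mls O_E=0$ directly; your fiberwise argument gives the same conclusion, though what you actually need is the theorem on formal functions rather than ``proper base change,'' since $q$ is not flat. For (ii), the paper argues that if no component of $F$ has exactly one node then $F$ must be a chain, and then invokes uniqueness against the explicit bridge contraction to see that $p$ is a node; you instead organize everything around the number $e$ of external nodes and the biconditional $e=2\Leftrightarrow p$ nodal. Both arguments implicitly use the same unstated fact---that the number of external attaching points of $F$ equals the number of branches of $C'$ at $p$---and neither spells it out. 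For (iv), the paper contracts all tails in the collapsed locus first and then all bridges, appealing to uniqueness (\ref{L:B.12}) to identify the result with $C'$; your induction peeling off one $E$ at a time is a cleaner version of the same idea, and the descent of $q^*\mls L'$ along $c$ you call ``standard'' is exactly the argument of \ref{L:B.11}(ii) rerun for $c:C\to C_1$ rather than over the base.

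For (iii), your parenthetical ``in the situation of interest'' is correct and necessary: as literally stated, (iii) requires the additional hypothesis that $(g\circ q)(E)$ is a point (otherwise take $C=E_1\cup E_2$ two lines meeting at a node, $q$ contracting $E_2$, one marking of weight $1$ on $E_1$, and $g$ nonconstant on $C'\simeq\mathbf P^1$; then $E_1$ satisfies (b) but is not contracted). The paper's own proof of (iii) has the same omission. In the sole application (the proof of \ref{T:stronger}) the $E$ produced by (ii) is contracted by a \emph{second} contraction through which $f$ factors, which forces $f(E)$ to be a point and closes the gap.
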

\begin{proof}
For (i), 
let $D\subset C$ be the union of the components other than $E$ and let $N\subset C$ be the intersection $D\cap E$, so $N$ is a finite number of nodes.  We then have an exact sequence of sheaves on $C$
$$
0\rightarrow \mls O_C\rightarrow \mls O_D\oplus \mls O_E\rightarrow \mls O_N\rightarrow 0.
$$
Applying $Rq_*$  and noting that $R^1q_*\mls O_N = 0$ we get a surjection $R^1q_*\mls O_C\rightarrow R^1q_*\mls O_E$ which since $q$ is a contraction implies that $R^1q_*\mls O_E = 0$.  This sheaf is the skyscraper sheaf at the image point of $E$ given by $H^1(E, \mls O_E)$ and therefore $E$ has arithmetic genus $0$.  It follows that $E$ is a smooth rational curve proving (i).

A similar argument shows that for a point $x\in C'(k)$ the preimage $q^{-1}(x)$ is a tree of rational curves. This implies (ii). Indeed, if $q^{-1}(x)$ contains a component $E$ which has only one node, then since $(C'/k, \{s_i'\}_{i=1}^n; g)$ is prestable we must have $\sum _{i, s_i\in E}a_i\leq 1$ and (b) holds. If not, the tree $q^{-1}(x)$ is a chain of rational curves and any irreducible component $E$ of this chain has two nodes. We claim $E$ has no marks, in which case (a) holds; for this it is enough to show that $x$ is a node. But by the uniqueness in \ref{L:B.12} it must be that $C'$ is obtained by contracting the rational bridges in this chain via the explicit construction in \cite[\href{https://stacks.math.columbia.edu/tag/0E3M}{Tag 0E3M}]{stacks-project}. From this it follows that $x$ is a node.

%and if $x_1, x_2\in C(k)$ are the two points where this chain is attached to the rest of the curve we find that $x$ is the node obtained by removing the components in $q^{-1}(x)$ and gluing the result along $x_1$ and $x_2$.  In particular, $x$ is a node so (a) holds.

To see (iii) note that if such a component $E$ is not contracted in $C'$ then $E$ maps isomorphically to its image $E'\subset C'$ since $q_*\mls O_C = \mls O_{C'}$, and hence if $x\in E(k)$ is a smooth point then $x = q^{-1}q(x)$.  In particular, the image of a point of $E$ distinct from the nodes is a smooth point of $C'$ in $E'$.  Therefore the number of nodes of $E'$ is less than or equal to the number of nodes of $E$, and the number of marked points on $E'$, counted with weights, is less than or equal to the number of marked points on $E$, counted with weights.  Therefore if $E$ is not contracted in $C'$ we obtain a component of $C'$ which violates the stability assumption.

To prove (iv), 
if we can find some contraction $C \to C'$ that factors as a sequence of contractions of rational tails and rational bridges, then this contraction is equal to $q$ by the uniqueness of contraction morphisms in \ref{L:B.12}.
Let $D \subset C$ be the union of irreducible components contracted in $C'$. We construct the desired contraction $C \to C'$ by first contracting all rational tails in $D$ and then contracting all rational bridges. Once this is done the curves must be equal by part (ii).

Finally to prove (v),
the statement provides an algorithm for the factorization, and hence we have uniqueness. Furthermore, since the number of irreducible components of $C$ is finite the repeated contractions of rational tails in the $C_i$ arrives eventually at a morphism $C_{\ell-1}\rightarrow D$ which does not contract any rational tails. Hence every irreducible component contracted by $C_{\ell-1} \to C_\ell=C'$ is a rational bridge.

\end{proof}

\subsubsection{Reduction of \ref{T:contractthm} and \ref{T:stronger} to the case when $X$ is projective}\label{S:reduction to projective}

Let $X \to S$ be as in \ref{SS:4.10} and let $(C/S, \{s_i\}_{i=1}^n; f)$ be a prestable map to $X$ of type $(g, \ba)$.
Let $Y\subset X$ be the scheme-theoretic image of $f$.  Then $Y\rightarrow S$ is a proper morphism (by \cite[\href{https://stacks.math.columbia.edu/tag/08AJ}{Tag 08AJ}]{stacks-project}), locally of finite presentation, and has fibers that are of dimension $\leq 1$.   Let $g:C\rightarrow Y$ be the map induced by $f$.  Any factorization of $f$ through another prestable map is induced by a unique factorization of $g$, so it suffices to prove the theorems for $g:C\rightarrow Y$.  Furthermore, by descent theory and the uniqueness part of the theorems it suffices to prove the theorems after replacing $S$ by an \'etale cover.  Now by the following variant of 
\cite[\href{https://stacks.math.columbia.edu/tag/0E6F}{Tag 0E6F}]{stacks-project} (removing the flatness assumption), the algebraic space $Y$ is \'etale locally on $S$ a projective scheme.  This reduces the proofs of \ref{T:contractthm} and \ref{T:stronger} to the case when $X$ is projective. 

\begin{lem}\label{lem:is-a-scheme} Let $S$ be a scheme and $\pi :X\rightarrow S$ a proper morphism locally of finite presentation with fibers of dimension $\leq 1$.  Then there exists an \'etale cover $\{S_i\rightarrow S\}$ of $S$ such that the base changes $X_{S_i}\rightarrow S_i$ are projective.
\end{lem}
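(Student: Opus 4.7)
The strategy is to produce a relatively ample line bundle \'etale locally on $S$, since projectivity of the proper morphism $\pi$ is equivalent to the existence of an $S$-ample line bundle on $X$. First, standard limit arguments (using that $\pi$ is locally of finite presentation) reduce us to the case where $S$ is affine Noetherian. It then suffices to show that every point $s\in S$ admits an \'etale neighborhood $U\to S$ over which $X_U\to U$ is projective.

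Fix such a point $s$. The geometric fiber $X_s$ is a proper algebraic space of dimension $\leq 1$ over the field $\kappa(s)$, and any such space is actually a projective scheme (cf.\ the results on proper curves in the Stacks project). Choose an ample line bundle $\mls L_0$ on $X_s$. I would then try to lift $\mls L_0$ step by step to line bundles $\mls L_n$ on the infinitesimal thickenings $X_n := X\times _S\Spec(R/\mathfrak{m}^{n+1})$, where $R = \mls O_{S,s}^{sh}$. The obstruction to extending $\mls L_n$ to $\mls L_{n+1}$ lies in $H^2(X_s, (\mathfrak{m}^n/\mathfrak{m}^{n+1})\otimes _{\kappa(s)}\mls O_{X_s})$, which vanishes since $X_s$ has cohomological dimension $\leq 1$. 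Crucially, this deformation theory does not require $\pi$ to be flat: we are deforming line bundles on the fixed topological space $|X_s|$ along a square-zero thickening of the structure sheaf, and the usual computation with the exact sequence $0 \to \mls I\to \mls O_{X_{n+1}}^*\to \mls O_{X_n}^*\to 0$ applies. By Grothendieck existence applied to the proper morphism $X\times _S\Spec(\widehat R)\to \Spec(\widehat R)$, the compatible system $\{\mls L_n\}$ algebraizes to a line bundle $\widehat{\mls L}$ on $X\times _S\Spec(\widehat R)$.

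Next I would invoke Artin approximation to descend $\widehat{\mls L}$ to a line bundle on $X_U$ for some \'etale neighborhood $U\to S$ of $s$, and then shrink $U$ further so that the resulting line bundle is $U$-ample (ampleness is an open condition on the base of a proper morphism). This produces the required projective structure.

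\textbf{Main obstacle.} The trickiest step is the algebraization/descent in the absence of flatness. When $\pi$ is flat and proper, one has a representable Picard functor to which Artin approximation applies cleanly, but without flatness the representability of $\mathbf{Pic}_{X/S}$ is less clear. To get around this one can work directly with the functor classifying line bundles on $X$ (this functor is still locally of finite presentation over $S$, since $X$ itself is, and we may apply Artin approximation in the form for functors on the category of $S$-algebras satisfying the usual axioms). Alternatively, for $n$ sufficiently large the ample line bundle $\mls L_0^{\otimes n}$ gives rise to a closed immersion $X_s\hookrightarrow \P^N_{\kappa(s)}$, and one can try to lift this embedding as a point of the appropriate Hilbert scheme $\underline{Hilb}_{\P^N/S}$, which is locally of finite presentation over $S$. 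Artin approximation then yields the embedding over an \'etale neighborhood of $s$, directly giving the desired projective structure without needing to transport ampleness separately.
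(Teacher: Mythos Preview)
Your proposal is correct and follows essentially the same route as the paper: reduce to Noetherian by limit arguments, lift an ample line bundle from the fiber through infinitesimal thickenings using that the obstruction $H^2$ vanishes in dimension $\leq 1$, algebraize over the completion via Grothendieck existence, descend to an \'etale neighborhood via Artin approximation, and finally use that relative ampleness is open on the base. Your caution about the non-flat Picard functor is well-placed, but as you note the functor of isomorphism classes of line bundles on $X_{-}$ is still locally of finite presentation, so Artin approximation applies; the paper invokes this step without further comment.
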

\begin{proof}
We fix a geometric point $\bar s\rightarrow S$ and show that in some \'etale neighborhood of $\bar s$ the map $\pi $ is projective.  Since the morphism $\pi $ is locally of finite presentation there exists an affine open neighborhood of $U\subset S$ of $\bar s$ and a morphism $U\rightarrow U_0$ with $U_0$ of finite type over $\mathbf{Z}$ such that $X_{U}$ is obtained by base change from a scheme $X_{U_0}$ over $U_0$ satisfying the same assumptions as in the theorem.  It therefore suffices to consider the case when $S$ is noetherian.
Since relative ampleness is an open condition, and since geometric fibers of $\pi$ admit ample line bundles, it is enough to show that a line bundle on $X_{\bar s}$ extends to a bundle on $X$, after possibly replacing $S$ by an \'etale neighborhood of $\bar s$. By a standard limit argument for this it suffices in turn to show that such an invertible sheaf exists over the base change of $\pi $ to the strict henselization of $S$ at $\bar s$.  By the Artin approximation theorem we can then even pass to the completion $\widehat {\mls O}_{S, \bar s}$ of  $\mls O_{S, \bar s}$.  By the Grothendieck existence theorem \cite[\href{https://stacks.math.columbia.edu/tag/08BE}{Tag 08BE}]{stacks-project} it then suffices to prove the following statement: If $A'\rightarrow A$ is a surjective map of artinian local rings with kernel $J$ annihilated by the maximal ideal of $A'$ and if $X_{A'}$ is a proper scheme whose reduction $X_k$ to the residue field $k$ has dimension $\leq 1$ then any invertible sheaf $\mls L$ on $X_A$ lifts to $X_{A'}$.  For this note that the map
$$
K:= \text{Ker}(\mls O_{X_{A'}}^*\rightarrow \mls O_{X_A}^*)\rightarrow \text{Ker}(\mls O_{X_{A'}}\rightarrow \mls O_{X_A}), \ \ u\mapsto u-1
$$
is an isomorphism of abelian sheaves.  Since the target of this map is coherent and $X_k$ has dimension $\leq 1$ we conclude that $H^2(X_{A'}, K) = 0$ and  that $H^1(X_{A'}, \mls O_{X_{A'}}^*)\rightarrow H^1(X_A, \mls O_{X_A}^*)$ is surjective.
\end{proof}

\subsubsection{Proof of \ref{T:stronger}}
By \ref{S:reduction to projective} we may assume that $X \to S$ is projective.
By \ref{L:B.13} it suffices to show that for every geometric point $\bar s$ the desired factorization $p$ exists in the geometric fiber over $\bar s$, which reduces the proof to the case when $S$ is the spectrum of an algebraically closed field.  
We prove the theorem in this case by induction on the number $M$ of components contracted in $C'$.  The base case is $M=0$ in which case $C'=C$ and there is nothing to prove.  For the inductive step \ref{L:B.16} implies that if $C\rightarrow C'$ is not an isomorphism then there exists a component $E\subset C$ contracted in both $C^c$ and $C'$.  Hence we may replace $C$ by the result $D$ of contracting $E$, and the induced map $D \to C'$ contracts fewer components of $D$. This induced map is also a contraction by \ref{L:properties}(ii), so we may apply the inductive hypothesis. \qed

\subsubsection{Proof of \ref{T:contractthm}}\label{SS:last}
By \ref{S:reduction to projective} we may assume that $X \to S$ is projective.
By the uniqueness already shown, it suffices to show that there exists a stable contraction \'etale locally on $S$.  Making a suitable such base change we can arrange that we have  additional sections $y_1, \dots, y_m:S\rightarrow C$ such that $(C/S, \{s_i\}_{i=1}^n\cup \{y_j\}_{j=1}^m; f)$ is stable with respect to the weight vector  $\bb := (a_1, \dots, a_n, 1, \dots, 1)$.  In this case the existence of the stabilization is shown in \cite[3.1]{AlexeevGuy}. \qed 

\subsection{Conclusion of proof of \ref{C:sequences}}\label{SS:applications}
In this subsection we use contractions of weighted prestable maps to prove \ref{C:sequences} and some other results.

\begin{cor}\label{C:B.9} With notation as in \ref{T:contractthm}, suppose $\bar s\rightarrow S$ is a geometric point over which $f_{\bar s}$ factors as 
$$
\xymatrix{
C_{\bar s}\ar[r]^{q_{\bar s}}\ar@/^2pc/[rr]^-{f_{\bar s}}&C'_{\bar s}\ar[r]^-{g_{\bar s}}& X,}
$$
with $q_{\bar s}$ a contraction of prestable curves.  Then after replacing $S$ by an \'etale neighborhood of $\bar s$ there exists a factorization of $f$ as
$$
\xymatrix{
C\ar[r]^{q}\ar@/^2pc/[rr]^-f&C'\ar[r]^-{g}& X,}
$$
with $q$ a contraction of prestable curves, inducing the given factorization over $\bar s$.
\end{cor}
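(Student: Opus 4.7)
The plan is to realize $C'$ as the stable contraction of $(C,\{s_i\},f)$ after adjoining auxiliary sections whose weights are chosen so that $C'$ itself becomes the stable model. Over the fiber $\bar s$, Theorem~\ref{T:contractthm} applied to $(C_{\bar s},\{s_{i,\bar s}\},f_{\bar s})$ gives a stable contraction $q^c_{\bar s}:C_{\bar s}\to C^c_{\bar s}$ with map $f^c_{\bar s}:C^c_{\bar s}\to X$, and Theorem~\ref{T:stronger} furnishes a unique morphism $p_{\bar s}:C'_{\bar s}\to C^c_{\bar s}$ satisfying $p_{\bar s}\circ q_{\bar s}=q^c_{\bar s}$ and $f^c_{\bar s}\circ p_{\bar s}=g_{\bar s}$; this $p_{\bar s}$ is itself a contraction by \ref{L:properties}(ii). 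Let $E_1,\ldots,E_m\subset C_{\bar s}$ be the irreducible components contracted by $q^c_{\bar s}$ but not by $q_{\bar s}$; applying \ref{L:B.16} to $p_{\bar s}$, each image $E'_j:=q_{\bar s}(E_j)\subset C'_{\bar s}$ is a smooth rational component of $C'_{\bar s}$ that is either a bridge (two nodes, no $s_i$-marks) or a tail (one node, $\sum_{s_i\in E'_j}a_i\leq 1$).

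Pick smooth points $x_j\in E_j$ whose images $q_{\bar s}(x_j)$ are smooth points of $C'_{\bar s}$ distinct from $s_i(\bar s)$; only finitely many points on $E_j\cong\mathbb{P}^1$ need to be avoided. After replacing $S$ by an \'etale neighborhood of $\bar s$, lift each $x_j$ to a section $t_j:S\to C^{sm}$ disjoint from the $s_i$. Choose rational weights $b_j\in(0,1]$ so that each $E'_j$, equipped with the additional marked point $q_{\bar s}(t_{j,\bar s})$, is stable with respect to the weight vector $(\ba,\bb):=(a_1,\ldots,a_n,b_1,\ldots,b_m)$: any $b_j>0$ works for a bridge, and any $b_j$ with $b_j>1-\sum_{s_i\in E'_j}a_i$ works for a tail.

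Now apply Theorem~\ref{T:contractthm} to the prestable map $(C,\{s_i\}\cup\{t_j\},f)$ of type $(g,(\ba,\bb))$ to obtain a stable contraction $q:C\to D$ with morphism $g:D\to X$. Over $\bar s$, the curve $C'_{\bar s}$ equipped with its induced markings $\{s_{i,\bar s}\}\cup\{q_{\bar s}(t_{j,\bar s})\}$ and the map $g_{\bar s}$ is also a stable prestable map of type $(g,(\ba,\bb))$: each non-$E'_j$ component, inherited from $C^c_{\bar s}$, remains stable because un-contracting an $E'_j$ only increases the stability score $\#\text{nodes}+\sum a_i$ of any adjacent surviving component (the newly created node compensates for any lost marks, whose total weight is at most $1$), while the components $E'_j$ themselves are stable with respect to $(\ba,\bb)$ by the choice of $b_j$. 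By the uniqueness part of Theorem~\ref{T:stronger} applied over $\bar s$, there is a canonical isomorphism $D_{\bar s}\cong C'_{\bar s}$ compatible with the contraction from $C_{\bar s}$ and the maps to $X$; setting $C':=D$ and using $g:C'\to X$ yields the desired factorization.

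The main technical point is the stability check over $\bar s$ sketched above, which reduces to the numerical observation that un-contracting a rational bridge or tail does not decrease the stability score of any adjacent surviving component; everything else reduces to the existence and uniqueness statements in Theorems~\ref{T:contractthm} and~\ref{T:stronger}.
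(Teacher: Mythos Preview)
Your approach is the same as the paper's: add auxiliary marked points so that $(C'_{\bar s}, g_{\bar s})$ becomes stable of some enlarged type, extend the points to sections over an \'etale neighborhood, and take the stabilization from Theorem~\ref{T:contractthm}; over $\bar s$ this must recover $C'_{\bar s}$ by uniqueness. The paper simply says ``choose $m$ auxiliary smooth points with weight one'' without spelling out which points.

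There are two small gaps in your more detailed bookkeeping. First, your appeal to Theorem~\ref{T:stronger} to produce $p_{\bar s}$ requires $(C'_{\bar s}, \{q_{\bar s}\circ s_i\}, g_{\bar s})$ to be prestable of type $(g,\ba)$, but nothing in the hypothesis of the corollary forces the weight constraint $\sum a_i \le 1$ to survive the contraction $q_{\bar s}$ (a contracted subtree of $C_{\bar s}$ can carry original marks of total weight $>1$, all landing on a single smooth point of $C'_{\bar s}$). Second, one auxiliary point of weight $b_j \in (0,1]$ on a rational tail $E'_j$ carrying no original marks yields stability score $1 + b_j \le 2$, which is not $>2$. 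Both issues disappear if you forget the original marks and instead place, say, three distinct weight-one auxiliary points on every irreducible component of $C'_{\bar s}$ (lifted to distinct smooth points of $C_{\bar s}$); the resulting marked map on $C'_{\bar s}$ is then manifestly $(g,\mathbf{1})$-stable and no comparison with $C^c_{\bar s}$ is needed.
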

\begin{proof}
Choose $m$  auxiliary smooth points on $C_{\bar s}$ such that if we assign these to have weight one, the resulting map $(C'_{\bar s}, \{q_{\bar s}(s_i)\}_{i=1}^{n+m}; g_{\bar s})$ is stable of weight $\bb = (a_1, \ldots, a_n, 1, \ldots, 1)$. After replacing $S$ by an \'etale neighborhood of $\bar s$ we may assume these auxiliary markings extend to $C$, and the desired factorization is then the stabilization of $f$ provided by \ref{T:contractthm}.

%By possibly choosing some auxiliary smooth points on $C_{\bar s}$ which we assign weight $1$ and lifting them to $C$, after replacing $S$ by an \'etale neighborhood of $\bar s$, we may assume that $(C'_{\bar s}, \{q_{\bar s}(s_i)\}; g_{\bar s})$ is a stable map of type $\ba $.  The desired factorization is then the stabilization of $f$ provided by \ref{T:contractthm}.
\end{proof}

From \ref{C:B.9} and \ref{L:properties} part (iii) we obtain the following corollary. 

\begin{cor}\label{cor:fibers}
Let $q: (C_1/S, f_1) \to (C_2/S, f_2)$ be a morphism of prestable maps. Then $q$ is a contraction if and only if for every geometric point $\bar s \to S$, the restriction $q_{\bar s}: C_{1, \bar s} \to C_{2, \bar s}$ is a contraction.
\end{cor}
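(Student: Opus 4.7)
The ``only if'' direction follows immediately from \ref{L:properties}(iii): the restriction $q_{\bar s}$ is the base change of $q$ along $\bar s\to S$, so it inherits the contraction property. For the ``if'' direction, suppose $q_{\bar s}$ is a contraction for every geometric point $\bar s\to S$. Since the condition $Rq_*\mls O_{C_1}\simeq \mls O_{C_2}$ can be verified after an \'etale base change on $S$, one fixes $\bar s\to S$ and works in an \'etale neighborhood of $\bar s$.

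The first step is to apply \ref{C:B.9} to the prestable map $(C_1, \{s_i\}, f_1=f_2\circ q)$ with the fiber factorization $f_{1,\bar s}=f_{2,\bar s}\circ q_{\bar s}$, whose first arrow is a contraction by assumption. This yields, after further \'etale shrinking of $S$, a factorization $C_1\xrightarrow{p}C'\xrightarrow{g}X$ of $f_1$ with $p$ a contraction and inducing the given factorization over $\bar s$, together with an isomorphism $\phi_{\bar s}:C'|_{\bar s}\simeq C_{2,\bar s}$ identifying $p_{\bar s}$ with $q_{\bar s}$ and $g_{\bar s}$ with $f_{2,\bar s}$. To finish, it then suffices to produce an $S$-isomorphism $\phi:C'\xrightarrow{\sim}C_2$ extending $\phi_{\bar s}$ and satisfying $\phi\circ p=q$ and $f_2\circ\phi=g$, since granting this, $q=\phi\circ p$ is a contraction because $p$ is.

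The plan for producing $\phi$ is a line-bundle descent argument closely modeled on \ref{L:B.11} and \ref{L:B.12}. After shrinking $S$ further, pick a relatively ample line bundle $\mls L_2$ on $C_2/S$ and set $\mls L:=q^*\mls L_2$ on $C_1$. On the fiber over $\bar s$ one has $\mls L|_{C_{1,\bar s}}=p_{\bar s}^*(\phi_{\bar s}^*\mls L_{2,\bar s})$, so this restriction descends through $p_{\bar s}$. By \ref{L:B.11}(ii) applied to the contraction $p$, after possibly further \'etale shrinking $\mls L$ descends through $p$ to a line bundle $\mls L'$ on $C'$. Since $\mls L'$ is ample on the fiber over $\bar s$, it is relatively ample in an \'etale neighborhood by openness of the ample locus. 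The projection formula for $p$ combined with $p_*\mls O_{C_1}\simeq \mls O_{C'}$ identifies the relative global sections of $\mls L^{\otimes n}$ on $C_1$ with those of $(\mls L')^{\otimes n}$ on $C'$, so the pullback on sections $q^*$ factors uniquely through a graded-algebra map from the relative sections of $\mls L_2^{\otimes n}$ to those of $(\mls L')^{\otimes n}$; by the Proj construction this yields the morphism $\phi:C'\rightarrow C_2$.

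The main obstacle is verifying the required compatibilities of $\phi$: that it extends $\phi_{\bar s}$ and is an isomorphism in an \'etale neighborhood (which will follow from the fiberwise computation together with openness of the isomorphism locus for proper flat morphisms); that $\phi\circ p=q$ (which is encoded in the definition of $\phi$ as the factorization of $q^*$ through the identification of relative sections); and that $f_2\circ\phi=g$ (which follows from $p$ being a contraction, since both $f_2\circ\phi$ and $g$ are morphisms $C'\to X$ whose composition with $p$ equals $f_1$, and a morphism from $C'$ to a separated $S$-space is determined by its composition with $p$ via $p_*\mls O_{C_1}\simeq\mls O_{C'}$).
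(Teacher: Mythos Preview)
Your argument is correct, but it takes a detour that the paper avoids. The paper's (admittedly terse) proof intends the following: for the ``if'' direction, apply \ref{C:B.9} directly with target $X=C_2$ and $f=q$ itself, using the fiber factorization $C_{1,\bar s}\xrightarrow{q_{\bar s}}C_{2,\bar s}\xrightarrow{\mathrm{id}}C_{2,\bar s}$. This immediately yields, \'etale-locally, a factorization $q=g\circ p$ with $p:C_1\to C'$ a contraction and $g:C'\to C_2$ satisfying $g_{\bar s}$ an isomorphism; then $g$ is an isomorphism in a neighborhood of $\bar s$ by openness of the isomorphism locus for proper morphisms between flat proper $S$-curves, and $q=g\circ p$ is a contraction.

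By contrast, you apply \ref{C:B.9} with the \emph{original} target $X$, which forces you to rebuild the comparison map $\phi:C'\to C_2$ by hand via \ref{L:B.11} and a Proj argument. That reconstruction is essentially sound (you should note, for completeness, that the sections coming from $H^0(C_2,\mls L_2^{\otimes n})$ generate $(\mls L')^{\otimes n}$---this follows from ampleness of $\mls L_2$ together with surjectivity of $p$---and that the resulting map to projective space factors through $C_2$ because your section map is a graded ring homomorphism), but it duplicates work already contained in \ref{C:B.9}. Both approaches ultimately rely on the same openness-of-isomorphism step; the paper's route simply gets there with one citation rather than a Proj construction.
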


\qed

Finally we prove the result \ref{C:sequences} which states that when $S$ is the spectrum of a strictly Henselian local ring, every contraction factors as a sequence of maps that contract either a rational bridge or a rational tail in the closed fiber.

\begin{proof}[Proof of \ref{C:sequences}]
Let $\bar s \to S$ be a geometric point mapping to the closed point of $S$. By \ref{L:properties} the restriction of $q$ to $q_{\bar s}: C_{\bar s} \to C'_{\bar s}$ is also a contraction, and by \ref{C:B.9} it suffices to construct such a factorization for $q_{\bar s}$. Existence of a factorization in this case follows from \ref{L:B.16} (iv). 
\end{proof}

\section{Contractions of log curves}\label{A:log-contractions}
The goal of this section is to explain how a contraction of $n$-marked prestable curves can be canonically enhanced to a morphism of log curves (with the canonical log structures defined in \ref{SS:2.19}). That is, given a contraction $q: (C/S, \{s_i\}_{i=1}^n) \to (C'/S, \{s_i'\}_{i=1}^n)$ we construct a canonical diagram
\begin{equation}\label{eq:enhancement}
\xymatrix{
(C, M_C)\ar[r]\ar[d]& (C', M_{C'})\ar[d]\\
(S, \MS{S}{C})\ar[r]& (S, \MS{S}{C'}).}
\end{equation}

We start this section by describing two explicit examples of \eqref{eq:enhancement} that will be used repeatedly in the remainder of the discussion.
 In \ref{sec:proof} we prove the existence of \eqref{eq:enhancement}, and in  \ref{S:contract-monoid} we explicitly describe the morphism of characteristic monoids induced by $q$.

 In our two examples \ref{S:contract-bridge} and \ref{S:contract-tail} we work over a separably closed field $k$ (so $S = \Sp (k)$). In this case all nodes of $C$ are $k$-points \cite[\href{https://stacks.math.columbia.edu/tag/0C4D}{Tag 0C4D}]{stacks-project}.  In each case we offer two descriptions.  The first is as a log blowup, which has the advantage of being global.  The second is local but more concrete in terms of Deligne-Faltings structures. In \ref{sec:proof} we then return to the setting of a general base.

\subsubsection{Minimal log structures}\label{S:log-structures}
As noted in \ref{SS:2.19}, if $(C/S, \{s\})$ is a prestable curve with one marked point, there is a canonical structure of a log smooth morphism of log schemes $(C, M_C) \to (S, \MS{S}{C})$. Locally it can be described as follows. Let $J$ be the set of nodes in $C$, let $\{e_j\}_{j \in J}$ be the standard basis for $\mathbf{N}^J$, and if $x \in C$ is a smooth point let $\mls O(-x)$ denote the ideal sheaf of $x$.
%, and for $x \in C$ set $C^{sh}_X = \Sp(\mls O^{sh}_{C, x})$. 
\begin{itemize}
\item The Deligne-Faltings structure corresponding to $\MS{S}{C}$ has a global chart 
\begin{align*}
\mathbf{N}^J &\to \mls Div^+(S)\\
e_j &\mapsto (\mls O \xrightarrow{0} \mls O)
\end{align*}
\item If $x \in C$ is the marked point $s$ then $x$ is a smooth point and the Deligne-Faltings structure corresponding to $M_C$ has a global chart 
\begin{align*}
\mathbf{N}^J \oplus \mathbf{N}e_s &\to \mls Div^+(S)\\
e_j &\mapsto (\mls O \xrightarrow{0} \mls O)\quad \quad\text{for}\; j \in J\\
e_s &\mapsto (\mls O(-x) \hookrightarrow \mls O)
\end{align*}
It is clear that $\mathbf{N}^J \to \mathbf{N}^J \oplus \mathbf{N}e_s$ sending $e_j$ to $e_j$ for $j \in J$ is a morphism of Deligne-Faltings structures. This induces the morphism $(C, M_C) \to (S, \MS{S}{C})$ near $x$.
\item If $x \in C$ is the node corresponding to $i \in J$ then the Deligne-Faltings structure corresponding to $M_C$ has a local chart defined as follows. Let $A = B=\Sp (k[t]_{(t)}^{sh})$, and let $x_A \in A$ (resp. $x_B \in B$) denote the closed point, so that $C_x^{sh}:=\Sp(\mls O^{sh}_{C, x})$ is isomorphic to the gluing of $A$ and $B$ at $x_A$ and $x_B$. Then the objects of $\mls Div^+(C_x^{sh})$ are given by pairs $(\mls L_1\rightarrow \mls O_A, \mls L_2\rightarrow \mls O_B)\in \mls Div^+(A)\times \mls Div^+(B)$ together with an isomorphism $\mls L_1|_x\simeq \mls L_2|_x$ in $\mls Div^+(x)$.

Let $\alpha: \mls L_A \to \mls O_{C_x^{sh}}$ be the element of $\mls Div^+(C^{sh}_x)$ given by the element $\mls O(x_A) \xrightarrow{0} \mls O_A$ on $A$ and the ideal sheaf $\mls O(-x_B) \hookrightarrow \mls O_B$ on $B$, together with the isomorphism $\mls O_A(x_A)|_x\simeq \mls O_B(-x_B)|_x$ sending the generator $1/t$ in $\mls O_A(x_A)|_{x_A}$ to the generator $t$ in $\mls O_B(-x_B)|_{x_B}$.

Similarly let $\beta: \mls L_B \to \mls O_{C_x^{sh}}$ be given by $\mls O(-x_A) \xrightarrow{0} \mls O_A$ on $A$ and $\mls O(x_B) \hookrightarrow \mls O_B$ with the gluing isomorphism again defined by the local coordinates.

Observe that $(\mls L_A, \alpha )$ is supported on $A$ and $(\mls L_B, \beta )$ is supported on $B$.  Now on $C^{sh}_x$ the Deligne-Faltings structure has a global chart
\begin{align*}
\mathbf{N}^{J \setminus \{i\}} \oplus \mathbf{N}e_A \oplus \mathbf{N}e_B &\to \mls Div^+(C^{sh}_x)\\
e_j &\mapsto (\mls O \xrightarrow{0} \mls O) \quad\quad\text{for}\;j \neq i\\
e_A&\mapsto (\mls L_A, \alpha )\\
e_B&\mapsto (\mls L_B, \beta ).
\end{align*}
Since the sum of $( \mls L_A, \alpha )$ and $(\mls L_B, \beta )$ is the trivial element of $\mls Div^+(C^{sh}_x)$, the morphism
\begin{align*}
\mathbf{N}^{J \setminus \{i\}} \oplus \mathbf{N} &\to \mathbf{N}^{J \setminus \{i\}} \oplus \mathbf{N}^2\\
e_j &\mapsto e_j \quad \quad \text{for}\;j \neq i\\
e_i &\mapsto e_A+e_B
\end{align*}
is a morphism of Deligne-Faltings structures. This induces the morphism $(C, M_C) \to (S, \MS{S}{C})$ near $x$.
\end{itemize}

\subsection{Contraction of a rational bridge}\label{S:contract-bridge}
We work over a separably closed field $k$ and construct a contraction $q:C\rightarrow C'$ over $k$ fitting into a diagram \eqref{eq:enhancement}, which contracts a single rational bridge with no markings.

\subsubsection{Description as a log blowup}

 Let $C'/k$ be a nodal curve with a node $y\in C'(k)$.  Then $\MS {\Sp (k)}{C'}\simeq N\oplus \mathbf{N}$, where the factor of $\mathbf{N}$ corresponds to the node $y$, and $\overline M_{C', y}\simeq \overline {N}\oplus \mathbf{N}^2$ with the map $\bMS {\Sp (k)}{C'}\rightarrow \overline M_{C', y}$ given by the identity map on $\overline N$ and the diagonal map $\mathbf{N}\rightarrow \mathbf{N}^2$.  Let $N\oplus \mathbf{N}^2$ denote the log structure on $\Sp (k)$ where the map to $k$ sends all nonzero elements to $0$, and let $M^\dag _{C'}$ denote the pushout in the category of log structures of the diagram
$$
\xymatrix{
N\oplus \mathbf{N}|_{C'}\ar[r]^-{\text{id}\oplus \Delta }\ar[d]& N\oplus \mathbf{N}^2|_{C'}\\
M_{C'}&}
$$
so we have a cartesian diagram of log schemes
$$
\xymatrix{
(C', M^\dag _{C'})\ar[d]\ar[r]& (C', M_{C'})\ar[d]\\
(\Sp (k), N\oplus \mathbf{N}^2)\ar[r]& (\Sp (k), \MS {S}{C'}).}
$$
We can then construct a morphism of log schemes $(C, M_C)\rightarrow (C', M^\dag _{C'})$ such that $C\rightarrow C'$ contracts a rational bridge and the induced diagram
$$
\xymatrix{
(C, M_C)\ar[r]\ar[d]& (C', M_{C'})\ar[d]\\
(\Sp (k), N\oplus \mathbf{N}^2)\ar[r]& (\Sp (k), \MS {S}{C'})}
$$
is the contraction diagram \ref{eq:enhancement}.

\begin{pg}\label{P:1.2}
The scheme $C$ will be obtained as representing a functor on the category of log schemes.  
The monoid $\overline M^\dag _{C', \bar y}\simeq \overline N\oplus (\mathbf{N}^2\oplus _{\mathbf{N}}\mathbf{N}^2)$ has two natural ideals given by two ideals in $\mathbf{N}^2\oplus _{\mathbf{N}}\mathbf{N}^2$: If $a\in \mathbf{N}^2$ is a nonzero irreducible element (there are two such) then let $I_a$ denote the ideal generated by $(a, 0)$ and $(0,a)$ (elements of $\mathbf{N}^2\oplus _{\mathbf{N}}\mathbf{N}^2$). 
\end{pg}

\begin{lem}\label{L:9.4} Let $P$ denote the monoid generated by four elements $a_1, b_1, a_2, b_2$ modulo the relation $a_1+b_1=a_2+b_2$.  If $\rho :P\rightarrow M$ is a morphism of integral sharp monoids such that $\rho (a_1, a_2)\cdot M$ is a principal ideal then the ideal $\rho (b_1, b_2)\cdot M$ is also principal, and similarly interchanging $a_i$ and $b_i$.
\end{lem}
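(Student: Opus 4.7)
The proof will rest on an elementary observation about sharp integral monoids: any principal two-generated ideal $(\alpha,\beta)$ in such a monoid must in fact be generated by one of $\alpha$ or $\beta$. To establish this, suppose $c$ generates the ideal. Then $c = \alpha + m$ or $c = \beta + m$ for some $m \in M$, and simultaneously $\alpha = c + x$ and $\beta = c + y$ for some $x, y \in M$. In the first case, substituting and using cancellativity gives $m + x = 0$, and sharpness then forces $m = x = 0$, whence $c = \alpha$ and $\beta \in \alpha + M$.

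Applying this observation to the ideal generated by $\rho(a_1)$ and $\rho(a_2)$, after possibly swapping the indices we may assume $\rho(a_2) = \rho(a_1) + m$ for some $m \in M$. The defining relation $a_1 + b_1 = a_2 + b_2$ in $P$, pushed through $\rho$, then gives
\[
\rho(a_1) + \rho(b_1) \;=\; \rho(a_2) + \rho(b_2) \;=\; \rho(a_1) + m + \rho(b_2),
\]
and cancelling $\rho(a_1)$ yields $\rho(b_1) = m + \rho(b_2)$. Hence $\rho(b_1) \in \rho(b_2) + M$, so the ideal generated by $\rho(b_1)$ and $\rho(b_2)$ is principal, generated by $\rho(b_2)$. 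The analogous assertion obtained by interchanging the roles of $a_i$ and $b_i$ follows by the evident symmetry of the presentation of $P$.

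I do not expect any serious obstacle. The only slightly subtle point is the preliminary characterization of principal two-generated ideals in a sharp integral monoid, which uses both hypotheses (cancellativity to derive $m + x = 0$, sharpness to conclude $m = x = 0$); once this is in hand, the rest is a direct computation from the single defining relation of $P$.
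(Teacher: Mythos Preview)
Your proof is correct and follows essentially the same approach as the paper's. The only difference is that you explicitly justify the preliminary fact---that a principal two-generated ideal in a sharp integral monoid is generated by one of the two given elements---whereas the paper simply launches directly into the case split ``Suppose $\rho(a_2) = \rho(a_1)+c$ \ldots\ Similarly if $\rho(a_1) = \rho(a_2)+c$'' without pausing to explain why these exhaust the possibilities.
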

\begin{proof}
    Suppose $\rho (a_2) = \rho (a_1)+c$ for some $c\in M$.  Then we obtain
    $$
    \rho (a_1)+\rho (b_1) = \rho (a_1)+c+\rho (b_2),
    $$
    which implies that $\rho (b_1) = c+\rho (b_2).$  Similarly if $\rho (a_1) = \rho 
    (a_2)+c$ then $\rho (b_2) = \rho (b_1)+c$.  This implies the lemma.
\end{proof}

\begin{pg}
Let $F$ be the functor on the category of fine log schemes over $(C', M_{C'}^\dag )$ which associates to a morphism $f:(T, M_T)\rightarrow (C', M_{C'}^\dag )$ the unital set if the following condition holds and the emptyset otherwise:
\begin{enumerate}
    \item [ \ ] For every geometric point $\bar t\rightarrow T$ for which $f(\bar t)= y$ the images of the ideals $I_a\subset \overline M_{C', \bar y}^\dag $ ($a\in \mathbf{N}^2$ irreducible) in $\overline M_{T,\bar t}$ generate principal ideals. 
\end{enumerate}
\end{pg}

\begin{pg} In an \'etale neighborhood of $y$ we can choose a chart $N\oplus \mathbf{N}^2\rightarrow M_{C'}$ and $\mathbf{N}^2\rightarrow \widetilde {\mc N}$ defining the  isomorphism $N\oplus \mathbf{N}^2\simeq \overline M_{C', \bar y}$. Let $J\subset \mathbf{N}^2\oplus _{\mathbf{N}}\mathbf{N}^2$ be the ideal generated by $(1,0)$ and $(0,1)$.  Then  by \ref{L:9.4} the functor $F$ can also be characterized as sending $(T, M_T)\rightarrow (C', M_{C'}^\dag )$ to the unital element if $J$ generates a principal ideal in $\overline M_T$ and the empty set otherwise. This implies that $F$ is represented by the log blowup $(C, M_C)\rightarrow (C', M_{C'}^\dag )$ as defined in 
\cite[II, 1.7.5]{Ogus}.

In particular, \'etale locally on $C'$ the functor $F$ is representable and since it is evidently a sheaf for the  \'etale topology this implies that $F$ is representable by a log algebraic space $(C, M_C)\rightarrow (C', M_{C'}^\dag )$ globally.  A calculation in local coordinates shows that $C$ is obtained by introducing a rational bridge at $y$.  
\end{pg}
  
\subsubsection{Explicit local description}
Let $C'_{\bar s'}$ denote the spectrum of the strictly henselian local $\mls O_{C', \bar s'}$.  The base change $C_{(\bar s')}:= C\times _{C'}C'_{\bar s'}$ can be described as follows.
\begin{itemize}
\item Set $A$ and $B$ equal to $\Spec((k[t]_{(t)})^{sh})$ with $x_A \in A$ and $y_B \in B$ equal to the respective closed points.
\item Let $P$ be a rational curve over $S$ with closed points $x_P$ and $y_P$.
\item $C'_{\bar s'}$ is  obtained by gluing $A$ to $B$ by identifying $x_A$ with $y_B$.
\item $C_{(\bar s')}$ is obtained by gluing $A$, $B$, and $P$ by identifying $x_A$ with $x_P$ and $y_P$ with $y_B$ (so $C_{(\bar s')}$ has three irreducible components).
\item $q: C_{(\bar s')} \to C_{\bar s'}'$ contracts the rational curve $P$.
\end{itemize}

Over $C'_{(\bar s')}$ the Deligne-Faltings structures corresponding to $\MS{S}{C}$, $\MS{S}{C'}$, $M_C$, and $M_{C'}$  all have global charts. We choose bases for the corresponding free monoids as follows:
\begin{itemize}
\item $\bMS{S}{C} = \mathbf{N}^2$ has basis $(e_{AP}, e_{PB})$ corresponding to the nodes connecting $A$ to $P$ and $P$ to $B$, respectively.
\item $\bMS{S}{C'} = \mathbf{N}$ has basis $e_{AB}$.
\item $\overline{M}_C = \mathbf{N}^4$ has basis $(e^A_{AP}, e^P_{AP}, e^P_{PB}, e^B_{PB})$, where the upper index indicates the component where the generator is supported. For example, the generator $e^A_{AP}$ corresponds to the line bundle equal to $\mls O(x_A)$ on $A$ with the zero map to $\mls O$ and equal to $\mls O(-x_A)$ on $P \cup B$ with the inclusion map to $\mls O$.
\item $\overline{M}_{C'} =\mathbf{N}^2$ has basis $(e^A_{AB}, e^B_{AB})$.
\end{itemize}
A commuting diagram of log schemes as in \eqref{eq:enhancement} is determined by the commuting diagram of charts 
\[
\begin{tikzcd}[column sep=1.2in, row sep = .5in]
\mathbf{N}^4 & \mathbf{N}^2 \arrow[l, "e^A_{AP}+e^P_{PB}\, \mapsfrom \,e^A_{AB}"', "e^P_{AP}+e^B_{PB} \,\mapsfrom\, e^B_{AB}"' {yshift=.5cm}]\\
\mathbf{N}^2 \arrow[u, "e^A_{AP}+e^P_{AP}\, \mapsfrom \,e_{AP}"{yshift=-.25cm}, "e^P_{PB}+e^B_{PB} \,\mapsfrom \, e_{BP}" {yshift=.25cm}] & \mathbf{N} \arrow[l, "e_{AP} + e_{PB} \,\mapsfrom \, e_{AB}"] \arrow[u, "e^A_{AB} + e^B_{AB} \,\mapsfrom \, e_{AB}"']
\end{tikzcd}
\]
One checks using the descriptions in \ref{S:log-structures} that this is a commuting diagram of Deligne-Faltings structures, meaning that the morphisms given also commute with the maps to $\mls Div^+$. 
For example, that the top arrow is a morphism of Deligne-Faltings structures includes the equality
\[
\left( \begin{array}{rl}
\mls O(x_A) \xrightarrow{0} \mls O & \text{on}\; A\\
\mls O(-x_P) \hookrightarrow \mls O & \text{on}\; P\\
\mls O \xrightarrow{1} \mls O & \text{on}\; B\\
\end{array}\right) + \left( \begin{array}{rl}
\mls O \xrightarrow{0} \mls O & \text{on}\; A\\
\mls O(y_P) \xrightarrow{0} \mls O & \text{on}\; P\\
\mls O(-y_B) \hookrightarrow \mls O & \text{on}\; B\\
\end{array}\right) = \left( \begin{array}{rl}
\mls O(x_A) \xrightarrow{0} \mls O & \text{on}\; A\\
\mls O \xrightarrow{0} \mls O & \text{on}\; P\\
\mls O(-y_B) \hookrightarrow \mls O & \text{on}\; B\\
\end{array}\right)
\]
in $\mls Div^+(C)$, where the right hand side is the pullback of the element of $\mls Div^+(C')$ corresponding to $e^A_{AB}$, and the left hand side is the sum of elements of $\mls Div^+(C)$ corresponding to $e^A_{AP}$ and $e^P_{PB}$, respectively.

\subsection{Contraction of a rational tail with one marking}\label{S:contract-tail}

Again we work over a separably closed field and construct a contraction $q: C\rightarrow C'$ over $k$ fitting into a diagram \eqref{eq:enhancement}, such that $q$ contracts a single rational tail with one marked point to a point $s'\in C'$.

\subsubsection{Description as a log blowup}

  Let $C'/k$ be a nodal curve.  Let $s'\in C'(k)$ be a smooth point, and consider the log structure $M_{C'}:= M_{C'}^{\node}\oplus _{\mls O_{C'}^*}M^{s'}$ incorporating the point $s'$.  There is a log structure $\MS {\Sp (k)}{C'}\oplus \mathbf{N}$ with map to $k$ sending the  nonzero elements of $\mathbf{N}$ to $0$, and similarly there is a log structure $M_{C'}\oplus \mathbf{N}$.  There is a coherent sheaf of ideals $J\subset \overline M_{C'}\oplus \mathbf{N}$ whose stalks at points different from $s'$ are given by $\mathbf{N}$ and whose stalk at $s'$ is given by the two generators of $\overline M^{s'}_{s'}\oplus \mathbf{N}\simeq \mathbf{N}^2$.  We can then consider the log blowup $(C, M_C)\rightarrow (C', M_{C'}\oplus \mathbf{N})$ which sits in a commutative diagram
    $$
    \xymatrix{
    (C, M_C)\ar[r]\ar[d] & (C', M_{C'})\ar[d]\\
    (\Sp (k), \MS {\Sp (k)}{C'}\oplus \mathbf{N})\ar[r]& (\Sp (k), \MS{\Sp (k)}{C'}).}
    $$
    Then $C$ is a nodal curve obtained by attaching a copy of $\mathbf{P}^1$ at $s'$, and there exists a unique smooth point $s\in C(k)$ on the contracted component such that $M_C = M_C^{\node}\oplus _{\mls O_{C}^*}M^s$ and the map $(C, M_C)\rightarrow (\Sp (k), \MS {\Sp (k)}{C'}\oplus \mathbf{N})$ is identified with $(C, M_C)\rightarrow (\Sp (k), \MS {\Sp (k)}{C})$.  These statements follow from a  calculation of the log blowup in local coordinates where it is described by
    $$
    \text{Proj}(k[x][u,v]/(xu))\rightarrow \Sp (k[x])
    $$
and the observation that  the log structure on $C$ obtained in this way is special in the sense of \cite[2.6]{OlssonTohoku}.

\subsubsection{Explicit local description}
We define a contraction $q: (C, \{s\}) \to (C', \{s'\})$ of 1-marked curves over $S$ as follows. Let $A$ and $P$ be curves over $S$ as in \ref{S:contract-bridge}.
\begin{itemize}
\item $(C, \{s\})$ is the curve obtained by gluing $A$ and $P$ by identifying $x_A$ with $x_P$  with marking $s = y_P$ (so $C$ has two irreducible components).
\item $(C', \{s'\})$ is equal to $A$ with marking $s = x_A$.
\item $q: (C, \{s\}) \to (C', \{s'\})$ contracts the rational curve $P$.
\end{itemize}
The Deligne-Faltings structures corresponding to $\MS{S}{C}$, $\MS{S}{C'}$, $M_C$, and $M_{C'}$  all have global charts. We choose ordered bases for the corresponding free monoids as follows:
\begin{itemize}
\item $\bMS{S}{C} = \mathbf{N}$ has basis $e_{AP}$.
\item $\bMS{S}{C'} = 0$.
\item $\overline{M}_C = \mathbf{N}^3$ has basis $(e^P_{AP}, e^A_{AP}, e_s)$, where $e^P_{AP}$ is supported on $P$, $e^A_{AP}$ is supported on $A$, and $e_s$ is supported on $s \in P$.
\item $\overline{M}_{C'} =\mathbf{N}$ has basis $e_{s'}$.
\end{itemize}
A commuting diagram of log schemes as in \eqref{eq:enhancement} is determined by the commuting diagram of charts 
\[
\begin{tikzcd}[column sep=1.2in, row sep = .5in]
\mathbf{N}^3 & \mathbf{N} \arrow[l, "e^P_{AP}+ e_s\, \mapsfrom \,e_{s'}"']\\
\mathbf{N} \arrow[u, "e^A_{AP}+e^P_{AP}\, \mapsfrom \,e_{AP}"] & 0 \arrow[l] \arrow[u]
\end{tikzcd}
\]
As in \ref{S:contract-bridge} one checks that this is a commuting diagram of Deligne-Faltings structures. For example, that the top arrow is a morphism of Deligne-Faltings structures amounts to the equality
\[
\left( \begin{array}{rl}
\mls O(x_P) \xrightarrow{0} \mls O & \text{on}\; P\\
\mls O(-x_A) \hookrightarrow \mls O & \text{on}\; A\\
\end{array}\right) + \left( \begin{array}{rl}
\mls O(-y_P) \hookrightarrow \mls O & \text{on}\; P\\
\mls O \xrightarrow{1} \mls O & \text{on}\; A\\
\end{array}\right) = \left( \begin{array}{rl}
\mls O \xrightarrow{0} \mls O & \text{on}\; P\\
\mls O(-x_A) \hookrightarrow \mls O & \text{on}\; A\\
\end{array}\right)
\]
in $\mls Div^+(C)$, where the right hand side is the pullback of the element of $\mls Div^+(C')$ corresponding to $e_{s'}$.

\subsection{Canonical log enhancements of contractions}\label{sec:proof}

Let $S$ be a scheme and let $(C/S, \{s_i\}_{i=1}^n)$ be an $n$-marked prestable curve over $S$. Recall from \ref{SS:2.19} that the canonical log structure $M_C$ decomposes as $M_C = M^{\node}_C \oplus_{\mls O^*_C} M^{\underline{s}}$. Define
\[
M^i_C := M^{\node}_C \oplus_{\mls O^*_C} M^{s_i}.
\]
Note that $M_C$ is the pushout of the diagram
\begin{equation}\label{eq:Mis}
\xymatrix{
M^{\node \oplus n}_C\ar[d]^-{\text{sum}}\ar[r]& \oplus _{i=1}^nM^i_C\\
M^{\node }_C.& }
\end{equation}

Let $q: (C/S, \{s_i\}_{i=1}^n) \to (C'/S, \{s_i'\})$ be a contraction of $n$-marked prestable curves.
\begin{prop}\label{prop:contracting}
There is a canonical isomorphism of log structures
\begin{equation}\label{eq:toprove}
q_* M_C^{\node} \oplus_{\mls O^*_{C'} } M^{s_i'} \xrightarrow{\sim} q_*M^i_C\end{equation} induced by the canonical morphism $M_C^{\node}  \to M^i_C$ and a unique inclusion of log structures $M^{s_i'} \hookrightarrow q_*M^i_C$.
\end{prop}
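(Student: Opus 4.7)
My plan is to first construct the map in \eqref{eq:toprove} on the Deligne--Faltings side and then show it is an isomorphism by reducing to the explicit basic contractions analyzed in Sections \ref{S:contract-bridge} and \ref{S:contract-tail}.

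For the construction, since $q \circ s_i = s_i'$ and $q$ is a contraction (so $q_*\mls O_C = \mls O_{C'}$ and hence $q_*\mls O_C^* = \mls O_{C'}^*$), there is a canonical morphism of ideal sheaves $q^{-1}\mls I_{s_i'} \to \mls I_{s_i}$, and thereby a compatible morphism of DF charts $s'_{i,*}\mathbf{N} = q_* s_{i,*}\mathbf{N} \to q_*\mathfrak{D}iv^+_C$ lifting the DF chart of $M^{s_i'}$. This yields a canonical inclusion $M^{s_i'} \hookrightarrow q_*M^i_C$. Combining with the canonical $q_*M^{\node}_C \to q_*M^i_C$ and using the universal property of the amalgamated sum over $\mls O^*_{C'}$, I obtain the morphism in \eqref{eq:toprove}.

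To verify it is an isomorphism, the question is local on $C'$, so I would pass to the strict henselization of $S$ at a geometric point. By \ref{C:sequences} applied to the induced contraction in the closed fiber, together with \ref{C:B.9} to extend the factorization, after \'etale base change $q$ decomposes as a finite sequence of contractions each of which contracts a single rational bridge or rational tail. Functoriality of $q_*$ and an induction on the length of this factorization reduce us to the case where $q$ itself contracts a single such component. If $q$ contracts a rational bridge, or a rational tail not meeting $s_i$, then $s_i$ lies in the locus where $q$ is an isomorphism, so $q_*M^{s_i}_C \cong M^{s_i'}_{C'}$ canonically and the claim follows from the corresponding statement for $M^{\node}_C$ established by the log-blowup description in \ref{S:contract-bridge}. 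In the remaining case $q$ contracts a rational tail containing $s_i$: here the situation is exactly that of \ref{S:contract-tail}, where $M^{\node}_C$ has trivial pushforward near $s_i'$ and the explicit commuting diagram of charts computed there directly exhibits $q_*M^i_C$ as the pushout $q_*M^{\node}_C \oplus_{\mls O^*_{C'}} M^{s_i'}$.

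The main obstacle will be the first step: because $q$ is not an isomorphism, the pushforward $q_*$ does not in general commute with amalgamated sums of fine log structures, so some care is needed in showing that the inclusion $M^{s_i'} \hookrightarrow q_*M^i_C$ has the expected characteristic monoid (rather than something larger arising from spurious contributions along the contracted locus in the stalk of $q_*M^i_C$ at $s_i'$). This is precisely where the explicit DF chart computations in Sections \ref{S:contract-bridge} and \ref{S:contract-tail} do the work: they show that the pushforward remains fine and that its characteristic monoid at the relevant geometric points matches the expected pushout, completing the argument.
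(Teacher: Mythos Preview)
Your strategy is different from the paper's and, as written, has genuine gaps.

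First, the construction paragraph does not produce the inclusion $M^{s_i'}\hookrightarrow q_*M^i_C$. The containment $q^{-1}\mls I_{s_i'}\subset \mls I_{s_i}$ only says that $q^*(\mls I_{s_i'}\to\mls O_{C'})$ lies in $\mls{D}iv^+(C)$ and vanishes at $s_i$; it does \emph{not} exhibit it as an element of the sub-DF structure $\overline{M^{s_i}_C}\to\mathfrak{D}iv^+_C$, because the pulled-back divisor is supported on the entire contracted fiber, not just on $s_i$. What one actually needs is that $q^*s_i'$ decomposes inside $\overline{M^i_C}$ as $s_i$ plus specific nodal generators along the path from the attaching node to $s_i$---exactly the nontrivial content you defer to later. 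So the sentence ``This yields a canonical inclusion $M^{s_i'}\hookrightarrow q_*M^i_C$'' is unjustified at that point.

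Second, and more seriously, the inductive step does not follow from ``functoriality of $q_*$.'' If $q=q_2\circ q_1$ and the proposition holds for $q_1$ and $q_2$ separately, you have
\[
q_*M^i_C \;=\; q_{2*}\bigl(q_{1*}M^{\node}_C \oplus_{\mls O^*_{C''}} M^{s_i''}\bigr),
\]
but the input to the proposition for $q_2$ is $M^i_{C''}=M^{\node}_{C''}\oplus_{\mls O^*}M^{s_i''}$, not $q_{1*}M^{\node}_C\oplus_{\mls O^*}M^{s_i''}$. These differ: $q_{1*}M^{\node}_C$ is the base change of $M^{\node}_{C''}$ along $\MS{S}{C''}\to\MS{S}{C}$. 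To proceed you must prove that $q_{2*}$ commutes with amalgamation by a log structure pulled back from $S$---a projection-formula-type lemma you neither state nor prove, and which is where the real work would live. The uniqueness assertion in the proposition is likewise not addressed by your induction: an inclusion $M^{s_i'}\hookrightarrow q_*M^i_C$ need not factor through $q_{2*}M^i_{C''}$ a priori, so uniqueness for $q_2$ alone does not force uniqueness for $q$.

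The paper avoids all of this by working directly at a geometric point $\bar y\to C'$ with contracted fiber $P$. Using \ref{L:contracting} it identifies $(\overline{q_*M^i_C})_{\bar y}$ with the kernel $E$ of an explicit map from $H^0(P,\overline{M^i_C}|_P)$ to $\mathrm{Pic}(P)$, and then proves a combinatorial lemma on trees (\ref{L:5.10c}) showing that $E$ is freely generated over $\overline{\MS{S}{C}}^N_{\bar y}$ by a single element $w'$ with prescribed coefficients. This simultaneously produces the inclusion, proves its uniqueness, and establishes the pushout decomposition---and it also yields the explicit formulas recorded in \ref{S:contract-monoid}. Your factorization idea could likely be made to work, but it needs the missing base-change lemma and a separate argument for uniqueness.
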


Before proving \ref{prop:contracting} we give a lemma that will be used in the proof and elsewhere.

\begin{lem}\label{L:contracting}
If $M$ is any log structure on $C$, the pushforward $q_*M$ is also a log structure and there is an inclusion $\overline{q_*M} \hookrightarrow q_*\overline{M}$. If $\bar y \to C'$ is a geometric point with fiber $P = q^{-1}(\bar y)$, then $(\overline{q_*M})_{\bar y}$ is canonically isomorphic to the elements of $H^0(P, \overline{M}|_P)$ whose associated $\mls O_P^*$-torsor of lifting to $M|_P$ induce the trivial line bundle on $P$.
\end{lem}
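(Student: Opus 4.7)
The plan is to prove the three claims in order, using left-exactness of $q_*$ and the contraction hypothesis $q_*\mls O_C = \mls O_{C'}$.

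For part~(1), the key observation is that $q_*\mls O_C^* = \mls O_{C'}^*$. Under the identification $q_*\mls O_C = \mls O_{C'}$, a section $f \in \mls O_{C'}(U)$ corresponds to $q^*f \in \mls O_C(q^{-1}(U))$, and since $q$ is surjective, $q^*f$ is invertible on $q^{-1}(U)$ if and only if $f$ is invertible on $U$. Left-exactness of $q_*$ applied to the log structure axiom $\alpha^{-1}(\mls O_C^*) \simeq \mls O_C^*$ then yields $(q_*\alpha)^{-1}(\mls O_{C'}^*) = q_*(\alpha^{-1}(\mls O_C^*)) \simeq q_*\mls O_C^* = \mls O_{C'}^*$, so $q_*M$ is a log structure on $C'$. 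Part~(2) follows by applying $q_*$ to the defining sequence $1 \to \mls O_C^* \to M \to \overline M \to 1$ and using left-exactness: the resulting left exact sequence $1 \to \mls O_{C'}^* \to q_*M \to q_*\overline M$ yields the injection $\overline{q_*M} \hookrightarrow q_*\overline M$.

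For part~(3), I would characterize the subsheaf $\overline{q_*M} \subset q_*\overline M$ as those sections that \'etale-locally on $C'$ lift to $q_*M$; concretely, a section $\bar m \in (q_*\overline M)(V) = \overline M(q^{-1}(V))$ belongs to $\overline{q_*M}(V)$ if and only if its $\mls O_C^*$-torsor of liftings (a line bundle on $q^{-1}(V)$) becomes trivial after pullback to some \'etale cover of $V$. I would then pass to the stalk at $\bar y$ and establish two facts. First (injectivity), if two sections $\bar m_1, \bar m_2 \in \overline M(q^{-1}(U))$ agree on $P = q^{-1}(\bar y)$, then on a smaller $q^{-1}(U')$ they coincide: the closed locus in $q^{-1}(U)$ where they disagree has image under the proper map $q$ that is closed in $U$ and misses $\bar y$, so the complement of this image gives the desired $U'$. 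Second (surjectivity onto the claimed subset), given $\bar m_P \in H^0(P, \overline M|_P)$ whose lifting line bundle on $P$ is trivial, first extend $\bar m_P$ to a section $\bar m \in \overline M(q^{-1}(U))$ for some neighborhood $U$ of $\bar y$; the lifting line bundle of $\bar m$ is a line bundle on $q^{-1}(U)$ whose restriction to $P$ is trivial, so by \ref{L:B.11}(ii) it becomes trivial on $q^{-1}(U')$ for some smaller $U' \subseteq U$, producing a lift to $M$ on $q^{-1}(U')$ and hence an element of $(\overline{q_*M})_{\bar y}$ restricting to $\bar m_P$ on $P$.

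The main obstacle is the extension step in the surjectivity argument: showing that every section of $\overline M|_P$ extends to a section of $\overline M$ on some $q^{-1}(U)$. This relies on the fact that every point of the base change $C \times_{C'} \Spec(\mls O^{sh}_{C',\bar y})$ specializes to a point of $P$ under the proper morphism $q$ (by the valuative criterion), so specialization maps propagate $\bar m_P$ uniquely to a section on the strict henselization; a standard limit argument then realizes this extension on some \'etale neighborhood $U$ of $\bar y$.
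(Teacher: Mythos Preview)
Your outline is largely sound and closely parallels the paper's proof: parts~(1) and~(2) are exactly as stated (the paper just cites \cite[B.3]{MR3329675}), and your description of part~(3) in terms of the lifting torsor is the concrete unwinding of the paper's boundary map $q_*\overline M^{\gp}\to R^1q_*\mls O_C^*$ obtained from the groupified short exact sequence. Your injectivity step and your ``extension step'' together amount to proper base change for $q_*$ of an \'etale sheaf of sets, which holds with no torsion hypothesis and which the paper simply cites as \cite[\href{https://stacks.math.columbia.edu/tag/0A3S}{Tag 0A3S}]{stacks-project}.

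There is, however, a genuine gap in your surjectivity argument: the appeal to \ref{L:B.11}(ii) is misplaced. That lemma concerns a contraction $q:C\to C'$ of prestable curves over a base $S$ and a geometric point $\bar s\to S$; it lets you descend a line bundle along $q$ after shrinking \emph{in $S$}. What you need is different: for an \'etale neighborhood $U$ of $\bar y$ in $C'$, a line bundle on $q^{-1}(U)$ that is trivial on the fiber $P=q^{-1}(\bar y)$ should become trivial after shrinking $U$ \emph{in $C'$}. The map $q^{-1}(U)\to U$ is proper but not a contraction of prestable curves over any base, so \ref{L:B.11} does not apply. Equivalently, you need the base change map $(R^1q_*\mls O_C^*)_{\bar y}\to \text{Pic}(P)$ to be injective, and this requires its own argument. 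The paper supplies it: pass to $C_{(\bar y)}=C\times_{C'}\Sp(\mls O_{C',\bar y})$, note that $P$ is a tree of rational curves (so $H^i(P,\mls O_P)=0$ for $i>0$), and then run the same deformation-theory-plus-Grothendieck-existence argument that underlies the \emph{proof} of \ref{L:B.11}(ii), now along the nilpotent thickenings of $P$ inside $C_{(\bar y)}$ rather than along thickenings of a fiber over $S$. Once you insert this step, your argument is complete and agrees with the paper's.
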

\begin{proof}
One sees that $q_*M$ is a log structure by the same reasoning as in \cite[B.3]{MR3329675}. To compute its characteristic sheaf, we
take cohomology of the short exact sequence of groups on $C$
\[
0 \to \mls O^*_C \to M^{gp} \to \overline{M}^{gp} \to 0
\]
to get a short exact sequence
\[
\xymatrix{
0\ar[r]&  \mls O_{C'}^*\ar[r]&  q_*M^{\gp }\ar[r]&  q_*\overline M^{\gp }\ar[r]^-{\partial }&  R^1q_*\mls O_C^*.}
\]
It follows that $(\overline{q_*M})_{\bar y}$ is identified with elements of $q_*(\overline{M})_{\bar y}$ mapping to 0 in $R^1q_*\mls O^*_C$. By the proper base change theorem  \cite[\href{https://stacks.math.columbia.edu/tag/0A3S}{Tag 0A3S}]{stacks-project} we have
$$
(q_*\overline M)_{\bar y}\simeq H^0(P, \overline M|_{P}).
$$
It remains to show that the base change map
\begin{equation}\label{eq:contracting}
R^1q_*\mls O^*_C \to H^1(P, \mls O^*_P) = \Pic(P)
\end{equation}
is an isomorphism.

It suffices to work locally on $S$ so assume $S$ is strictly henselian local. The fiber $P$ is a tree of $m$ rational curves by \ref{L:B.16}, and by our assumption on $S$ we can choose sections $x_1, \ldots, x_m$ of $C \to S$ such that each section meets a unique component of $P$. It follows that the classes $[\mls O_C(x_i)] \in H^1(C, \mls O^*_C)$ map to a generating set of $\Pic(P) \simeq \mathbf{Z}^m$. We claim the resulting map $\mathbf{Z}^m \to (R^1q_*\mls O^*_C)_{\bar y}$ is an isomorphism and inverse to \eqref{eq:contracting}.

Let $C_{(\bar y)}$ denote the base change $C\times _{C'}\Sp (\mls O_{C', \bar y})$ so that $\text{Pic}(C_{(\bar y)})\simeq (R^1q_*\mls O_C^*)_{\bar y}$.  Then it suffices to show that a line bundle $\mls L$ on $C_{(\bar y)}$ whose reduction to $P$ is trivial must be trivial.  For this note that it suffices to show that $H^0(C_{(\bar y)}, \mls L)$ is a free $\mls O_{C', \bar y}$-module of rank $1$ which generates $\mls L$.  For this in turn it suffices to show that this is the case after base change to the completion $\widehat {\mls O}_{C', \bar y}$ and then by the Grothendieck existence theorem that the reduction of $\mls L$ modulo each power of the maximal ideal of $\mls O_{C', \bar y}$ is trivial.  This follows, as in the proof of \ref{L:B.11}, from the fact that $H^i(P, \mls O_P) = 0$ for $i>0$.
\end{proof}

To prove \ref{prop:contracting} it suffices to consider the case when $S$ is the spectrum of a strictly henselian local ring.  Let $s\in S$ be the closed point.
Let $\bar y\rightarrow C'$ be a geometric point over $s$  and let $P$ denote the fiber of $q$ over $\bar y$.  We then verify that the map \ref{eq:toprove} is an isomorphism on the stalks at $\bar y$.  If $\bar y$ does not meet $s_i'$ then the result is immediate, so we can further assume that $\bar y$ maps to $s_i'$.  We show in this case  that there exists a unique inclusion of log structures $M^{s_i'}_{\bar y}\hookrightarrow (q_*M_C^i)_{\bar y}$ and that the induced map \eqref{eq:toprove} induces an isomorphism on stalks at $\bar y$.

Let $C_{(\bar y)}$ denote the fiber product $C\times _{C'}\Sp (\mls O_{C', \bar y})$  The log structure $\MS{S}{C}$ decomposes as an amalgemated sum
\begin{equation}\label{E:decompose1}
\MS{S}{C} = \oplus _{\mls O_{S}^*}\mc L_e
\end{equation}
induced by the nodes of the closed fiber, where $\mc L_e$ is induced by a chart $\mathbf{N}\rightarrow \mls O_{S}$ sending $1$ to a parameter $t_e$ such that the node $e$ is \'etale locally given by $xy-t_e$ (see \cite[\S 2]{FKato} for a discussion of this).  Likewise the log structure $M_C^\node $ decomposes as an amalgemated sum
\begin{equation}\label{E:decompose}
M_C^\node = \oplus _{\mls O_C^*}M_C^e,
\end{equation}
where in an \'etale neighborhood of $e$ as above the log structure $M_C^e$ is given by the map $\mathbf{N}^2\rightarrow \mls O_{S}[x, y]/(xy-t_e)$ sending the two generators to $x$ and $y$.

Let $P$ denote the fiber of $q$ over $\bar y$, so $P$ is a tree of rational curves getting contracted to a point with one marked point.  Let $N = \{x_n\}$ be the set of nodes of $P$ (including the attachment point of $P$ to the components not in $P$) and let $I$ be the set of irreducible components of $P$. For $i\in I$ let $P_i\subset P$ denote the corresponding rational curve.  There is a distinguished component $i_0\in I$ with $P_{i_0}$ meeting the rest of the curve $C$.  Let $i_{\infty }$ denote the component containing $s_i$, and let $x_0\in N$ be the node meeting $P_{i_0}$ and the rest of the curve.  Let $\nu :\widetilde P\rightarrow P$ be the normalization, so $\widetilde P = \coprod _{i\in I}P_i$.

Let $N^c$ denote the nodes of $C_s$ not in $P$.    Then the decompositions \eqref{E:decompose1} and \eqref{E:decompose} gives a decompositions according to whether nodes lie in $P$ or $P^c$
$$
\MS {S}{C} \simeq \MS{S}{C}^{N^c}\oplus _{\mls O_S^*}\MS{S}{C}^N, \ \ M_C^\node \simeq M_C^{N^c}\oplus _{\mls O_{C}^*}M_C^N.
$$
Since the pullback map $f^*\MS{S}{C}^{N^c}\rightarrow M_C^{N^c}$ is an isomorphism over $C_{(\bar y)}$ we have (using \ref{L:contracting})
$$
(q_*M_C^\node )_{\bar y}\simeq f^*\MS{S, \bar y}{C}^{N^c}\oplus _{\mls O_{C', \bar y}^*}q_*M_C^N, \ \ q_*M_C^i\simeq f^*\MS{S, \bar y}{C}^{N^c}\oplus _{\mls O_{C', \bar y}^*}q_*(M_C^N\oplus _{\mls O_{C}^*}M^{s_i}).
$$
It therefore suffices to prove the analogous statement for $M_C^N$ in \ref{prop:contracting}.  
This we do by explicit calculation.

Write $N = N'\cup \{x_0\}$ so that $M_C^N = M_C^{N'}\oplus _{\mls O_{C}^*}M_C^{x_0}$.
Since $P$ is a tree, for each $x_i\in N'$ we can write $P=P_+^i\cup P_-^i$ as the gluing of two connected trees of rational curves glued together at $x_i$, and with $P_+^i$ meeting the point $x_0$.  Let $\nu _i:P_+^i\coprod P_-^i\rightarrow P$ be the projection.  Then we have $\overline M_C^{x_i}|_P = \nu _{i*}\mathbf{N}$ (see for example \cite[\S 2]{FKato}).  Similarly $\overline M^{x_0}_C|_P\simeq \mathbf{N}_P\oplus \mathbf{N}_{x_0}$, where $\mathbf{N}_{x_0}$ is the skyscraper sheaf at $x_0$.  We therefore find that 
$$
q_*(\overline M_C^N\oplus \overline M^{s_i})_{\bar y}\simeq (t_{0+}^{\mathbf{N}}\oplus t_{0-}^{\mathbf{N}})\oplus (\bigoplus _{x_j\in N'}(t_{j+}^{\mathbf{N}}\oplus t_{j-}^{\mathbf{N}}))\oplus w^{\mathbf{N}},
$$
where we write $t^{\mathbf{N}}$ for the free monoid $\mathbf{N}$ with generator $t$ and $t_{j+}$ (resp. $t_{j-1}$) is the generator which is $1$ on $P_{+}^j$ (resp. $P_{-}^j$) and $0$ on $P_{-}^j$ (resp. $P_{+}^j$).  The generator $t_{0+}$ vanishes on all of $P-\{x_0\}$ and the generator $t_{0-}$ is $1$ on $P$.  The factor $w^{\mathbf{N}}$ is $H^0(P, \overline M^{s_i})$.

Since $P$ is a tree of rational curves, we have $\text{Pic}(P) = \oplus _{i\in I}\mathbf{Z}\cdot f_i$, where $f_i$ is the class of a line bundle which restricts to $\mls O_{P_i}(1)$ on $P_i$ and $\mls O_{P_j}$ on $P_j$ for $j\neq i$.  The map 
\begin{equation}\label{E:picmap}
q_*(\overline M_C^N\oplus \overline M^{s_i})_{\bar y}\rightarrow \text{Pic}(P)
\end{equation}
is given explicitly as follows.  The generator $t_{0+}$ (resp. $t_{0-}$) maps to $f_{i_0}$ (resp. $-f_{i_0}$), and $w$ maps to $-f_{i_\infty }$.  Finally for $x_{j}\in N'$ let $i^{j}_+$ (resp. $i^{j}_-$) be the component of $P^j_+$ (resp. $P^j_-$) containing $x_j$.  Then the map \eqref{E:picmap} sends $t_{j+}$ (resp. $t_{j-}$) to $f_{i^j_+}-f_{i^j_-}$ (resp. $-f_{i^j_+}+f_{i^j_-}$).

Let $E$ denote the preimage of $0$ under the map \eqref{E:picmap}.Note that the diagonal elements $t_{j+}+t_{j-}$ define elements of $E$ corresponding to the inclusion $\mathbf{N}^N\simeq \overline M_{C\rightarrow S, \bar y}^N\hookrightarrow q_*(\overline M_C^N\oplus \overline M^{s_i})_{\bar y}$.

\begin{lem}\label{L:5.10c}
(i) The map $\overline M_{C\rightarrow S, \bar y}^N\rightarrow (q_*M_{C}^N)_{\bar y}$ is an isomorphism.

(ii) There exists a unique element $w'\in E$ with coefficient $1$ of $w$.  The coefficient of $t_{0+}$ (resp. $t_{0-}$) in $w'$ is $0$ (resp. $1$).

(iii) The induced map 
$$
\overline M_{C\rightarrow S, \bar y}^N\oplus w^{\prime \mathbf{N}}\rightarrow E
$$
is an isomorphism.
\end{lem}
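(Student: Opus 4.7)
The plan is to prove all three parts by studying the submonoid $E \subset q_*(\overline M_C^N \oplus \overline M^{s_i})_{\bar y}$ cut out by the kernel of the Picard map \eqref{E:picmap}. The explicit presentation of $q_*(\overline M_C^N \oplus \overline M^{s_i})_{\bar y}$ as a free monoid on $\{t_{0\pm}, t_{j\pm}, w\}$, together with the formula for the Picard map on generators, is already in hand; moreover from \ref{L:contracting} we know $(\overline{q_*M})_{\bar y}$ is precisely the preimage of $0$ under this map. The whole argument will reduce to a rank computation on the groupification plus an analysis of the intersection with the positive cone.

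For (i), I would work inside the submonoid spanned by $\{t_{0\pm}, t_{j\pm}\}$, with groupification $\mathbf{Z}^{2|N|}$ mapping to $\text{Pic}(P) = \mathbf{Z}^{|I|}$. Since $P$ is a tree of rational curves with $|I|-1$ internal nodes plus the attaching node $x_0$, we have $|N|=|I|$. The Picard map is surjective: $f_{i_0}$ is the image of $t_{0+}$, and for $i \neq i_0$ the class $f_i$ is obtained by telescoping along the unique path from $i_0$ to $i$ in the dual tree, using the generators $t_{j\pm}$. Hence the kernel has rank $|I|$, matching the rank of the diagonal subgroup $\{(a,a) : a \in \mathbf{Z}^N\}$; since the diagonal is contained in the kernel (each $t_{j+}+t_{j-}$ has vanishing Picard image), the two coincide, and intersection with the positive cone yields the diagonal submonoid $\mathbf{N}^N$. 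This is precisely the image of $\overline M^N_{C\to S, \bar y}$ under the inclusion sending the generator at $x_j$ to $t_{j+}+t_{j-}$, proving (i).

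For (ii), adjoining $w$ makes the kernel rank $|I|+1$, so modulo the diagonal (rank $|I|$) it is rank one, and any element of $E$ is determined modulo the diagonal by its $w$-coefficient. Writing $w' = w + \sum c_{j\pm}t_{j\pm}$ and setting the Picard image to zero produces a linear system in the differences $d_j = c_{j+}-c_{j-}$ whose coefficient matrix (an integration map along the tree rooted at $x_0$) is invertible, so the $d_j$ are uniquely determined; taking in each pair the unique reduced nonnegative solution with $\min(c_{j+},c_{j-}) = 0$ yields the canonical element $w'$, whose explicit coefficients can be read off from the combinatorics of the path from $x_0$ to $s_i$ in the dual tree and match the description in (ii). For (iii), every $(c_+,c_-,c) \in E$ should decompose uniquely as $c\cdot w' + (\text{diagonal element})$: the difference has $w$-coefficient zero and Picard image zero, so by (i) it lies in the diagonal subgroup as an element of the groupification. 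The key technical step, and the main obstacle I anticipate, is verifying that the difference remains in the positive cone; this follows from the reduced form of $w'$, which guarantees that in each pair one of $c^{w'}_{j\pm}$ vanishes, so the Picard constraint on $(c_+,c_-,c)$ forces the corresponding nonzero coordinate of $(c_{j+}, c_{j-})$ to be at least $c$ times that coordinate of $w'$. Uniqueness of $w'$ among indecomposable elements of $E$ with $w$-coefficient $1$ follows at once, since any other such element would differ by a nonzero diagonal, contradicting indecomposability.
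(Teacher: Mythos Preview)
Your approach is sound and genuinely different from the paper's, but there is one logical slip in part (i) that needs a one-line fix.

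In (i) you argue: the Picard map $\mathbf{Z}^{2|N|}\to \mathbf{Z}^{|I|}$ is surjective, so its kernel has rank $|I|=|N|$, and since the diagonal also has rank $|N|$ and sits inside the kernel, ``the two coincide.'' But containment plus equal rank does \emph{not} force equality of subgroups of a free abelian group (think $2\mathbf Z\subset \mathbf Z$). What you actually need is that the induced map from the quotient $\mathbf{Z}^{2|N|}/(\text{diagonal})\simeq \mathbf{Z}^{|N|}$ to $\mathbf{Z}^{|I|}$ is an isomorphism. Your telescoping observation already shows this quotient map is surjective, and a surjection of finitely generated free abelian groups of the same rank is automatically an isomorphism; so the repair is just to insert that sentence. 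Once the kernel in the groupification is genuinely equal to the diagonal subgroup, intersecting with the positive cone gives the diagonal submonoid as you say, and your arguments for (ii) and (iii) then go through cleanly (the positivity step in (iii) is exactly right: the reduced form of $w'$ has a zero in each $\{t_{j+},t_{j-}\}$ pair, and the diagonal constraint then forces the nonnegative remainder).

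For comparison, the paper argues (i) directly at the monoid level without passing to groupifications: an ``unbalanced'' node (one where the $t_{j+}$ and $t_{j-}$ coefficients differ) forces another unbalanced node on each side, yielding an impossible unbounded walk in the finite dual tree. For (ii) and (iii) the paper writes $w'$ down explicitly as $w+\sum_{x_j\in\gamma}t_{j-}$ along the unique path $\gamma$ from $x_0$ to $s_i$, and then proves the decomposition by induction on $|I|$, contracting the leaf component $P_{i_\infty}$. Your linear-algebra route---solve for the differences $d_j$, take the reduced nonnegative representative, then check positivity of the remainder---is more uniform and avoids the induction; the paper's combinatorial description has the advantage of making the path structure (and hence the $t_{0\pm}$ coefficients) visible at a glance.
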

\begin{proof}
Let $u\in (t_{0+}^{\mathbf{N}}\oplus t_{0-}^{\mathbf{N}})\oplus (\bigoplus _{x_j\in N'}(t_{j+}^{\mathbf{N}}\oplus t_{j-}^{\mathbf{N}}))$ be an element mapping to $0$ in $\text{Pic}(P)$.  Define a node $x_j\in N'$ to be \emph{$u$-unbalanced} if the coefficients of $t_{j+}$ and $t_{j-}$ are not equal.  Then we see from the explicit description of the map to the Picard group that each of $P_{j+}$ and $P_{j-}$ must also contain an additional $u$-unbalanced node.  It follows that we can find a path in the graph associated to $P$ of edges each of which contains at least two vertices.  This is impossible since this graph is a finite tree.  This argument also shows that the coefficients of $t_{0+}$ and $t_{0-}$ must be the same proving (i).

For (ii) and (iii) we first define an element $w'\in E$ with the indicated coefficients.  Notice that since $P$ is a tree, there exists a unique path $\gamma $ in $P$ connection $x_0$ to $s_i$.  Let $w'$ be the element $(\sum _{x_j\in \gamma }t_{j-})+w$ (included in the sum is $x_0$).  Then this element has the desired properties and to complete the proof we have to show that if $u\in E$ is an arbitrary element with coefficient $\alpha _w$ of $w$, then $u$ can be written uniquely as a sum of $\alpha _ww'$ and a unique element $\sum _{j}\beta _j(t_{j+}+t_{j-1})$.

We do this by induction on the number of components of $P$.  If the number of components is $1$ the result is immediate.

For the inductive step let $P_2$ be the tree of rational curves obtained by contracting $P_{i_\infty }$ to a point $s_2\in P_2$.  Let $x_\infty $ be the node connecting $P_{i_\infty }$ to the rest of $P$, and let $\alpha _{i_{\infty }+}$ (resp. $\alpha _{i_\infty -}$) be the coefficient in $u$ of $t_{i_\infty +}$ (resp. $i_{\infty -}$).  Then we must have $\alpha _w +\alpha _{i_\infty -}-\alpha _{i_\infty +} = 0$ since $u$ defines the trivial line bundle on $P_{i_\infty }$.  Equivalently the coefficients of $u$ at $x_{i_\infty }$ is given by
$$
\alpha _wt_{i_\infty -}+ \alpha _{i_\infty +}(t_{i_\infty -}+t_{i_\infty +}).
$$
Now consider the element over $P_2$ given by the coefficients of $u$ at nodes not equal to $x_{i_\infty }$ plus $\alpha _w$ times $s_2$. By induction we then see that this element over $P_2$ can be written uniquely as $\alpha _w$ times the $P_2$-version of $w'$ plus a unique sum of linear combinations of $t_{j+}+t_{j-}$.  From this (ii) and (iii) follow.
\end{proof}

\begin{proof}[Proof of \ref{prop:contracting}]
The proposition now follows almost immediately from \ref{L:5.10c}.

Indeed giving an inclusion $M_{\bar y}^{s_i'}\hookrightarrow (q_*M_C^i)_{\bar y}$ is equivalent to giving an element of $(q_*M_C^N)_{\bar y} = E$ whose coefficient of $t_{0+}$ is $1$ and whose coefficient of $t_{0-}$ is $0$ as follows from considering the local description of the node $x_0$.  By parts (i) and (ii) of the lemma there exists a unique such element; namely, $w'$.  Furthermore that the induced map \eqref{prop:contracting} is an isomorphism follows from part (iii) of the lemma.
\end{proof}

\begin{cor}\label{cor:i-enhancement}
Let $q: (C/S, \{s_i\}_{i=1}^n) \to (C'/S, \{s_i'\}_{i=1}^n)$ be a contraction of $n$-marked prestable curves. Then the map $q$ extends uniquely to a commutative square of log schemes
\begin{equation}\label{eq:i-enhancement}
\xymatrix{
(C, M_C^i)\ar[r]\ar[d]& (C', M^i_{C'})\ar[d]\\
(S, \MS{S}{C})\ar[r]& (S, \MS{S}{C'}).}
\end{equation}
\end{cor}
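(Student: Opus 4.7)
The plan is to reduce the construction of the square to data already furnished by Proposition \ref{prop:contracting}, using adjunction. A morphism of log schemes $(C,M^i_C)\to (C',M^i_{C'})$ over the underlying map $q$ is the same as a morphism $\alpha:M^i_{C'}\to q_*M^i_C$ of log structures on $C'$; similarly, the bottom arrow amounts to a morphism $\beta:\MS{S}{C'}\to \MS{S}{C}$ on $S$. Commutativity of the square becomes the equality of the two compositions $\pi'{}^*\MS{S}{C'}\rightrightarrows q_*M^i_C$, where $\pi':C'\to S$ is the structure map.

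To build $\alpha$ I would use the pushout presentation $M^i_{C'}=M^{\node}_{C'}\oplus_{\mls O^*_{C'}}M^{s_i'}$ and specify compatible maps on each summand. On the $M^{s_i'}$ summand, take the canonical inclusion $M^{s_i'}\hookrightarrow q_*M^i_C$ supplied by Proposition \ref{prop:contracting}. On the $M^{\node}_{C'}$ summand, take the canonical map $M^{\node}_{C'}\to q_*M^{\node}_C\hookrightarrow q_*M^i_C$; the existence of this unmarked map is essentially the content of \ref{L:5.10c}(i) combined with the node decomposition $\MS{S}{C}=\MS{S}{C}^{N^c}\oplus_{\mls O^*_S}\MS{S}{C}^N$ used in the proof of Proposition \ref{prop:contracting}. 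The combined map $M^i_{C'}\to q_*M^{\node}_C\oplus_{\mls O^*_{C'}}M^{s_i'}$ lands in $q_*M^i_C$ via the isomorphism \eqref{eq:toprove}. The bottom arrow $\beta$ is obtained similarly at the level of characteristic monoids from the same decomposition of $\MS{S}{C}$, recognizing $\MS{S}{C'}$ as the log structure associated to the nodes of $C'$, each of which is either a surviving node of $C$ or the image of a contracted rational chain whose generator becomes the appropriate sum of generators of $\MS{S}{C}$.

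For the commutativity and uniqueness, I would observe that the $M^{s_i'}$ summand contributes nothing to the comparison with $\MS{S}{C}$ (since $M^{s_i'}$ is trivial away from $s_i'$ while $\MS{S}{-}$ is concentrated at node data), so commutativity of the square reduces to the purely nodal, marking-free square. The nodal square can in turn be verified étale-locally after reducing via \ref{C:sequences} to the elementary contractions of rational bridges and tails, where \ref{S:contract-bridge} and \ref{S:contract-tail} exhibit explicit commuting diagrams of Deligne-Faltings charts that glue canonically. Uniqueness then falls out: the $M^{s_i'}$-component of $\alpha$ is uniquely pinned down by the uniqueness clause of Proposition \ref{prop:contracting}, while the $M^{\node}_{C'}$-component of $\alpha$ and the map $\beta$ are determined by compatibility with the underlying nodal square, which is itself unique by the explicit local models.

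The main obstacle I anticipate is the careful packaging of the unmarked enhancement diagram---producing $\beta$ and the nodal half of $\alpha$ as \emph{canonical} maps of log structures (rather than via an ad hoc choice of factorization through rational bridges and tails)---and verifying its commutativity globally. My approach would be to extract these maps directly from the stalkwise analysis in the proof of Proposition \ref{prop:contracting} (specifically the identifications in \ref{L:5.10c}), then check that the resulting maps are intrinsic by comparing with the explicit local constructions of \ref{S:contract-bridge} and \ref{S:contract-tail}. Once this nodal enhancement is in hand, coupling it with the $M^{s_i'}$ inclusion from \ref{prop:contracting} via the pushout presentation of $M^i_{C'}$ assembles the desired square with no further obstruction.
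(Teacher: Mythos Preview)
Your approach is correct and follows the same outline as the paper: decompose $M^i_{C'}$ as a pushout of $M^{\node}_{C'}$ and $M^{s_i'}$, use Proposition~\ref{prop:contracting} for the marking summand, and handle the nodal summand separately. The difference lies entirely in how the nodal square is obtained.

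The paper's proof is much shorter than your sketch because it simply cites \cite[B.6]{MR3329675} for the nodal enhancement: that reference shows $q_*M^{\node}_C$ is a log structure making $(C', q_*M^{\node}_C)$ log smooth over $(S,\MS{S}{C})$, after which existence and uniqueness of the square \eqref{E:C.3.1} follow immediately from the universal property of the canonical log structure \cite[1.2]{OlssonTohoku}. You instead propose to extract the nodal map from the stalkwise computation in \ref{L:5.10c}(i) and then verify globality and uniqueness via the explicit local models of \ref{S:contract-bridge} and \ref{S:contract-tail} together with \ref{C:sequences}. This can be made to work, but note that \ref{L:5.10c}(i) only treats the nodes lying in a single contracted fiber (the $N$-part in the decomposition you mention), and passing from stalk descriptions to a global canonical map of log structures---precisely the ``main obstacle'' you flag---is exactly what the cited external result plus the universal property accomplish in one stroke. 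So your route is sound but unnecessarily hands-on; the paper's invocation of \cite[B.6]{MR3329675} reduces the corollary to a two-line deduction from Proposition~\ref{prop:contracting}.
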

\begin{proof}
By \cite[B.6]{MR3329675} the map $q$ extends uniquely to a commutative square of log schemes
\begin{equation}\label{E:C.3.1}
\xymatrix{
(C, M_C^{\node})\ar[d]\ar[r]& (C', M_{C'}^{\node})\ar[d]\\
(S, \MS {S}{C})\ar[r]& (S, \MS {S}{C'}),}
\end{equation}
where the bottom horizontal map is the identity on $S$.
Indeed it is shown in loc. cit. that $q_*M_C^{\node}$ is a log structure on $C'$ making $(C', q_*M_C^{\node})$ log smooth over $(S, \MS {S}{C})$, and then the existence and uniqueness of the diagram \eqref{E:C.3.1} follows from \cite[1.2]{OlssonTohoku}. From here the existence and uniqueness of \eqref{eq:i-enhancement} follows from \ref{prop:contracting}.
\end{proof}

\begin{cor}\label{cor:log-contractions}
Let $q: (C/S, \{s_i\}_{i=1}^n) \to (C'/S, \{s_i'\}_{i=1}^n)$ be a contraction of $n$-marked prestable curves. Then the map $q$ has a unique extension to a commutative square of log schemes as in \eqref{eq:enhancement} such that the extension restricts to a commutative square as in \eqref{eq:i-enhancement} for each $i=1, \ldots, n$.
\end{cor}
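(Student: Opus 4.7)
My plan is to deduce the corollary from Corollary \ref{cor:i-enhancement} via the pushout presentation \eqref{eq:Mis} of $M_C$ and its analogue for $M_{C'}$. By \ref{cor:i-enhancement}, for each $i \in \{1, \ldots, n\}$ there is a unique commutative square of log schemes \eqref{eq:i-enhancement} extending $q$. The uniqueness part of \cite[B.6]{MR3329675} ensures that all of these squares restrict, along the canonical inclusions $M^{\node}_C \hookrightarrow M^i_C$ and $M^{\node}_{C'} \hookrightarrow M^i_{C'}$, to the same commutative square of log schemes involving $M^{\node}_C$ and $M^{\node}_{C'}$; in particular the bottom arrow $(S, \MS{S}{C}) \to (S, \MS{S}{C'})$ is common to all $n$ squares.

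Next I would assemble these data. The $n$ morphisms of log structures $q^*M^i_{C'} \to M^i_C$ arising from \eqref{eq:i-enhancement}, together with the common morphism $q^*M^{\node}_{C'} \to M^{\node}_C$, agree on the factor $q^*M^{\node}_{C'}$ embedded diagonally in $q^*M^{\node \oplus n}_{C'}$, so they form a cocone over the pushout diagram for $M_{C'}$ regarded as a diagram in log structures on $C$. Invoking the universal property of the pushout \eqref{eq:Mis} for both $C$ and $C'$ produces a unique morphism of log structures $q^*M_{C'} \to M_C$ covering $q$ and lying over the common bottom arrow, i.e. the desired commutative square \eqref{eq:enhancement}. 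Uniqueness subject to compatibility with each \eqref{eq:i-enhancement} follows symmetrically: any such extension restricts along $M^i_{C'} \to M_{C'}$ and $M^{\node}_{C'} \to M_{C'}$ to data pinned down by \ref{cor:i-enhancement}, hence by the universal property of the pushout it must coincide with the morphism constructed above.

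The main point of substance is the observation that the bottom arrow $(S, \MS{S}{C}) \to (S, \MS{S}{C'})$ appearing in \eqref{eq:i-enhancement} is independent of $i$, as this is what permits the pushout assembly. This is provided by the uniqueness clause of \cite[B.6]{MR3329675} applied after restricting each \eqref{eq:i-enhancement} along $M^{\node}_{\bullet} \hookrightarrow M^i_{\bullet}$ for $\bullet \in \{C, C'\}$. Beyond this, the argument is a formal manipulation of the pushout \eqref{eq:Mis} and requires no further calculation.
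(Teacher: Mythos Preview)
Your proof is correct and follows essentially the same route as the paper: both deduce the result from \ref{cor:i-enhancement} together with the pushout presentation \eqref{eq:Mis}. The only cosmetic difference is that the paper phrases the assembly step via pushforward, invoking the natural map $q_*M \oplus_{q_*L} q_*N \to q_*(M \oplus_L N)$, whereas you work on the pullback side and use the universal property of the pushout directly; your explicit remark that the bottom arrow is independent of $i$ (via the uniqueness in \cite[B.6]{MR3329675}) makes a point the paper leaves implicit.
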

\begin{proof}
Existence of \eqref{eq:enhancement} comes from the diagrams \eqref{eq:i-enhancement}, the decomposition \eqref{eq:Mis}, and the fact that for any monoids $M, L, P$ on $C$ we have a natural morphism $q_*M \oplus_{q_*L} q_*N \to q_*(M \oplus_L N)$. Uniqueness comes from the uniqueness in \ref{cor:i-enhancement}.
\end{proof}

\subsection{All-at-once description of log contractions}
\label{S:contract-monoid}

We described above explicitly what log contractions look like when only one component of $C$ is collapsed at a time, with at most one marking. In this section we give a description for arbitrary contractions of $n$-marked curves. 
Let $S = \Spec(k)$ be the spectrum of a separably closed field and let $q: (C/S, \{s_i\}_{i=1}^n) \to (C'/S, \{s_i'\}_{i=1}^n)$ be a contraction. 

% It is a special property of contractions that the pushforward by $q$ of a Deligne-Faltings structure on $C$ is again a Deligne-Faltings structure. Indeed, if $M$ is a log structure on $C$ and $\overline{M}$ is the associated Deligne-Faltings structure, the proof of \ref{} shows that the sheaf $q_*\overline{M}$
% Since $q: C \to C'$ induces an isomorphism of $\mls O^*_C$ and $\mls O^*_{C'}$, if $M$ is any log structure on $C$ the pushforward $q_*\overline{M}$ of the Deligne-Faltings structure is a Deligne-Faltings structure on $C'$ equal to a subobject of the Deligne-Faltings structure $\overline{q_* M_C}$. \Rachel{ask martin about this}
% We begin by defining morphisms of Deligne-Faltings structures as follows. 

Let $J(C)$ (resp. $J(C')$) denote the set of nodes of $C$ (resp. of $C'$) and let $\{e_j\}_{j\in J(C)}$ (resp. $\{e_j\}_{j'\in J(C')}$) denote the set of irreducible elements in $\mathbf{N}^{J(C)}$ (resp. $\mathbf{N}^{J(C')}$). If $x \in C'$ is a closed point let $C^{\prime, sh}_x$ denote $\Sp(\mls O^{sh}_{C', x})$, and let $A$ (resp. $A, B$) denote the irreducible component(s) of $C^{\prime, sh}_x$ if $x$ is smooth (resp. a node). Let $\Gamma_x$ be the graph dual to $C \times_{C'} C^{\prime, sh}_x$. In particular,
\begin{itemize}
\item Edges $E(\Gamma_x)$ are in bijection with nodes of $C$ mapping to $x$.
\item Half edges of $\Gamma_x$ are in bijection with markings $s_i$ mapping to $x$.
\item Vertices $V(\Gamma_x)$ of $\Gamma_x$ are in bijection with irreducible components of $C$ whose images in $C'$ meet $x$. If $x \in C'$ is smooth (resp. a node) there is a distinguished vertex $A \in V(\Gamma_x)$ (resp. two distinguished vertices $A, B \in V(\Gamma_x)$) mapping to the component $A$ (resp. the components $A, B$) of $C^{\prime, sh}_x$.
\end{itemize}
If $x=j' \in J(C')$ is a node, we let $E^*(\Gamma_{j'}) \subset E(\Gamma_{j'})$ be the subset of edges separating vertex A from vertex B.

Since $M_C$ is equal to $M^{\node}_C \oplus_{\mls O_{C}^*} M_C^{\underline{s}},$ the morphism of log structures $M_{C'} \to q_*M_C$ is determined by two morphisms $\phi^{\node}: M^{\node}_{C'} \to q_*M_C$ and $\phi^{\underline{s}}: M^{\underline{s}}_{C'} \to q_*M_C$.  We let $\phi: \MS{S}{C'} \to \MS{S}{C}$ denote the morphism of log structures on the base. We now describe $\phi$, $\phi^{\node}$, and $\phi^{\underline{s}}$ by calculating the induced morphisms of Deligne-Faltings structures.
\begin{itemize}
\item $\overline \phi: \bMS{S}{C'} \to \bMS{S}{C}$ is given by
\begin{align*}
\mathbf{N}^{J(C)} &\leftarrow \mathbf{N}^{J(C')}\\
\sum_{j \in E^*(\Gamma_{j'})} e_j &\mapsfrom e_{j'}.
\end{align*}
\item $\overline \phi^{\node}: \overline{M}^{\node}_{C'} \to \overline{q_*{M}_C}$ factors through the inclusion $\overline{q_*M^{\node}_C} \subset \overline{q_*M_C}$. The composition
\begin{equation}
\label{eq:node-computable}
\overline{M}^{\node}_{C'} \to \overline{q_*M^{\node}_C} \to q_*\overline{M}^{\node}_C
\end{equation}
of the factoring morphism and the inclusion in \ref{L:contracting} can be described explicitly as follows. It is identified with $\overline{\phi}$ away from the nodes of $C'$. 
%If $x=j' \in J(C')$ is a node let $E^*(\Gamma_x)\subset E(\Gamma_x)$ denote the subset of edges separating vertex $A$ from vertex $B$. 
Note that if $x=j' \in J(C')$ is a node, for edge $\nu \in E^*(\Gamma_{x})$ there is a generator $e_\nu^A$ of $q_*\overline{M}^{\node}_C$ on $C^{\prime, sh}_x$ supported on the component containing vertex $A$ and a generator $e_\nu^B$ supported on the component containing vertex $B$. Let $e_A$ and $e_B$ be the generators of $\overline{M}^{\node}_{C'}$ on $C^{\prime, sh}_x$ supported on the components $A$ and $B$, respectively, and let $J(C) \setminus E(\Gamma_x)$ be the set of nodes of $C$ not mapping to $x$. Then \ref{eq:node-computable} has a global chart on $C^{\prime, sh}_x$ given by 
\[
(\mathbf{N} \oplus \mathbf{N})^{E(\Gamma_x)} \oplus \mathbf{N}^{J(C) \setminus E(\Gamma_x)} \leftarrow  \mathbf{N}^2 \oplus \mathbf{N}^{J(C') \setminus \{j'\}}
\]
\[
\sum_{\nu \in E^*(\Gamma_x)} e^A_\nu \mapsfrom e_A, \quad \quad \quad \sum_{\nu \in E^*(\Gamma_x)} e^B_\nu \mapsfrom e_B, \quad \quad \quad \overline{\phi}(e_{i'})\mapsfrom e_{i'} \; \text{for}\; i' \in J(C') \setminus \{j'\}.
\]
% \item $\overline \phi_1: \overline{M}^{\node}_{C'} \to q_*\overline{M}^{\node}_C$ is the identity away from nodes of $C'$. If $x=j' \in J(C')$ is a node let $E^*(\Gamma_x)\subset E(\Gamma_x)$ denote the subset of edges separating vertex $A$ from vertex $B$. Note that for each such edge $\nu$, there is a generator $e_\nu^A$ of $q_*\overline{M}^{\node}_C$ on $C^{\prime, sh}_x$ supported on the component containing vertex $A$ and a generator $e_\nu^B$ supported on the component containing vertex $B$. Let $e_A$ and $e_B$ be the generators of $\overline{M}^{\node}_{C'}$ on $C^{\prime, sh}_x$ supported on the components $A$ and $B$, respectively. Then $\overline \phi_1$ has a global chart on $C^{\prime, sh}_x$ given by 
% \[
% (\mathbf{N} \oplus \mathbf{N})^{E(\Gamma_x)} \oplus \mathbf{N}^{J(C') \setminus \{j'\}} \leftarrow  \mathbf{N}^2 \oplus \mathbf{N}^{J(C') \setminus \{j'\}}
% \]
% \[
% \sum_{\nu \in E^*(\Gamma_x)} e^A_\nu \mapsfrom e_A, \quad \quad \quad \sum_{\nu \in E^*(\Gamma_x)} e^B_\nu \mapsfrom e_B, \quad \quad \quad e_{i'} \mapsfrom e_{i'} \; \text{for}\; i' \in J(C') \setminus \{j'\}.
% \]
\item $\overline \phi^{\underline{s}}: \overline{M}^{\underline s}_{C'} \to \overline{q_*M_C}$ followed by the inclusion $\overline{q_*M_C} \to q_*\overline{M}_C$ can be described as follows. It is zero away from marked points of $C'$. If $x \in C'$ is a marked point, let $I(x) \subset \{1, \ldots, n\}$ denote the set indices such that $s_i'=x$, and for $i \in I(x)$ let $E^i(\Gamma_x) \subset E(\Gamma_x)$ denote the subset of edges separating vertex $A$ from the half edge corresponding to $s_i$. For each such edge $\nu$, there is a generator $e^P_\nu$ of $q_*\overline{M}^{\node}_C$ on $C^{\prime, sh}_x$ supported on the component containing the half edge corresponding to $s_i$.
%(There is also a generator supported on the component containing vertex $A$, but we will not need notation for this generator.) 
Let $\{e_i\}_{i \in I(x)}$ denote the irreducible elements of $\mathbf{N}^{I(x)}$ and let $J(C) \setminus E(\Gamma_x)$ be the set of nodes in $C$ not mapping to $x$. Then $\overline{M}^{\underline s}_{C'} \to q_*\overline{M}_C$ has a global chart on $C^{\prime, sh}_x$ given by
\[
(\mathbf{N} \oplus \mathbf{N})^{E(\Gamma_x)} \oplus \mathbf{N}^{J(C) \setminus E(\Gamma_x) } \oplus \mathbf{N}^{I(x)} \leftarrow   \mathbf{N}^{I(x)}
\]
\[
\sum_{\nu \in E^i(\Gamma_x)} e^P_\nu  + e_i  \mapsfrom e_i\;\text{for}\;i \in I(x) 
%\quad \quad \quad \quad \quad \quad 
%e_{j'} \mapsfrom e_{j'} \;\text{for}\; j' \in J(C').
\]
\end{itemize}

These descriptions follow from the two examples of contractions in considered previously, the fact that $q$ can be factored as a sequence of contractions of rational bridges and rational tails \ref{C:sequences}, and the uniqueness of contractions in \ref{cor:log-contractions}.

\begin{rem}
We point out that the morphism of log structures $M^{\node}_{C'} \to q_*M_C$ factors through $q_*M^{\node}_C$, but $M^{\underline{s}}_{C'} \to q_*M_C$ does not factor through $q_*M^{\underline{s}}_C$.
\end{rem}

\section{Contractions of generalized log twisted curves}\label{sec:glt-contractions}

In this section (in fact in \ref{P:3.11}) we define contractions of generalized log twisted curves. The definition is chosen so that such a contraction induces a morphism of associated stacks, and we explain this in \ref{S:contract-stack}. In \ref{S:contract-curve} we explain how a contraction of coarse curves lifts to a canonical \textit{initial} contraction of generalized log twisted curves (initial contractions are defined in \ref{def:initial}).
Our discussion throughout this section uses the canonical log structures on the coarse space of a generalized log twisted curve and on the associated stack defined in \ref{SS:2.19} and \ref{SS:log-stack}, respectively.
%The functor that sends a generalized log twisted curve to its associated stack is not full: it is possible for non-isomorphic generalized log twisted curves to have isomorphic associated stacks (see Example \ref{ex:two-to-one}). In this section we discuss a class of morphisms of stacks associated to generalized log twisted curves that do arise from the data specifying generalized log twisted curves.  

% \begin{example}\label{ex:two-to-one}
% Let $C = \mathbf{P}^1_k$ and let $s_1=s_2$ be the same marking in $C$. Consider the admissible sheaves $\mls N_{i}$ defined by submonoids $N_i\subset \mathbf{Q}^2$ defined as follows:
% $$
% N_1 = \langle (\frac{1}{2}, 0), (0, 1)\rangle , \ \ N_2 = \langle (1, 0), (0, \frac{1}{2})\rangle .
% $$
% Then we have two non-isomorphic generalized log twisted curves 
% \[
% \bC_i = (\mathbf{P}^1_k, \{s_1, s_2\}, M_k \xrightarrow{\sim} M_k, \mls N_i) \quad \quad \text{for}\; i=1, 2
% \] but it follows from \eqref{eq:local description2} that the associated stacks are both isomorphic to the root stack of $C$ at $s_1=s_2$ with $\mu_2$ stabilizer.
% \end{example}
% The material in this section is \Rachel{currently} only used to prove the valuative criterion for separatedness in \Rachel{ref}.

%\subsection*{Contractions of generalized log twisted curves}

\subsubsection{Contractions of generalized log twisted curves} \label{P:3.11}
%Let $\bC = (C/S, \{s_i\}_{i=1}^n, \ell: \MS{S}{C} \hookrightarrow \MS {S}{C}', \mls N)$ be a generalized log twisted curve and let $\mls C$ be the associated stack. Let $\mls C^{\node}$ be the stack associated to the simple inclusion $\ell$, so there is a morphism $\mls C \to \mls C^{\node}$ given by rigidification along the marked points. 

% Recall from \Rachel{ref} that there is a universal log structure $M_{\mls C^{nd}}$ on $\mls C^{nd}$. Likewise there is a universal log structure $M_{\mls C^{\mls N}}$ on $\mls C^{\mls N}$ lifting the universal Deligne-Faltings structure associated to $\mls N$ (see \ref{SS:3.4}). \Rachel{I think I'm not explaining this very well.} Hence there is a canonical log structure $M_{\mls C}$ on $\mls C$ given by
% \[
% M_{\mls C} := M^{nd}_{\mls C} \oplus_{\mls O_\mls C^*}  M^{\mls N}_{\mls C}
% \]
% where $M^{nd}_{\mls C} := M_{\mls C^{nd}}|_{\mls C}$ and where $M^{\mls N}_{\mls C} := M_{\mls C^{\mls N}}|_{\mls C}.$ The coarse space morphism for $\mls C$ is a morphism of log stacks $(\mls C, M_{\mls C}) \to (C, M_C)$ and the associated morphism of log structures sends $M^{nd}_C|_{\mls C}$ into $M^{nd}_{\mls C}$ via a simple inclusion and it sends $M_C^{\bs}|_{\mls C}$ into $M^{\mls N}_{\mls C}.$

Suppose we have two generalized log twisted curves
\begin{equation}\label{eq:CD}
\begin{gathered}
\bC = (C/S, \{s_i^C\}_{i=1}^n, \ell^C: \MS{S}{C} \hookrightarrow \MS{S}{C}', \mls N^C)\\
\bD = (D/S, \{s_i^D\}_{i=1}^n, \ell^D: \MS {S}{D} \hookrightarrow \MS{S}{D}', \mls N^D)
\end{gathered}
\end{equation}
and let $q: C \to D$ be a contraction of coarse curves (see \ref{def:contraction}). By \ref{cor:log-contractions} the map $q$ extends to a diagram of log schemes
$$
\xymatrix{
(C, M_C)\ar[r]^-{(q, q^b)}\ar[d]& (D, M_D)\ar[d]\\
(S, \MS {S}{C})\ar[r]^-{(\text{id}, \phi )}& (S, \MS {S}{D}).}
$$
%where $\phi$ is defined as in \ref{S:contract-monoid}.

\begin{defn}\label{D:6.2}
Let $\bC$ and $\bD$ be generalized log twisted curves as in \eqref{eq:CD}. A \emph{contraction} $\bC \to \bD$ is a contraction $q:C \to D$ of the underlying coarse curves together with a morphism of log structures $\phi ':\MS {S}{D}'\rightarrow \MS{S}{C}'$ such that the diagram 
    \begin{equation}\label{E:3.20.1}
    \xymatrix{
    \MS{S}{D}\ar[r]^-{\ell _D}\ar[d]^-\phi & \MS {S}{D}'\ar[d]^-{\phi '}\\
    \MS {S}{C}\ar[r]^-{\ell _C}& \MS {S}{C}'}
    \end{equation}
     commutes, and such that the dashed arrow in the following diagram exists (note that if it exists it is unique): 
%\Rachel{change  notation in this diagram}
\begin{equation}\label{eq:contract2}
\begin{tikzcd}
\overline{M}^{\underline{s}}_{D}|_{\mls C} \arrow[r, hookrightarrow] \arrow[d, hookrightarrow] & \mls N^{D}|_{\mls C} \arrow[r, hookrightarrow]  \arrow[d, dashrightarrow] & (\overline{M}^{\underline{s}}_{D}|_{\mls C})_{\mathbf Q} \arrow[d, hookrightarrow ]\\
\overline{M}_{C}|_{\mls C} \arrow[r, hookrightarrow] & \overline{M}_{\mls C} \arrow[r, hookrightarrow] & (\overline{M}_{C}|_{\mls C})|_{\mathbf Q}.
\end{tikzcd}
\end{equation}
%\Rachel{We mean restrictions of characteristic sheaves here, not characteristic sheaves of restrictions}
\end{defn}

% As motivation for this definition, we refer the reader to \ref{S:contract-stack}: the upshot is that diagrams \eqref{E:3.20.1} and \eqref{eq:contract2} are precisely what is needed for the contraction $q: C \to D$ to lift to a morphism $\mls C \to \mls D$ of stacks.

\begin{rem}\label{R:3.20}
If the contraction $q: C \to D$ is  an isomorphism then $\overline M_D^{\underline s}$ maps isomorphically to $\overline M_C^{\underline s}$ and the dashed arrow in \eqref{eq:contract2} exists if and only if $\mls N^{C} $ is contained in $\mls N^{D}$.
\end{rem}

\begin{rem}
The morphism of log structures $\phi': \MS{S}{D}' \rightarrow \MS{S}{C}'$ may not always exist and when it does is not unique.  For example, suppose $S = \Sp (A)$ is the spectrum of a strictly henselian local ring, $C=D$,   and  the diagram 
$$
 \xymatrix{
    \MS{S}{D}\ar[r]^-{\ell _D}\ar[d]^-\phi & \MS {S}{D}'\\
    \MS {S}{C}\ar[r]^-{\ell _C}& \MS {S}{C}'}
$$
is induced by a diagram of charts (for simplicity assume we are in the case of a single node)
$$
\xymatrix{
\mathbf{N}\ar[r]^-{n_D}\ar[d]^-{\simeq }& \mathbf{N}\ar@/^2pc/[rd]& \\
\mathbf{N}\ar[r]^-{n_C}& \mathbf{N}\ar[r]^-{\beta }& A}
$$
with $\alpha (1), \beta (1)\in A$ nonunits.
Then for the existence of $\phi '$ we must have $n_D|n_C$, and in this case the choice of $\phi '$ is determined by a map $\mathbf{N}\rightarrow A^*\oplus \mathbf{N}$ sending $1$ to $(u, n_C/n_D)$ for a unit $u$ satisfying 
$$
u\beta  (1)^{n_C/n_D} = \alpha (1), \ \ u^{n_D} = 1.
$$
In general there can be several solutions to these equations.  This can be viewed as a manifestation of the fact that 
 balanced twisted curves can have automorphisms acting trivially on the coarse space that come from stacky nodes \cite[7.1.1]{MR2007376}. 
\end{rem}

\subsubsection{Contractions of associated stacks}\label{S:contract-stack}
Let $\bC$ and $\bD$ be generalized log twisted curves as in \eqref{eq:CD}, let $\mls C$ and $\mls D$ be the associated stacks, and let $\bC \to \bD$ be a contraction. We will construct a morphism $\mls C\to \mls D$ determined by the contraction that lifts the coarse map $C \to D$ coming from the contraction. To begin, let $\mls C^{\node}$ and $\mls D^{\node}$ be the stacks associated to the simple inclusions $\ell_C$ and $\ell_D$, respectively.

\begin{prop}\label{P:induce-stack}
Let $\bC$, $\bD$, and $q$ be as in \ref{P:3.11}. A morphism of log structures $\phi ':\MS {S}{D}'\rightarrow \MS{S}{C}'$ such that the diagram \eqref{E:3.20.1} commutes induces a morphism of stacks $\mls C^{\node }\rightarrow \mls D^{\node}$ over $q$. 
\end{prop}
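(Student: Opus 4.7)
My plan is to use the moduli interpretation of $\mls C^{\node }$ and $\mls D^{\node }$ recalled in \ref{SS:log-stack}: the stack $\mls D^{\node }$ carries a universal simple inclusion $M_D^{\node }|_{\mls D^{\node }}\hookrightarrow M_{\mls D^{\node }}$ whose restriction to the base $S$ is $\ell _D$, and this property characterizes $\mls D^{\node }$ as a $D$-stack (and analogously for $\mls C^{\node }$). So to produce the desired morphism it suffices to exhibit a simple extension of $q_{\mls C}^{*}M_D^{\node }$ on $\mls C^{\node }$ whose base restriction is (the pullback of) $\ell _D$, where $q_{\mls C}$ denotes the composite $\mls C^{\node }\rightarrow C\rightarrow D$.

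First I would apply \ref{cor:log-contractions} to upgrade $q:C\rightarrow D$ to a canonical morphism of log schemes $(C, M_C^{\node })\rightarrow (D, M_D^{\node })$ over $(\mathrm{id},\phi ):(S, \MS{S}{C})\rightarrow (S, \MS{S}{D})$.  Pulling this back along $\mls C^{\node }\rightarrow C$ and composing with the universal inclusion $M_C^{\node }|_{\mls C^{\node }}\hookrightarrow M_{\mls C^{\node }}$ yields a morphism of log structures
$$
q_{\mls C}^{*}M_D^{\node }\rightarrow M_{\mls C^{\node }}
$$
on $\mls C^{\node }$ whose restriction to the base is the composite $\MS{S}{D}\xrightarrow{\phi }\MS{S}{C}\xrightarrow{\ell _C}\MS{S}{C}'$.

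Using $\phi '$ and the commutativity of \eqref{E:3.20.1}, I would then form the pushout log structure
$$
M':=\MS{S}{D}'|_{\mls C^{\node }}\oplus _{\MS{S}{D}|_{\mls C^{\node }}}q_{\mls C}^{*}M_D^{\node }
$$
on $\mls C^{\node }$.  Commutativity of \eqref{E:3.20.1} provides a canonical map $M'\rightarrow M_{\mls C^{\node }}$ factoring the previous morphism as $q_{\mls C}^{*}M_D^{\node }\hookrightarrow M'\rightarrow M_{\mls C^{\node }}$.  I would verify that the first arrow is a simple inclusion restricting on the base to the pullback of $\ell _D$, at which point the universal property of $\mls D^{\node }$ produces the desired morphism $\mls C^{\node }\rightarrow \mls D^{\node }$ over $q$.

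The main obstacle will be verifying simplicity of $q_{\mls C}^{*}M_D^{\node }\hookrightarrow M'$ at every node of $C$ in the geometric fibers; this is the only nontrivial thing since away from the nodes both log structures agree with the pulled-back log structure of a smooth curve with sections.  This reduces to a local chart computation using the explicit description of $\overline{\phi }^{\node }$ from \ref{S:contract-monoid}.  At a node $e$ of $C$ that $q$ maps isomorphically onto a node $e'$ of $D$, local Kato charts for $\ell _C$ and $\ell _D$ take the form $\mathbf{N}\xrightarrow{\times n_C}\mathbf{N}$ and $\mathbf{N}\xrightarrow{\times n_D}\mathbf{N}$, the commutativity of \eqref{E:3.20.1} forces the divisibility $n_D\mid n_C$, and simplicity of the pushout at $e$ follows.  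At a node $e$ of $C$ collapsed by $q$ into the smooth or nodal part of a component of $D$, the explicit formulas in \ref{S:contract-monoid} show that $q^{*}M_D^{\node }$ contributes no Kato chart with positive multiplicity at $e$, so the pushout $M'$ reproduces locally at $e$ the simple chart coming from $M_{\mls C^{\node }}$ itself.  Combining these two local cases yields the required simplicity.
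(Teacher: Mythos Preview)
Your overall strategy is sound, but the key construction has a genuine gap: the pushout $M'=\MS{S}{D}'|_{\mls C^{\node}}\oplus_{\MS{S}{D}|_{\mls C^{\node}}}q_{\mls C}^{*}M_D^{\node}$ does \emph{not} give a simple inclusion $q_{\mls C}^{*}M_D^{\node}\hookrightarrow M'$ in general.  Compute the stalk of the characteristic sheaf at any geometric point $\bar x$ with $q(\bar x)$ a node of $D$ of twist $d\ge 2$: isolating that node, the relevant pushout of monoids is
\[
P \;=\; \mathbf{N}\;\oplus_{\mathbf{N}}\;\mathbf{N}^2,\qquad (\mathbf{N}\xrightarrow{\cdot d}\mathbf{N},\ \mathbf{N}\xrightarrow{\Delta}\mathbf{N}^2),
\]
which is the saturated rank-$2$ monoid $\langle u,v_1,v_2\mid d\,u=v_1+v_2\rangle$.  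For $d\ge 2$ this has three irreducibles and is not free, so $\mathbf{N}^2\to P$ is not simple and the moduli description of $\mls D^{\node}$ does not apply.  Your local verification is also mis-organized: what matters is whether $q(\bar x)$ is a node of $D$, not whether $\bar x$ is a node of $C$; in particular smooth points of $C$ on a contracted bridge map to nodes of $D$ and face the same obstruction.  The divisibility $n_D\mid n_C$ you extract is correct and necessary, but it does not by itself make the pushout free.

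The paper fixes exactly this by working \emph{inside} $M_{\mls C}^{\node}$ rather than with the abstract pushout.  One uses the canonical map $M'\to M_{\mls C}^{\node}$ (which you already noted), but then replaces $M'$ by the unique sublog structure $\mls M\subset M_{\mls C}^{\node}$ whose branch generators are obtained by \emph{dividing} the images of the $v_i$ by $d$; this is where the divisibility $d_j\mid c_{j'}$ (for every node $j'$ of $C$ over $j$) is actually used.  Concretely, for each irreducible $e$ of $\overline{q^*M_D^{\node}}_{\bar x}$ corresponding to a node $j$ of $D$ one sets $\iota(e)$ to be the unique element of $\overline{M_{\mls C}^{\node}}_{\bar x}$ with $d_j\cdot\iota(e)=k\circ q^\flat(e)$; the resulting $\mls M$ is free of the correct rank and $q^*M_D^{\node}\hookrightarrow\mls M$ is the diagonal multiplication by $(\mathbf d,d_j)$ required by the moduli description.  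Your pushout maps to $\mls M$ but is strictly smaller, which is why the argument breaks.
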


\begin{proof}
% \begin{pg}\label{P:3.20b} \Rachel{would you have any complaints if I moved 3.11 and 3.12 into \ref{S:contract-stack}?}
%     A morphism of log structures $\phi ':\MS {S}{D}'\rightarrow \MS{S}{C}'$ such that the diagram
%     \begin{equation}\label{E:3.20.1}
%     \xymatrix{
%     \MS{S}{D}\ar[r]^-{\ell _D}\ar[d]^-\phi & \MS {S}{D}'\ar[d]^-{\phi '}\\
%     \MS {S}{C}\ar[r]^-{\ell _C}& \MS {S}{C}'}
%     \end{equation}
%     commutes induces a morphism of stacks $\mls C^{\node}\rightarrow \mls D^{\node}$ over $q$. 
%    \Rachel{I think the maps of stalks of characteristic monoids inuduced by $\phi'$ are injective and unique if they exist. Does it follow that $\phi'$ is injective and unique if it exists? If not, what is an example? If so, we should reword Def. 3.13 so that $\phi'$ is not an additional datum.}
    %\Rachel{Is there some reason why $\phi'$ will ``look like'' $\phi$---for example, will it be injective?} 
   
     Recall from \cite[4.1]{LogTwisted} that by the moduli description of the stack $\mls D^\node $ to define a morphism $\mls C^{\node} \to \mls D^{\node}$ it suffices (in fact it is equivalent) to give a diagram of log structures on $\mls C^{\node}$
    \begin{equation}\label{eq:induce-stack1}
    \begin{tikzcd}
\MS {S}{D}|_{\mls C^{\node}} \arrow[r, "\ell_D"] \arrow[d] & \MS{S}{D}'|_{\mls C^{\node}} \arrow[d] \\
M_D^{\node}|_{\mls C^{\node}} \arrow[r] & \mls M
    \end{tikzcd}
    % \xymatrix{
    % \MS {S}{D}|_{\mls C^{\node}}\ar[r, "\ell_D"]\ar[d]& \MS{S}{D}'|_{\mls C^{\node}}\ar[d]\\
    % M_D^\node |_{\mls C^{\node}}\ar[r]& \mls M,}
    \end{equation}
    such that for every geometric point $\bar x \to \mls C^{\node}$, the induced diagram of stalks of characteristic monoids has the form
    \begin{equation}\label{eq:induce-stack2}
     \begin{tikzcd}
\mathbf{N}^n \arrow[r, "\cdot \bd"] \arrow[d, equal] &\mathbf{N}^n \arrow[d, equal]\\
\mathbf{N}^{n} \arrow[r, "\cdot {\bd}"] & \mathbf{N}^{n}
    \end{tikzcd}
    \quad \quad \text{or} \quad \quad
    \begin{tikzcd}
\mathbf{N}^n \arrow[r, "\cdot \bd"] \arrow[d, "\Delta_i"'] &\mathbf{N}^n \arrow[d, "\Delta_i"]\\
\mathbf{N}^{n+1} \arrow[r, "\cdot {(\bd, \,d_i)}"] & \mathbf{N}^{n+1}
    \end{tikzcd}
    \end{equation}
where $\bd = (d_1, \ldots, d_n)$ is a vector of positive integers and $\Delta_i$ sends $(x_1, \ldots, x_n)$ to $(x_1, \ldots, x_n, x_i)$. 
There is a universal diagram analogous to \eqref{eq:induce-stack1} associated to the identity map $\mls C^{\node} \to \mls C^{\node}$, with $M^{\node}_{\mls C}$ in the place of $\mls M$. Combining this universal diagram with \eqref{eq:induce-stack1}, we get the following commuting diagram of injective arrows, where the black part of the diagram is already known to exist:
\begin{equation}\label{eq:induce-stack3}
\begin{tikzcd}
\MS{S}{D}|_{\mls C^{\node}} \arrow[r, "\ell_D"] \arrow[d] & \MS{S}{D}'|_{\mls C^{\node}} \arrow[dr, "\phi'"] \arrow[d, gray!60]\\
M_D^{\node}|_{\mls C^{\node}} \arrow[dr, "q^\flat"'] \arrow[r, gray!60] & \color{gray!60} \mls M \color{black}& \MS{S}{C}'|_{\mls C^{\node}} \arrow[d]\\
&M^{\node}_C|_{\mls C^{\node}} \arrow[r, "k"] & M^{\node}_{\mls C}.
\end{tikzcd}
\end{equation}
The following lemma then defines the morphism $\mls C^{\node} \to \mls D^{\node}$.

\begin{lem} \label{L:3.12} There exists  a unique sublog structure $\mls M\subset M_{\mls C}^\node $ containing the images of $\MS {S}{D}'|_{\mls C}$ and $M_D^\node |_{\mls C}$ such that the diagram \eqref{eq:induce-stack1} defines a morphism $\mls C^{\node} \to \mls D^{\node}.$
%the induced map $M_D^\node|_{\mls C}\rightarrow \mls M$ is a simple inclusion of log structures. \Rachel{and the induced map $\MS{S}{D}'|_{\mls C} \to \mls M$ has a certain form?}
\end{lem}
\begin{proof}

A subsheaf of $M^{\node}_{\mls C}$ is determined by its stalks, and by \cite[3.15]{BV} the stalk of a characteristic monoid is a local chart for the Deligne-Faltings structure.
%Since our lemma includes a uniqueness statement, it follows that it is enough to consider the stalks of characteristic monoids at every $\bar x \to \mls C^{\node}$. 
%More precisely, f
For $\bar x \to \mls C^{\node}$ let $\mls M_{\bar x}$ be the abstract monoid isomorphic to $(M_{D}^{\node}|_{\mls C^{\node}})_{\bar x}$, and consider the diagram of stalks of characteristic monoids induced by \ref{eq:induce-stack3}, where we insert $\mls M_{\bar x}$ into the appropriate place in the diagram, with morphisms as specified by \ref{eq:induce-stack2}. It is enough to show that there is a unique inclusion $\iota: \mls M_{\bar x} \to (M_{\mls C}^{\node})_{\bar x}$ making the diagram commute. 

For this, let $\bar s \to S$ be the image of $\bar x$ in $S$. The stalks of $\ell_D$ and $\ell_C$ at $\bar s$ are given by
\[
\mathbf{N}^{J(D_{\bar s})} \xrightarrow{\cdot \bd}\mathbf{N}^{J(D_{\bar s})} \quad \quad \quad \quad \mathbf{N}^{J(C_{\bar s})} \xrightarrow{\cdot \bc}\mathbf{N}^{J(C_{\bar s})}
\]
for some positive integers $\bd = (d_j)_{j \in J(D_{\bar s})}$ and $\bc = (c_{j'})_{j' \in J(C_{\bar s})}$, where $J$ denotes the set of nodes. From commutativity of \eqref{E:3.20.1} it follows that $\overline{\phi}_{\bar x}$ is the unique morphism satisfying
\begin{equation}\label{eq:phibar}
d_j\overline{\phi}(e_j) = \bc \cdot \phi(e_j)
\end{equation}
where $e_j \in (\MS{S}{D})_{\bar s}$ is the unique irreducible element corresponding to node $j \in J(D_{\bar s})$.
In particular $d_j | c_{j'}$ for all nodes $j'$ in the preimage of $j$.

Now for $e \in \mls M_{\bar x}$ an irreducible element associated to a node $j \in J(D)$\footnote{
Usually this irreducible element is unique, but if $q(\bar x)$ is itself a node, there will be two irreducible elements of $\mls M_{\bar x}$ associated to $\bar x$.} we define $\iota(e)$ to be the unique element of $M_{\mls C}^{\node}$ satisfying
\[
d_j \iota(e) = k\circ q^\flat(e).
\]
Such an element exists (and is necessarily unique) because $d_j | c_{j'}$ for all nodes $j'$ in the preimage of $j$---the map $k$ will be multiplication by $\bc$, with some coordinates repeated. It follows from \eqref{eq:phibar} and commutativity of the black diagram \eqref{eq:induce-stack3} that $\iota$ also makes ``the other square'' in the diagram commute.
\end{proof}

This completes the proof of \ref{P:induce-stack}.
\end{proof}

\begin{lem}\label{L:induce-stack}
Let $\mathbf{C}$, $\mathbf{D}$, and $q$ be as in \ref{P:3.11}. A dashed arrow making \eqref{eq:contract2} commute induces a morphism of stacks $\mls C \to \mls D^{\mls N^D}$.
\end{lem}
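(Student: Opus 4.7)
The plan is to use the moduli-theoretic description of $\mls D^{\mls N^D}$ recalled in \ref{SS:3.4}, which says that giving a morphism $\mls C \to \mls D^{\mls N^D}$ over the composition $\mls C \to C \to D$ is the same as giving a symmetric monoidal functor
\[
F : \mls N^D|_{\mls C} \longrightarrow \mathfrak{D}iv^+_{\mls C}
\]
together with an isomorphism $\alpha : G \to F \circ \iota$, where $\iota : \oplus s_{i,*}^D \mathbf{N}|_{\mls C} \hookrightarrow \mls N^D|_{\mls C}$ is the inclusion and $G$ is the pullback to $\mls C$ of the DF structure on $D$ corresponding to the markings $s_i^D$.

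First, I would construct $F$. The log structure $M_{\mls C}$ from \ref{SS:log-stack} defines a canonical symmetric monoidal functor $\overline{M}_{\mls C} \to \mathfrak{D}iv^+_{\mls C}$ sending $m \in \overline{M}_{\mls C}(U)$ to the pair $(\mls L_m, \mls L_m \to \mls O_U)$ coming from the $\mls O_U^*$-torsor of lifts of $m$ to $M_{\mls C}$. I would take $F$ to be the composition of the dashed arrow $\mls N^D|_{\mls C} \hookrightarrow \overline{M}_{\mls C}$ from \eqref{eq:contract2} with this functor to $\mathfrak{D}iv^+_{\mls C}$.

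Next, I would construct $\alpha$. By commutativity of the left square in \eqref{eq:contract2}, the composition $F \circ \iota$ factors through $\overline{M}^{\underline{s}}_D|_{\mls C} \hookrightarrow \overline{M}_C|_{\mls C} \hookrightarrow \overline{M}_{\mls C} \to \mathfrak{D}iv^+_{\mls C}$. On the other hand, the canonical log morphism $(C, M_C) \to (D, M_D)$ from \ref{cor:log-contractions} induces a morphism of log structures $q^*M_D^{\underline s} \to M_C$, and its pullback to $\mls C$ together with the log morphism $(\mls C, M_{\mls C}) \to (C, M_C)$ of \ref{SS:log-stack} produces a morphism $(q\pi)^*M_D^{\underline s} \to M_{\mls C}$. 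The associated map on DF structures is precisely $G$, and it agrees with $F \circ \iota$ up to the canonical identification on characteristic sheaves, giving the desired isomorphism $\alpha$.

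The main obstacle I expect is verifying this identification honestly, i.e. that the DF structure on $\mls C$ coming from the pullback of the markings of $D$ coincides, as a map into $\mathfrak{D}iv^+_{\mls C}$, with the composition $\overline{M}^{\underline{s}}_D|_{\mls C} \hookrightarrow \overline{M}_C|_{\mls C} \hookrightarrow \overline{M}_{\mls C} \to \mathfrak{D}iv^+_{\mls C}$. This requires unwinding the construction of $M_C$ in \ref{SS:2.19} and of $M_{\mls C}$ in \ref{SS:log-stack}, along with the chain of log morphisms $q^*M_D \to M_C$ and $M_C|_{\mls C} \to M_{\mls C}$ on the summand $M^{\underline{s}}$. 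However, since all maps in sight are inclusions of log structures induced by sub-DF structures, and since commutativity of the left square in \eqref{eq:contract2} is already part of the hypothesis, this check reduces to a diagram chase on characteristic sheaves. Functoriality of the moduli description of $\mls D^{\mls N^D}$ then ensures that the resulting morphism $\mls C \to \mls D^{\mls N^D}$ is canonical and lies over $q \circ \pi$.
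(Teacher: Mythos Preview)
Your proposal is correct and is essentially the same argument as the paper's, phrased in the equivalent Deligne--Faltings language rather than the log-structure language. The paper uses the description of $\mls D^{\mls N^D}$ from \ref{SS:log-stack} as classifying inclusions $M_D^{\underline s}|_{\mls C}\hookrightarrow R$ with $\overline R \simeq \mls N^D|_{\mls C}$, and simply takes $R := M_{\mls C}\times_{\overline M_{\mls C}}\mls N^D|_{\mls C}$ via the dashed arrow; this fiber-product log structure is exactly the log structure corresponding to your functor $F = (\mls N^D|_{\mls C}\hookrightarrow \overline M_{\mls C}\to \mathfrak{D}iv^+_{\mls C})$, and the inclusion $M_D^{\underline s}|_{\mls C}\hookrightarrow R$ induced by the left square of \eqref{eq:contract2} is your $\alpha$.
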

\begin{proof}
To define the map $\mls C \to \mls D^{\mls N^D}$ it suffices to define an inclusion of log structures $M_D^{\bs }|_{\mls C}\hookrightarrow R$ such that the induced inclusion $\overline M_D^{\bs }|_{\mls C}\hookrightarrow \overline R$ identifies with the inclusion $\overline M_D^{\bs }|_{\mls C}\hookrightarrow \mls N^D|_{\mls C}$.  For this we take $R$ to be the fiber product $M_{\mls C}\times _{\overline M_{\mls C}}\mls N^D|_{\mls C}$, where the map $\mls N^D|_{\mls C}\rightarrow \overline M_{\mls C}$ is the one provided by the dashed arrow in \eqref{eq:contract2}.
\end{proof}

Recall that $\mls D\simeq \mls D^{\node}\times _D\mls D^{\mls N^D}$. Hence, to define a morphism $\mls C \to \mls D$ it suffices to define morphisms $\mls C \to \mls D^{\node}$ and $\mls C \to \mls D^{\mls N^D}$. 

\begin{defn}\label{D:6.8}
Let $\mathbf{C} \to \mathbf{D}$ be a contraction of generalized log twisted curves. The associated morphism of stacks $\mls C \to \mls D$ is the one induced by 
the morphism $\mls C\rightarrow \mls D^{\node}$ equal to the projection $\mls C\rightarrow \mls C^{\node}$ followed by the morphism from \ref{P:induce-stack}, and by the morphism $\mls C \to \mls D^{\mls N^D}$ from \ref{L:induce-stack}.
\end{defn}

\begin{rem}\label{R:3.24} Note that the map of stacks $\mls C\rightarrow \mls D$ is uniquely determined by the fact that it extends to a morphism of log stacks $(\mls C, M_{\mls C})\rightarrow (\mls D, M_{\mls D})$ filling in the diagram
$$
\xymatrix{
(\mls C, M_{\mls C})\ar[dd]\ar@{-->}[rr]\ar[rd]&& (\mls D, M_{\mls D})\ar[dd]\ar[rd]& \\
& (C, M_C)\ar[rr]\ar[dd]&& (D, M_D)\ar[dd]\\
(S, \MS {S}{C}')\ar[rr]\ar[rd]&& (S, \MS {S}{D}')\ar[rd]& \\
& (S, \MS {S}{C})\ar[rr]&& (S, \MS {S}{D}).}
$$
\end{rem}

% \begin{rem}
% If $\bC \to \bD$ and $\bD \to \bE$ are contractions of generalized log twisted curves, then it makes sense to speak of their composition $\bC \to \bE$: It is immediate from the definition that the composition of contractions of generalized log twisted curves is a contraction. \Rachel{right?}
% \end{rem}

\begin{lem}
If $(q, \phi '):\bC \to \bD$ and $(r, \psi '):\bD \to \bE$ are contractions of generalized log twisted curves, then $(r\circ q, \phi '\circ \psi '):\bC \to \bE$ is also a contraction, and the associated morphism $\mls C \to \mls E$ is the composition of the associated morphisms $\mls C \to \mls D \to \mls  E$.
\end{lem}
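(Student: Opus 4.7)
The plan is to verify the two conditions in Definition \ref{D:6.2} for the pair $(r\circ q,\ \phi'\circ \psi')$, and then identify the morphism of associated stacks via the uniqueness property in Remark \ref{R:3.24}.

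First, by \ref{L:properties}(i) the composition $r\circ q \colon C\rightarrow E$ is a contraction of coarse curves. By the uniqueness in \ref{cor:log-contractions}, the canonical log enhancement of $r\circ q$ coincides with the composition of the log enhancements of $q$ and $r$; in particular, the induced morphism $\MS{S}{E}\rightarrow \MS{S}{C}$ on base characteristic log structures equals $\phi\circ \psi$, where $\psi$ and $\phi$ are the log enhancements of $r$ and $q$ respectively. Pasting the two commutative squares of the form \eqref{E:3.20.1} furnished by the hypotheses yields the corresponding commutative square for $(r\circ q,\ \phi'\circ \psi')$.

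Next, to produce the dashed arrow in \eqref{eq:contract2} for the composite, I use the morphism of stacks $\mls C\rightarrow \mls D$ associated to $(q,\phi')$ via \ref{D:6.8}. By \ref{R:3.24} this morphism extends canonically to a morphism of log stacks $(\mls C, M_{\mls C})\rightarrow (\mls D, M_{\mls D})$, and thus induces a morphism of characteristic sheaves $\overline M_{\mls D}|_{\mls C}\rightarrow \overline M_{\mls C}$. The required map $\mls N^E|_{\mls C}\rightarrow \overline M_{\mls C}$ is then obtained as the composition
\[
\mls N^E|_{\mls C}\ \longrightarrow\ \overline M_{\mls D}|_{\mls C}\ \longrightarrow\ \overline M_{\mls C},
\]
where the first arrow is the pullback to $\mls C$ of the dashed arrow for $(r,\psi')$. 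Compatibility of this composite with the inclusions from $\overline M_E^{\underline s}$ and into the rational sheaf in \eqref{eq:contract2} follows by pasting the analogous diagrams for $(r,\psi')$ and $(q,\phi')$.

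For the final assertion, I observe that both the morphism $\mls C\rightarrow \mls E$ associated to $(r\circ q,\ \phi'\circ \psi')$ via \ref{D:6.8} and the composition $\mls C\rightarrow \mls D\rightarrow \mls E$ of the morphisms associated to $(q,\phi')$ and $(r,\psi')$ fit into the commutative diagram of log stacks described in \ref{R:3.24}; for the composition, this follows from pasting the corresponding diagrams for the two given contractions. Since \ref{R:3.24} states that a morphism of stacks with such a log enhancement is uniquely determined, the two coincide. The main potential obstacle is the bookkeeping of verifying compatibility of all inclusions in \eqref{eq:contract2}, but once the two dashed arrows are identified via the uniqueness in \ref{R:3.24}, the remaining checks reduce to functoriality of the constructions in \ref{P:induce-stack} and \ref{L:induce-stack}.
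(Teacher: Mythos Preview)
Your proposal is correct and follows essentially the same approach as the paper's proof. The paper's argument is very terse: it simply asserts that existence of the dashed arrow in \eqref{eq:contract2} for the two given contractions immediately yields it for the composite, and then invokes \ref{R:3.24} for the equality of the two induced stack morphisms. Your version spells out the construction of the composite dashed arrow explicitly (via the log morphism $(\mls C,M_{\mls C})\to(\mls D,M_{\mls D})$) and records the auxiliary facts from \ref{L:properties}(i) and \ref{cor:log-contractions}, but the underlying strategy is identical.
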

\begin{proof}
It is clear that if the dashed arrow in \eqref{eq:contract2} exists for both $\bC \to \bD$ and $\bD \to \bE$, then it exists for $\bC \to \bE$ so $(r\circ q, \phi '\circ \psi ')$ is a contraction. The statement that the two induced maps $\mls C\rightarrow \mls E$ agree follows from the fact that they both extend to morphisms of log stacks and \ref{R:3.24}.
\end{proof}

\begin{example}\label{ex:contract}
Let $k$ be a field, let $C = \mathbf{P}^1_k$ and let $s \in C$ be a marked point. Consider the admissible sheaves $\mls N_{i}$ defined by submonoids $N_1 = (1/a) \mathbf{N}$ and $N_2 = (1/b) \mathbf{N}$. We have two generalized log twisted curves 
\[
\bC_i := (\mathbf{P}^1_k, \{s\}, M_k \xrightarrow{\sim} M_k, \mls N_i) \quad \quad \text{for}\; i=1, 2.
\] 
In this case a contraction $\bC_1 \to \bC_2$ lifting the identity map $C \to C$ is unique if it exists, and referring to \eqref{eq:contract2} we see that it exists exactly when $(1/b)\mathbf{N} \subseteq (1/a)\mathbf{N}$, i.e. exactly when $a = qb$ for some integer $q$. In this case the stacks $\mls C_1$ and $\mls C_2$ are the root stacks of $C$ at $s$ to orders $a$ and $b$, respectively. So if $U \to C$ is a morphism from a scheme $U$ then $\mls C_1(U)$ is the groupoid of triples $(\mls L, \sigma, \phi: \mls L^{\otimes a} \to \mls O(1)|_U)$ where $\phi$ is an isomorphism sending $\sigma^a$ to the pullback of the section defining $s \in C$. The morphism $\mls C_1 \to \mls C_2$ induced by the contraction is the one that sends a triple $(\mls L, \sigma, \phi: \mls L^{\otimes a} \to \mls O(1))$ to $(\mls L^{\otimes q}, \sigma^q, \phi: (\mls L^{\otimes q})^{\otimes b} \to \mls O(1)).$
\end{example}

\subsubsection{Relative coarse moduli}
Let $\mls C$ be the stack associated to a generalized log twisted curve $\bC = (C/S, \{s_i\}_{i=1}^n, M_S \hookrightarrow M_S', \mls N)$ and let 
\begin{equation}\label{eq:rel coarse}f: \mls C \to \mc X
\end{equation}
be a locally finitely presented morphism of algebraic stacks. The relative moduli space of $f$ was constructed in \cite[Thm~3.1]{AOV}. Example \ref{ex:contract} motivates the following proposition.

\begin{prop}\label{lem:rel coarse}
Let $\mls C \to \mls C^\dagger \to \cX$ be the factorization of \eqref{eq:rel coarse} through its relative moduli space. Up to unique isomorphism, the stack $\mls C^{\dagger}$ is the stack associated to a generalized log twisted curve $\bC^\dag$ with the same underlying marked prestable curve as $\mls C$, and the morphism $\mls C \to \mls C^\dagger$  arises from a contraction $\bC \to \bC^\dag$.

Conversely, if $\bC \to \bD$ is a contraction whose underlying morphism of marked prestable curves is an isomorphism, and if $\mls D \to \cX$ is a representable morphism, then $\mls D$ is the relative coarse space of the composition $\mls C \to \mls D \to \cX$.
\end{prop}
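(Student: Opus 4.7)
The plan is to prove part (1) by constructing $\bC^\dag$ locally on $S$ from the stabilizer group schemes of $\mls C^\dag$, and to deduce part (2) via the universal property of the relative coarse moduli space.

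For part (1), I first note the properties of $\mls C^\dag$ coming from \cite[Thm~3.1]{AOV}: the morphism $\mls C \to \mls C^\dag$ is proper and quasi-finite, the morphism $\mls C^\dag \to \cX$ is representable, and the stabilizer at each geometric point $\bar x \to \mls C^\dag$ is the quotient of the stabilizer of any lift of $\bar x$ in $\mls C$ by the kernel of its action on the fiber $f^{-1}(f(\bar x))$. It follows that $\mls C^\dag$ is a tame stack with coarse space $C$, abelian stabilizers, and stacky structure supported only at nodes and markings. To produce $\bC^\dag$, I work Zariski-locally on $S$, using \ref{lem:admissible-base} and the proof of \ref{thm:curves-are-algebraic} to reduce to the case where $\mls N$ is induced by a fixed admissible monoid $N \subset \mathbf{Q}^n_{\geq 0}$ and where the simple inclusion $M_S \hookrightarrow M_S'$ has a global chart $\mathbf{N}^J \xrightarrow{\cdot \bd} \mathbf{N}^J$. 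By \ref{cor:properties}(4), the stabilizer at a smooth point of $\mls C$ at which sections indexed by $I$ meet is $D((N^I)^{\gp}/\mathbf{Z}^I)$. Its quotient in $\mls C^\dag$ corresponds dually to a subgroup of $(N^I)^{\gp}/\mathbf{Z}^I$, which pulls back to a subgroup $\widetilde H^I \subset (N^I)^{\gp}$ containing $\mathbf{Z}^I$; I set $N^{\dag, I} := \widetilde H^I \cap \mathbf{Q}^I_{\geq 0}$, admissible by \ref{lem:monoid-vs-group}. A parallel analysis at the nodes refines $\bd$ to $\bd^\dag$ with $d_j^\dag \mid d_j$ and yields a factorization $M_S \hookrightarrow M_S^{\dag\prime} \hookrightarrow M_S'$ of the original simple inclusion.

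After checking that these locally-defined data assemble into an admissible subsheaf $\mls N^\dag \subset \mls N$ and a global simple inclusion $M_S \hookrightarrow M_S^{\dag\prime}$, the resulting generalized log twisted curve $\bC^\dag := (C, \{s_i\}, M_S \hookrightarrow M_S^{\dag\prime}, \mls N^\dag)$ has associated stack isomorphic to $\mls C^\dag$ by direct comparison with \eqref{eq:local description2}. The morphism $\mls C \to \mls C^\dag$ then arises from a contraction $\bC \to \bC^\dag$ in the sense of \ref{D:6.2}: the inclusion $\mls N^\dag \subset \mls N$ provides the dashed arrow in \eqref{eq:contract2}, and the second half $M_S^{\dag\prime} \hookrightarrow M_S'$ of the factorization gives the map $\phi'$ making \eqref{E:3.20.1} commute. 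Uniqueness of the lift $\bC^\dag$ will follow from faithfulness of the associated-stack functor discussed in Section \ref{S:image}.

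For part (2), given $\bC \to \bD$ with coarse-space component an isomorphism and $\mls D \to \cX$ representable, the universal property of the relative coarse space produces a unique morphism $\mls C^\dag \to \mls D$ over $\cX$. Since both $\mls C^\dag \to \cX$ and $\mls D \to \cX$ are representable, so is $\mls C^\dag \to \mls D$. Both target and source are tame Artin stacks with coarse space $C$ and abelian stabilizers, and a representable morphism of such stacks is injective on geometric stabilizers. On the other hand, $\mls C^\dag \to \mls D$ fits into the factorization $\mls C \to \mls C^\dag \to \mls D$ of the induced map from the contraction $\bC \to \bD$, and the map $\mls C \to \mls D$ is locally a torus quotient and hence surjects on stabilizers; this forces $\mls C^\dag \to \mls D$ to surject on stabilizers as well. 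Hence $\mls C^\dag \to \mls D$ is an isomorphism on stabilizers and coarse spaces, so it is an isomorphism of stacks, and $\mls D$ realizes the relative coarse space. The main technical obstacle will be the globalization step in part (1): showing that the character-group data $\widetilde H^I$ defined pointwise via the kernel of the $f$-action assembles into the projections of a single admissible subgroup of $N^{\gp}$, and thus into a bona fide admissible subsheaf $\mls N^\dag \subset \mls N$. This reduces to verifying that the $\widetilde H^I$ transform compatibly across specializations between points of $C$ (equivalently, that the kernel of the stabilizer action is functorial under \'etale base change on $\mls C^\dag$), which is encoded in the canonical nature of the rigidification construction.
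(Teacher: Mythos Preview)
Your outline for part (1) follows the same idea as the paper: define $\mls N^\dag$ (and the refined simple inclusion) pointwise via the kernel $K_{\bar x}$ of the induced map on stabilizer group schemes, then globalize. Two remarks. First, the paper gets the node/marking separation for free by using the Zariski open cover $\mls C = \mls U^{\node} \cup \mls U^{\mls N}$, which lets it treat the two cases independently rather than simultaneously. Second, your phrase ``by direct comparison with \eqref{eq:local description2}'' hides the actual content: one must compute the $K_{\bar x}$-invariants $(R\otimes_{\mathbf{Z}[\mathbf{N}^I]}\mathbf{Z}[\mls N_{\bar x}])^{K_{\bar x}}$ and identify them with $R\otimes_{\mathbf{Z}[\mathbf{N}^I]}\mathbf{Z}[\mls N^\dag_{\bar x}]$. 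This uses integrality of $\mathbf{N}^I \hookrightarrow \mls N_{\bar x}$ (\ref{L:integral}) to decompose the ring into eigenspaces indexed by minimal lifts. Also, your appeal to ``faithfulness'' for uniqueness of $\bC^\dag$ is misplaced: faithfulness concerns morphisms, and Example \ref{ex:two-to-one} shows non-isomorphic $\bC$'s can have isomorphic associated stacks. What makes $\bC^\dag$ canonical here is that $\mls N^\dag$ is constructed as a specific subsheaf of $\mls N$, not merely as some admissible sheaf whose stack happens to be $\mls C^\dag$.

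For part (2) your route diverges from the paper. The paper argues directly: having identified $\bC^\dag$ in part (1) via the kernels $K_{\bar x}$, it checks at each geometric point that the short exact sequence $0 \to K_{\bar x} \to D(\mls N_{\bar x}/\mathbf{Z}^{n_{\bar x}}) \to D(\mls M_{\bar x}/\mathbf{Z}^{n_{\bar x}}) \to 0$ forces $\mls M_{\bar x} = \mls N^\dag_{\bar x}$, so $\bD = \bC^\dag$. Your abstract argument (universal property, then ``iso on coarse space plus iso on stabilizers implies iso'') is plausible but the final implication is not free for tame Artin stacks and you have not justified it; in particular you would need to rule out non-reduced or otherwise thickened structure, and you have not identified $\mls C^\dag \to \mls D$ as coming from a contraction of generalized log twisted curves, so you cannot invoke \ref{lem:rep is iso} (which in any case is a corollary of the result you are proving). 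The paper's direct comparison of defining data avoids this.
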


\begin{proof}
Write $\mls C = \mls C^{\node} \times_C \mls C^{\mls N}$. In fact, $\mls C$ has a Zariski open cover $\mls C = \mls U^{\node}\cup \mls U^{\mls N}$, where $\mls U^{\node}$ is the complement  of the markings in $\mls C^{\node}$ and $\mls U^{\mls N}$ is the complement of the nodes in $\mls C^{\mls N}$.  These two open substacks have schematic intersection $U\subset \mls C$ and the stack $\mls C^\dagger$ is obtained by gluing the relative coarse spaces of $\mls U^{\node}\rightarrow \mc X$ and $\mls U^{\mls N}\rightarrow \mc X$ along $U$.
Since the non-representable locus of $\mls C^{\node}$ (resp. $\mls C^{\mls N}$) is contained in $\mls U^{\node},$ (rep. $\mls U^{\mls N}$), it is enough to prove the lemma in the two special cases $\mls C = \mls C^{\node}$ (i.e., $\oplus s_{i, *} \mathbf{N} = \mls N$) and $\mls C = \mls C^{\mls N}$ (i.e., $M_S = M_S'$).\\

 \noindent 
\textit{The case $\mls C = \mls C^{\mls N}$.}
 By Remark \ref{R:3.20} we need to construct an admissible subsheaf $\mls N^\dag\subset \mls N$ such that the associated contraction of $\bC$ induces the relative coarse moduli morphism $\mls C^{\mls N} \to \mls C^{\mls N, \dag}$. 
Fix $x \in |C|$ and let $\bar x: \Sp(k) \to \mls C$ be be a geometric point mapping to $x$. Define
\[
K_{\bar x} := \ker(D(\mls N_{\bar x}^{gp}/\mathbf{Z}^{n_{\bar x}}) \to G_{f(\bar x)})
\]
where $G_{f(\bar x)}$ is the automorphism group scheme of $f(\bar x):\Sp(k) \to \cX$, the arrow is the homomorphism of $k$-group schemes induced by $f$, and $n_{\bar x}$ is the number of sections containing $x$.
Since $D(\mls N_{\bar x}^{gp}/\mathbf{Z}^{n_{\bar x}})$ is a diagonalizable group scheme over $k$, so is  $K_{\bar x}$, and hence the dual $D(K_{\bar x})$ is an abelian group. We define $\mls N_{\bar x}^\dagger$ to be the fiber product
\[
\begin{tikzcd}
\mls N^\dagger_{\bar x} \arrow[r] \arrow[d] & \mls N_{\bar x} \arrow[d]\\
\ker(\mls N_{\bar x}^{gp}/\mathbf{Z}^{n_{\bar x}} \to D(K_{\bar x})) \arrow[r] &  \mls N_{\bar x}^{gp}/\mathbf{Z}^{n_{\bar x}}
\end{tikzcd}
\]
so $\mls N^\dagger_{\bar x}$ is an admissible monoid by construction. 

By the second local description in \ref{SS:local} there exists an \'etale neighborhood $\Sp (R)\rightarrow C$ of $\bar x$ such that 
$$
\mls C_R^{\mls N}\simeq [\Sp (R\otimes _{\mathbf{Z}[\mathbf{N}^I]}\mathbf{Z}[\mls N_{\bar x}])/D(\mls N_{\bar x}^\gp /\mathbf{Z}^{n_x})],
$$
where $I\subset \{1, \dots, n\}$ is the subset of indices $i$ for which $s_i$ contains $\bar x$.  By the construction in \cite[3.6]{AOV}, the relative coarse moduli space in this situation is given by the quotient
$$
\mls C_R^{\mls N, \dag }\simeq [\Sp (R\otimes _{\mathbf{Z}[\mathbf{N}^I]}\mathbf{Z}[\mls N_{\bar x}])^{K_{\bar x}}/D((\mls N_{\bar x}^\dag )^\gp /\mathbf{Z}^{n_x})],
$$
where we have used the natural isomorphism
\[
D(\mls N_{\bar x}^{gp}/\mathbf{Z}^{n_{\bar x}}) / K_x \simeq D((\mls N_{\bar x}^\dagger)^{gp}/\mathbf{Z}^{n_{\bar x}}).
\]
Now since $\mathbf{N}^I\hookrightarrow \mls N_{\bar x}$ is integral the ring $R\otimes _{\mathbf{Z}[\mathbf{N}^I]}\mathbf{Z}[\mls N_{\bar x}]$ is isomorphic, as a $D(\mls N_{\bar x}^\gp /\mathbf{Z}^{n_x})$-representation over $R$, to a direct sum of eigenspaces 
indexed by the minimal lifts of elements in $\mls N_{\bar x}^\gp /\mathbf{Z}^{n_x}$.  From this it follows that we have an isomorphism
$$
(R\otimes _{\mathbf{Z}[\mathbf{N}^I]}\mathbf{Z}[\mls N_{\bar x}])^{K_{\bar x}}\simeq R\otimes _{\mathbf{Z}[\mathbf{N}^I]}\mathbf{Z}[\mls N_{\bar x}^\dag ].
$$
From this it follows that there exists a unique admissible subsheaf  $\mls N^\dag \subset \mls N$ whose stalks agree with the above defined $\mls N_{\bar x}^\dag $.  Furthermore, if $M_C^{\underline s}|_{\mls C}\hookrightarrow M'$ denotes the universal log structure on $\mls C$ whose associated map on characteristic sheaves is give by $\overline M_{C}^{\underline s}\hookrightarrow \mls N$ then the fiber product $M'\times _{\mls N}\mls N^\dag $ defines an object of $\mls U^{\mls N^\dag }$ over $\mls U^{\mls N}$.  This defines a global map $\mls U^{\mls N}\rightarrow \mls U^{\mls N^\dag }$ restricting \'etale locally on $C$ to the above isomorphism.  We conclude that $\mls U^{\mls N^\dag }$ is the relative coarse moduli space of the map $\mls U^{\mls N}\rightarrow \mc X$.

For the converse statement in the lemma in this case, let $q:\bC \to \bD$ be a contraction whose underlying morphism of marked prestable curves is an isomorphism and let $\mls M$ be the sheaf of monoids on $C$ associated to $\bD$. By \ref{R:3.20} we have that $q$ is determined by an inclusion $\mls M \subset \mls N$. We show that this inclusion identifies $\mls M$ with $\mls N^\dag$ by checking at every geometric point $\bar x$ of $C$. The inclusion $\mls M \subset \mls N$ induces a surjection $D(\mls N_{\bar x}/\mathbf Z^{n_{\bar x}}) \twoheadrightarrow D(\mls M_{\bar x}/\mathbf Z^{n_{\bar x}})$ which we identify with a surjection of isotropy groups of $\mls C$ to $\mls D$
at $\bar x$. On the other hand, since the morphism $D(\mls N_{\bar x}/\mathbf Z^{n_{\bar x}}) \to G_{f(\bar x)} $ factors through an inclusion  $D(\mls M_{\bar x}/\mathbf Z^{n_{\bar x}}) \to G_{f(\bar x)} $, we see we have an exact sequence
\[
0 \to K_{\bar x} \to D(\mls N_{\bar x}/\mathbf Z^{n_{\bar x}}) \to D(\mls M_{\bar x}/\mathbf Z^{n_{\bar x}})   \to 0.
\]
It follows from the definition of $\mls N^\dag_{\bar x}$ above that $\mls N^\dag_{\bar x} = \mls M_{\bar x}.$\\

\noindent 
\textit{The case $\mls C = \mls C^{\text{\rm node}}$.}
This case is very similar to the previous. If $\bar s \to S$ is a geometric point then we have $\overline M_{S, \bar s}\simeq \mathbf{N}^I$, where $I$ is the set of nodes of the fiber, and $\ell _C:\overline M_{S, \bar s}\rightarrow \overline M_{S, \bar s}'$ is given by positive integers $\{e_i\}_{i\in I}$.  If $R$ denotes $\mls O_{S, \bar s}$ then \'etale locally on $C_R$ the stack $\mls C$ is given around the $i$-th node $x_i$ by
$$
[\Sp (R[u, v]/(uv-t'))/\mu _{e_i}],
$$
where $t'$ is the image in $R$ of a generator for $M_{S, \bar s}'$ corresponding to $i$  and $\zeta \in \mu _{e_i}$ acts by $\zeta *u = \zeta u$, $\zeta *v= \zeta ^{-1}v$.
In particular, the stabilizer group scheme at $x_i$ is $\mu _{e_i}$.  The image of the map on stabilizer groups $\mu _{e_i}\rightarrow G_{f(x_i)}$ is then isomorphic to $\mu _{e_i'}$ for some $e_i'|e_i$ and the kernel is given by $\mu _{e_i/e_i'}$.  The new sequence of integers $\{e_i'\}_{i\in I}$ defines a submonoid of $\overline M_{S, \bar s}'$ which gives a smaller simple inclusion $M_S\hookrightarrow M_S^\dag \hookrightarrow  M_S$ in some neighborhood of $\bar s$.  The associated map $\mls C\rightarrow \mls C^\dag $ is given in local coordinates as above by the map
$$
R[u^\dag , v^\dag ]/(u^\dag v^\dag -(t^{\prime})^{e_i/e_i'})\rightarrow R[u, v]/(uv-t'), \ \ u^\dag \mapsto u^{e_i/e_i'}, v^\dag \mapsto v^{e_i/e_i'}.
$$
It follows as in the preceding case that $\mls C\rightarrow \mls C^\dag $ is the relative coarse moduli space of $\mls C\rightarrow \mc X$ in some neighborhood of $\bar b$.

The converse statement also follows from this description of $\mathbf{C}^\dag $ in terms of the kernels of the maps on stabilizer groups.
\end{proof}

\begin{cor}\label{lem:rep is iso}
Let $(q, \phi '):\bC \to \bD$ be a contraction of generalized log twisted curves such that the underlying contraction of marked prestable curves is an isomorphism. If the associated morphism $\mls C \to \mls D$ is representable then $(q, \phi ')$ is an isomorphism.
\end{cor}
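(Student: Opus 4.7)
The plan is to deduce the corollary directly from Proposition \ref{lem:rel coarse}, applied in both directions.

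First I would apply the converse direction of Proposition \ref{lem:rel coarse} with $\cX := \mls D$ and the trivially representable identity morphism $\mathrm{id}_{\mls D}: \mls D \to \mls D$. Since the underlying contraction of coarse curves $q: C \to D$ is an isomorphism by hypothesis, this yields that $\mls D$ is the relative coarse moduli space of the morphism $\mls C \to \mls D$ induced by $(q, \phi')$. On the other hand, $\mls C \to \mls D$ is representable by hypothesis, so its relative coarse moduli space is trivially $\mls C$ itself (with the identity as the coarse moduli factor and $\mls C \to \mls D$ as the representable factor). By the universal property of the relative coarse moduli space, the canonical morphism $\mls C \to \mls D$ must be an isomorphism of stacks.

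To promote this isomorphism of stacks to an isomorphism of generalized log twisted curves, I would invoke the uniqueness ``up to unique isomorphism'' in the forward direction of Proposition \ref{lem:rel coarse}, applied to the same representable morphism $\mls C \to \mls D$. The associated pair $(\bC^\dag, \bC \to \bC^\dag)$ realizing the relative coarse moduli factorization is unique, and both the trivial contraction $\bC \xrightarrow{\mathrm{id}} \bC$ (using the iso $\mls C \simeq \mls D$ just established as the representable factor) and the given contraction $(q, \phi'): \bC \to \bD$ (with $\mathrm{id}_{\mls D}$ as the representable factor) produce such a pair. By the uniqueness there is therefore a (unique) isomorphism $\bC \simeq \bD$ identifying $(q, \phi')$ with the identity; in particular $(q, \phi')$ is an isomorphism.

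The main point to verify carefully is that the uniqueness in Proposition \ref{lem:rel coarse} determines not merely the generalized log twisted curve $\bC^\dag$ but also the contraction $\bC \to \bC^\dag$ producing it. This should follow from the fact, emphasized in Remark \ref{R:3.24}, that the induced morphism of log stacks $(\mls C, M_{\mls C}) \to (\mls D, M_{\mls D})$ lifts uniquely through the canonical log structures; consequently, once the coarse curves are identified and the associated stack morphism is pinned down, the contraction data $(q, \phi')$ is forced. Granting this, the chain of isomorphisms above transfers across to the level of generalized log twisted curves without further work.
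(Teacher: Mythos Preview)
Your argument is correct and follows essentially the same route as the paper: both apply the converse direction of Proposition~\ref{lem:rel coarse} with $\cX=\mls D$ to identify $\mls D$ as the relative coarse moduli space of $\mls C\to\mls D$, then use representability to identify $\mls C$ as the same, concluding that $\mls C\to\mls D$ is an isomorphism. The paper's proof stops there, leaving the passage from the stack-level isomorphism back to an isomorphism of generalized log twisted curves implicit; your additional paragraph makes this explicit, and the justification works (the proof of the converse in Proposition~\ref{lem:rel coarse} in fact pins down the admissible sheaf and simple inclusion of the target, not just its associated stack), though your appeal to Remark~\ref{R:3.24} is slightly beside the point since that remark runs in the opposite direction.
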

\begin{proof}
   Since $\mls C \to \mls D$ is representable, $\mls C$ is itself the relative coarse space of this morphism. But \ref{lem:rel coarse} says that $\mls D$ is also the relative coarse space. It follows that we have an isomorphism $\mls D \to \mls C$ of stacks over $\mls D$, which therefore must be inverse to $\mls C \to \mls D$.
\end{proof}

\subsection{Initial contractions}\label{S:contract-curve}

In this section we explain how a contraction of coarse curves induces a canonical contraction of general log twisted curves.

\begin{defn}\label{def:initial}
Let $(q, \phi'): \bC \to \bD$ be a contraction. We say this contraction is \emph{initial} if, given a contraction $(p, \psi'): \bC \to \bD'$ such that $p$ factors through $q$, there is a unique contraction $(r, \rho'): \bD \to \bD'$ such that $(p, \psi') = (r, \rho') \circ (q, \phi')$.
\end{defn}

%Fix an integer $n\geq 0$ and weights $a_1, \dots, a_n\in (0, 1]$.
Let $\bC =(C/S, \{s_i\}_{i=1}^n, \ell :M_B\hookrightarrow M_B', \mls N)$ be a 
%prestable (with respect to $\bf a$) 
generalized log twisted curve over a base scheme $S$.  Let $(D/S, \{t_i\}_{i=1}^n)$ be a second prestable marked curve over $S$ and let $q:C\rightarrow D$ be a contraction morphism with $q(s_i) = t_i$ for all $i$.

\begin{thm}\label{T:8.1} The marked curve $(D/S, \{t_i\}_{i=1}^n)$ admits the structure of a generalized log twisted curve $\bD $ for which the contraction morphism $q$ extends to an initial contraction $(q, \phi '):\bC \rightarrow \bD$.
%which is initial in the sense that for any other contraction $(p, \psi '):\bC \rightarrow \bD '$ for which $p: C\rightarrow D'$ factors through $q$ there exists a unique contraction morphism $(r, \rho '):\bD \rightarrow \bD '$ such that   $(p, \psi ') = (r, \rho' )\circ (q, \phi ')$.
\end{thm}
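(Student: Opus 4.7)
The plan is to construct $\bD$ and the contraction $(q, \phi')$ explicitly, then verify the universal property. The construction is \'etale local on $S$, and we fix a geometric point $\bar s \to S$ throughout to guide the discussion.

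\textbf{The simple inclusion and $\phi'$.} Let $\{c_j\}_{j \in J(C_{\bar s})}$ be the positive integers defining $\ell_C$. By \ref{S:contract-monoid}, the canonical morphism $\overline \phi : \bMS{S}{D} \to \bMS{S}{C}$ sends the irreducible generator $e_{j'}$ associated to a node $j' \in J(D_{\bar s})$ to $\sum_{j \in E^*(\Gamma_{j'})} e_j$. Commutativity of \eqref{E:3.20.1} at the characteristic level forces the integer $d_{j'}$ defining $\ell_D$ at $j'$ to satisfy $d_{j'} \mid c_j$ for every $j \in E^*(\Gamma_{j'})$. The \emph{largest} (and hence initial) such choice is $d_{j'} := \gcd_{j \in E^*(\Gamma_{j'})} c_j$, with $\overline{\phi'}(e_{j'}) := \sum_{j \in E^*(\Gamma_{j'})} (c_j/d_{j'})\, e_j$. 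These combinatorial data patch to define $\ell_D$, and $\phi'$ lifts from $\overline{\phi'}$ after a suitable \'etale cover of $S$ and a choice of Kato chart.

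\textbf{The admissible sheaf $\mls N^D$.} For each geometric point $\bar y \to D$, define the stalk $\mls N^D_{\bar y}$ to be the largest admissible submonoid of $\Q^{I(\bar y)}_{\geq 0}$ such that, for every geometric point $\bar x \to \mls C$ lying over some $x \in q^{-1}(\bar y)$, the natural map $\mathbf{N}^{I(\bar y)} = (\overline M^{\underline s}_D)_{\bar y} \to (\overline M_{\mls C})_{\bar x}$ induced by \ref{cor:log-contractions} (and described explicitly in \ref{S:contract-monoid}) extends to a map $\mls N^D_{\bar y} \to (\overline M_{\mls C})_{\bar x}$. Unpacking the description in \ref{S:contract-monoid}, this is a finite collection of divisibility conditions on rational vectors in $\Q^{I(\bar y)}_{\geq 0}$ involving $\mls N^C_x$, the $c_j$, and the $d_{j'}$, cutting out a finitely generated saturated submonoid by \ref{lem:monoid-vs-group}. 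The stalk assignment is constant on Zariski neighborhoods since the combinatorial data is, so by \ref{lem:subsheaves} (equivalence of (i) and (iv)) the stalks glue to an admissible sheaf $\mls N^D$.

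\textbf{Verification and initiality.} That $\bD := (D/S, \{t_i\}, \ell_D, \mls N^D)$ together with $(q, \phi')$ is a contraction follows directly from the construction: the dashed arrow in \eqref{eq:contract2} exists stalk-wise by the defining property of $\mls N^D$, and the remaining conditions of \ref{D:6.2} hold by construction of $d_{j'}$ and $\phi'$. For initiality, suppose $(p, \psi'): \bC \to \bD'$ is a contraction with $p = r \circ q$. Then $r : D \to D'$ is a contraction by \ref{L:properties}. On the node side, the integers $d''_{j''}$ defining $\ell_{D'}$ must divide $c_j$ for each $j$ separating the two sides of $j''$ in $C$; via the combinatorics of the factoring, these $j$ include all $j \in E^*(\Gamma_{j'})$ for $j' \in r^{-1}(j'')$, so by maximality of $d_{j'} = \gcd$ we obtain $d''_{j''} \mid d_{j'}$, yielding $\overline{\rho'}$ at the characteristic level which lifts uniquely to $\rho': M_S^{D'\prime} \to M_S^{D\prime}$ compatibly with $\psi'$. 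On the admissible-sheaf side, the dashed arrow for $(p, \psi')$ imposes divisibility conditions on $\mls N^{D'}_{\bar y'}$ that, when pulled back along $r$, are precisely those defining $\mls N^D_{\bar y}$; by maximality of $\mls N^D$, we therefore get $\mls N^{D'} \subset r_*\mls N^D$, verifying condition (c) of \ref{D:2.23} and producing the required morphism $(r, \rho')$.

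\textbf{Main obstacle.} The principal technical point is showing that the stalk-wise definition of $\mls N^D$ is admissible in the sense of \ref{def:admissible-sheaf} and genuinely glues to a sheaf on $D$. Via \ref{lem:subsheaves}(iv) this reduces to verifying admissibility of each stalk and constancy of the stalk assignment on Zariski neighborhoods; both require a careful case analysis at nodes of $D$, where the fiber $q^{-1}(\bar y)$ is a nontrivial tree of rational curves and the interaction between the $d_{j'}$ and the marking data becomes intricate. A secondary subtlety is the unit ambiguity in lifting $\phi'$ and $\rho'$ from characteristic monoids to actual log structures, but this is controlled and does not obstruct the universal property.
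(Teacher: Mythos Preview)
Your approach is essentially the same as the paper's, and the computations you outline (the gcd for the simple inclusion, the ``largest admissible submonoid'' for $\mls N^D$) are correct. The difference is one of packaging, and the paper's packaging resolves precisely what you flag as the main obstacle.

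The paper defines both pieces of $\bD$ \emph{globally as sheaves} rather than stalk-wise. For the simple inclusion, $\MS{S}{D}'$ is taken to be the saturation of $\MS{S}{D}$ inside $\MS{S}{C}'$ under $\ell_C\circ\phi$; this is a canonical sub-log-structure, so there is no lifting of $\overline{\phi'}$ to perform and no \'etale-local choices to reconcile. The stalk computation then recovers your gcd formula and shows simplicity. For the admissible sheaf, the paper sets
\[
\mls N^D := (\overline M^{\underline s}_D)_{\mathbf Q}\times_{\tilde q_*(\overline M_C|_{\mls C})_{\mathbf Q}} \tilde q_*\overline M_{\mls C},
\]
a fiber product of sheaves on $D$. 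This is automatically a sheaf (no gluing needed), its stalks are exactly your ``largest submonoid such that the map extends,'' and admissibility is immediate since a fiber product of fs monoids is fs. The universal property then drops out of the fiber product property: applying $r_*$ preserves the fibered square, and any $\mls N^{D'}$ mapping compatibly into $r_*\tilde q_*\overline M_{\mls C}$ therefore factors through $r_*\mls N^D$.

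One small correction: in your initiality paragraph you invoke condition (c) of \ref{D:2.23}, but that is the condition for a \emph{morphism} (an isomorphism of underlying curves). What you need is the dashed arrow of \eqref{eq:contract2} for the contraction $\bD\to\bD'$, which requires the additional observation that $\mls N^D|_{\mls D}$ is a summand of $\overline M_{\mls D}$.
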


We note that an initial contraction $\bC \to \bD$ as in \ref{T:8.1} will be unique when it exists.
%We say that a contraction $\bC \rightarrow \bD$ is \emph{initial} if it has the universal mapping property of the theorem. 
The proof of Theorem \ref{T:8.1} occupies the remainder of this section.

\begin{pg}
Let $r: D \to D'$ denote the morphism factoring $p$; that is, we have $p = r \circ q$. It follows from \ref{L:B.12} that $r$ is the unique morphism satisfying $p = r \circ q$ and that $r$ is a contraction.
\end{pg}
\begin{pg}
To define $\bD$ we need to specify a simple inclusion $\ell^D: \MS{S}{D} \to \MS{S}{D}'$ of log structures and an admissible monoid $\mls N^D$. We define $\MS{S}{D}'$ to be the saturation of $\MS{S}{D}$ in $\MS{S}{C}$ via the composition $\MS{S}{D} \xrightarrow{\phi} \MS{S}{C} \xrightarrow{\ell_C} \MS{S}{C}'$, so there is a commuting diagram of log structures
\begin{equation}\label{eq:bridge1}
\begin{tikzcd}
\MS{S}{D} \arrow[r, dashrightarrow, "\ell^D"] \arrow[d, "\phi"] & \MS{S}{D}' \arrow[d, dashrightarrow, "\phi'"] \\
\MS{S}{C} \arrow[r, "\ell^C"] & \MS{S}{C}'.
\end{tikzcd}
\end{equation}

\begin{lem}
The inclusion $\ell^D$ is simple.
\end{lem}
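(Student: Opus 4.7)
The plan is to verify simplicity étale locally on $S$ at a geometric point $\bar s \to S$, using the explicit local descriptions of the canonical log structures on nodal curves from \ref{S:log-structures} together with the combinatorial description of $\bar\phi$ from \ref{S:contract-monoid}. Recall that a morphism of log structures is simple if, étale locally, both sides have characteristic sheaves freely generated over the same index set and the map is given by multiplication by a vector of positive integers. Near $\bar s$ the characteristic stalks satisfy $\bMS{S}{C, \bar s} \simeq \mathbf{N}^{J(C_{\bar s})}$ and $\bMS{S}{D, \bar s} \simeq \mathbf{N}^{J(D_{\bar s})}$, while $\bar\ell^C$ is multiplication by a vector $\mathbf{c} = (c_j)_{j \in J(C_{\bar s})}$ of positive integers.

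Next I would compute the image of the composition $\bar\ell^C \circ \bar\phi : \mathbf{N}^{J(D_{\bar s})} \to \mathbf{N}^{J(C_{\bar s})}$ on characteristic stalks. By \ref{S:contract-monoid}, $\bar\phi$ sends $e_{j'}$ to $\sum_{j \in E^*(\Gamma_{j'})} e_j$, so the composition sends $e_{j'}$ to
$$v_{j'} \;:=\; \sum_{j \in E^*(\Gamma_{j'})} c_j\, e_j.$$
Since the subsets $E^*(\Gamma_{j'}) \subset J(C_{\bar s})$ for distinct $j' \in J(D_{\bar s})$ are pairwise disjoint (distinct nodes of $D$ have disjoint preimage graphs in $C$) and each is nonempty (there is always an $A$--$B$ path in $\Gamma_{j'}$), the $v_{j'}$ are $\mathbf{Z}$-linearly independent and the image is the free submonoid $I := \bigoplus_{j'} \mathbf{N}\, v_{j'}$. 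Setting $d'_{j'} := \gcd\{c_j : j \in E^*(\Gamma_{j'})\}$, the saturation of $I$ inside $\mathbf{N}^{J(C_{\bar s})}$ is the free submonoid $\bigoplus_{j'} \mathbf{N}\cdot(v_{j'}/d'_{j'})$, still of rank $|J(D_{\bar s})|$.

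Combining these, I would identify $\bMS{S}{D, \bar s}'$ with the saturated monoid just computed, and verify that under the resulting isomorphism $\bMS{S}{D, \bar s}' \simeq \mathbf{N}^{J(D_{\bar s})}$ (sending the $j'$-th generator to $v_{j'}/d'_{j'}$), the inclusion $\bar\ell^D$ becomes multiplication by the positive integer vector $(d'_{j'})_{j' \in J(D_{\bar s})}$. This is precisely the local form of a simple morphism, so $\ell^D$ is simple. The map $\phi'$ then corresponds on characteristic sheaves to the tautological inclusion $\bigoplus_{j'} \mathbf{N}\cdot(v_{j'}/d'_{j'}) \hookrightarrow \mathbf{N}^{J(C_{\bar s})} = \bMS{S}{C, \bar s}'$, and commutativity of \eqref{eq:bridge1} is automatic from the construction.

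The main obstacle is to show rigorously that the ``saturation of $\MS{S}{D}$ in $\MS{S}{C}'$'' as a log-structure operation really does recover the monoid-theoretic saturation on characteristic stalks computed above, and that the resulting characteristic subsheaf $\bMS{S}{D}' \subset \bMS{S}{C}'$ lifts to a fine log structure $\MS{S}{D}' \subset \MS{S}{C}'$ whose induced inclusion from $\MS{S}{D}$ is of the simple form. Once the definition of saturation is unwound (namely: saturate the image of $\MS{S}{D}^{gp} \to \MS{S}{C}'^{gp}$ and intersect with $\MS{S}{C}'$, or equivalently take the preimage of the saturation at the characteristic level), the rest is the explicit $\gcd$ computation described above.
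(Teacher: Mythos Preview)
Your proposal is correct and follows essentially the same approach as the paper: both reduce to stalks at a geometric point $\bar s$, use the description of $\bar\phi$ from \ref{S:contract-monoid}, observe that the composition $\bar\ell^C\circ\bar\phi$ decomposes over the disjoint sets $E^*(\Gamma_{j'})$, and compute the saturation by dividing each $v_{j'}$ by the gcd of its coordinates. The paper handles your ``main obstacle'' in one line by invoking that saturation commutes with taking stalks, so your hedging there is unnecessary but not wrong.
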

\begin{proof}
We can verify the lemma by looking at stalks of characteristic monoids at each geometric point $\bar s$ of $S$. From \ref{S:contract-monoid}, for the morphisms $\MS{S}{D} \xrightarrow{\phi} \MS{S}{C} \xrightarrow{\ell_C} \MS{S}{C}'$ we have
\[
\mathbf{N}^{J(D_{\bar s})} \xrightarrow{\phi} \mathbf{N}^{J(C_{\bar s})} \xrightarrow{\bc} \mathbf{N}^{J(C^{\bar s})} 
\]
where $\bc$ is multiplication by a sequence of positive integers $(c_1, \ldots, c_m)$. This composition of morphisms is a direct sum of morphisms indexed by nodes $x \in J(D_{\bar s})$, and the summand corresponding to $x$ is 
\[
\mathbf{N} \xrightarrow{\Delta} \mathbf{N}^{E^*(\Gamma_x)} \xrightarrow{\bc_{x}} \mathbf{N}^{E^*(\Gamma_x)} 
\]
where $\Delta$ is the diagonal and $\bc_{x}$ denotes the restriction of the vector $\bc$ to coordinates indexed by $E^*(\Gamma_x)$. The saturation of $\mathbf{N}$ under this inclusion is the submonoid of $\mathbf{N}^{E^*(\Gamma_x)}$ generated by $g^{-1}\bc_x$, where $g$ is the gcd of the coordinates of $\bc_x$. In particular this saturation is isomorphic to $\mathbf{N}$ and the induced map $\mathbf{N} \to \mathbf{N}$ is multiplication by $g$. Since saturation commutes with taking stalks, this concludes the proof of the lemma. 
\end{proof}
\end{pg}

\begin{pg}
To define $\mls N^D$, let $\mls C$ be the stack associated to $\bC$ and let $\tilde q$ be the composition $\mls C \to C \xrightarrow{q} D$. There is a diagram of solid arrows
\begin{equation}\label{eq:contract-monoid}
\begin{tikzcd}
\overline{M}^{\underline{s}}_D \arrow[d, hookrightarrow] \arrow[r, dashrightarrow] & \mls N^D \arrow[d, dashrightarrow] \arrow[r, dashrightarrow] & (\overline{M}^{\underline{s}}_D)_{\mathbf{Q}}\arrow[d, hookrightarrow]\\
\tilde q_*(\overline{M}_C|_{\mls C}) \arrow[r, hookrightarrow] & \tilde q_*\overline{M}_{\mls C} \arrow[r, hookrightarrow] & \tilde q_*(\overline{M}_C|_{\mls C})_{\mathbf Q}
\end{tikzcd}
\end{equation}
where the vertical arrows are the composition $\overline{M}^{\underline{s}}_D \to \overline{\tilde q_*M_C|_{\mls C}} \to \tilde q_*\overline{M}_C|_{\mls C}$ given explicitly in \ref{S:contract-monoid}. We define $\mls N^D$ to be the fiber product
\[
\mls N^D := (\overline{M}^{\underline{s}}_D)_{\mathbf{Q}} \times_{q_*(\overline{M}_C|_{\mls C})_{\mathbf Q}} \tilde q_*\overline{M}_{\mls C} 
\]
so that it fits into the diagram as shown.

\begin{lem}
The sheaf of monoids $\mls N^D$ is admissible.
\end{lem}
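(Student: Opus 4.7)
The plan is to verify admissibility étale-locally at each geometric point $\bar y \to D$, using the equivalence of characterizations in \ref{lem:subsheaves}. First, I would reduce to showing that the stalk $\mls N^D_{\bar y}$ is an admissible submonoid of $(\oplus t_{i,*}\mathbf Q_{\geq 0})_{\bar y} \simeq \mathbf Q^{I_{\bar y}}_{\geq 0}$ (where $I_{\bar y}$ is the set of markings through $\bar y$) and that $\mls N^D$ coincides on a Zariski neighborhood of $\bar y$ with the image of this stalk under $\beta$. Since admissibility is étale-local on $C$, I can work after pulling back to a sufficiently small étale neighborhood where $\mls N^C$ is the image of an admissible submonoid $N^C \subset \mathbf Q^n_{\geq 0}$.

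Next, I would use the explicit local computations of Section \ref{S:contract-monoid} to understand $(\tilde q_*\overline M_{\mls C})_{\bar y}$ and the vertical map from $(\overline M^{\underline s}_{D,\bar y})_{\mathbf Q}$. By \ref{L:B.16} the fiber $P = q^{-1}(\bar y)$ is a tree of rational curves, and by \ref{L:contracting} the stalk $(\tilde q_*\overline M_{\mls C})_{\bar y}$ can be identified with the submonoid of $H^0(P, \overline M_{\mls C}|_P)$ consisting of sections whose associated $\mls O_P^*$-torsor induces the trivial line bundle on $P$. Combined with the decomposition $M_{\mls C} = M^{\node}_{\mls C} \oplus_{\mls O^*} M^{\mls N}_{\mls C}$ from \ref{SS:log-stack} and the explicit description of $M^{\mls N}_{\mls C}$ in terms of $N^C$ from \ref{SS:local}, this gives a concrete finite-combinatorial description of the stalk.

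With this description in hand, I would identify $\mls N^D_{\bar y}$ as the preimage of $(\tilde q_*\overline M_{\mls C})_{\bar y}$ under the rational extension of the inclusion $\overline M^{\underline s}_{D,\bar y} \hookrightarrow \tilde q_*(\overline M_C|_{\mls C})_{\bar y}$ from \ref{S:contract-monoid}, then verify the three defining properties: finite generation follows from finiteness of $P$ combined with finite generation of $N^C$; the containment $\mathbf N^{I_{\bar y}} \subset \mls N^D_{\bar y}$ follows from the factorization of $\overline M^{\underline s}_D \to \tilde q_*\overline{M}_{\mls C}$ through $\overline{\tilde q_*M_C|_{\mls C}}$; and saturation follows because the fiber product is computed inside a torsion-free ambient group and both factors are saturated. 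Finally, I would check that the formation of the fiber product \eqref{eq:contract-monoid} commutes with passage to a Zariski neighborhood of $\bar y$, so that the local model $\mls N^D_{\bar y}$ actually realizes $\mls N^D$ as the image of $\beta$ there.

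The main obstacle will be the last step, namely showing that the combinatorial data on $P$ assembles correctly so that the resulting stalk $\mls N^D_{\bar y}$ actually realizes $\mls N^D$ Zariski-locally via $\beta$, rather than merely describing an abstract admissible monoid at each point. Controlling how different geometric points of $D$ see overlapping portions of $\mls N^C$ (especially when the tree $P$ degenerates or when nodes of $D$ lift to chains in $C$) is where care is needed; here the explicit node-by-node formulas in \ref{S:contract-monoid} together with \ref{lem:admissible-base} should allow one to patch the local admissibility to global admissibility.
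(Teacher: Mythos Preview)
Your approach is correct but considerably more laborious than the paper's. The paper's proof is two sentences: since all arrows in the defining diagram \eqref{eq:contract-monoid} are injective, $\mls N^D$ is a subsheaf of $(\overline M^{\underline s}_D)_{\mathbf Q}\simeq \oplus t_{i,*}\mathbf Q_{\geq 0}$ containing $\overline M^{\underline s}_D\simeq \oplus t_{i,*}\mathbf N$; and since $\mls N^D$ is constructible (being a fiber product of constructible sheaves along injections), it remains only to check that each stalk is fine and saturated, which follows because fiber products of fine saturated monoids along injections are again fine and saturated. The reference to \ref{S:contract-monoid} is only invoked to confirm that the solid entries of the diagram have fine saturated stalks.

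What you gain from your explicit route is a concrete description of $\mls N^D_{\bar y}$ in terms of the tree $P$ and the monoid $N^C$, which may be useful elsewhere; what the paper's route buys is that the ``patching'' step you flag as the main obstacle essentially evaporates. Constructibility of $\mls N^D$ with respect to the stratification determined by the sections $t_i$ already forces the sheaf to be locally the image of its stalk under $\beta$, so characterization (iv) of \ref{lem:subsheaves} applies immediately once the stalks are known to be admissible submonoids. You do not need to trace through the node-by-node formulas or invoke \ref{lem:admissible-base} to control overlaps; the abstract closure property of the category of fs monoids under fiber products does all the work.
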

\begin{proof}
Since $\mls N^D$ is constructible and all the arrows in \eqref{eq:contract-monoid} are injective, we only need to show that $\mls N^D$ is finitely generated and saturated. One can check (for example using \ref{S:contract-monoid}) that all the monoids appearing in the solid part of \eqref{eq:contract-monoid} are fine and saturated, so $\mls N^D$ is as well. 
\end{proof}

\end{pg}

\begin{pg}

We define $\bD := (D/S, \{t_i\}_{i=1}^n, \mls N^D, \ell_D: \MS{S}{D} \to \MS{S}{D}')$ where $\mls N^D$ and $\ell_D$ are as above. The contraction morphism $\bC \to \bD$ is induced by \eqref{eq:bridge1}, noting that \eqref{eq:contract2} is the $(\tilde q^*, \tilde q_*)$-adjoint of \eqref{eq:contract-monoid} so the factorization in \eqref{eq:contract2} exists.

\end{pg}

\begin{pg}
To complete the proof of Theorem \ref{T:8.1} it remains to show that $\bC \to \bD$ has the desired universal property. Let $\bD' = (D'/S, \{t_i'\}_{i=1}^n, \mls N^{D'}, \MS{S}{D'} \to \MS{S}{D'}')$ be a generalized log twisted curve and let $(p, \psi'): \bC \to \bD'$ be a contraction such that $p = r \circ q$. The data defined thus far give a solid commuting diagram of inclusions
\[
\begin{tikzcd}
\MS{S}{D'} \arrow[d] \arrow[r, "\ell_{D'}"] & \MS{S}{D'}' \arrow[d, dashrightarrow] \arrow[dd, bend left=50, "\psi'"] \\
\MS{S}{D} \arrow[r, "\ell_D"] \arrow[d] & \MS{S}{D}' \arrow[d, "\phi'"']\\
\MS{S}{C} \arrow[r, "\ell_C"] & \MS{S}{C}'.
\end{tikzcd}
\]
It follows from the fact that $\MS{S}{D}'$ is the saturation of $\MS{S}{D}$ in $\MS{S}{C}'$ and that $\ell_{D'}$ is simple that the dotted arrow exists and is unique. Call this dotted arrow $\rho'$.

To see that $(r, \rho')$ defines a contraction from $\bD \to \bD'$ it remains to show the existence of the dotted arrow in \eqref{eq:contract2}. Pushing forward preserves fiber products, so pushing forward \eqref{eq:contract-monoid} gives the bottom part of a solid diagram
\begin{equation}\label{eq:recent}
\begin{tikzcd}
\overline{M}^{\underline{s}}_{D'} \arrow[d, hookrightarrow] \arrow[r, hookrightarrow] & \mls N^{D'} \arrow[r, hookrightarrow] \arrow[d, dashrightarrow] & (\overline{M}^{\underline{s}}_{D'})_{\mathbf Q} \arrow[d, hookrightarrow] \\
r_*\overline{M}^{\underline{s}}_D \arrow[d, hookrightarrow] \arrow[r, hookrightarrow] & r_*\mls N^D \arrow[d, hookrightarrow] \arrow[r, hookrightarrow] & r_*(\overline{M}^{\underline{s}}_D)_{\mathbf{Q}}\arrow[d, hookrightarrow]\\
r_*\tilde q_*(\overline{M}_C|_{\mls C}) \arrow[r, hookrightarrow] & r_*\tilde q_*\overline{M}_{\mls C} \arrow[r, hookrightarrow] & r_*\tilde q_*(\overline{M}_C|_{\mls C})_{\mathbf Q}
\end{tikzcd}
\end{equation}
where the bottom right square is fibered. The top row of the diagram comes from $\bD'$ and the arrow $\overline{M}^{\underline{s}}_{D'} \to r_*\overline{M}^{\underline{s}}_D$ is equal to the composition $\overline{M}^{\underline{s}}_{D'} \to \overline{r_*M^{\underline{s}}_D} \to r_*\overline{M}^{\underline{s}}_D$. Since $(p, \psi')$ is a contraction, there exists an arrow $\mls N^{D'}|_{\mls C} \to \overline{M}_\mls C$ filling in \eqref{eq:contract2}, and the adjoint of this arrow is a map $\mls N^{D'} \to r_*\tilde q_*\overline{M}_{\mls C}$ commuting with the other solid arrows in \eqref{eq:recent}. It follows that the dashed arrow in \eqref{eq:recent} exists. Applying adjunction again, this time just to the top two cells, and using that $\mls N^D|_{\mls D}$ is a summand of $\overline{M}_{\mls D}$, this dashed arrow induces the required factorization in \eqref{eq:contract2}.
\end{pg}
This completes the proof of \ref{T:8.1}. \qed

\section{Tame abelian nodal orbicurves versus generalized log twisted curves}\label{S:image}

The category of generalized log twisted curves with distinct markings is precisely the category of log twisted curves considered in \cite[A.3]{AOV}. By \cite[A.5]{AOV}, this category is equivalent to the category of twisted curves as defined in \cite[2.1]{AOV}. In other words, the (restriction of the) functor defined in \ref{E:twistedfunctor} from log twisted curves to stacks is fully faithful and the essential image is the category of twisted curves. 

In this section, we see that the functor \ref{E:twistedfunctor} from generalized log twisted curves to stacks is faithful but no longer full \ref{ss:faith-not-full}, and we characterize its essential image \ref{C:4.13}. We also show that fibers of this functor are finite (at least over an algebraically closed field) \ref{P:4.15}.

% While not strictly necessary for what follows, it is natural to consider the question of whether the stack $\mls C$ attached to a generalized log twisted curve $\mathbf{C}$ in fact determines $\mathbf{C}$, as is the case when the points do not collide \cite{LogTwisted}.
% It turns out that $\mathbf{C}$ does not quite determine $\mls C$, but that we can characterize those stacks that may arise from generalized log twisted curves.  

%Our next task is to characterize the essential image of the functor in \ref{SS:3.4}. For this we need some vocabulary.
%Let $k$ be an algebraically closed field.

For this section we fix an algebraically closed field $k$.

\subsection{Tame abelian nodal orbicurves}

\begin{defn}\label{def:abelian curve}
A \emph{tame nodal orbicurve} over $k$ is a tame proper stack $\mls C$ of finite presentation over $k$ satisfying the following: 
\begin{enumerate}
\item [(i)] The coarse space $C$ of $\mls C$ is a nodal curve and $\mls C \to C$ is an isomorphism over a dense open substack of $C$.
\item [(ii)] The morphism $\mls C \to C$ is flat away from the nodes of $C$.
\item [(iii)] For any geometric point $\bar x \in C$ mapping to a node, there is an integer $r$ such that
\[
\Spec(\mls O_{C, \bar x}) \times_C \mls C \simeq[\Spec((k[u, v]/uv)^{sh} / \mu_r]
\]
where $\mu_r$ acts with weight 1 on $u$ and weight $r-1$ on $v$.
\end{enumerate} 
We say that $\mls C$ is \emph{abelian} if moreover the stabilizer group scheme at every geometric point of $\mls C$ is abelian.

If $S$ is a scheme then a \emph{tame nodal orbicurve over $S$} is a proper flat tame stack $\mls C\rightarrow S$ all  of whose geometric fibers are tame nodal orbicurves.  Similarly we define abelian tame nodal orbicurves over $S$.
\end{defn}

\begin{rem}\label{R:flat-reduced}
We remark that the flatness in condition (ii) can be replaced by the condition that $\mls C$ is reduced. That is, let $\mls C$ be a tame proper stack over an algebraically closed field $k$
satisfying (i) and (iii) in \ref{def:abelian curve}. Then (ii) holds if and only if 
\begin{enumerate}
\item[ (ii)'] $\mls C$ is reduced. 
\end{enumerate}
We note that (iii) ensures that $\mls C$ is always reduced at nodes of $C$, so both (ii) and (ii)' can be checked Zariski locally on $C^{sm}$. Therefore we may replace $C$ by the spectrum of a discrete valuation ring $A$ with uniformizer $x$ ($A$ is the local ring at a closed point of $C$) and $\mls C$ by its pullback to $\mathrm{Spec}(A)$. Let $\Spec(B) \to \mls C$ be a smooth cover, so $B$ is an $A$-algebra. 
The map $A \to B$ is flat if and only if the map $B \xrightarrow{\cdot x} B$ is injective, and this in turn is true if and only if $B \to B_x$ is injective. 

Let $U \to \mathrm{Spec}(A)$ be the complement of the divisor defined by $x$ and let $\mls C_U = \mls C \times_{\mathrm{Spec}(A)} U$.
If $\mls C$ is reduced, then $\mls C_U$ is schematically dense in $\mls C$. It follows that $B \to B_x$ is injective and hence $A \to B$ is flat by the preceeding paragraph. Conversely, if $A \to B$ is flat, then $B \to B_x$ is injective. But $B_x$ is reduced, being smooth over $A_x$ (smoothness holds since $G$ is smooth and $\mls C_U \to U$ is an isomorphism by (i)). 
So $B$ must also be reduced. 
\end{rem}

\subsection{Local monoids}
Let $\mls C$ be a tame abelian nodal orbicurve with coarse space $C$ and let $\bar x \to C^{sm}$ be a geometric point with residue field $k$. Let $R = \mls O_{C, \bar x}$ and let $\mls C_R$ be the fiber product $\mls C \times_C \mathrm{Spec}(R)$. Let $D_{\bar x} \subset \mls Div^+(\mls C_R)$ denote the subgroupoid  of generalized effective Cartier divisors $(\mc L, \gamma :\mc L\rightarrow \mls O_{\mls C_R})$ on $\mls C_R$ for which the map $\gamma $ restricts to an isomorphism away from the preimage of $x$. For such a pair $(\mc L, \gamma )$ the map $\gamma $ is injective, so any isomorphism between objects is uniquely determined. It follows that $D_{\bar x}$ is equivalent to its set of isomorphism classes and from now on we identify $D_{\bar x}$ with this set.  The addition operation on generalized effective Cartier divisors makes $D_{\bar x}$ a monoid.

 \begin{defn}
The \emph{local monoid at a $k$-point} $\bar x \in C^{sm}$ is the monoid $D_{\bar x}$ constructed above.
 \end{defn}

\begin{pg}\label{P:7.6}
\label{R:localmonoid} Let $\mls D_{\bar x}$ be the \'etale sheaf on $C$  that to a morphism $U \to C$ associates the set of isomorphism classes of generalized effective Cartier divisors $(\mls L, \gamma: \mls L \to \mls O_{\mls C_U})$ on $\mls C_U$ for which $\gamma$ is an isomorphism away from $\bar x$. Since the functor $\mls D_{\bar x}$ is limit-preserving, $D_{\bar x}$ is the stalk of $\mls D_{\bar x}$ at $\bar x$, and the stalks at other geometric points of $C$ are $0$.  Therefore $\mls D_{\bar x}\simeq x_*D_{\bar x}$.  Note also that for any morphism $V\rightarrow \mls C$ there is a tautological map $H^0(V, \mls D_{\bar x}|_V)\rightarrow \mls Div^+(V)$ defining a log structure $M_{\mls C}$ on $\mls C$ with characteristic sheaf $\mls D_{\bar x}|_{\mls C}$.  Furthermore, the inclusion $\mathbf{N}\hookrightarrow D_{\bar x}$ induces an inclusion of log structures $M^x|_{\mls C}\hookrightarrow M_{\mls C}$, where $M^x$ is the log structure on $C$ defined by $x$.
\end{pg}
\begin{pg}
Let $G_{\bar x}$ denote the stabilizer group scheme of  $\mls C$ at $\bar x \in C^{sm}$ (see \ref{SS:conventions}).  The group $G_{\bar x}$ is  a finite abelian tame group scheme, therefore a product of group schemes $\mu_a$ for various positive integers $a$. We define the \emph{character group $X_{\bar x}$} at $\bar x \in C^{sm}$ to be the character group of $G_{\bar x}$, noting that this is isomorphic to $\Pic(BG_{\bar x})$. 

Let $R$ denote $\mls O_{C, \bar x}^{sh}$, and let $\mls C_R$ denote the base change $\mls C\times _C\Sp (R)$.  Let $G$ be the diagonalizable group scheme over $R$ associated to $X_{\bar x}$.  Then by \cite[Proof of 3.6]{tame} we can write $\mls C_R\simeq [\Sp (S)/G]$, where $S$ is a finite local $R$-algebra.
\end{pg}

\begin{lem} The reduction functor $\text{\rm Pic}(\mls C_R)\rightarrow \text{\rm Pic}(BG_{\bar x}) = X_{\bar x}$ is an isomorphism.
\end{lem}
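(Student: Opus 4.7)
The plan is to compute $\Pic(\mls C_R)$ directly using the presentation $\mls C_R \simeq [\Sp(S)/G]$ as a quotient of an affine scheme by a diagonalizable group, and then see that the reduction map is the identity under a natural identification with $X_{\bar x}$.

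First I would translate the problem into commutative algebra: a line bundle on $\mls C_R = [\Sp(S)/G]$ is the same as a $G$-equivariant invertible $S$-module. Since $G = D(X_{\bar x})$ is the diagonalizable $R$-group associated to $X_{\bar x}$, a $G$-action on an $S$-module is the same as an $X_{\bar x}$-grading. Hence $\Pic(\mls C_R)$ is the group of invertible $X_{\bar x}$-graded $S$-modules up to graded isomorphism.

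The main step is to show that every such graded invertible module is isomorphic to $S(\chi)$ for a unique $\chi \in X_{\bar x}$, where $S(\chi)$ denotes $S$ with grading shifted so that $S(\chi)_\eta := S_{\eta - \chi}$. Let $L$ be an invertible graded $S$-module. Since $S$ is local (it is finite over the local ring $R$ with residue field $k$), $L$ is free of rank one as an ungraded $S$-module, so $L/\mathfrak{m}_S L$ is a one-dimensional $k$-vector space. It inherits an $X_{\bar x}$-grading from $L$, so it is concentrated in a single degree $\chi_0 \in X_{\bar x}$. Pick a homogeneous generator $e \in L_{\chi_0}$ lifting a nonzero element of $L/\mathfrak{m}_S L$; the graded Nakayama lemma (applied to the submodule $S \cdot e \subset L$) shows $e$ generates $L$, and the induced surjection $S \to L$ is an isomorphism of ungraded modules because $L$ is invertible. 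Translating the grading by $\chi_0$, we obtain a graded isomorphism $S(\chi_0) \xrightarrow{\sim} L$. Uniqueness of $\chi_0$ is clear from the fact that $L/\mathfrak{m}_S L$ is determined by $L$ as a $G_{\bar x}$-representation.

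Finally, I would identify the reduction functor under this description: the restriction of $S(\chi)$ to $BG_{\bar x}$ is the one-dimensional representation $k(\chi)$, i.e. the character $\chi \in X_{\bar x} = \Pic(BG_{\bar x})$. Thus the reduction functor corresponds to the map $\chi \mapsto \chi$ under the bijection $\Pic(\mls C_R) \leftrightarrow X_{\bar x}$ produced in the previous step, and is therefore an isomorphism. The only subtlety is verifying that every character $\chi$ is actually realized; this is immediate since $\chi \in X_{\bar x}$ defines a character of $G$ (as $G$ is the diagonalizable $R$-group with the same character group), which gives rise to the line bundle $S(\chi)$ whose reduction recovers $\chi$.
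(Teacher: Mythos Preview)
Your argument is correct, but it proceeds somewhat differently from the paper's.  The paper exploits the presentation $\mls C_R\simeq [\Sp(S)/G]$ to produce a morphism $\mls C_R\to BG$; pullback along this map gives a section $X_{\bar x}=\Pic(BG)\to \Pic(\mls C_R)$ of the reduction map, which immediately yields surjectivity.  Injectivity is then deduced from descent: a line bundle on $\mls C_R$ whose stabilizer action at the closed point is trivial descends to the coarse space $\Sp(R)$ and is therefore trivial.  Your approach instead classifies graded invertible $S$-modules directly, using that $S$ is local and that $\mathfrak m_S$ is homogeneous (which follows from $S^G=R$ and the fact that $G_{\bar x}$ acts trivially on the residue field), together with graded Nakayama.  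The two arguments are equivalent in content---your shifted modules $S(\chi)$ are exactly the line bundles pulled back from $BG$---but the paper's phrasing is more conceptual while yours is more explicitly algebraic; yours has the minor advantage of not invoking descent for tame stacks.
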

\begin{proof}
The presentation of $\mls C_R$ as a quotient by $G$ defines a map $\mls C_R\rightarrow BG$, and therefore a map $X_{\bar x} = \text{Pic}(BG)\rightarrow \text{Pic}(\mls C_R)$ defining a section of the map in the lemma.  This implies surjectivity.  For injectivity, note that a line bundle $\mc L$ on $\mls C_R$ with trivial stabilizer group action descent to a line bundle on $\Sp (R)$ and therefore is trivial.
\end{proof}

Consider the homomorphism
\[\chi: D_{\bar x} \to \Pic(BG_{\bar x})\] sending $(\mc L, \gamma)$ to the restriction of $\mc L$ to $BG_{\bar x}$.

\begin{lem}\label{lem:Dx}
For every 
 $\theta \in \Pic(BG_x)$ there exists a unique element $\delta_\theta \in \chi ^{-1}(\theta)$ such that the map $\mathbf{N}\rightarrow \chi ^{-1}(\theta)$ sending $a$ to $a+\delta_\theta$ is an isomorphism.
\end{lem}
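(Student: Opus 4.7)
The plan is to exploit the explicit presentation $\mls C_R \simeq [\Spec(S)/G_{\bar x}]$ recalled before the lemma, together with the fact that $R = \mls O_{C, \bar x}^{sh}$ is a strictly henselian DVR (since $\bar x$ lies in $C^{sm}$), in order to reduce the statement to a direct calculation in a diagonalizable action on a DVR. In broad outline, for each $\theta$ we will identify every line bundle of class $\theta$ with a canonical representative $\mls L_\theta$, rewrite $\chi^{-1}(\theta)$ as $R^*$-orbits on a weight space of $S$, and observe that $R$-adic valuation gives the desired isomorphism with $\mathbf{N}$.

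First I would verify that $S$ is a DVR: since $\mls C \to C$ is flat at $\bar x$ by condition (ii) of \ref{def:abelian curve}, and since $\Spec(S) \to \mls C_R$ is étale (being a $G_{\bar x}$-torsor), $S$ is a regular one-dimensional local ring. The tame diagonalizable $G_{\bar x}$-action on $S$ is semisimple, so $S = \bigoplus_{\psi \in X_{\bar x}} S_\psi$ as $R$-module, with $S_0 = R$. I would then choose a uniformizer $\pi_S$ of $S$ lying in a single weight space $S_{\theta_0}$; this is possible because the $G_{\bar x}$-action on the one-dimensional space $\mathfrak{m}_S/\mathfrak{m}_S^2$ is semisimple, so any eigenvector lift of a generator of $\mathfrak{m}_S/\mathfrak{m}_S^2$ is itself a uniformizer. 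The equality $S_0 = R$ combined with $S$ being free of rank $|G_{\bar x}|$ over $R$ forces $\theta_0$ to generate $X_{\bar x}$: indeed if $r = \mathrm{ord}(\theta_0)$, then $\pi_S^r \in R$ is a uniformizer of $R$ and $S = \bigoplus_{n=0}^{r-1} R\pi_S^n$, so $|X_{\bar x}| = r$. Consequently $S_\psi = R \cdot \pi_S^{n(\psi)}$ where $n(\psi) \in \{0,\dots,r-1\}$ is the unique integer with $n(\psi)\theta_0 = \psi$.

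Next I would use the preceding lemma's isomorphism $\Pic(\mls C_R) \cong \Pic(BG_{\bar x})$ to identify every line bundle of class $\theta$ with the rank-one $G_{\bar x}$-equivariant $S$-module $\mls L_\theta$ of weight $\theta$. Sections of $\mls L_\theta^{\vee}$ are identified with $S_{-\theta} = R \cdot \pi_S^{n(-\theta)}$, and $\mathrm{Aut}(\mls L_\theta) = H^0(\mls C_R, \mls O_{\mls C_R}^*) = R^*$. Therefore $\chi^{-1}(\theta)$ is in bijection with $R^*$-orbits on the \emph{nonzero} elements of $R \cdot \pi_S^{n(-\theta)}$, nonzero because $\gamma$ must restrict to an isomorphism on the dense open $\mls C_R \setminus \bar x$. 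Via $\mathrm{ord}_R$, this orbit set is canonically $\mathbf{N}$. I define $\delta_\theta$ to be the class of $\pi_S^{n(-\theta)}$, the unique representative of valuation zero; adding $a \in \mathbf{N}$ corresponds to multiplication by $\pi_R^a$ (for $\pi_R$ a uniformizer of $R$), which is precisely adding the divisor $a \cdot [\bar x] \in \chi^{-1}(0)$ in the monoid $D_{\bar x}$. The map $a \mapsto a + \delta_\theta$ is therefore a bijection, and uniqueness of $\delta_\theta$ is just the uniqueness of the representative of $R$-valuation zero.

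The main obstacle is really the second step, namely showing that a uniformizer of $S$ may be chosen homogeneous for the $G_{\bar x}$-action and then extracting from $S_0 = R$ the fact that $\theta_0$ generates $X_{\bar x}$ (so that all weight spaces are free of rank one with an explicit generator). Once this structural input is in place, everything else is a direct calculation with characters and the $R$-adic valuation.
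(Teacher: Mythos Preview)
Your argument has a genuine gap at the outset: the claim that $S$ is a regular local ring is unjustified and false in general. Flatness of $\mls C_R$ over the DVR $R$ does not make $\mls C_R$ (or $S$) regular; Definition~\ref{def:abelian curve} imposes no smoothness on the stack $\mls C$ over the smooth locus of $C$, only flatness. Concretely, Example~\ref{ex:not} gives $\mls C = [\Spec(k[y,w]/(y^2-w^2))/\mu_4]$ with coarse space $\Spec(k[x])$, $x = yw$; the corresponding $S$ over $R = \mls O_{C,\bar x}^{sh}$ is $R \otimes_{k[x]} k[y,w]/(y^2-w^2)$, which is not even a domain. (As a secondary point, the torsor $\Spec(S) \to \mls C_R$ is \'etale only when $G_{\bar x}$ is \'etale, which can fail in positive characteristic.) Your argument then forces the false conclusion that $X_{\bar x}$ is cyclic, whereas stabilizers such as $\mu_2\times\mu_2$ already arise from fiber products of two $\mu_2$-root stacks along the same section.

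The repair, which is essentially the paper's proof, is to bypass regularity of $S$ and argue directly that each eigenspace $S_\theta \subset S$ is free of rank one over $R$: since $S$ is finite flat over the local ring $R$, each summand $S_\theta$ is projective and hence free, and since the $G_{\bar x}$-action on $\Spec(S)$ is free over the generic point of $\Spec(R)$ (by condition (i) of Definition~\ref{def:abelian curve}), each character occurs exactly once in $S\otimes_R K$. Fixing a generator $e^\theta$ of $S_\theta$, your identification of $\chi^{-1}(\theta)$ with $R^*$-orbits on $S_\theta\setminus\{0\} \cong R\setminus\{0\}$, and hence with $\mathbf{N}$ via the $R$-adic valuation, then goes through unchanged, with $\delta_\theta$ the class of the inclusion $S\cdot e^\theta \hookrightarrow S$.
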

\begin{proof}
The action of $G_{\bar x}$ on $S$ defines a decomposition $S = \oplus _{\theta \in X}Re^{\theta}$, where $G_{\bar \theta}$ acts on $e^\theta$ through the character $\theta$ (each eigenspace is a free $R$-module since $S$ is flat over $R$ and of rank $1$ since the action is generically free).  We take $\delta_\theta$ to be the element of $D_{\bar x}$ defined by the map $S\cdot e^\theta\rightarrow S$.  

To see that $\delta_\theta$ has the desired properties, let $M$ be a free $S$-module of rank $1$ with action of $G_{\bar x}$ and let $u:M\rightarrow S$ be a map which is an isomorphism after inverting a uniformizer $\pi \in R$.  Suppose further that the character defined by $M/\mathfrak{m}_SM$ is $\theta$. Then $u: M \to S$ defines an element of $\chi^{-1}(\theta)$. Since $G_{\bar x}$ is linearly reductive we can lift a basis element for $M/\mathfrak{m}_SM$ to a basis element $m_0\in M$ on which $G_{\bar x}$ acts through $\theta$. 
%\Rachel{what theorem is this?}  
This identifies $M$ with $S\cdot m_0$ as a $G_{\bar x}$-module.  The image of $m_0$ in $S$ is an element on which $G_{\bar x}$ acts through $\theta$ and therefore this image equals $fe^\theta$ for some element $f\in R$ which is invertible after inverting $\pi $.  It follows that $f = v\pi ^a$ for some $v\in R^*$ and $a\geq 0$.  Replacing $m_0$ by $v^{-1}m_0$ we then identify $u: M \to S$ with the tensor product of $\delta_\theta: S \cdot e^\theta \to S$ with the pullback of $\pi ^a:R\rightarrow R$.
\end{proof}

\begin{cor}\label{cor:Dx} The monoid $D_{\bar x}$ is a fine sharp monoid and the inclusion $\mathbf{N}\hookrightarrow D_{\bar x}$ is integral.
\end{cor}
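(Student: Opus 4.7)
The plan is to make the monoid structure of $D_{\bar x}$ explicit in the coordinates supplied by Lemma \ref{lem:Dx}, and then read off each required property almost directly. Recall from \ref{lem:Dx} that we have a bijection of sets $D_{\bar x}\cong \mathbf{N}\times X_{\bar x}$ sending $(a,\theta)$ to $a\cdot(1,0)+\delta_\theta$, where $\delta_\theta$ corresponds to the principal ideal $(e^\theta)\subset S$ in the presentation $\mls C_R\cong[\Spec(S)/G_{\bar x}]$ with $S=\bigoplus_{\theta\in X_{\bar x}}Re^\theta$. Since each $Re^\theta$ is free of rank one, there is a unique relation $e^\theta\cdot e^\eta=c_{\theta,\eta}\,e^{\theta+\eta}$ with $c_{\theta,\eta}\in R$, and $c_{\theta,\eta}\neq 0$ because $S\otimes_R\mathrm{Frac}(R)$ is the regular representation of $G_{\bar x}$ (where every $e^\theta$ is invertible). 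Writing $c_{\theta,\eta}=\pi^{d(\theta,\eta)}u_{\theta,\eta}$ with $\pi$ a uniformizer of $R$ and $u_{\theta,\eta}\in R^*$, the monoid operation translates to
\[
(a,\theta)+(b,\eta)=\bigl(a+b+d(\theta,\eta),\ \theta+\eta\bigr),
\]
and the inclusion $\mathbf{N}\hookrightarrow D_{\bar x}$ is $a\mapsto(a,0)$.

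From this explicit formula, fineness and integrality of $D_{\bar x}$ are immediate: $\{(1,0)\}\cup\{\delta_\theta\}_{\theta\in X_{\bar x}}$ is a finite generating set (finite because $G_{\bar x}$ is finite), and the cancellation law reduces to cancellation in the abelian group $X_{\bar x}$ together with subtraction in $\mathbf{N}$.

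Sharpness is the main substantive step. If $(a,\theta)$ admits an inverse $(b,\eta)$, the formula forces $\eta=-\theta$ and $a=b=d(\theta,-\theta)=0$, so the entire content is to show $d(\theta,-\theta)\geq 1$ whenever $\theta\neq 0$. This is where the hypothesis that $k$ is algebraically closed enters essentially: the tame linearly reductive group scheme $G_{\bar x}$ must act trivially on the residue field $S/\mathfrak{m}_S\simeq k$, so the $\theta$-eigenspace of $S/\mathfrak{m}_S$ vanishes for $\theta\neq 0$; hence $e^\theta\in\mathfrak{m}_S$ for all nonzero $\theta$, and consequently $c_{\theta,-\theta}=e^\theta\cdot e^{-\theta}\in\mathfrak{m}_S\cap R=\mathfrak{m}_R$, giving $d(\theta,-\theta)\geq 1$ as required.

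Finally, to show $\mathbf{N}\hookrightarrow D_{\bar x}$ is integral, suppose $a+(c,\theta)=b+(d,\eta)$ in $D_{\bar x}$; the product formula forces $\theta=\eta$ and $a+c=b+d$. Assuming without loss of generality that $c\geq d$, the triple $p_3=c-d$, $p_4=0$, $q=(d,\theta)$ satisfies the conditions of \cite[Def.~4.6.2(1)]{Ogus}, and the case $c\leq d$ is symmetric. The main obstacle in the whole argument is thus the sharpness step, which is the only place where the geometric hypotheses on $\mls C$ (and the algebraic closedness of $k$) are used in an essential way; everything else is formal manipulation of the cocycle $d(\theta,\eta)$.
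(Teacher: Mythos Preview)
Your argument is correct and follows the same overall strategy as the paper: both extract the structure of $D_{\bar x}$ from Lemma \ref{lem:Dx} and then verify each property. Your treatment is more explicit, writing out the cocycle $d(\theta,\eta)$ governing the monoid law, whereas the paper leaves this implicit. The one substantive difference is the sharpness step. The paper argues directly from the definition of $D_{\bar x}$ as generalized Cartier divisors: if $(\mc L,\gamma)$ is a unit (hence torsion, since $\chi^{-1}(0)=\mathbf{N}$ and $X_{\bar x}$ is finite), then some $\gamma^{\otimes n}$ is nowhere vanishing, forcing $\gamma$ itself to be nowhere vanishing. This is more elementary than your route through the eigenspace decomposition of $S$ and avoids invoking the residue field; in particular the standing hypothesis that $k$ is algebraically closed is not actually essential to sharpness in the paper's approach. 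Your argument is perfectly valid, but you might note that the sharpness can be seen more cheaply without unpacking the ring $S$.
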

\begin{proof}
    Lemma \ref{lem:Dx} implies that $D_{\bar x}$ is finitely generated.  The uniqueness part of that lemma implies that it is cancellative. It is sharp because if some power of $(\mc L, \gamma) \in D_{\bar x}$ is the trivial element, then some power of $\gamma$ is nowhere vanishing, and hence $\gamma$ is nowhere vanishing. The statement that $\mathbf{N}\hookrightarrow D_{\bar x}$ is integral follows from the description in \ref{lem:Dx}.
\end{proof}

\begin{defn}\label{def:local monoid} A \emph{local monoid} is a fine sharp monoid $D$ with an integral inclusion $\mathbf{N}\hookrightarrow D$ with $D^\gp /\mathbf{Z}$ a finite abelian group.
\end{defn}

\begin{rem}  Note that the quotient $D^\gp /\mathbf{Z}$ is also the quotient $D/\mathbf{N}$ in the category of monoids.
\end{rem}

\begin{rem}
If $D$ is a local monoid with $D^\gp /\mathbf{Z} = X$ then there is a bijection of sets $\mathbf{N}\times X\rightarrow D$ sending $(a, \theta )$ to $a+e^\theta $, where $e^\theta \in D$ is the minimal lift of $\theta $ to $D$ (using the integrality of $\mathbf{N}\hookrightarrow D$.  In this way we get a monoid structure on $\mathbf{N}\times X$ compatible with the monoid structure on $\mathbf{N}$.  Therefore a local monoid can be described by the set $\mathbf{N} \times X$ equipped with a binary operation $+$ given by
\[
(a, \theta) + (a', \theta') = (a+a'+c_{\theta \theta'}, \theta +\theta')
\]
where $c_{\theta \theta'} \in \mathbf{N}$ satisfy
\begin{enumerate}
\item $c_{0\theta}=0$  (identity)
\item $c_{\theta \theta'} = c_{\theta' \theta}$  (commutativity)
\item $c_{\theta \theta'} + c_{(\theta + \theta') \theta''} = c_{\theta(\theta'+\theta'')} + c_{\theta'\theta''}$  (associativity)
\item $c_{\theta \theta^{-1}} = 0$ implies $\theta = 1$ (sharpness)
%$c_{\theta \theta} + c_{\theta \theta^2} + \ldots + c_{\theta \theta^{|\theta|-1}} \neq 0$  (sharpness)
\end{enumerate}
for all $\theta, \theta'$, and $\theta''$ in $X$.
\end{rem}

\subsection{Characterization of tame abelian nodal orbicurves}

We can construct all tame abelian nodal orbicurves fairly explicitly. Consider a tuple 
\begin{equation}\label{eq:tanc}
(C, \{s_i\}_{i=1}^n, M_k \hookrightarrow M_k', \{D_i\}_{i=1}^n)
\end{equation}
where $(C, \{s_i\}_{i=1}^n)$ is a prestable curve over $k$ with \textit{distinct} markings, $M_k$ is the canonical log structure on $\Sp(k)$ associated to $C$, $M_k \to M_k'$ is a simple inclusion, and each $D_i$ is a local monoid as in \ref{def:local monoid}. Given a tuple as in \eqref{eq:tanc} we obtain an orbicurve $\mls C$ over $k$ as follows. Let $s_{i, *} D_i$ be the \'etale skyscraper sheaf with stalk $D_i$ at the image of $s_i$ and let $\mls P = \oplus s_{i, *} D_i$. As in \ref{SS:3.4} there is a tame stack $\mls C^{\node}$ associated to the simple inclusion $M_k \hookrightarrow M_k'$ and a stack of roots $\mls C^{\mls P}$ arising from the system of denominators $\oplus s_{i, *} \mathbf{N} \hookrightarrow \mls P$. We define
\[
\mls C := \mls C^{\node} \times_C \mls C^{\mls P}.
\]

\begin{lem}
The stack $\mls C$ associated to a tuple \eqref{eq:tanc} is a tame abelian nodal orbicurve with coarse space $C$ whose stabilizer group scheme at the image of $s_i$ is $D(D_i/\mathbf{N})$. 
\end{lem}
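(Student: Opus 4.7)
The plan is to verify conditions (i)--(iii) of Definition \ref{def:abelian curve}, abelianness of stabilizers, and the stabilizer computation by reducing to the individual properties of $\mls C^{\node}$ and $\mls C^{\mls P}$, exploiting the fact that the nodes of $C$ and the markings $\{s_i\}$ are disjoint loci.

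First I would establish local descriptions of each factor. The stack $\mls C^{\node}$ is precisely the twisted curve associated to the simple inclusion $M_k \hookrightarrow M_k'$ by \cite[1.8]{LogTwisted}, so it has coarse space $C$, is an isomorphism over $C^{sm}$, and at each node admits exactly the local quotient form required by Definition \ref{def:abelian curve}(iii). For $\mls C^{\mls P}$, the markings are distinct and each $\mathbf{N} \hookrightarrow D_i$ is integral, so arguing as in \ref{SS:local} I would write down on an affine chart $\Spec(R) \subset C$ containing only the section $s_i$ (with defining element $a \in R$) the explicit presentation analogous to \eqref{eq:local description2},
\[
\mls C^{\mls P}_{\Spec R} \;\simeq\; [\Spec(R \otimes_{\mathbf{Z}[\mathbf{N}]} \mathbf{Z}[D_i]) / D(D_i^{gp}/\mathbf{Z})].
\]
The same arguments underlying Corollary \ref{cor:properties} then show that $\mls C^{\mls P}$ has coarse space $C$, is an isomorphism away from the markings, is flat over $C$ at each $s_i$ (using integrality as in Remark \ref{L:integral}), and has stabilizer $D(D_i^{gp}/\mathbf{Z}) = D(D_i/\mathbf{N})$ at the image of $s_i$.

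Since the markings lie in $C^{sm}$, the stacky loci of the two factors are disjoint, and the fiber product $\mls C$ agrees Zariski-locally with $\mls C^{\mls P}$ in a neighborhood of each $s_i$, with $\mls C^{\node}$ in a neighborhood of each node, and with $C$ elsewhere. This reduction immediately yields Definition \ref{def:abelian curve}(i), the flatness assertion (ii), and the explicit local form (iii) at nodes; it also gives abelianness (each stabilizer is either $\mu_r$ at a node or the diagonalizable group $D(D_i/\mathbf{N})$ at a marking), tameness and the coarse-space identification (both local conditions, verified on each of the three overlapping charts), and the desired stabilizer computation at $s_i$.

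The only point requiring care is that our local monoids $D_i$ are not assumed saturated, so strictly speaking they need not be admissible submonoids of $\mathbf{Q}_{\geq 0}$ in the sense of Definition \ref{def:admissible}. However, the root-stack construction and the local quotient computation behind Corollary \ref{cor:properties} depend only on the integrality of $\mathbf{N} \hookrightarrow D_i$ and not on saturation, so those arguments transport verbatim. This, rather than any deep obstacle, is the main subtlety: once the local presentation of $\mls C^{\mls P}$ is in hand, the rest of the verification is a straightforward gluing across the disjoint stacky loci.
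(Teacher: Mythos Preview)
Your proposal is correct and follows essentially the same approach as the paper: the paper's proof simply says that the local descriptions of \S\ref{SS:local} remain valid after replacing $\mathbf{N}^n \to N$ by $\mathbf{N} \to D_i$, and then invokes the proof of Corollary \ref{cor:properties}. Your additional remark that only integrality of $\mathbf{N} \hookrightarrow D_i$ (not saturation) is needed for those arguments is a correct and helpful observation that the paper leaves implicit.
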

\begin{proof}
The local descriptions of $\mls C^{\mls N}$ in \ref{SS:local} are also valid for $\mls C^{\mls P}$, after replacing $\mathbf{N}^n \to N$ with $\mathbf{N} \to D_i$. The lemma follows from these local descriptions as in the proof of
\ref{cor:properties}.
\end{proof}

\begin{prop}\label{p:tanc} Every tame abelian nodal orbicurve arises from a tuple \eqref{eq:tanc}.
\end{prop}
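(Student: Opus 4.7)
The plan is to extract a tuple \eqref{eq:tanc} directly from $\mls C$ and then show the associated stack recovers $\mls C$. Let $C$ be the coarse space of $\mls C$. By condition (i) of \ref{def:abelian curve}, the non-representable locus of $\mls C \to C$ is a proper closed subset of the curve $C$, hence a finite set of points. These split into the nodes of $C$ and finitely many smooth points $s_1, \ldots, s_n$, which we take as the markings. For the simple inclusion, condition (iii) realizes $\mls C$ as a twisted curve at the nodes in the sense of \cite[2.1]{AOV}, and by \cite[1.8]{LogTwisted} such a structure is classified by a canonical simple inclusion $\ell: M_k \hookrightarrow M_k'$. For the local monoids, set $D_i := D_{s_i}$; by \ref{cor:Dx} each $D_i$ is a local monoid in the sense of \ref{def:local monoid}.

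Next I would construct a morphism $\mls C \to \mls C' := \mls C^{\node} \times_C \mls C^{\mls P}$, where $\mls C'$ is the stack associated to the above tuple and $\mls P = \bigoplus s_{i,*} D_i$. The map $\mls C \to \mls C^{\node}$ comes from reversing the equivalence of \cite[1.8]{LogTwisted}. For $\mls C \to \mls C^{\mls P}$, I would apply \ref{P:7.6}: the tautological maps $\mls D_{s_i}|_{\mls C} \to \mathfrak{D}iv^+_{\mls C}$ for each $i$ assemble to a symmetric monoidal functor $\mls P|_{\mls C} \to \mathfrak{D}iv^+_{\mls C}$ which is compatible with the Deligne--Faltings structure $\bigoplus s_{i,*}\mathbf{N} \to \mathfrak{D}iv^+_C$ from the markings (since the inclusion $\mathbf{N} \hookrightarrow D_i$ sends the generator to the class of the pullback of $\mls I_{s_i}$). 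By the moduli description \eqref{eq:def CN} of $\mls C^{\mls P}$ this produces the required morphism.

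Finally I would check that $\mls C \to \mls C'$ is an isomorphism locally on $C$. Away from nodes and markings both stacks equal $C$, and at nodes the isomorphism holds by the log twisted curve correspondence together with $\mls C^{\mls P}$ being trivial there. The main verification is at a marked point $s_i$: use the local model $\mls C_R \simeq [\Sp(S)/G]$ with $G = D(X_{s_i})$ and $R = \mls O^{sh}_{C, s_i}$, together with the eigenspace decomposition $S \simeq \bigoplus_{\theta \in X_{s_i}} R e^\theta$ from \ref{lem:Dx}. Comparing with the local description \eqref{eq:local description2} for $\mls C^{\mls P}$, the bijection $\mathbf{N} \times X_{s_i} \xrightarrow{\sim} D_i$ given by minimal lifts yields a $G$-equivariant $R$-algebra isomorphism $R \otimes_{\mathbf{Z}[\mathbf{N}]} \mathbf{Z}[D_i] \xrightarrow{\sim} S$; the multiplication matches because the monoid structure on $D_i$ is defined precisely by multiplying the corresponding elements in $\mls Div^+(\mls C_R)$, which is exactly the multiplication of the eigenspace generators in $S$.

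The main obstacle is this last local verification: checking that the globally-defined morphism $\mls C \to \mls C^{\mls P}$ induces on local models the explicit equivariant ring isomorphism above. This is essentially bookkeeping rather than hard mathematics, since both sides are built from the same divisor data parameterized by $D_i$, but care must be taken to track the identifications through the equivalence between the stack of roots description and the quotient presentation $[\Sp(S)/G]$.
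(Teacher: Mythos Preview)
Your proposal is correct and follows essentially the same approach as the paper: extract the tuple from $\mls C$, build the comparison morphism $\mls C \to \mls C^{\node}\times_C \mls C^{\mls P}$ from the moduli descriptions, and verify it is an isomorphism locally via the eigenspace decomposition $S \simeq \bigoplus_{\theta} R e^\theta$. The paper organizes the local check at $s_i$ slightly differently---it first observes that $\mls C_R \to \mls C^{\mls P}_R$ is representable (isomorphism on stabilizers), pulls back the presentation $\Sp(R\otimes_{\mathbf{Z}[\mathbf{N}]}\mathbf{Z}[D_i]) \to \mls C^{\mls P}_R$ to obtain $\Sp(S)$ with its induced chart $D_i \to S$, and then checks the ring map is an isomorphism eigenspace by eigenspace---which sidesteps the bookkeeping you flagged, but the substance is the same.
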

\begin{proof}
Let $\mls C$ be a tame abelian nodal orbicurve with coarse space $C$, and let $\{s_i\}_{i=1}^n$ be the $k$-points of $C$ with nontrivial stabilizer. Let $\mls C^{\node}$ be the orbicurve obtained by gluing the complement of the markings in $\mls C$ with $C^{sm}.$ Define $M_k \hookrightarrow M_k'$ to be the simple inclusion associated to this stack by \cite[1.8]{LogTwisted}. There is a natural morphism $\mls C \to \mls C^{\node}.$

On the other hand, if we define $D_i$ to be the local monoid of $\mls C$ at $s_i$ and $\mls P = \oplus _is_{i*}D_i$, we get a canonical morphism $\mls C \to \mls C^{\mls P}$ from the log structure $M_{\mls C}$ defined in \ref{P:7.6}. 
We have an induced morphism 
\begin{equation}\label{eq:tanc1}
\mls C \to \mls C^{\node} \times_C \mls C^{\mls P}
\end{equation}
from $\mls C$ to the stack associated to the tuple $(C, \{s_i\}_{i=1}^n, M_k \hookrightarrow M_k', \{D_i\}_{i=1}^n),$ which we claim is an isomorphism.

That \eqref{eq:tanc1} is an isomorphism in a neighborhood of a node follows from the definition of $M_k'$.  It therefore suffices to show that the map $\mls C\rightarrow \mls C^{\mls P}$ is an isomorphism at the images $x_i$ of the $s_i$.  Fix one such point $x_i$ and let $R$ denote the strictly henselian local ring of $C$ at a geometric point over $x_i$.  It then suffices to show that the base change  $\mls C_{R}\rightarrow \mls C^{\mls P}_{R}$ is an isomorphism.  Fixing a uniformizer in $R$ we can describe the stack $\mls C^{\mls P}_R$ as a quotient
$$
\mls C^{\mls P}_R=[\Sp (R\otimes _{\mathbf{Z}[\mathbf{N}]}\mathbf{Z}[D_{i}])/G_{x_i}].
$$
Since the map $\mls C_R\rightarrow \mls C^{\mls P}_R$ is representable (it induces an isomorphism on stabilizer groups) the fiber product $\mls C_R\times _{\mls C^{\mls P}_R}\Sp (R\otimes _{\mathbf{Z}[\mathbf{N}]}\mathbf{Z}[D_{i}])$ is an algebraic space finite over $\Sp (R)$ and hence an affine scheme.  We therefore obtain a finite $R$-algebra $S$ with $G_{x_i}$-action such that $\mls C_R = [\Sp (S)/G_{x_i}]$ which fits into a commutative diagram
$$
\xymatrix{
\Sp (S)\ar[d]\ar[r]& \Sp (R\otimes _{\mathbf{Z}[\mathbf{N}]}\mathbf{Z}[D_{i}])\ar[d]\\
\mls C_R\ar[r]& \mls C_R^{\mls P}.}
$$
Furthermore, the log structure on $\Sp (S)$ induced by that on $\mls C_R$ defining the map to $\mls C^{\mls P}$ is the pullback of the log structure on $\Sp (R\otimes _{\mathbf{Z}[\mathbf{N}]}\mathbf{Z}[D_{i}])$ induced by the natural map $D_{i}\rightarrow R\otimes _{\mathbf{Z}[\mathbf{N}]}\mathbf{Z}[D_{i}]$. In particular we have a global chart
\begin{equation}\label{eq:tanc2}
D_i \to S
\end{equation}
for this log structure. 
Decomposing $S$ as $\oplus _{\theta \in X_i}Re^\theta$ as in the proof of \ref{lem:Dx} we see that the image of the class $\delta_\theta \in D_{i}$ maps under \eqref{eq:tanc2} to an element of $R^*$ times $e^\theta$ in $S$.  It follows that the map $R\otimes _{\mathbf{Z}[\mathbf{N}]}\mathbf{Z}[D_{x_i}]\to S$ decomposes into a direct sum of isomorphisms of free $R$-modules of rank $1$ and therefore is an isomorphism.
\end{proof}

\subsection{The functor from generalized log twisted curves to tame abelian nodal orbicurves}

As discussed in \cite[7.10]{bragg2024amplevectorbundlesmoduli} the collection of tame abelian nodal orbicurves, which a priori is a 2-category, is in fact equivalent to a $1$-category.
There is a functor
\begin{equation}\label{E:twistedfunctor}
(\text{generalized log twisted curves over $S$})\rightarrow (\text{tame abelian nodal orbicurves})
\end{equation}
defined as follows.

On objects, the functor from generalized log twisted curves to algebraic stacks is given by the rule
\begin{equation}\label{eq:functor}
\mathbf C \mapsto \mls C \quad \quad \;\text{where} \quad \quad\;\mls C := \mls C^{\node} \times_C \mls C^{\mls N},
\end{equation}
and we call $\mls C$ the \emph{stack associated to $\mathbf{C}$.} Note that if $\mathbf{C}$ is a generalized log twisted curve over $S$ then $\mls C$ has a canonical morphism to $S$.

On morphisms the functor is defined as follows. Let $\mathbf{C}^{(1)} \to \mathbf{C}^{(2)}$ be a morphism of generalized log twisted curves over a scheme $S$, where $\mathbf{C}^{(j)}$ are as in \eqref{eq:some curves}. Hence we have a pair $(f: C^{(1)} \to C^{(2)}, \rho: M^{(1)\prime}_S \to M^{(2)\prime}_S)$ satisfying the conditions in \ref{D:2.23}. These induce morphisms of stacks 
\[\mls C^{(1), \,\node} \to \mls C^{(2), \,\node} \quad \quad \quad \quad\quad \mls C^{\mls N^{(1)}} \to \mls C^{\mls N^{(2)}},
\]
whence we obtain $\mls C^{(1)} \to \mls C^{(2)}$. The morphism $\mls C^{\mls N^{(1)}} \to \mls C^{\mls N^{(2)}}$ is induced from the inclusion $\mls N^{(2)} \subset f_*\mls N^{(1)}$ and the definition of $\mls C^{\mls N^{(j)}}$ as the category of lifts filling in \ref{eq:def CN}.

\subsection{Faithful but not full}\label{ss:faith-not-full}

To see that the functor in \ref{E:twistedfunctor} is faithful, let $\bC$ be a generalized log twisted curve over a scheme $S$. An automorphism of $\bC$ is determined by the induced automorphism $\phi^{\node}$ of $\mls C^{\node}$ (which also preserves the monoid), so it is enough to show that if the automorphism $\phi$ of $\mls C$ induced by $\phi^{\node}$ is 2-isomorphic to the identity, then $\phi^{\node}$ is 2-isomorphic to the identity.  This follows from noting that the map $\mls C\rightarrow \mls C^\node $ is a coarse moduli space map away from the nodes, and therefore $\phi ^\node $ is determined by the automorphism $\phi $ of the stack.

We now give two examples to show that the functor in \ref{E:twistedfunctor} is not full. The first is an example of two nonisomorphic generalized log twisted curves with isomorphic associated stacks, and the second is an example of a single generalized log twisted curve with fewer automorphisms than its associated stack.

\begin{example}\label{ex:two-to-one}
Let $k$ be a field, let $C = \mathbf{P}^1_k$ and let $s_1=s_2$ be the same marking in $C$. Consider the admissible sheaves $\mls N_{i}$ defined by submonoids $N_i\subset \mathbf{Q}^2_{\geq 0}$ defined as follows:
$$
N_1 = (1/2)\mathbf{N} \times \mathbf{N} , \ \ N_2 =\mathbf{N} \times (1/2)\mathbf{N} .
$$
Then we have two non-isomorphic generalized log twisted curves 
\[
\bC_i = (\mathbf{P}^1_k, \{s_1, s_2\}, M_k \xrightarrow{\sim} M_k, \mls N_i) \quad \quad \text{for}\; i=1, 2
\] but it follows from \eqref{eq:local description2} that the associated stacks are both isomorphic to the root stack of $C$ at $s_1=s_2$ with $\mu_2$ stabilizer.
\end{example}

\begin{example} 
Let $k$ be a field, let $C = \mathbf{P}^1_k$, let $s_1=s_2$ be the same marking in $C$, and fix two other markings $s_3 \neq s_4$ in $C$ distinct from $s_1=s_2$. Consider the admissible sheaf defined by the submonoid
\[
N = (1/2)\mathbf{N} \times (1/2)\mathbf{N} \times \mathbf{N} \times \mathbf{N} \subset \mathbf{Q}^4_{\geq 0}
%\langle (\frac{1}{2}, 0,0,0), (0, \frac{1}{2},0,0),(0,0,1,0),(0,0,0,1) \rangle 
\]
and let $\bC$ be the generalized log twisted curve associated to $C, \{s_1, s_2\},$ and $N.$ Then the only automorphism of $\bC$ is the trivial automorphism.
However, the automorphism of $N$ that swaps the first two generators induces a nontrivial automorphism of the associated stack $\mls C$: it is nontrivial because it induces a nontrivial automorphism of the stabilizer group scheme at a geometric point corresponding to $s_1=s_2$. 
\end{example}

\subsection{Essential image}

We can now characterize the essential image of the associated stack functor \ref{E:twistedfunctor} from generalized log twisted curves to tame abelian nodal orbicurves.

\begin{cor}\label{C:4.13} A tame abelian nodal orbicurve $\mls C$ is isomorphic to the stack associated to a generalized log twisted curve if and only if for each $k$-point $\bar x \in C$ the monoid $D_{\bar x}$ can be realized as a pushout in the category of monoids %\Rachel{category of integral monoids?}
$$
\xymatrix{
\mathbf{N}^{m}\ar[d]\ar[r]& N\ar[d]\\
\mathbf{N}\ar[r]& D_{\bar x}}
$$
for some integer $m\geq 1$ and admissible submonoid $N\subset \mathbf{Q}^{m}_{\geq 0}$, where the left vertical map is summation.
\end{cor}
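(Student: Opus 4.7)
The plan is to prove both implications by comparing the local description in \ref{SS:local} of the stack associated to a generalized log twisted curve with the local-monoid description in \ref{p:tanc} of a tame abelian nodal orbicurve.

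For the forward direction, let $\bC = (C, \{s_i\}_{i=1}^n, \ell, \mls N)$ be a generalized log twisted curve with associated stack $\mls C$, and fix a $k$-point $\bar x \in C^{sm}$. Let $I \subset \{1, \ldots, n\}$ be the set of indices with $s_i = \bar x$, set $m := |I|$, and let $N := \mls N_{\bar x} \subset \mathbf{Q}^I_{\geq 0}$, which is admissible by \ref{rem:stalk quotient}. The third local description in \ref{SS:local} realizes $\mls C$ near $\bar x$ as
\[
[\Spec(R \otimes_{\mathbf{Z}[\mathbf{N}^I]} \mathbf{Z}[N])/D(N^{\gp}/\mathbf{Z}^I)],
\]
where $R = \mls O_{C, \bar x}^{sh}$ and the $\mathbf{N}^I$-algebra structure on $R$ is determined by local equations for the ideals $\mls I_i$. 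Since all $s_i$ with $i \in I$ coincide at $\bar x$, after rescaling by units we may arrange that each generator of $\mathbf{N}^I$ maps to a common uniformizer $t \in R$, so that the structure map $\mathbf{N}^I \to R$ factors through the summation map $\mathbf{N}^I \to \mathbf{N}$. Combining this with the integrality of $\mathbf{N}^I \hookrightarrow N$ from Remark \ref{L:integral}, we rewrite the local model as $[\Spec(R \otimes_{\mathbf{Z}[\mathbf{N}]} \mathbf{Z}[D])/D(D^{\gp}/\mathbf{Z})]$, where $D := \mathbf{N} \oplus_{\mathbf{N}^I} N$, using the canonical identification $D^{\gp}/\mathbf{Z} \simeq N^{\gp}/\mathbf{Z}^I$ coming from the pushout. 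Comparison with the local presentation of $\mls C$ at $\bar x$ given in the proof of \ref{p:tanc} then identifies $D_{\bar x}$ with $D$, which is exactly the required pushout.

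For the reverse direction, by \ref{p:tanc} we may write $\mls C$ as the stack associated to a tuple $(C, \{y_j\}_{j=1}^r, M_k \hookrightarrow M_k', \{D_{y_j}\}_{j=1}^r)$ with distinct markings $y_j$, and by hypothesis each $D_{y_j}$ is realized as a pushout $\mathbf{N} \oplus_{\mathbf{N}^{m_j}} N_j$ with $N_j \subset \mathbf{Q}^{m_j}_{\geq 0}$ admissible. Set $n := \sum_j m_j$ and form an $n$-marked prestable curve on $C$ by repeating each $y_j$ with multiplicity $m_j$; index the resulting sections $\{s_i\}_{i=1}^n$ and let $I_j \subset \{1, \ldots, n\}$ be the block of indices mapping to $y_j$. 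On a Zariski neighborhood $U_j$ of $y_j$ avoiding every other $y_{j'}$ we prescribe the admissible sheaf to be the image of the admissible monoid $N_j \oplus \mathbf{N}^{I_j^c} \subset \mathbf{Q}^n_{\geq 0}$; on overlaps $U_j \cap U_{j'}$ both choices restrict to the trivial admissible sheaf, so by \ref{lem:admissible-base} they assemble to a global admissible sheaf $\mls N$ on $(C, \{s_i\})$ whose stalk at $y_j$ is $N_j$. The forward direction applied to $\bC := (C, \{s_i\}_{i=1}^n, M_k \hookrightarrow M_k', \mls N)$ shows that the local monoids of its associated stack at each $y_j$ coincide with those of $\mls C$, so \ref{p:tanc} supplies the desired isomorphism.

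The main obstacle is the identification in the forward direction, where the key inputs are the integrality of $\mathbf{N}^I \hookrightarrow N$ from Remark \ref{L:integral} (ensuring the pushout $D = \mathbf{N} \oplus_{\mathbf{N}^I} N$ is well-behaved under $\mathbf{Z}[-]$) together with the canonical isomorphism $D^{\gp}/\mathbf{Z} \simeq N^{\gp}/\mathbf{Z}^I$, so that after the summation factorization of $\mathbf{N}^I \to R$ is absorbed, the presentation of $\mls C$ from \ref{SS:local} matches exactly the presentation of a tame abelian nodal orbicurve arising from a local monoid as in \ref{p:tanc}.
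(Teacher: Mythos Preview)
Your proof is correct and follows essentially the same approach as the paper: both directions hinge on identifying $D_{\bar x}$ with the pushout $\mathbf{N}\oplus_{\mathbf{N}^m}\mls N_{\bar x}$ via the local description in \ref{SS:local}, and on reconstructing a generalized log twisted curve from the tuple of \ref{p:tanc} by repeating each stacky point with the appropriate multiplicity. You spell out more explicitly than the paper the factorization of $\mathbf{N}^I\to R$ through summation and the gluing of the admissible sheaf, but the underlying argument is the same.
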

\begin{proof}
    If $\mathbf{C} = (C/k, \{s_i\}_{i=1}^n, M_k\hookrightarrow M_k', \mls N)$ is a generalized log twisted curve then we can take $m$ to be the number of marked points passing through $\bar x$.  
    %The valuation on $\mls O_{C, x_i}$ then defines a map $\oplus _{s_j = x_i}\mathbf{N}\rightarrow \mathbf{N}$ and 
    Chasing through the above construction of $D_{\bar x}$ we find that $D_{\bar x} = \mls N_{\bar x}\oplus_{\mathbf{N}^m} \mathbf{N} $.
    % this monoid can be realized as the pushout of the diagram
    % $$
    % \xymatrix{
    % \mathbf{N}^{m}\ar[d]\ar[r]& \mls N_{\bar x}\\
    % \mathbf{N}&}
    % $$
    % where the left vertical map is summation.
    %\Rachel{It will be a pushout in the category of integral monoids, right? Because we showed earlier that $D_{\bar x}$ is integral, but the pushout of this diagram needn't be in general, since none of these guys is a group?}
    
    Conversely if $\mls C$ is a tame abelian nodal orbicurve, let $\{\bar x_j\}$ be the $k$-points of the coarse space $C$ with nontrivial stabilizer. If the local monoid $D_{\bar x_j}$ can be written as a pushout $N_j \oplus_{\mathbf{N}^{m_j} }\mathbf{N}$, then define markings $\{s_i\}$ on $C$ by taking each $\bar x_j$ with multiplicity $m_j$ and let $\mls N := \oplus x_{j, *} M_j$. If $M_k \hookrightarrow M_k'$ is the simple inclusion induced by the log structure of the nodes, then $\bC := (C, \{s_i\}, M_k \hookrightarrow M_k', \mls N)$ is a generalized log twisted curve with associated stack $\mls C$ by \ref{p:tanc}.
    
\end{proof}

\begin{rem}\label{R:integral}
Since the inclusion $\mathbf{N}^m \to N$ into an admissible monoid is integral by \ref{L:integral}, the pushout $\mathbf{N} \oplus_{\mathbf{N}^m} N$ will be integral and in particular this pushout can be computed in the category of integral monoids.
\end{rem}

% \begin{example}\label{ex:admissible}
% Let $N \subset \mathbf{Q}^n_{\geq 0}$ be an admissible inclusion. We describe the local monoid arising via pushout as in \ref{C:4.13}.

% Using \ref{R:integral}, the pushout is the image of $\mathbf{N} \oplus N$ in $\mathbf{Z} \oplus_{\mathbf{Z}^n} N^{gp}$. Since elements of the latter group can be uniquely written as $(a, \mathbf{x})$ for some $a \in \mathbf{Z}$ and $\mathbf{x} \in N \cap [0, 1)^n$, and since elements of $\mathbf{Z}\oplus_{\mathbf{Z}^n} N^{gp}/\mathbf{Z} \simeq N^{gp}/\mathbf{Z}^n$ have unique representatives in $N \cap [0, 1)^n$, this image is the local monoid $\mathbf{N} \times N^{gp}/\mathbf{Z}^n$ with structure constants computed as follows. For $\theta, \eta \in N^{gp}/\mathbf{Z}^n$ let $\overline{\theta}, \overline{\eta}$ be the unique representatives in $N \cap [0, 1)^n$. Then
% \[
% c_{\theta \eta} = \#\{i \in \{1, \ldots, n\} \mid \overline{\theta}_i + \overline{\eta}_i \geq 1\}.
% \]
% \end{example}

\begin{example}\label{ex:mu2}
Every local monoid with quotient $D/\mathbf{N} \simeq \mathbf{Z}/2\mathbf{Z}$ arises as a pushout of an admissible monoid.  Indeed let $z\in D$ be the minimal element mapping to $1$ in $\mathbf{Z}/(2)$ and write $2z = c$ for some $c\geq 1$ (note that since $D$ is sharp we cannot have $c = 0$). Let $N$ be the monoid 
\[\mathbf N^{c} + \mathbf{N}(1/2, \ldots, 1/2)  \subset \mathbf{Q}^{c}_{\geq 0}.\]
Then the map $N\rightarrow D$ sending $(1/2, \ldots, 1/2)$ to $c$ identifies $D$ with the pushout of $N$.
\end{example}

\begin{example}\label{ex:mu3}
    Every local monoid $D$ with quotient $D/\mathbf{N}\simeq \mathbf{Z}/(3)$ arises as a pushout of an admissible monoid.  To see this let $z_1\in D$ (resp. $z_2\in D$) be the minimal lift of $1\in \mathbf{Z}/(3)$ (resp. $2\in \mathbf{Z}/(3)$).  Write
    $$
    2z_1 = a+z_2, 2z_2 = b+z_1, 3z_1 = c, 3z_2 = d
    $$
    for some $a,b, c, d\in \mathbf{N}$.  By associativity of addition we then have 
    $$
    3z_1 = a+z_2+z_1 = c, \ \ 3z_2 = b+z_1+z_2 = d.
    $$
It follows that $d-b = c-a$ and these quantities are equal to $z_1+z_2$ so positive (since $D$ is sharp).  Note also that 
$$
d-b + z_1 = z_1+z_2+z_1 = 2z_1 +z_2 = a+2z_2 = a+b+z_1.
$$
Therefore $a+b = d-b=c-a.$

Let $N$ be the monoid 
\[\mathbf N^{a+b} + \mathbf{N}((1/3)^{b}, \,(2/3)^{a}) + \mathbf{N}((2/3)^{b}, \,(1/3)^{a})\subset \mathbf{Q}^{a+b}_{\geq 0}\]
where an exponent $m$ on a fraction means to repeat that entry $m$ times. Then there is a map $N\rightarrow D$ sending $((1/3)^{b}, \,(2/3)^{a})$ to $z_1$ and $((2/3)^{b}, \,(1/3)^{a})$ to $z_2$ realizing $D$ as the pushout of $N$.
\end{example}

The next example shows that a tame abelian nodal orbicurve with a $\mu_4$ stabilizer may not arise from a generalized log twisted curve.

%\Rachel{rewrote this example. did I understand correctly?}
\begin{example}\label{ex:not} Consider the stack 
\begin{equation}\label{eq:not2}
\mc X:= [\Sp (k[y,w]/(y^2-w^2))/\mu _4],
\end{equation}
where $\mu _4$ acts on $y$ with weight $1$ and on $w$ with weight $3$. 
This is the stack associated to the monoid $D\subset \mathbf{N}\oplus \mathbf{Z}/(2)$ consisting of $(0,0)$ and pairs $(a, b)$ with $a>0$.  There is a map $D\rightarrow \mathbf{Z}/(4)$ sending $(a, b)$ to $a+2b$.  The fiber over $0$ of this map is the submonoid $\mathbf{N}\hookrightarrow D$, $m\mapsto m\cdot (2,1)$.  Observe that the fiber over $1$ (resp. $2$, $3$) consists of $\mathbf{N}+(1,0)$ (resp. $\mathbf{N}+(2,0)$, $\mathbf{N}+(1,1)$). 
%We therefore have a bijection $D\simeq \mathbf{N}\oplus \mathbf{Z}/(4)$.
The coarse space of $\mc X$ is $Y:=\Sp (k[x])$ with the map $\mc X\rightarrow Y$ induced by $x\mapsto yw$.  

This stack is not of the form 
\begin{equation}\label{eq:not1}\Sp (k[x]\otimes _{k[\mathbf{N}^r]}k[N])/D(X)]
\end{equation}
for an admissible inclusion $\mathbf{N}^r\hookrightarrow N$, where the map $\mathbf{N}^r\rightarrow k[x]$ sends each generator to $x$. To see this, suppose to the contrary that we have such a description of $\mc X$.
Consider the stack $\mc X_0:= \mc X\times _{\Sp (k[x]), x\mapsto 0}\Sp (k)$ and let $\fm \subset \mls O_{\mc X_0}$ be the ideal of $B\mu_4 \subset \mc X_0$. consider the graded sheaf of algebras $A = \oplus _{m\geq 0}\mathfrak{m}^m/\mathfrak{m}^{m+1}$ on $B\mu _4$, which we also abusively view as a graded algebra with $\mu _4$-action. On the one hand, from \eqref{eq:not2} we have $\mc X_0 = [\Sp (k[y,w]/(y^2-w^2, yw))/\mu _4]$ and $\fm = ( y, w )$, hence
\[
A = k[y, w]/(y^2-w^2, yw) = k \oplus ky \oplus ky^2 \oplus kw.
\]
On the other hand, from the description \eqref{eq:not1} we have
\[
A = k[N]/(\mathbf{N}^r )
\]
where the quotient is by the ideal in $k[N]$ generated by elements corresponding to the image of $\mathbf{N}^r \hookrightarrow N$. 

Looking at the stabilizer group scheme of the closed point, we must have $X = \mathbf{Z}/(4)$, and for  $i=1, \dots, 3$ we have a minimal lift $z_i\in N$ of $i\in \mathbf{Z}/(4)$. If these two descriptions of the algebra $A$ are truly isomorphic, then looking at eigenspaces for the $\mu _4$ actions we see that $z_1$ is identified with $y$ (up to unit) and $z_2$ is identified with $y^2$.  
This implies that $2z_1 = z_2$ in $N$.  Similarly we must have $2z_3 = z_2$.  This gives that $z_1-z_3\in N^\gp $ is torsion and therefore in $N$ since $N$ is saturated.  On the other hand, the element $y/w$ is not in the ring $k[y,w]/(y^2-w^2)$ giving a contradiction.
% This stack is not of the form $[\Sp (k[x]\otimes _{k[\mathbf{N}^r]}k[N])/D(X)]$ for an admissible inclusion $\mathbf{N}^r\hookrightarrow N$, where the map $\mathbf{N}^r\rightarrow k[x]$ sends each generator to $x$. To see this, suppose to the contrary that we have such a description of the stack.  Looking at the stabilizer group scheme of the closed point, we must have $X = \mathbf{Z}/(4)$, and for  $i=1, \dots, 3$ we have a minimal lift $z_i\in N$ of $i\in \mathbf{Z}/(4)$.  Consider the stack $\mc X_0:= \mc X\times _{\Sp (k[x]), x\mapsto 0}\Sp (k)$, which is isomorphic to $[\Sp (k[y,w]/(y^2-w^2, yw))/\mu _4]$.  Let $\mathfrak{m}\subset \mls O_{\mc X_0}$ be the ideal of $B\mu _4\subset \mc X_0$, and consider the graded sheaf of algebras $\oplus _{m\geq 0}\mathfrak{m}^m/\mathfrak{m}^{m+1}$ on $B\mu _4$, which we also abusively view as a graded algebra with $\mu _4$-action.  In fact, this algebra is simply $k[y, w]/(y^2-w^2, yw)= ky\oplus ky^2\oplus kw$. \Rachel{ this algebra is simply $k \oplus ky \oplus ky^2 \oplus kw$. On the other hand, if $\mc X$ is of the form $[\Sp (k[x]\otimes _{k[\mathbf{N}^r]}k[N])/D(X)]$ then } Looking at eigenspaces for the $\mu _4$ action we see that $z_1$ maps to $y$ (up to unit) and $z_2$ maps to $y^2$.  \Rachel{}
% This implies that $2z_1 = z_2$ in $N$.  Similarly we must have $2z_3 = z_2$.  This gives that $z_1-z_3\in N^\gp $ is torsion and therefore in $N$ since $N$ is saturated.  On the other hand, the element $y/w$ is not in the ring $k[y,w]/(y^2-w^2)$ giving a contradiction. 
\end{example}

\subsection{Finite fibers}

Let $k$ be an algebraically close field. Fix a tame abelian nodal orbicurve $\mls C$ with coarse space $C$. 

\begin{prop}\label{P:4.15} The set of generalized log twisted curves with $n$ marked points and associated stack $\mls C$  is finite.
\end{prop}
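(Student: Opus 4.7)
The plan is to show that the data of a generalized log twisted curve $\bC$ producing $\mls C$ is determined by a finite set of combinatorial choices localized at the stacky points of $\mls C$. The coarse space of any such $\bC$ must coincide with $C$, and by the equivalence \cite[1.8]{LogTwisted} between simple inclusions and stacky structure at nodes, the simple inclusion $M_k \hookrightarrow M_k'$ is uniquely determined by the stacky structure of $\mls C$ at the nodes of $C$. Thus only the markings $\{s_i\}_{i=1}^n$ and the admissible sheaf $\mls N$ remain to be specified.

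Let $x_1, \dots, x_r \in C^{sm}(k)$ be the finitely many stacky points in the smooth locus, with local monoids $D_{x_j}$ determined by $\mls C$ as in \ref{R:localmonoid}. Any $\bC$ producing $\mls C$ satisfies: the support of the nontrivial stacky structure of its associated stack coincides with $\{x_1, \dots, x_r\}$, so any markings landing at non-stacky points must have $\mls N$ locally equal to the trivial admissible monoid $\mathbf{N}^m$; and at each $x_j$, by \ref{C:4.13} the stalk $\mls N_{x_j}$ must realize $D_{x_j}$ as a pushout $\mathbf{N} \oplus_{\mathbf{N}^{m_j}} \mls N_{x_j}$ where $m_j \geq 1$ is the number of sections landing at $x_j$.

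The heart of the proof is the local finiteness claim: for a fixed local monoid $D$ with nontrivial quotient $D^{gp}/\mathbf{Z}$, the set of pairs $(m, N)$ with $m \geq 1$ and $N \subset \mathbf{Q}^m_{\geq 0}$ admissible realizing $\mathbf{N} \oplus_{\mathbf{N}^m} N = D$ (in integral monoids, as in \ref{R:integral}) is finite. I would argue this in two steps. First, $m$ admits an upper bound: because the inclusion $\mathbf{N}^m \hookrightarrow N$ is integral by \ref{L:integral}, each of the $m$ standard generators maps to a distinct nonzero class under the composition $N \to D \to D^{gp}/\mathbf{Z}$ which factors through the finite abelian group, pinning $m$ in terms of invariants of $D$. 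Second, for each fixed $m$, the admissible group $N^{gp} \subset \mathbf{Q}^m$ gives a surjection $N^{gp}/\mathbf{Z}^m \twoheadrightarrow D^{gp}/\mathbf{Z}$ whose kernel contributes torsion in the group-theoretic pushout; since $D^{gp}/\mathbf{Z}$ is a fixed finite abelian group, only finitely many admissible subgroups $N^{gp}$ lie above $\mathbf{Z}^m$ with this property, and the admissible monoid $N$ is recovered as $N = N^{gp} \cap \mathbf{Q}^m_{\geq 0}$ by \ref{lem:monoid-vs-group}.

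Assembling: finitely many stacky points $x_j$, finitely many partitions $n = m_0 + \sum_j m_j$ with $m_j \geq 1$, and finitely many choices of $\mls N_{x_j}$ at each stacky point. The hardest step to address rigorously is the contribution from the $m_0$ remaining sections placed at non-stacky smooth points: these do not affect $\mls C$, so naively a continuum of positions is available. Resolving this requires observing that such non-stacky sections may be transported by automorphisms of $C$ preserving the stacky locus, or — more directly in the spirit of the section — invoking the implicit convention that within the fiber over a fixed $\mls C$, only configurations at the stacky points contribute, while the non-stacky markings collapse into finitely many equivalence classes under the GLTC isomorphism relation. This is the main obstacle to turn into a rigorous argument.
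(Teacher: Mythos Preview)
Your overall strategy matches the paper's: the coarse curve and the simple inclusion are determined by $\mls C$, so one reduces to counting local possibilities for $\mls N$ at each stacky point of $C^{sm}$.  There are, however, two points to correct.

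Your argument bounding $m$ is wrong.  The standard generators $e_1,\dots,e_m\in\mathbf{N}^m\subset N$ all map to $1\in\mathbf{N}\subset D$ in the pushout $D=\mathbf{N}\oplus_{\mathbf{N}^m}N$ (they are identified with the image of the summation map), hence to $0$ in $D^{\gp}/\mathbf{Z}$, not to distinct nonzero classes.  In fact $m$ cannot be bounded from the local monoid alone: replacing $N$ by $N\times\mathbf{N}^k\subset\mathbf{Q}^{m+k}_{\geq 0}$ gives the same pushout.  No such bound is needed, though, since the total number of markings is the fixed integer $n$; the paper simply fixes a distribution of the $n$ markings among the stacky points $x_j$ and works with the resulting $n_j$.

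For fixed $m$ your sketch is correct but vague; the paper makes it precise via $\mathrm{Ext}$ groups.  Since groupification commutes with pushouts, $N^{\gp}$ is an extension of the fixed finite group $A_j:=D_{x_j}^{\gp}/\mathbf{Z}$ by $\mathbf{Z}^{n_j}$ whose pushforward along summation $\mathbf{Z}^{n_j}\to\mathbf{Z}$ recovers $D_{x_j}^{\gp}$.  The relevant set is therefore a fiber of $\mathrm{Ext}^1(A_j,\mathbf{Z}^{n_j})\to\mathrm{Ext}^1(A_j,\mathbf{Z})$, a torsor under the finite group $\mathrm{Ext}^1(A_j,\ker\Sigma)$.

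Your concern about the $m_0$ markings at non-stacky points is legitimate and is not an artifact of your approach: such markings carry trivial $\mls N$ locally, do not affect $\mls C$, and for a curve $C$ with trivial automorphism group yield a continuum of non-isomorphic generalized log twisted curves with the same associated stack.  The paper's proof simply distributes all $n$ markings among the $r$ stacky points without comment, so the statement should be read with that restriction implicit.
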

\begin{proof}
Suppose $(C, \{s_i\}_{i=1}^n, \ell: M_k \hookrightarrow M_k', \mls N)$ is a generalized log twisted curve with associated stack $\mls C$. Then $C$ is uniquely isomorphic to the coarse space of $\mls C$, and the simple inclusion $\mls \ell$ is also uniquely determined by $\mls C$. Let $x_1, \ldots, x_r$ denote the distinct points of $C^{sm}$ where $\mls C$ has nontrivial stabilizer. There are finitely many ways to distribute the $n$ markings $s_i$ among the $r$ points $x_j$, so to finish the proof, it is enough to fix one such distribution and show that there are finitely many possibilities for each stalk $\mls N_{x_j}$. Let $n_j$ denote the number of markings equal to $x_j$ (so $\sum n_j = n$). By \ref{lem:monoid-vs-group} it is enough to show that there are finitely many possibilities for the groupifications $\mls N_{x_j}^{gp}$. 

Let $A_j$ denote the character group of $\mls C$ at $x_j \in  C$ (this group is noncanonically isomorphic to $\mathrm{Pic}(BG_{x_j})$). It follows from \ref{C:4.13}
that the set of possible $\mls N_{x_j}^{gp}$'s is equal to a fiber of the map $\mathrm{Ext}^1(A_j, \mathbf{Z}^{n_j}) \xrightarrow{\Sigma} \mathrm{Ext}^1(A_j, \mathbf{Z})$, where $\mathbf{Z}^{n_j}\rightarrow \mathbf{Z}$ is summation. 
% it is enough to show that there are finitely many possibilities for each of the stalks $\mls N_{x_j}$. Let $n_j$ denote the number of $s_i$ equal to $x_j$ and let $X_j$ denote the character group of the stabilizer group scheme at $x_j$. If $\mls N$ determines a generalized log twisted curve whose associated stack is $\mls C$, then  $\mls N_{x_j}^{gp}$ is a  extension of $X_j$ by $\mathbf{Z}^{n_j}$ and $\mls N_{x_j}$ is determined by this extension.
Letting $K$ denote the kernel of this map, from short exact sequence
\[
0\rightarrow K\rightarrow \mathbf{Z}^{n_j}\rightarrow \mathbf{Z}\rightarrow 0
\]
we get an exact sequence
$$
0 = \text{Hom}(A_j, \mathbf{Z})\rightarrow \text{Ext}^1(A_j, K)\rightarrow \text{Ext}^1(A_j, \mathbf{Z}^{n_j})\xrightarrow{\Sigma}\text{Ext}^1(A_j,\mathbf{Z})\rightarrow \text{Ext}^2(A_j, K) = 0,
$$
where $\text{Ext}^2(A_j, K) = 0$ since $A_j$ has projective dimension $1$ (being a finite abelian group).  It follows that the required fiber is a torsor for the finite group $\text{\rm Ext}^1(A_j, K)$; hence, the set of possible $\mls N_{x_j}^{gp}$'s is finite.
\end{proof}

\bibliographystyle{amsplain}
\bibliography{bibliography}{}

\end{document}